\numberwithin{equation}{section}
\newcommand{\mcal}[1]{\mathcal{#1}}
\newcommand{\cF}{\mcal{F}}
\newcommand{\op}{\mathsf{op}}
\newsavebox{\pullback}
\sbox\pullback{%
\begin{tikzpicture}%
\draw (0,0) -- (1ex,0ex);%
\draw (1ex,0ex) -- (1ex,1ex);%
\end{tikzpicture}}
\DeclareMathOperator{\Hom}{Hom}
\newtheorem{lemma}[equation]{Lemma}
\newtheorem{corollary}[equation]{Corollary}
\newtheorem{proposition}[equation]{Proposition}
\newtheorem{theorem}[equation]{Theorem}
\newtheorem*{lemma*}{Lemma}
\theoremstyle{definition}
\newtheorem{example}[equation]{Example}
\newtheorem{definition}[equation]{Definition}
\newtheorem{observation}[equation]{Observation}
\newtheorem{notation}[equation]{Notation}
\theoremstyle{remark}
\newtheorem{remark}[equation]{Remark}
\crefname{thm}{Theorem}{Theorems}
\crefname{lem}{Lemma}{Lemmas}
\crefname{cor}{Corollary}{Corollaries}
\crefname{prop}{Proposition}{Propositions}
\crefname{ex}{Exercise}{Exercises}
\crefname{exm}{Example}{Examples}
\crefname{defn}{Definition}{Definitions}
\crefname{claim}{Claim}{Claims}
\crefname{rem}{Remark}{Remarks}
\crefname{fct}{Fact}{Facts}
\crefname{note}{Note}{Notes}
	\let\oldparen\paren
	\def\paren{\@ifstar{\oldparen}{\oldparen*}}
\newcommand{\cat}{\mathrm{Cat}_{\infty}}
\newcommand{\id}{\mathrm{id}}
\newcommand{\Sp}{\mathrm{Sp}}
\newcommand{\End}{\mathrm{End}}
\newcommand{\cL}{\mathcal{L}}
\newcommand{\mbbC}{\mathbb{C}}
\newcommand{\mbbE}{\mathbb{E}}
\newcommand{\mbbK}{\mathbb{K}}
\newcommand{\mbbN}{\mathbb{N}}
\newcommand{\mbbZ}{\mathbb{Z}}
\newcommand{\Mod}{\mathrm{Mod}}
\newcommand{\BMod}{\mathrm{BMod}}
\newcommand{\SBim}{\mathrm{SBim}}
\newcommand{\EE}{\mathbb{E}}
\newcommand{\U}{\mathrm{U}}
\newcommand{\BS}{\mathrm{BS}}
\newcommand{\pt}{\mathrm{pt}}
\renewcommand{\Set}{\mathrm{Set}}
\newcommand{\eHom}{\underline{\Hom}}
\newcommand{\K}{{\bK}}
\newcommand{\Kb}{\K^b}
\newcommand{\Kbloc}{\Kb_{\mathrm{loc}}}
\newcommand{\Cat}{\mathrm{Cat}}
\newcommand{\Mor}{\mathrm{Mor}}
\newcommand{\Morita}{\mathrm{Morita}}
\newcommand{\Moritac}{\Morita^{\mrc}}
\def\cA{\mathcal A}\def\cB{\mathcal B}\def\cC{\mathcal C}\def\cD{\mathcal D}
\def\cF{\mathcal F}
\def\cL{\mathcal L}
\def\cZ{\mathcal Z}
\def\EE{\mathbb E}
\newcommand{\h}{{\sf h}}
\providecommand{\leftsquigarrow}{%
  \mathrel{\mathpalette\reflect@squig\relax}%
}
\newcommand{\reflect@squig}[2]{%
  \reflectbox{$\m@th#1\rightsquigarrow$}%
}
\newcommand{\Alg}{\mathrm{Alg}} 
 \newcommand{\Braid}{\mathrm{Braid}}
\newcommand{\loc}{\mathrm{loc}}
\newcommand{\CProj}{\mathrm{CProj}}
\newcommand{\add}{\mathrm{add}}
\newcommand{\st}{\mathrm{st}}
\newcommand{\Monoid}{\cZ}
\newcommand{\Spectra}{\mathrm{Sp}}
\newcommand{\ConnSpectra}{\Spectra_{\geq 0}}
\newcommand{\largecat}{\widehat{\mathrm{Cat}}}
\newcommand{\Perf}{\mathrm{Perf}}
\newcommand{\CAlg}{\mathrm{CAlg}}
\newcommand{\mrc}{\mathrm{c}}
\newcommand{\cp}{\mathrm{cp}}
\newcommand{\arrow}{\mathrm{ar}}
\newcommand{\PreBraid}{\mathrm{PreBraid}}
\newcommand{\Gen}{\mathrm{Gen}}
\renewcommand{\th}{{\text{th}}}
\newcommand{\Free}{\mathrm{Free}}
\def\bK{\mathbf K}
\newcommand{\EBS}{EB}
\newcommand{\FE}{{\mcal{F}_{E}}}
\newcommand{\MU}{\mathrm{MU}}
\newcommand{\PolyMorc}{\mathrm{Poly}\Morc}
\newcommand{\Morc}{\Mor^{\mrc}}
\newcommand{\fib}{\mathrm{fib}}
\newcommand{\underbf}[1]{\underline{\mathbf{{#1}}}}
\newcommand{\Deltas}{\Delta s}
\title{Braiding on complex oriented Soergel bimodules}
\address{1 Oxford St, Cambridge, MA 02139}
\email{yuleonliu@math.harvard.edu}
\author{Yu Leon Liu}
\begin{document}
\begin{abstract}
    In this note, we study $\U(n)$ Soergel bimodules in the context of stable homotopy theory. We define the $(\infty, 1)$-category $\SBim_E(n)$ of $E$-valued $\U(n)$ Soergel bimodules, where $E$ is a connective $\EE_\infty$-ring spectrum, and assemble them into a monoidal locally additive $(\infty, 2)$-category $\SBim_E$. When $E$ has a complex orientation, we then construct a braiding, i.e. an $\EE_2$-algebra structure, on the universal locally stable $(\infty, 2)$-category $\Kbloc(\SBim_E)$ associated to $\SBim_E$. Along the way, we also prove spectral analogs of standard splittings of Soergel bimodules. This is a topological generalization of the type $A$ Soergel bimodule theory developed in \cite{2024braided}. 
\end{abstract}
\maketitle
\tableofcontents

\section{Introduction}
In the last twenty years, there has been tremendous progress in quantum topology and categorification, with 
\emph{Soergel bimodules} \cite{Soergel} at the center of much of the advancement (see \cite{stroppel2022categorification}).
While Soergel bimodules can defined for any compact Lie group $G$, in this paper we restrict ourselves to the $\U(n)$ case and denote by $\SBim(n)$ the category of $\U(n)$ Soergel bimodules.
The $\U(n)$ Soergel bimodules can be packaged together into a $(2,2)$-category\footnote{We invite the reader to \cite[Appendix A]{2024braided} for an accessible introduction to higher and $\infty$-categories, which are utilized throughout this paper.}  $\SBim$, 
whose objects are indexed by natural numbers $n \in \mbbN$, and whose endomorphism category $\eHom_{\SBim}(n,n)$ is $\SBim(n)$.
In \cite{2024braided} the author and collaborators constructed a braiding, i.e. an $\EE_2$-algebra structure, on the locally stable $(\infty, 2)$-category $\Kbloc(\SBim)$, which is the universal locally stable $(\infty, 2)$-category associated to $\SBim$.\footnote{The objects of $\Kbloc(\SBim)$ are also indexed by natural numbers, and the endomorphism category $\eHom_{\Kbloc(\SBim)}(n,n) = \Kb(\SBim(n))$ is the universal stable $\infty$-category associated to $\SBim(n)$, which is the $\infty$-categorical generalization of the chain homotopy category. See \cref{rem:Kb-generalizes-chain-complex}.} Just as the $S_n$ Hecke algebras taken together form a braided monoidal category that controls type $A$ quantum link invariants and $3$d topological field theories, we believe that $\Kbloc(\SBim)$, together with its $\EE_2$-algebra structure, controls type $A$ homotopy-coherent link homology theories and $4$d topological quantum field theories (see the introduction of \cite{2024braided}).

In this paper we generalize the results of \cite{2024braided} to the setting of stable homotopy theory.\footnote{See \cite{nituSBI, nituSBII} for previous work on spectral generalizations of Soergel bimodules.}
Let $T$ denote the maximal torus $\U(1)^n$ in $\U(n)$. The 
$\U(n)$ Soergel bimodules are retracts of direct sums of Bott-Samelson bimodules, which are $T-T$-equivariant rational cohomology groups of Bott-Samelson spaces (\cref{def:BS-space}). 
Let $E$ be an $\EE_\infty$-ring spectrum. We can similarly define $E$-valued Soergel bimodules as retracts of direct sums of $E$-valued Bott-Samelson bimodules, which are $E$-mapping spectra of  $T-T$-equivariant quotients of Bott-Samelson spaces (see \cref{def:E-Bott-Samelson}). Let $\SBim_E(n)$ denote the full subcategory of $E(BT)-E(BT)$-bimodule spectra consisting of $E$-valued Soergel bimodules. 
Our first result is that $\SBim_E(n)$ can be assembled into a monoidal locally additive $(\infty, 2)$-category:
\begin{theorem}[{\cref{prop:spell-out-SBim_E}}]
    Let $E$ be a connective $\EE_\infty$-ring spectrum.\footnote{We worked with connective ring spectra for technical reasons. However, the connectivity condition can be removed; see \cref{rem:non-connective-E}.} There exists a monoidal locally additive $(\infty, 2)$-category $\SBim_E$ whose objects are labeled by natural numbers, and whose hom categories are 
    \begin{equation}
        \eHom_{\SBim_E}(n,m) = 
        \begin{cases}
            0 & n \neq m, \\
            \SBim_E(n) & n = m.
        \end{cases}
    \end{equation}
\end{theorem}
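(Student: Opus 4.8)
The plan is to build $\SBim_E$ by first constructing a larger monoidal locally additive $(\infty,2)$-category of "$E$-valued bimodule categories" and then carving out the Soergel part. Concretely, I would start from the $(\infty,2)$-category $\mathrm{BMod}_E$ whose objects are $E_\infty$-$E$-algebras of the form $E(BT^k)$ (for varying $k$, with $T^k$ the maximal torus of $\U(n_1)\times\cdots\times\U(n_j)$ — but for our purposes only the diagonal case matters), whose $1$-morphisms from $A$ to $B$ are $A$-$B$-bimodule spectra, with composition given by relative tensor product $-\otimes_B-$, and whose $2$-morphisms are bimodule maps. This is the standard Morita-style $(\infty,2)$-category and is known to be monoidal (via $\otimes_E$) and locally stable (hence in particular locally additive). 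The subtlety is that I do not want all of $\mathrm{BMod}_E$ — I want only the objects indexed by $n\in\mbbN$ corresponding to $E(BT)$ for the standard torus $T=\U(1)^n\subset\U(n)$, with the empty hom category imposed whenever $n\neq m$.

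For the objects-and-homs bookkeeping I would use the following device: define $\SBim_E$ as the $(\infty,2)$-category with object set $\mbbN$, and declare $\eHom_{\SBim_E}(n,m)$ to be $0$ unless $n=m$, in which case it is the full additive subcategory $\SBim_E(n)\subset E(BT_n)\text{-}E(BT_n)\text{-bimodules}$ spanned by retracts of finite direct sums of $E$-valued Bott–Samelson bimodules, as in the paper's definition. The only thing to check is that this collection of hom-categories is closed under composition, i.e. that the relative tensor product over $E(BT_n)$ of two $E$-valued Soergel bimodules is again an $E$-valued Soergel bimodule, and that the monoidal unit $E(BT_n)$ (or rather the relevant bimodule) lies in $\SBim_E(n)$. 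Since Soergel bimodules are by definition retracts of sums of Bott–Samelson bimodules, and retracts and finite direct sums are manifestly preserved under $\otimes$ and composition, this reduces to the single statement that the tensor product of two $E$-valued Bott–Samelson bimodules decomposes as a retract of a finite sum of $E$-valued Bott–Samelson bimodules. This in turn follows from the "spectral analogs of standard splittings" promised in the abstract — in particular, the $E$-valued version of the fact that $B_{s_i}\otimes B_{s_i}\simeq B_{s_i}\langle 1\rangle\oplus B_{s_i}\langle -1\rangle$ (up to shifts) and that Bott–Samelson bimodules for a word compose to the Bott–Samelson bimodule for the concatenated word. Concretely one uses that $E^*(\mathrm{BS}(w))$ is computed by iterated pushforward/pullback along the fibrations defining Bott–Samelson spaces, and the connectivity of $E$ together with a complex-orientation-free Eilenberg–Moore / base-change argument gives the needed Künneth isomorphism $E(BS(w)) \otimes_{E(BT)} E(BS(w')) \simeq E(BS(ww'))$.

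The packaging into an honest $(\infty,2)$-category is then formal: one exhibits $\SBim_E$ as a full sub-$2$-category of $\mathrm{BMod}_E$ on the objects $E(BT_n)$, cut down further by restricting $\eHom$ to the additive-idempotent-complete closure of Bott–Samelson bimodules; fullness on $2$-morphisms is automatic, and the locally additive structure is inherited because $\SBim_E(n)$ is closed under finite direct sums and retracts inside a stable (hence additive) category of bimodule spectra. The monoidal structure on $\SBim_E$ is the restriction of $\otimes_E$ on $\mathrm{BMod}_E$; one checks that $E(BT_n)\otimes_E E(BT_m) \simeq E(BT_{n+m})$ (using $B(T_n\times T_m)\simeq BT_{n+m}$ and the fact that $E$ of a product of spaces is the $\otimes_E$-product when things are suitably finite/connective), so that the tensor product sends the object $n$ and the object $m$ to the object $n+m$, matching the claimed formula for hom-categories since the tensor of a Soergel bimodule for $\U(n)$ and one for $\U(m)$ is a Soergel bimodule for $\U(n)\times\U(m)$, which is (an external product inside) $\SBim_E(n+m)$.

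The main obstacle I anticipate is not the $2$-categorical bookkeeping — which is routine given a robust Morita-type construction — but verifying the spectral splitting/Künneth statement in the generality of an arbitrary connective $\EE_\infty$-ring $E$ \emph{without} assuming a complex orientation. Rationally (the case of \cite{2024braided}) the relevant $E$-cohomology of Bott–Samelson spaces is a polynomial algebra and everything is transparent; over a general connective $E$ one must argue that the $T$-equivariant cells of Bott–Samelson spaces are still "free enough" that the relative tensor products compute correctly, i.e. there is no Tor obstruction. I expect this to be exactly the content of the "spectral analogs of standard splittings" section the paper refers to, so in the proof I would cite those results and reduce the present theorem to the closure-under-composition check described above.
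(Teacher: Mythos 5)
Your central reduction targets the wrong lemma, and this is a genuine gap. You claim that closure of the hom-categories under composition ``reduces to'' the spectral splittings, in particular an $E$-valued analogue of $\EBS_{\underline{ii}}\simeq \EBS_i\oplus\EBS_i(2)$, and you then worry about proving such splittings without a complex orientation. Two problems: first, in the paper those splittings (\cref{prop:E-BS_s-splitting-1}, \cref{prop:E-sts-splitting-1}) genuinely use the complex $\EE_\infty$-orientation (via the Demazure operators and the invertible series $g$ of \cref{lem:unique-g}), whereas the theorem you are proving assumes only that $E$ is connective; an argument routed through them would not establish the statement under its stated hypotheses. Second, and more importantly, no splitting is needed at all: $\SBim_E(n)$ is by definition the closure under retracts and finite direct sums of the shifted Bott--Samelson bimodules $\EBS_{\underbf{i}}(k)$ for \emph{arbitrary} words $\underbf{i}$, and the relative tensor product of two Bott--Samelson bimodules is the Bott--Samelson bimodule of the concatenated word (\cref{obs:tensor-over-EBT}, used in exactly this way in \cref{obs:describe-F}); closure under composition and under $\boxtimes$ is therefore formal, with no Tor/Eilenberg--Moore obstruction to resolve. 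The splittings enter only later, for the braiding on $\Kbloc(\SBim_E)$. The nontrivial input you do need but do not mention is that the elementary bimodule $\EBS_1$ is compact as a right $E(B\U(1)^2)$-module, so that the hom-categories land in the Morita category with its compactness condition; the paper checks this using the cofiber sequence of \cref{prop:E-two-fiber-seq}.

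Beyond the gap, your route also differs from the paper's in how the $(\infty,2)$-categorical and monoidal structure is produced. You propose to carve out a locally full monoidal sub-$2$-category of a Morita $(\infty,2)$-category by hand, verifying closure of the chosen hom-subcategories; the paper instead maps the free monoidal category $\Gen$ (one object, $\mathbb{Z}$ many endomorphisms of $*\boxtimes *$) into $\PolyMorc_E$, $\add_E$-linearizes, and defines $\SBim_E$ as the (surjective-on-objects and dominant-on-1-morphisms, faithful) factorization in $\Alg(\Cat[\add_E])$ (\cref{def:definition-of-SBim_E}), after which the description of objects and homs is read off from \cref{lem:dominant}. The payoff of the paper's construction is that the monoidal structure, the $\add_E$-enrichment, and all coherence are inherited automatically from functoriality of the factorization system, precisely the bookkeeping your approach would have to supply directly. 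With the compactness check added and the spurious appeal to the splittings removed, your outline could be completed, but as written the key step is misidentified.
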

By abstract nonsense, we also get a monoidal locally stable $(\infty, 2)$-category $\Kbloc(\SBim_E)$ (see \cref{obs:describe-KblocSBim_E}) whose objects are also labeled by natural numbers, and whose hom categories are 
\begin{equation}
    \eHom_{\Kbloc(\SBim_E)}(n,m) = 
    \begin{cases}
        0 & n \neq m, \\
        \Kb(\SBim_E(n)) & n = m. 
    \end{cases}
\end{equation}
Here $\Kb(\SBim_E(n))$ is the universal stable $(\infty, 1)$-category associated to $\SBim_E(n)$.
Additionally, we have a \emph{fiber functor} \eqref{eq:fiber-functor}, which 
is an $\EE_1$-algebra map $H_{\loc} \colon \Kbloc(\SBim_E) \to \Mor_E$, where $\Mor_E$ is the symmetric monoidal $(\infty, 2)$-category of $E$-algebras and bimodules (satisfying some finiteness condition), and bimodule homomorphisms.

Next we turn to the braiding. Interestingly, we need a complex $\EE_\infty$-orientation on $E$, which is an $\EE_\infty$-ring homomorphism $f_E \colon \MU \to E$, where $\MU$ is the complex bordism spectrum.
Intuitively, such an orientation is necessary because it allows us to trivialize Thom spectra of complex vector bundles (see \cref{prop:E-thom-class} and \cref{prop:E-two-fiber-seq}).

Here is the main result of the paper:
\begin{theorem}[\cref{thm:main}]
    Let $E$ be a connective $\EE_\infty$-ring spectrum with a complex $\EE_\infty$-orientation $f_E \colon \MU \to E$. There exists an $\EE_2$-algebra structure on $\Kbloc(\SBim_E)$ together with an $\EE_2$-algebra structure on $H_{\loc} \colon \Kbloc(\SBim_E) \to \Mor_E$ such that 
    \begin{enumerate}
        \item The $\EE_2$-algebra structures enhances the $\EE_1$-algebra structures on $\Kbloc(\SBim_E)$ and $H_{\loc}$.
        \item The braiding on two strands is given by the Rouquier complex (\cref{def:rouquier-complex}).
    \end{enumerate}
   Furthermore, the space of such pairs of $\EE_2$-algebra structures satisfying the above conditions is contractible.
\end{theorem}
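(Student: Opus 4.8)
The plan is to follow the strategy of \cite{2024braided}, systematically replacing the $T$-equivariant rational cohomology of Bott--Samelson spaces by their $E$-valued analogues, with the complex orientation $f_E$ supplying the trivializations of Thom spectra that make the spectral Bott--Samelson bimodules formally mimic their rational counterparts. By Dunn additivity ($\EE_2 \simeq \EE_1\otimes\EE_1$), upgrading the monoidal structure on $\Kbloc(\SBim_E)$ to an $\EE_2$-algebra structure is the same as promoting $\Kbloc(\SBim_E)$ to an $\EE_1$-algebra object internal to the $\infty$-category of monoidal locally stable $(\infty,2)$-categories, and similarly --- working internally to the arrow $\infty$-category over $\Mor_E$ --- for the map $H_{\loc}$. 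Concretely, the data to be produced is the braiding $2$-morphisms with all higher coherences, together with proofs of their invertibility.

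First I would introduce the braiding generators: the Rouquier complexes $F_i\in\Kb(\SBim_E(n))$ and their duals $F_i^{-1}$ (\cref{def:rouquier-complex}), and prove $F_i\otimes F_i^{-1}\simeq\uno\simeq F_i^{-1}\otimes F_i$. As in the classical case this reduces to the rank-one statement, where $f_E$ first enters: the relevant cofiber sequence of $E(BT)$-bimodules is identified via the $E$-Thom class of the tautological line bundle (\cref{prop:E-thom-class}). Next I would verify the braid relations coherently. Far commutativity $F_i\otimes F_j\simeq F_j\otimes F_i$ for $|i-j|\ge 2$ is immediate from the commuting of the reflections $s_i,s_j$; the essential relation $F_i\otimes F_{i+1}\otimes F_i\simeq F_{i+1}\otimes F_i\otimes F_{i+1}$ is reduced to $\SBim_E(3)$ and deduced from the spectral analogues of the standard Soergel splitting lemmas --- the ``splittings'' advertised in the abstract --- whose proofs rest on \cref{prop:E-thom-class} and \cref{prop:E-two-fiber-seq}. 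To pass from the bare relations to a genuine $\EE_2$-structure, and to do so simultaneously for $H_{\loc}$ (which sends each $F_i$ to a standard invertible $E(BT)$-bimodule and visibly respects the braid relations), I would invoke the $\EE_2$-recognition mechanism of \cite{2024braided}, checking that its hypotheses survive the base change from $\Q$- to $E$-cohomology.

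For the contractibility assertion I would argue by rigidity rather than by exhibiting a point. The space in question admits a forgetful map to the (contractible) space recording only the fixed $\EE_1$-structure and the fixed normalization on two strands; I would show this map is an equivalence by climbing a Postnikov-type tower of $\EE_2$-coherences, checking at each stage that the fiber is a torsor under a mapping space that the splitting lemmas force to be contractible. Running this induction in the arrow category over $\Mor_E$ disposes of the pair $(\Kbloc(\SBim_E),H_{\loc})$ at once.

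I expect the principal obstacle to be the spectral splitting lemmas: they must be established with enough naturality to control not merely the braid relations on the nose but the entire coherent diagram, and it is precisely here that the complex orientation $f_E$ --- and, for convergence bookkeeping, the connectivity of $E$ --- is indispensable, since without a trivialization of the Thom spectra occurring in iterated $E$-valued Bott--Samelson bimodules for $\U(2)$ and $\U(3)$ both the rank-one identity and the Yang--Baxter computation break down. A secondary, more clerical difficulty is importing the recognition principle of \cite{2024braided} and verifying that nothing in it relied essentially on rationality of the coefficients.
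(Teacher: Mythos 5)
Your plan follows the same overall route as the paper: define the Rouquier complex, prove its invertibility from the fiber sequences furnished by the complex orientation, handle the three-strand computation with the spectral splitting lemmas, and delegate all higher coherences to the recognition mechanism of \cite{2024braided}. Two deviations are worth flagging. First, the input that the recognition theorem (\cref{prop:reducing-to-prebraid}, i.e.\ \cite[Theorem 8.2.1]{2024braided}) actually demands is a \emph{prebraiding} on $h_1\Gen \to h_1\Kbloc(\SBim_E)$ over $h_1\PolyMorc_E$, and its naturality requirement unwinds to the slide relations $\EBS_1 \circ R_2 \circ R_1 \simeq R_2 \circ R_1 \circ \EBS_2$ and $\EBS_2 \circ R_1 \circ R_2 \simeq R_1 \circ R_2 \circ \EBS_1$ of \cref{cor:reduce-to-Rouquier-and-slide}(3) --- not the Yang--Baxter relation $R_1\circ R_2\circ R_1 \simeq R_2\circ R_1\circ R_2$ you propose to verify; the paper never checks Yang--Baxter directly (it is an output of the machine), and far commutativity requires no check either, being built into the construction of the cabled crossings via functoriality of $\boxtimes$. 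If you executed your plan literally you would have verified a relation that is not the required hypothesis and omitted the one that is, though the splittings you invoke are exactly what proves the slide relations (\cref{prop:condition-3}). Second, contractibility is immediate in the paper: the recognition theorem identifies the space of braidings over $\PolyMorc_E$ with the \emph{discrete} set of prebraidings, and the normalization on two strands singles out one element; your proposed Postnikov-tower rigidity argument is therefore unnecessary and would essentially amount to re-proving \cite[Theorem 8.2.1]{2024braided}. Dunn additivity likewise plays no role. Otherwise your treatment of invertibility (via the Thom-class fiber sequences, condition (2)) and of the fiber functor condition (1) matches \cref{prop:condition-2} and \cref{prop:condition-1}.
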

\begin{remark}
   See \cref{thm:main} for the precise statement.
\end{remark}
\begin{remark}
    A crucial step in the proof of \cref{thm:main} is constructing the splittings $\BS_{\underline{ii}} \simeq \BS_{i} \oplus \BS_{i}(2)$ (\cref{prop:E-BS_s-splitting-1}) and $\BS_{\underline{iji}} \simeq \BS_{i,j} \oplus \BS_{i}(2)$ (\cref{prop:E-sts-splitting-1}) in the setting of $E$-valued Soergel bimodules. Here $j = i \pm 1$ and $\BS_{i,j}$ is defined in \cref{nota:EBS_iji}.
\end{remark}
\begin{remark}
    Let $E = H\mbbZ$ together with its canonical complex $\EE_\infty$-orientation (see \cref{ex:HS-additive}). 
    It follows immediately from \cref{thm:main} that the braiding constructed in \cite{2024braided} on rational Soergel bimodules can be lifted to integral Soergel bimodules.
\end{remark}





\textbf{Outline.}
In \cref{sec:top-of-BS} we introduce Bott-Samelson spaces and construct two important cofiber sequences.
In \cref{sec:complex-oriented-BS} we study  $E$-valued Bott-Samelson bimodules and their splittings, where $E$ is equipped with a complex $\EE_\infty$-orientation. 
In \cref{sec:inf-2-SBim} we construct the $(\infty, 2)$-categories of additive Soergel categories $\SBim_E$, as well as the stable analogue $\Kbloc(\SBim_E)$.
Lastly, in \cref{sec:braiding-on-Kbloc} we prove  \cref{thm:main} regarding braidings on $\Kbloc(\SBim_E)$.

\textbf{Acknowledgements:}
I would like to thank Cameron Krulewski and Sophia Se\~na for helpful comments on an early draft of this paper.
I would like to thank Aaron Mazel-Gee, David Reutter, Catharina Stroppel, and Paul Wedrich for previous collaboration, which both inspired and formed the technical foundation of this work. I would also like to thank Fabio Capovilla-Searle, Alex Hsu, Catherine Li,  Ryan Quinn, Alba Send\'{o}n Blanco, and Daniel Tolosa for many pleasant mathematical and non-mathematical conversations. Lastly, I extend my gratitude to Mike Hopkins and Tomer Schlank for insightful discussion and encouragement.
A significant part of the project was completed while I attended the 2024 Algebraic Structures in Topology conference held in San Juan, Puerto Rico.
I gratefully acknowledge the financial support provided by the Simons Collaboration on Global Categorical Symmetries.

\section{Topology of Bott-Samelson spaces}\label{sec:top-of-BS}
In this section, we review the basics of Bott-Samelson spaces. We will restrict ourselves to $\U(n)$, although much of the following works for general compact Lie groups.

\subsection{The basics of Bott-Samelson spaces}
\label{subsec:basics-of-BS}
Fix $n > 1$. Let $G$ be $\U(n)$, $T$ be $\U(1)^n$ the maximal torus, and $W = N_G(T)/T = S_n$ be the Weyl group. 
Given a simple transposition $s_i = (i, i+1)$, we denote by $G_{i}$ the corresponding standard parabolic  $\U(1)^{i-1} \times \U(2) \times \U(1)^{n-i-1}$. Note that $G_i$'s, along with other standard parabolic subgroups, are spaces with a
$T-T$ action by $T$ multiplication on the both the left and the right.\footnote{A parabolic subgroup is \emph{standard} if it contains $T$. They are in one-to-one correspondence with parabolic subgroups of $W$.  
}



Given two $T-T$ spaces $X_1, X_2$, we can define a new $T-T$ space $X_1 \times_T X_2$ as the homotopy quotient by the simultaneous $T$ action, acting on the right of $X_1$ and the left of $X_2$.\footnote{In this paper the left and right $T$ actions are free (but the combined $T \times T$ action might not be!), therefore the naive quotient and the homotopy quotient coincide.} The $T-T$ action is defined to be 
\begin{equation}
    (t_1, t_2) \cdot (x_1, x_2) \coloneqq (t_1 \, x_1, x_2\, t_2).
\end{equation}
We will view the operation $- \times_T -$ as composition, as it will correspond to composition of $1$-morphisms.
We now define Bott-Samelson spaces as spaces with $T-T$ actions:
\begin{definition}\label{def:BS-space}
    Fix  $\underbf{i} = (i_1, \cdots, i_m)$ in $\{1, \cdots, n-1\}^{\times m}$ with $m \in \mbbN$. The \emph{Bott-Samelson} space $\BS_{\underbf{i}}$ is the $T-T$ space 
    \begin{equation}
        G_{i_1} \times_T G_{i_2} \cdots \times_T G_{i_m}.
    \end{equation}
    When $\mathbf{i} = \varnothing$ we take $\BS_{\varnothing}$ to be $T$.
\end{definition}

\begin{remark}
    $T-T$ acts on the left of $G_{i_1}$ and right of $G_{i_m}$ respectively:
    \begin{equation}
        (t_1, t_2) \cdot (g_1, \cdots, g_m) \coloneqq (t_1\, g_1, \cdots, g_m\, t_2).
    \end{equation}
\end{remark}

\begin{remark}\label{rem:Bott-Samelson-variety}
    The more standard notion of Bott-Samelson {variety} \cite{bottsamelson} associated to $\underbf{i}$ is the quotient $\BS_{\underbf{i}}/T$. 
\end{remark}

We end the subsection with a useful proposition needed for the slide maps. 
Let $s_i, s_j$ be neighboring simple transpositions, i.e. $j = i\pm 1$. 

The elements $s_i$ and $s_j$ generate a $S_3$ subgroup in $W$. We denote by $G_{i,j} = G_{j,i}$
the associated standard parabolic subgroup of $G$.\footnote{If $j = i+1$, then $G_{i,j} = 
\U(1)^{i-1} \times \U(3) \times \U(1)^{n-i-2}$.} Then we have a commutative diagram of $T-T$ spaces:
\begin{equation}\label{eq:sts-pushout}
    \begin{tikzcd}
        \BS_{\underline{ii}} = G_i \times_T G_i \ar[rrrr,
            "{(g_1, g_2) \mapsto (g_1, 1, g_2)}"
            ] \ar[d, "\mu_i"] 
            &&&&
            \BS_{\underline{iji}} = G_i \times_T G_{j} \times_T G_i
            \ar[d, "\mu_{iji}"]\\ 
        \BS_s = G_i \ar[rrrr] &&&&
        G_{i,j}
    \end{tikzcd}
\end{equation}
where the vertical maps are multiplications. We have the following propositions from \cite[Lemma 5.7]{nituSBI} (with $K = \{s_i\}$):
\begin{proposition}\label{prop:sts-pushout}
   The diagram \eqref{eq:sts-pushout} is a pushout diagram.
\end{proposition}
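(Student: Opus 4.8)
The plan is to realize \eqref{eq:sts-pushout} as an excision square --- this recovers \cite[Lemma 5.7]{nituSBI} in the case $K=\{s_i\}$. The idea is that the top map identifies $\BS_{\underline{ii}}$ with the preimage $\mu_{iji}^{-1}(G_i)\subseteq\BS_{\underline{iji}}$, that $\mu_{iji}$ restricts to a homeomorphism on the complements of these subspaces, and that consequently $G_{i,j}$ is the strict pushout, which is also the homotopy pushout because the top map is a cofibration.

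The group-theoretic input is $G_i\cap G_j=T$ inside $G_{i,j}$: a matrix that is block diagonal with respect to both decompositions $\mathbb{C}^3=\mathbb{C}^2\oplus\mathbb{C}$ and $\mathbb{C}^3=\mathbb{C}\oplus\mathbb{C}^2$ of the relevant $\U(3)$-factor is diagonal. Using this, a point $(g_1,g_2,g_3)\in\BS_{\underline{iji}}$ satisfies $g_1g_2g_3\in G_i$ if and only if $g_2\in g_1^{-1}G_ig_3^{-1}\cap G_j=G_i\cap G_j=T$ (as $g_1,g_3\in G_i$), in which case $(g_1,g_2,g_3)$ represents the same point as $(g_1g_2,1,g_3)$ after sliding $g_2\in T$ across the first balanced product. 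Hence $\mu_{iji}^{-1}(G_i)$ is exactly the image of $(h_1,h_2)\mapsto(h_1,1,h_2)$, which is the top arrow of \eqref{eq:sts-pushout}; this arrow is a closed $T$-$T$-equivariant embedding (it is obtained from the closed embedding $T\hookrightarrow G_j$ by applying $G_i\times_T(-)\times_T G_i$), hence a cofibration, and $\mu_i$ agrees with the restriction of $\mu_{iji}$ to this subspace. In particular the square commutes.

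Next I would show that $\mu_{iji}$ is surjective onto $G_{i,j}$ and injective over $G_{i,j}\setminus G_i$. Surjectivity is the statement $G_iG_jG_i=G_{i,j}$; since $iji$ is a reduced word for the longest element of $\langle s_i,s_j\rangle\cong S_3$ this is the standard fact that the Bott--Samelson map surjects onto the (parabolic) flag variety, and it can also be checked by an explicit column/row reduction in $\U(3)$. For injectivity over $G_{i,j}\setminus G_i$: suppose $g_1g_2g_3=h_1h_2h_3\notin G_i$, and set $a=h_1^{-1}g_1\in G_i$, $c=h_3g_3^{-1}\in G_i$, so that $ag_2c^{-1}=h_2\in G_j$ with $g_2\notin T$. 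A short computation in $\U(3)$ --- using that for $g_2\notin T$ both off-diagonal entries of its $\U(2)$-block are nonzero, and that a triangular unitary matrix is diagonal --- forces $a,c\in T$; then $(g_1,g_2,g_3)=(h_1a,g_2,c^{-1}h_3)\sim(h_1,h_2,h_3)$ in $\BS_{\underline{iji}}$. Since $\mu_{iji}$ is a closed map (compact source, Hausdorff target), its restriction to the saturated open set $\BS_{\underline{iji}}\setminus\BS_{\underline{ii}}=\mu_{iji}^{-1}(G_{i,j}\setminus G_i)$ is a closed continuous bijection onto $G_{i,j}\setminus G_i$, hence a homeomorphism.

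Finally I would assemble the pushout. Let $Q$ be the strict pushout $\BS_s\sqcup_{\BS_{\underline{ii}}}\BS_{\underline{iji}}$ in topological spaces. The previous two steps show that the canonical map $Q\to G_{i,j}$ is a continuous bijection; as $Q$ is compact and $G_{i,j}$ is Hausdorff, it is a homeomorphism. Because the top arrow of \eqref{eq:sts-pushout} is a cofibration, this strict pushout also computes the homotopy pushout. Since every object and map in sight is a $T$-$T$-space and the forgetful functor from $T$-$T$-spaces to spaces is conservative and preserves colimits, \eqref{eq:sts-pushout} is a (homotopy) pushout of $T$-$T$-spaces, as claimed. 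The main obstacle is the injectivity of $\mu_{iji}$ over $G_{i,j}\setminus G_i$ --- this is precisely where $G_i\cap G_j=T$ and unitarity are used, and it is the content of \cite[Lemma 5.7]{nituSBI}; the remaining steps are formal once the inclusions are known to be cofibrations.
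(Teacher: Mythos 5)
Your proof is correct, but note that it is genuinely different from what the paper does: the paper gives no argument at all for \cref{prop:sts-pushout} and simply quotes \cite[Lemma 5.7]{nituSBI} (with $K=\{s_i\}$), where the statement is established for general compact Lie groups and arbitrary parabolic subsets. Your argument is a self-contained, type-$A$-specific verification of \eqref{eq:sts-pushout}: you use $G_i\cap G_j=T$ to identify $\mu_{iji}^{-1}(G_i)$ with the image of $\BS_{\underline{ii}}$, prove injectivity of $\mu_{iji}$ over $G_{i,j}\setminus G_i$ by the $3\times 3$ unitary computation (a non-diagonal $2\times 2$ unitary block has both off-diagonal entries nonzero, which forces the off-diagonal blocks of $a$ and $c$ to vanish --- I checked the $(1,3)$ and $(3,1)$ entries of $ag_2=h_2c$ and this works), obtain surjectivity $G_iG_jG_i=G_{i,j}$ from the reduced-word fact (or directly: any two $2$-dimensional subspaces of $\mbbC^3$ meet nontrivially), and then pass from the strict pushout, identified via compactness and Hausdorffness, to the homotopy pushout because the top arrow is a cofibration; finally the reduction of the equivariant statement to underlying spaces via the colimit-preserving, conservative forgetful functor from $T$-$T$-spaces is exactly the form needed for the later application in \cref{prop:E-sts-pushout}. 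What the paper's citation buys is generality beyond $\U(n)$; what your route buys is that the note becomes self-contained in the only case it uses. Two points worth making explicit if this were written up: justify ``closed embedding, hence cofibration'' by observing it is a smooth closed embedding of compact manifolds (a tubular neighborhood exhibits an NDR pair), and state that ``pushout'' is meant as a homotopy pushout of (Borel) $T$-$T$-spaces, which your conservativity argument correctly supplies.
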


\subsection{Two fiber sequences}\label{subsec:two-fiber-sequences}

In this subsection we construct two fiber sequences that are crucial in constructing the braiding. 
To start, fix a simple transposition $s_i \in W = N_G(T)/T$. 
Pick a lift to $N_G(T)/T$, for which we abuse notation and also denote by $s_i$.\footnote{One such lift is the permutation matrix associated to $s$.}
Conjugation by $s_i$ on $T$ induces the Weyl group element's action on $T$: $$\sigma_i \colon T \to T, \quad t\mapsto \sigma_i(t) \coloneqq s_i\, t\, s_i^{-1}.$$
\begin{definition}
    Let $Ts_i$ be the $T-T$ space whose underlying space is $T$ and the $T-T$ action is given by 
    \begin{equation}
        (t_1, t_2) \cdot t \coloneqq t_1\, t \, (\sigma_i(t_2)) = t_1\, t\, (s_i\, t_2\, s_i^{-1}).
    \end{equation}
\end{definition}
Note that we conjugated the right $T$ action by $s_i$. 
\begin{definition}\label{def:m-and-ms}
Let $m_i \colon T \to G_i$ denote the standard $T-T$-equivariant inclusion. Similarly, let $ms_i \colon Ts_i \to G_i$ denote the map $t \mapsto t\,s$. It is straightforward to check that $ms_i$ is also $T-T$-equivariant.
\end{definition}
Let $H$ be a group and $X$ be a space with $H$ action. Furthermore, let $V$ be an $H$-equivariant vector bundle on $X$; then the Thom space $X^V$ is a $H$-equivariant based space. 
In our case, let $\Lambda_T \coloneqq \Hom(T, U(1))$ be the character lattice. Each character $\beta \in \Lambda_T$ defines a one-dimensional complex representation of $T$, i.e. a $T$-equivariant line bundle over the trivial $T$-space $* = T/T$. Equivalently, we get a $T-T$-equivariant line bundle $\cL_\beta$ on $T$. 
Applying the same argument to $Ts_i$, $\beta$ also defines a $T-T$-equivariant line bundle $\cL_\beta$ on $Ts_i$.
\begin{proposition}\label{prop:two-fiber-sequences}
    Let $\alpha_i$ be the root associated to the simple transposition $s_i$.\footnote{$\alpha_i$ has $1$ in the $i$-th row, $-1$ in the $(i+1)$-th row, and $0$ everywhere else.}
    As a $T-T$ based space, the cofiber of $ms_i$ is equivalent to the Thom space $T^{\cL_{\alpha_i}}$. Furthermore, let $\Delta_i$ denote the cofiber map $G_i \to T^{\cL_\alpha}$; then the composite 
    \begin{equation}
        T \xrightarrow{m_i} G_i \xrightarrow{\Delta_i} T^{\cL_{\alpha_i}}
    \end{equation}
    is the zero section map.

    Similarly, the cofiber of $m_i$ is equivalent to the Thom space $(Ts_i)^{\cL_{-\alpha_i}}$. Furthermore, let $\Deltas_i$ denote the cofiber map $G_i \to Ts_i^{\cL_{-\alpha_i}}$; then the composite 
    \begin{equation}
        Ts_i \xrightarrow{ms_i} G_i \xrightarrow{\Deltas_i} (Ts_i)^{\cL_{-\alpha_i}}
    \end{equation}
    is the zero section map.
    To summarize, we get a diagram:
\begin{equation}\label{eq:fancy-diagram}
    \begin{tikzcd}
        T \ar[rd, "m_i"]& & Ts_i \ar[ld, "ms_i"'] \\ 
        & G_i  \ar[ld, "\Delta_i"'] \ar[rd, "\Deltas_i"]& \\
        T^{\cL_{\alpha_i}} & & (Ts_i)^{\cL_{-\alpha_i}}
    \end{tikzcd}
\end{equation}
where the diagonal maps are cofiber sequences while the other two composites are zero section inclusions.
\end{proposition}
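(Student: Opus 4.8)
The plan is to compute the cofiber of $ms_i$ (and dually $m_i$) by an explicit $T$–$T$-equivariant deformation retraction of $G_i$ onto a tubular neighborhood of the image of $ms_i$, together with an identification of the normal bundle. First I would reduce to the rank-one local model: since $G_i = \U(1)^{i-1} \times \U(2) \times \U(1)^{n-i-1}$ and the $T$–$T$ action factors through the $\U(1)^2$-torus inside the $\U(2)$-factor (with the outer $\U(1)$'s acting trivially and contributing a free factor that splits off), it suffices to treat $G = \U(2)$, $T = \U(1)^2$, $s$ the nontrivial Weyl element, $\alpha$ the positive root. So the claim becomes: the cofiber of $ms \colon Ts \to \U(2)$ is the Thom space $T^{\cL_\alpha}$, and the two composites in \eqref{eq:fancy-diagram} are zero sections.

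Next I would exhibit the geometry of $\U(2)$ as a $T$–$T$-space. The double coset space $T \backslash \U(2) / T$ is an interval, with the two endpoints corresponding to the two Bruhat cells $T$ (the big cell's closure point, image of $m$) and $Ts$ (image of $ms$). Concretely, writing a matrix in $\U(2)$ in terms of its $(1,1)$-entry $z$ with $|z| \le 1$, the locus $|z| = 1$ is the $T$–$T$-orbit of $1$ (a copy of $T$, via $m$), and the locus $z = 0$ is the $T$–$T$-orbit of $s$ (a copy of $Ts$, via $ms$); the complement $0 < |z| < 1$ is a free $T$–$T$-orbit of an open interval. The open complement of the image of $ms$, i.e. $\{|z| > 0\}$, $T$–$T$-equivariantly deformation retracts onto the image of $m$ (push $|z| \to 1$), so the cofiber $\U(2) / \mathrm{im}(ms)$ is the Thom space of the $T$–$T$-equivariant normal bundle of $\mathrm{im}(ms) \cong Ts$ in $\U(2)$. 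That normal bundle is the line along which $z$ varies near $z = 0$; one reads off the $T$–$T$-weight of this complex line to be $\alpha_i$ (this is exactly the statement that the root $\alpha_i$ is the weight of the tangent space to the Schubert cell), giving cofiber $\simeq T^{\cL_{\alpha_i}}$. The dual statement for $m_i$ is obtained by interchanging the roles of the two endpoints, which swaps $T \leftrightarrow Ts_i$ and negates the weight, yielding $(Ts_i)^{\cL_{-\alpha_i}}$.

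Finally I would identify the composites. For $T \xrightarrow{m_i} G_i \xrightarrow{\Delta_i} T^{\cL_{\alpha_i}}$: the image of $m_i$ is the endpoint $|z| = 1$, which lies in the open set that retracts onto itself, and under the retraction-plus-collapse it maps to the zero section of the Thom space (the point $z = 0$ on each fiber is the basepoint/infinity; the retract image sits at the zero section). More carefully, $\Delta_i \circ m_i$ factors as $T \to (\text{zero section of } \cL_{\alpha_i} \text{ over } T) \hookrightarrow T^{\cL_{\alpha_i}}$, because $m_i$ lands entirely away from $\mathrm{im}(ms_i)$ and the Pontryagin–Thom collapse sends any such point to its image under the bundle projection, i.e. to the zero section. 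Equivariance of all maps involved is automatic from the constructions in \cref{def:m-and-ms}. Assembling the $\U(2)$ computation with the trivial outer factors recovers the general $G_i$ statement, and the two halves of \eqref{eq:fancy-diagram} fit together because they are the two boundary-collapse maps of the same interval-shaped double coset space.

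The main obstacle I expect is making the "tubular neighborhood / normal bundle has weight $\alpha_i$" step precise and genuinely $T$–$T$-equivariant, rather than just $T$-equivariant for one of the two torus actions: one must check that the chosen deformation retraction of $\{|z|>0\}$ onto $\{|z|=1\}$ and the chosen tubular-neighborhood parametrization can be taken simultaneously equivariant for both the left and the right $T$-action (which do not commute with the retraction naively, since the right action is twisted by $\sigma_i$ on the $Ts_i$ side). Getting the sign/twist bookkeeping right — that it is $\cL_{\alpha_i}$ over $T$ on one side and $\cL_{-\alpha_i}$ over $Ts_i$ on the other, with the $s_i$-conjugation in $Ts_i$ exactly accounting for the discrepancy — is the delicate part; everything else is a matter of unwinding definitions.
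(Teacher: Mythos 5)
Your overall strategy is the same as the paper's: decompose $G_i$ (equivalently $G_i/T \cong \mbbC P^1 \cong S^2$, with your $|z|$ playing the role of the height function on the two-hemisphere picture) into two $T-T$-equivariant tubular neighborhoods of the closed orbits $T$ and $Ts_i$, identify the cofiber with a Thom space, and read off the weight at a fixed point of $\mbbC P^1$. However, the key identification step is garbled in a way that matters. The cofiber of $ms_i$ is $G_i/\mathrm{im}(ms_i) \simeq G_i/\nu_\infty \cong \nu_0/\partial\nu_0$, where $\nu_\infty$ is the tubular neighborhood of $Ts_i$ and $\nu_0$ is the complementary closed tubular neighborhood of $T$; hence it is the Thom space of the normal bundle of the image of $m_i$, a bundle over $T$, whose fiber is the off-diagonal direction near $|z|=1$ and whose weight (in the paper's conventions, fixed by \eqref{eq:T-action-on-CP1}) is $\alpha_i$. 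You instead assert that it is ``the Thom space of the normal bundle of $\mathrm{im}(ms_i)\cong Ts_i$'' and compute ``the line along which $z$ varies near $z=0$'', i.e.\ the normal bundle of the \emph{other} stratum; that bundle is $\cL_{-\alpha_i}$ over $Ts_i$ and is the one relevant to the cofiber of $m_i$, not of $ms_i$. Your final formula $T^{\cL_{\alpha_i}}$ is correct, but it does not follow from the reasoning as written: the base of your Thom space ($T$) does not match the submanifold whose normal bundle you computed ($Ts_i$), and carried out literally the argument swaps (or sign-flips) the two identifications. This is not a harmless convention issue: the assignment $\alpha_i \leftrightarrow T$ and $-\alpha_i \leftrightarrow Ts_i$ is exactly what makes the composites $\Delta_i \circ m_i$ and $\Deltas_i\circ ms_i$ become multiplication by $x_i -_\FE x_{i+1}$ and $x_{i+1} -_\FE x_i$ respectively in \cref{prop:E-two-fiber-seq} and \cref{prop:E-coho-two-fiber-seq}, which the Demazure-operator computations downstream depend on. The sign/complex-structure bookkeeping you yourself flag as the delicate point is precisely where the slip occurs.

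Two smaller points. First, the inference ``the complement of $\mathrm{im}(ms_i)$ deformation retracts onto $\mathrm{im}(m_i)$, so the cofiber is a Thom space'' is incomplete: you need the complement of the \emph{open} tubular neighborhood of $Ts_i$ to be itself a disk bundle over $T$ with boundary the sphere bundle (this is what the hemisphere decomposition of $G_i/T \cong S^2$, pulled back along $G_i \to G_i/T$, provides), not merely a homotopy retraction. Second, the claim that the collapse map ``sends any such point to its image under the bundle projection'' is not what the collapse map does; the correct statement, which suffices for the zero-section claims, is that points of $T \subset \nu_0$ are sent to themselves under the identification $G_i/\nu_\infty \cong \nu_0/\partial\nu_0$, so $\Delta_i \circ m_i$ is the zero-section inclusion by construction. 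With the roles of the two normal bundles corrected and the disk-bundle structure of the complement made explicit, your argument becomes the paper's proof.
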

\begin{proof}
    We claim that 
    there are $T-T$-equivariant tubular neighborhoods $\nu_0$, $\nu_\infty$ of $T$ and $Ts$ (via inclusions $m_i$ and $ms_i$) in $G_i$ such that $v_0 \bigcup v_\infty = G_i$ and $v_0 \bigcap v_\infty = \partial \nu_0 = \partial \nu_\infty$.

    To construct them, we first quotient $G_i$ by the right $T$ action. Note that $G_i/T \simeq \mbbC P^1 = (\mbbC^2-\{0\})/\mbbC^\times$ with $T$ acting as
    \begin{equation}\label{eq:T-action-on-CP1}
        t \cdot [x_1 : x_2] \coloneqq [x_1 : \alpha_i(t)\, x_2] = [\alpha_i(t)^{-1} \,x_1 : x_2] = [(-\alpha_i)(t)\, x_1: x_2]
    \end{equation}
     Furthermore, the maps $m_i/T\colon T/T = \pt \to \mbbC P^1$ 
     and $ms_i/T \colon Ts_i/T = \pt \to \mbbC P^1$
     correspond to the inclusions of $T$-fixed points $[1 : 0]$ and $[0:1]$ respectively. We can  identify $\mbbC P^1$ with $S^2$ such that the two $T$-fixed points corresponds to the north and south poles. Furthermore, the northern and southern semispheres form $T$-equivariant tubular neighborhoods of the north and south poles. 
    Let $\nu_0,\, \nu_\infty$ be the preimages of the northern and southern semispheres along the quotient map $G_i \to G_i/T \simeq S^2$. It is clear that they are tubular neighborhoods of $T$ and $Ts_i$ satisfying the conditions above.

    It follows that the cofiber of $ms_i$ is $T/v_\infty \simeq v_0/\partial v_0 = T^{V}$ where $V$ is the normal bundle of $T$ in $G_i$. Now we need to identify $V$ with $\cL_{\alpha_i}$. Once again we quotient out the free right $T$ action; then $V$ corresponds to the $T$ representation that is the tangent space of the $T$-fixed point $[1:0]$ in $\mbbC P^1$. The tangent space at $[1:0]$ can be identified with the coordinate $x_2$, on which $T$ acts via the character $\alpha_i$ by \eqref{eq:T-action-on-CP1}. Hence $V$ is the line bundle $\cL_\alpha$.
    The same argument shows that the cofiber of $m_i$ is $(Ts_i)^{\cL_{-\alpha_i}}$. Note that the $-\alpha_i$ comes from the action of $T$ on the coordinate $x_1$ in \cref{eq:T-action-on-CP1}.
\end{proof}

\begin{remark}\label{rem:heegaard-splitting}
    \cref{prop:two-fiber-sequences} generalizes to general compact Lie groups. In particular, let $G = SU(2) \simeq S^3$, $T = \U(1)$, $W = S_n$, and $s_1 = (1,2)$. Then we can view $T$ and $Ts_i$ as two unknots that link each other in $S^3$. Furthermore, the decomposition of $S^3$ into two tubular neighborhood is precisely the standard genus one Heegaard splitting of $S^3$.
\end{remark}

Next we compose the two fiber sequences with $G_i$. 
Note that we have obvious $T-T$-equivariant isomorphisms:
\begin{equation}\label{eq:obvious-iso}
    \begin{tikzcd}
        G_i \times_T T \ar[r, "\sim"] &  G_i & \ar[l, "\sim"'] T \times_T G_i \\ 
        (g,t) \ar[r, mapsto] & g\,t & \\
        & t\,g & \ar[l, mapsto] (t, g).
    \end{tikzcd}
\end{equation}

As for $Ts_i$, we use the fact that $s_i$ is an element of $G_i$ to get isomorphisms:
\begin{equation}\label{eq:non-obvious-iso}
   \begin{tikzcd}
    G_i \times_T Ts_i \ar[r, "\sim"] &  G_i & \ar[l, "\sim"'] Ts_i \times_T G_i \\ 
    (g,t) \ar[r, mapsto] & g\,t\,s_i & \\
    & t\,s_i\,g & \ar[l, mapsto] (t, g).
   \end{tikzcd} 
\end{equation}
It is  straightforward to check that these are well-defined and are $T-T$-equivariant.

Fix two $T-T$ spaces $X_1, X_2$. Suppose that we have a $T-T$-equivariant vector bundle $V$ on $X_1$. Then we get a $T-T$-equivariant vector bundle $V \times_E X_2$ over $X_1 \times_T X_2$ whose total space is $E_V \times_T X_2$, where $E_V$ is the total space of $V$. 
\begin{corollary}\label{cor:four-fiber-sequences}
By applying $-\times_T G_i$ to \eqref{eq:fancy-diagram}, we get a diagram 
\begin{equation}\label{eq:fancy-diagram-times-Gs-1}
    \begin{tikzcd}
        G_i \ar[rd, "m_i \times_T G_i"]& & G_i \ar[ld, "ms_i \times_T G_i"'] \\ 
        & G_i \times_T G_i  \ar[ld, "\Delta_i \times_T G_i"'] \ar[rd, "\Deltas_i \times_T G_i"]& \\
        G_i^{\cL_\alpha \times_T G_i} & & (G_i)^{\cL_{-\alpha}' \times_T G_i}
    \end{tikzcd}
\end{equation}
where the diagonal maps are cofiber sequences and the other two composites are zero section inclusions.
Note that we twisted $ms_i \times_T G_i$ and $\Deltas_i \times_T G_i$ by the isomorphisms in \eqref{eq:non-obvious-iso}.

Similarly, by applying $G_i \times_T  -$ to \eqref{eq:fancy-diagram}, we get a diagram 
\begin{equation}\label{eq:fancy-diagram-times-Gs-2}
    \begin{tikzcd}
        G_i \ar[rd, "G_i \times_T  m_i "]& & G_i \ar[ld, "G_i \times_T  ms_i "'] \\ 
        & G_i \times_T G_i  \ar[ld, "G_i \times_T  \Delta_i"'] \ar[rd, "G_i \times_T  \Deltas_i"]& \\
        G_i^{G_i \times_T \cL_\alpha} & & (G_i)^{G_i \times_T \cL_{-\alpha}'}
    \end{tikzcd}
\end{equation}
where the diagonal maps are cofiber sequences and the other two composites are zero section inclusions.
Once again we twisted $G_i  \times_T ms_i$ and $G_i \times_T \Deltas_i$ by the isomorphisms in \eqref{eq:non-obvious-iso}.
\end{corollary}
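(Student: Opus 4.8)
The plan is to prove \cref{cor:four-fiber-sequences} by transporting the \emph{geometric} input of the proof of \cref{prop:two-fiber-sequences} along the operations $-\times_T G_i$ and $G_i\times_T-$, rather than by naively ``applying a functor to the diagram \eqref{eq:fancy-diagram}''. The point to watch is that $-\times_T G_i$ is \emph{not} exact: it does not even preserve the terminal object, since $\ast\times_T G_i\simeq G_i/T\simeq\mbbC P^1$, so it does not preserve cofiber sequences in general. What it \emph{does} preserve is homotopy pushouts (being a composite of $-\times G_i$ with a homotopy quotient) together with disk and sphere bundles of $T-T$-equivariant vector bundles (these being fiberwise data), and the cofiber sequences of \cref{prop:two-fiber-sequences} were extracted from precisely such data.

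Concretely, I would first recall from the proof of \cref{prop:two-fiber-sequences} the $T-T$-equivariant decomposition $G_i=\nu_0\cup\nu_\infty$ into closed tubular neighborhoods, where $\nu_0=D(\cL_{\alpha_i})$ has core $m_i\colon T\hookrightarrow G_i$, $\nu_\infty=D(\cL_{-\alpha_i})$ has core $ms_i\colon Ts_i\hookrightarrow G_i$, and $\nu_0\cap\nu_\infty=\partial\nu_0=\partial\nu_\infty$ is the common sphere bundle. Applying $-\times_T G_i$ then yields a homotopy pushout $G_i\times_T G_i=(\nu_0\times_T G_i)\cup(\nu_\infty\times_T G_i)$ with intersection $\partial\nu_0\times_T G_i$, in which $\nu_0\times_T G_i=D(\cL_{\alpha_i}\times_T G_i)$ is a tubular neighborhood of $T\times_T G_i$ and $\nu_\infty\times_T G_i=D(\cL_{-\alpha_i}\times_T G_i)$ is a tubular neighborhood of $Ts_i\times_T G_i$; using the identifications $T\times_T G_i\simeq G_i$ from \eqref{eq:obvious-iso} and $Ts_i\times_T G_i\simeq G_i$ from \eqref{eq:non-obvious-iso}, both cores are identified with $G_i$. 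Now I run the argument of \cref{prop:two-fiber-sequences} verbatim on this new decomposition: by excision applied to the pushout, the cofiber of $ms_i\times_T G_i$ is $(\nu_0\times_T G_i)/(\partial\nu_0\times_T G_i)=D(\cL_{\alpha_i}\times_T G_i)/S(\cL_{\alpha_i}\times_T G_i)=G_i^{\cL_\alpha\times_T G_i}$, and the cofiber of $m_i\times_T G_i$ is $(G_i)^{\cL_{-\alpha}'\times_T G_i}$ (the primes recording the twist \eqref{eq:non-obvious-iso}), giving the two cofiber sequences of \eqref{eq:fancy-diagram-times-Gs-1}. The remaining two composites are zero sections because a zero section is a geometric sub-bundle: $T=(\text{zero section of }\nu_0)\hookrightarrow\nu_0\hookrightarrow G_i\to(\text{cofiber})$, and $-\times_T G_i$ carries this to the zero section of $\cL_{\alpha_i}\times_T G_i$ over $T\times_T G_i$, and symmetrically for the other. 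Replacing $-\times_T G_i$ by $G_i\times_T-$ throughout gives \eqref{eq:fancy-diagram-times-Gs-2} by the identical argument.

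The one step that genuinely requires care — the main obstacle — is exactly the non-exactness flagged above: each cofiber must be computed \emph{intrinsically} from the preserved pushout square for $G_i\times_T G_i$ and then recognized as a Thom space via the disk-bundle-modulo-sphere-bundle description. (Feeding the Thom space $T^{\cL_{\alpha_i}}=G_i/\nu_\infty$ directly into $-\times_T G_i$ would instead produce $(G_i\times_T G_i)\sqcup_{\nu_\infty\times_T G_i}(G_i/T)$, which only maps to, and is not equivalent to, the correct cofiber $(G_i\times_T G_i)/(\nu_\infty\times_T G_i)$.) Once this subtlety is handled, everything else is a routine, formal transport of \cref{prop:two-fiber-sequences} and its proof.
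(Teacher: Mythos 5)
Your proposal is correct, and it supplies precisely the justification that the paper leaves implicit: the paper states \cref{cor:four-fiber-sequences} with no proof at all, treating it as immediate from ``applying $-\times_T G_i$'' to \eqref{eq:fancy-diagram}. Your extra care is well placed. Since $-\times_T G_i$ sends a point to $G_i/T\simeq\mbbC P^1$, the Thom spaces in \eqref{eq:fancy-diagram-times-Gs-1} and \eqref{eq:fancy-diagram-times-Gs-2} are genuinely \emph{not} the images of $T^{\cL_{\alpha_i}}$ and $(Ts_i)^{\cL_{-\alpha_i}}$ under the functor (your parenthetical identification of what that naive image would be is accurate); they are the recomputed cofibers, and the corollary's notation $\cL_{\alpha_i}\times_T G_i$, $G_i\times_T\cL_{\alpha_i}$ for the induced bundles already signals that this is the intended reading. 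Your route --- transport the $T-T$-equivariant decomposition $G_i=\nu_0\cup\nu_\infty$ from the proof of \cref{prop:two-fiber-sequences} along $-\times_T G_i$ (which preserves the pushout and carries disk and sphere bundles to those of the induced bundles), then rerun the excision argument to identify the cofibers of $m_i\times_T G_i$ and $ms_i\times_T G_i$ as Thom spaces and read off the zero-section composites, with the identifications \eqref{eq:obvious-iso} and \eqref{eq:non-obvious-iso} supplying the stated twists --- is exactly the argument the one-line derivation in the paper is standing on, and the symmetric case $G_i\times_T-$ goes through verbatim as you say. In short: same approach as the paper's implicit one, but with the key non-exactness subtlety made explicit and handled correctly; I see no gap.
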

\begin{observation}\label{obs:explicit-maps-Gi}
    Unpacking the construction, 
    $m_i \times_T G_i$ and 
    $ms_i \times_T G_i$ are given by 
    $g \mapsto (1, g)$ and 
    $g \mapsto (s_i, s_i^{-1}\,g)$ respectively.
    Similarly, 
    $G_i \times_T m_i$ and 
    $G_i \times_T ms_i$ are given by 
    $g \mapsto (g,1)$ and
    $g \mapsto (gs_i^{-1}, \, s_i)$ respectively.
\end{observation}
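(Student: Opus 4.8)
The plan is to prove \cref{obs:explicit-maps-Gi} by directly unwinding the definitions of the functorial operations $-\times_T G_i$ and $G_i\times_T-$ applied to \eqref{eq:fancy-diagram} (as in \cref{cor:four-fiber-sequences}), together with the identifications \eqref{eq:obvious-iso} and \eqref{eq:non-obvious-iso} used to rewrite the sources as $G_i$. First I would record the inverses of the four isomorphisms in question: the inverse of $G_i\times_T T\xrightarrow{\sim}G_i$, $(g,t)\mapsto gt$, is $g\mapsto[(g,1)]$; the inverse of $T\times_T G_i\xrightarrow{\sim}G_i$, $(t,g)\mapsto tg$, is $g\mapsto[(1,g)]$; and, using $s_i\in G_i$, the inverse of $G_i\times_T Ts_i\xrightarrow{\sim}G_i$, $(g,t)\mapsto g\,t\,s_i$, is $g\mapsto[(g\,s_i^{-1},1)]$, while the inverse of $Ts_i\times_T G_i\xrightarrow{\sim}G_i$, $(t,g)\mapsto t\,s_i\,g$, is $g\mapsto[(1,\,s_i^{-1}\,g)]$.

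Next, since $m_i\colon T\to G_i$ is the inclusion $t\mapsto t$ and $ms_i\colon Ts_i\to G_i$ is $t\mapsto t\,s_i$ (\cref{def:m-and-ms}), the functorial maps send a class $[(x,g)]$ to $[(m_i(x),g)]$, resp. $[(ms_i(x),g)]$, and symmetrically for $G_i\times_T-$. Precomposing with the inverse isomorphisms above then produces the four stated formulas; for instance
\[
G_i\ \xrightarrow{\ g\mapsto[(1,\,s_i^{-1}g)]\ }\ Ts_i\times_T G_i\ \xrightarrow{\ ms_i\times_T G_i\ }\ G_i\times_T G_i
\]
sends $g\mapsto[(1\cdot s_i,\ s_i^{-1}g)]=[(s_i,\ s_i^{-1}g)]$, which is the claim for $ms_i\times_T G_i$, and the other three maps are obtained identically. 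Along the way I would check that each displayed assignment descends to the relevant $\times_T$-quotient — this is exactly where $s_i\in G_i$ enters, moving the conjugated right $T$-action across — and note that $T$-$T$-equivariance is automatic since the isomorphisms of \eqref{eq:non-obvious-iso} are $T$-$T$-equivariant.

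I do not expect a genuine obstacle: the statement is pure bookkeeping. The one place to be attentive is consistently tracking the $s_i$-twist — remembering that the $Ts_i$-slots are identified via the ``non-obvious'' maps of \eqref{eq:non-obvious-iso} rather than the naive ones, and keeping the conjugation $\sigma_i(t)=s_it s_i^{-1}$ on the correct side — but the equivariance constraint pins these conventions down uniquely, so any momentary side- or inverse-convention slip is self-correcting.
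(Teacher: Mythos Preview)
Your proposal is correct and is precisely the ``unpacking the construction'' that the paper alludes to: the Observation is stated without proof in the paper, and your argument supplies exactly the details---inverting the identifications \eqref{eq:obvious-iso} and \eqref{eq:non-obvious-iso} and composing with $m_i$, $ms_i$---that the phrase is meant to indicate. There is nothing to add.
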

Let us end this section with an easy yet important observation:
\begin{observation}\label{obs:mult-splitting}
    \cref{obs:explicit-maps-Gi} implies that the multiplication map $$\mu_{i} \colon G_i \times_T G_i \to G_i, \quad  (g_1, g_2) \mapsto g_1\, g_2$$ is a $T-T$-equivariant retraction of all four maps 
    $m_i \times_T G_i$, 
    $ms_i \times_T G_i$,
    $G_i \times_T m_i$, and
    $G_i \times_T ms_i$.
\end{observation}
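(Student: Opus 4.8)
The plan is to compose $\mu_i$ with each of the four inclusion maps and verify directly that the result is $\id_{G_i}$, reading off the formulas for these maps from \cref{obs:explicit-maps-Gi}.

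First I would record that $\mu_i$ is well-defined on the balanced product and $T-T$-equivariant. Well-definedness holds because group multiplication is invariant under the diagonal $T$-action used to form $G_i \times_T G_i$: one has $\mu(g_1 t,\, t^{-1} g_2) = g_1 t t^{-1} g_2 = g_1 g_2$. Equivariance for the outer $T-T$-action is immediate from associativity in $G_i$, since the left $T$ acts on $g_1$, the right $T$ acts on $g_2$, and $(t_1 g_1)(g_2 t_2) = t_1 (g_1 g_2) t_2$.

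Then, substituting the formulas from \cref{obs:explicit-maps-Gi} and simplifying in the group $G_i$ (using $s_i s_i^{-1} = 1$), one computes
\begin{equation}
    \mu_i(1, g) = g, \qquad \mu_i(s_i, s_i^{-1} g) = g, \qquad \mu_i(g, 1) = g, \qquad \mu_i(g s_i^{-1}, s_i) = g,
\end{equation}
so that $\mu_i \circ (m_i \times_T G_i)$, $\mu_i \circ (ms_i \times_T G_i)$, $\mu_i \circ (G_i \times_T m_i)$, and $\mu_i \circ (G_i \times_T ms_i)$ all equal $\id_{G_i}$. Since every map in sight is $T-T$-equivariant, this exhibits $\mu_i$ as a common $T-T$-equivariant retraction of the four maps, as claimed.

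The verification is routine; the only point that requires attention is that the twists by the isomorphisms \eqref{eq:non-obvious-iso} have already been folded into the formulas of \cref{obs:explicit-maps-Gi}, so no further bookkeeping with those identifications is needed. In that sense there is no genuine obstacle here — the observation is a direct consequence of the explicit descriptions, and it is recorded because it is exactly what underlies the spectral splitting $\BS_{\underline{ii}} \simeq \BS_i \oplus \BS_i(2)$ used later in the paper.
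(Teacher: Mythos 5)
Your proposal is correct and matches the paper's (implicit) argument: the observation is justified exactly by plugging the explicit formulas of \cref{obs:explicit-maps-Gi} into $\mu_i$ and noting each composite is $\id_{G_i}$, with $T$-$T$-equivariance of $\mu_i$ being immediate. Your extra remarks on well-definedness on the balanced product and on the twists from \eqref{eq:non-obvious-iso} being already absorbed into \cref{obs:explicit-maps-Gi} are accurate but add nothing beyond what the paper intends.
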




\section{Complex oriented Bott-Samelson bimodules}\label{sec:complex-oriented-BS}
In this section we define Bott-Samelson bimodules valued in complex $\EE_\infty$-oriented ring spectra and study their splittings.
We start by reviewing the basics of formal group laws.

\subsection{Basics of formal group laws}\label{subsec:basics-of-FGL}
Let $S$ be a commutative ring. 
Recall that a (one-dimensional) formal group law $\cF$ on  $S$ is a power series $\cF \in S[[x, y]]$ satisfying
\begin{enumerate}
    \item $\cF(x,y) = \cF(y,x)$,
    \item $\cF(0,x) = \cF(x,0)$,
    \item $\cF(x, \cF(y,z)) = \cF(\cF(x,y),z)$.
\end{enumerate}
We are interested in the graded setting, where $S$ is a $\mbbZ$-graded commutative ring, $x$ and $y$ are in degree $2$, and $\cF(x,y)$ is a homogenous degree $2$ power series.
Given a formal group law, there exists a unique inverse $\iota x \in S[[x]]$ such that $\cF(x, \iota x) = 0$. We will write $x +_\cF y$ for $\cF(x,y)$ and $x -_\cF y$ for $\cF(x, \iota y)$. 
\begin{example}[Additive formal group law]\label{ex:additive-formal-group-law}
    Let $S$ be a commutative ring, which we view as a graded commutative ring concentrated in degree $0$.
    The \emph{additive} formal group law is 
    \begin{equation}
        x +_\cF y \coloneqq x + y, \, \iota x = -x.
    \end{equation}
\end{example}
\begin{example}[Multiplicative formal group law]\label{ex:multiplicative-formal-group-law}
    Let $S= \mbbZ[\beta]$ with $\beta$ in degree $-2$. The \emph{multiplicative} formal group law is defined as
    \begin{equation}
        x +_{\cF} y \coloneqq x + y - \beta\, x\,y, \, \iota x =
        -\sum_{i \geq 0}\,\beta^i\, x^{i+1}.
    \end{equation}
\end{example}
\begin{example}[Lazard ring]\label{ex:universal-formal-group-law}
    There exists a graded commutative ring $L$, called the \emph{Lazard ring}, together with a formal group law $\cF$ on $L$ such that 
    for any graded commutative ring $R$, formal group laws on $R$ are in one-to-one correspondence with ring homomorphisms from $L$ to $R$. Categorically, $L$ is the initial object in the category of formal group laws. Lazard \cite{lazard} showed that $L$ is a free polynomial algebra $\mathbb{Z}[x_i]$ on infinitely many generators,
    where $i > 0$ and $x_i$ in degree $-2i$.
\end{example}
See \cite[Examples 3.3-3.6]{khovdef} for more examples.

We need the following lemma from \cite[Claim 3.8]{khovdef}:
\begin{lemma}\label{lem:unique-g}
   There exists a unique invertible element $g(x,y) \in S[[x,y]]$ such that 
   \begin{equation}
    x-y = (x-_\cF y) \, g(x,y).
   \end{equation}
\end{lemma}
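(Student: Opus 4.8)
The plan is to reduce the statement to the single assertion that $x -_\cF y$ factors in $S[[x,y]]$ as $(x-y)$ times an invertible power series, the key technical tool being the change of variables $u = x - y$, under which $S[[x,y]] \cong S[[u,y]]$ (with $u, y$ again of degree $2$).

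\textbf{Existence.} First I would observe that $x -_\cF y$ vanishes on the diagonal: by definition of the formal inverse $\iota$ we have $\cF(x,\iota x) = 0$, so substituting $x = y$ into $x -_\cF y = \cF(x, \iota y)$ gives $\cF(y, \iota y) = 0$. Concretely, under the quotient map $S[[x,y]] \to S[[x,y]]/(x-y) \cong S[[y]]$ sending $x, y \mapsto y$ — whose kernel is exactly the principal ideal $(x-y)$, as one checks after the substitution $u = x-y$ (the map becomes $S[[u,y]] \to S[[y]]$, $u \mapsto 0$) — the element $x -_\cF y$ goes to $0$. Hence there is a $h(x,y) \in S[[x,y]]$, homogeneous of degree $0$, with $x -_\cF y = (x-y)\,h(x,y)$. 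Next, writing $\cF(x,z) = x + z + (\text{order} \geq 2)$ and $\iota y = -y + (\text{order} \geq 2)$, the power series $x -_\cF y$ has linear part $x - y$; comparing with $(x-y)\,h(x,y) = (x-y)\,h(0,0) + (\text{order}\geq 2)$ forces $h(0,0) = 1$. A power series over $S$ with constant term the unit $1$ is invertible in $S[[x,y]]$, with inverse again homogeneous of degree $0$. Setting $g(x,y) \coloneqq h(x,y)^{-1}$ then gives $x - y = (x -_\cF y)\,g(x,y)$ with $g$ invertible.

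\textbf{Uniqueness.} If $x - y = (x -_\cF y)\,g = (x -_\cF y)\,g'$, then $(x -_\cF y)(g - g') = 0$; since $x -_\cF y = (x-y)\,h$ with $h$ a unit, it is enough to know that $x-y$ is a non-zero-divisor in $S[[x,y]]$, which is immediate after expanding elements as $\sum_{k \geq 0} a_k(y)(x-y)^k$ with $a_k \in S[[y]]$. Hence $g = g'$.

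The only genuinely substantive ingredient here is the power-series factor theorem over the graded ring $S$ — that any element killed by $x \mapsto y$ is divisible by $x-y$ — and this is handled cleanly by the coordinate change $u = x-y$; the remaining steps are bookkeeping with leading terms and the (automatically consistent) grading.
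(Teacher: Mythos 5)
Your proof is correct. There is nothing in the paper to compare it against: the lemma is not proved there but simply quoted from \cite[Claim 3.8]{khovdef}, and your write-up supplies (what is surely) the standard argument behind that citation. The two substantive points are exactly the ones you isolate: (i) $x -_\cF y$ lies in the kernel of the evaluation $S[[x,y]] \to S[[y]]$, $x \mapsto y$, and that kernel is the principal ideal $(x-y)$, seen cleanly via the substitution $u = x - y$; hence $x -_\cF y = (x-y)\,h(x,y)$; and (ii) comparing linear terms forces $h(0,0) = 1$, so $h$ is a unit and $g \coloneqq h^{-1}$ works, while uniqueness follows since $x - y$ is a non-zero-divisor in $S[[x,y]]$. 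Two minor remarks. First, your identification of the linear part of $x -_\cF y$ with $x - y$ uses the unit axiom $\cF(x,0) = x$; the paper's axiom (2) as printed ($\cF(0,x) = \cF(x,0)$) is evidently a typo for this, so this is not a gap on your side, but it is worth flagging which axiom is actually being invoked. Second, the homogeneity of $h$ (degree $0$), which you assert in passing, does follow — decompose $h$ into homogeneous pieces and use that $x-y$ is a homogeneous non-zero-divisor — but it is not needed for the statement as given. One could also note that invertibility of $g$ is automatic rather than an extra requirement: any solution of $x - y = (x -_\cF y)\,g$ satisfies $(x-y)(1 - h\,g) = 0$, hence $h\,g = 1$, so uniqueness among all elements and invertibility come for free from your factorization $x -_\cF y = (x-y)\,h$.
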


Fix the pair $(S, \cF)$. Let $A$ be an abelian group, and $S[[A]]$ be the formal power series generated by  variables $x_\mu, \mu \in A$, each in degree two.  
We denote by $S[[A]]_\cF$ the quotient 
\begin{equation}
    S[[A]]/(x_0, x_{\mu + \nu} - (x_\mu +_F x_{\nu})).
\end{equation}
When $A \simeq \mbbZ^n$ is a finite rank lattice with basis $\mu_1, \cdots, \mu_n$, then we have 
\begin{equation}\label{eq:SA-and-power-series}
    S[[A]]_{\cF} \simeq S[[x_1, \cdots, x_n]]
\end{equation}
with $x_i$ corresponding to $x_{\mu_i}$.

The construction $S[[-]]_{\cF}$ defines a functor from the category of abelian groups to $S$-algebras. In particular, if there is a group $W$ that acts on the abelian group $A$, then $S[[A]]_{\cF}$ inherits a $W$ action by permuting the variables $x_{\mu}$.
Let $\Lambda = \mbbZ^n$, which we view as the root lattice of $\U(n)$. Using the standard basis, we can write $S[[\Lambda]]_{\cF}$ as $S[[x_1, \cdots, x_n]]$.
The Weyl group $W = S_n$ acts on $S[[\Lambda]]_{\cF}$ by permuting the variables $x_i$. For any simple transposition $s_i$, let 
\begin{equation}\label{eq:s_i-fixed-points}
    S[[\Lambda]]_{\cF}^{s_i} \simeq S[[x_1, \cdots, x_n]]^{s_i} =
S[[x_1, \cdots, x_{i-1}, x_{i}+ x_{i+1}, x_{i} x_{i+1}, x_{i+2},\cdots, x_{n}]
\end{equation}
 denote the subring of elements that are fixed by $s_i$. 
Here's an elementary but useful observation:
\begin{observation}\label{obs:S-free-rank-2}
    $S[[\Lambda]]_{\cF}$ is a free rank two $S[[\Lambda]]_{\cF}^{s_i}$-module with basis $1$ and $x_i$.
\end{observation}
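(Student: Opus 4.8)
The plan is to avoid invoking the classical theory of symmetric functions and instead produce the splitting directly via a Demazure-type divided difference operator. Write $R \coloneqq S[[\Lambda]]_{\cF} \simeq S[[x_1,\dots,x_n]]$ and $B \coloneqq S[[x_j : j \neq i, i+1]]$, so that $R = B[[x_i, x_{i+1}]]$ and $s_i$ acts by swapping $x_i \leftrightarrow x_{i+1}$ while fixing $B$. I will work with $R^{s_i} = S[[\Lambda]]_{\cF}^{s_i}$ in its description as the fixed subring $\{f \in R : s_i f = f\}$. The two things to check are that the $R^{s_i}$-linear map $R^{s_i} \oplus R^{s_i} \to R$, $(a,b) \mapsto a + b\,x_i$, is both injective and surjective.

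First I would record the key divisibility fact: the kernel of the $B$-algebra map $\psi \colon R = B[[x_i,x_{i+1}]] \to B[[x]]$ sending $x_i, x_{i+1} \mapsto x$ is the principal ideal $(x_i - x_{i+1})$, generated by a non-zero-divisor. This is immediate after the change of variables $w = x_i - x_{i+1}$, $v = x_{i+1}$, under which $R = B[[w,v]]$ and $\psi$ becomes the projection $B[[w,v]] \to B[[w,v]]/(w) \simeq B[[v]]$; that $w$ is a non-zero-divisor is clear by expanding in powers of $w$. Given $f \in R$, the element $h \coloneqq f - s_i(f)$ satisfies $\psi(h) = \psi(f) - \psi(s_i f) = 0$, since $\psi$ coequalizes $\id$ and $s_i$; hence $h = (x_i - x_{i+1})\,\partial_i(f)$ for a unique $\partial_i(f) \in R$. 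Because $s_i$ negates both $h$ and $x_i - x_{i+1}$, we get $s_i(\partial_i f) = \partial_i f$, i.e. $\partial_i(f) \in R^{s_i}$.

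For surjectivity, set $g \coloneqq f - x_i\,\partial_i(f)$. Then $g - s_i(g) = (f - s_i f) - (x_i - x_{i+1})\,\partial_i(f) = 0$ using that $\partial_i(f)$ is $s_i$-fixed, so $g \in R^{s_i}$, and $f = g + \partial_i(f)\,x_i$ exhibits $f$ as an $R^{s_i}$-combination of $1$ and $x_i$. For injectivity, suppose $a + b\,x_i = 0$ with $a, b \in R^{s_i}$; applying $s_i$ gives $a + b\,x_{i+1} = 0$, and subtracting yields $b\,(x_i - x_{i+1}) = 0$, whence $b = 0$ (non-zero-divisor) and then $a = 0$. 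This shows $\{1, x_i\}$ is a free $R^{s_i}$-basis of $R$.

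I do not anticipate a serious obstacle here; the only point requiring care is the exactness of division by $x_i - x_{i+1}$, which is why I isolate the computation of $\ker \psi$ and the non-zero-divisor property at the outset — this is precisely what replaces, in the completed and non-reduced setting, the usual appeal to "$R$ is a domain" or to a Gröbner/symmetric-function argument. If one prefers to phrase things via \eqref{eq:s_i-fixed-points} rather than the fixed subring, one can alternatively verify directly that $g$ and $\partial_i(f)$ lie in $S[[\dots, x_i + x_{i+1}, x_i x_{i+1}, \dots]]$, but the argument above makes this unnecessary.
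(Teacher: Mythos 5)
Your proof is correct: the reduction to $R = B[[x_i,x_{i+1}]]$, the identification of $\ker\psi$ with the principal ideal on the non-zero-divisor $x_i - x_{i+1}$ via the change of variables $w = x_i - x_{i+1}$, $v = x_{i+1}$, the resulting well-defined divided difference $\partial_i$ landing in the fixed subring, and the decomposition $f = (f - x_i\,\partial_i f) + (\partial_i f)\,x_i$ together with the cancellation argument for injectivity all check out, and no hypotheses on $S$ beyond commutativity are used. The paper, by contrast, offers no argument at all: it treats the observation as classical, implicitly resting on the identification \eqref{eq:s_i-fixed-points} of the fixed ring with the power series ring on $x_i + x_{i+1}$ and $x_i x_{i+1}$ and the standard freeness of $S[[x,y]]$ over symmetric power series. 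So your route is genuinely different and more self-contained; note also that it quietly re-proves, in the additive form, the divisibility statement the paper imports from the literature to define the Demazure operators in \cref{def:Demazure} (your $\partial_i$ differs from the paper's by the unit $g(x_i,x_{i+1})$ of \cref{lem:unique-g}, so the two are interchangeable), and your surjectivity identity is exactly the Demazure-type splitting that reappears later in \cref{prop:demazure-prop} and \cref{prop:E-BS_s-splitting-1}. What the paper's (implicit) appeal to symmetric functions buys is brevity and the explicit presentation of the fixed ring; what your argument buys is independence from that presentation, a proof that works verbatim in the completed, graded, non-reduced setting, and a conceptual link to the operators actually used in the rest of the paper. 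The only cosmetic suggestion is to remark explicitly that your $\partial_i$ is the additive divided difference rather than the formal-group-law one of \cref{def:Demazure}, to avoid a clash of notation.
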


Next we move on to Demazure (divided difference) operators. 
Note that simple roots of $\U(n)$ together with the vector $(1, 0, \cdots, 0)$ form a basis of $\Lambda$. 
It follows from  \cref{eq:SA-and-power-series} that the element $x_\alpha \in S[[\Lambda]]_{\cF} = S[[x_1, \cdots, x_n]]$ is regular for any simple root $\alpha$.\footnote{For more general root datum and $\alpha$ a simple long root, the element $x_{\alpha}$ may not be regular. See the discussion in \cite[\S 4]{CZZ}.}
By \cite[Lemma 2.3]{li2020equivariant}, for any simple transposition $s_i$, $\alpha_i$ the corresponding simple root, and any $r \in S[[\Lambda]]_{\cF}$, the power series $r - s_i(r)$ is divisible by $x_{\alpha_i} = x_{i} -_\cF x_{i+1}$ and $x_{-\alpha_i} = x_{i+1} -_\cF x_i$.
\begin{definition}\label{def:Demazure}
    Let $s_i$ be a simple transposition and $\alpha_i$ be the corresponding simple root. The Demazure operator is
    \begin{equation}\label{eq:Demazure}
        \partial_i \colon S[[\Lambda]]_{\cF} \to S[[\Lambda]]_{\cF}, \quad
        f \mapsto \frac{f - s_i(f)}{x_{\alpha_i}} =
        \frac{f - s_i(f)}{x_i -_\cF x_{i+1}} .
    \end{equation}
    We also have a variant 
\begin{equation}
    \partial'_i \colon  S[[\Lambda]]_{\cF} \to S[[\Lambda]]_{\cF}, \quad
    f \mapsto \frac{s_i(f) - f}{x_{-\alpha_i}} = 
    \frac{s_i(f) - f}{x_{i+1} -_\cF x_i}.
\end{equation}
\end{definition}

The following proposition follows from direct computation:
\begin{proposition}\label{prop:demazure-prop}
    The following holds:
    \begin{enumerate}
        \item $\partial_i r = 0 = \partial'_i r$ for any $r \in S[[\Lambda]]_{\cF}^{s_i}$.
        \item $\partial_i (r_1\, r_2) = (\partial_i r_1)\, r_2 + s_i(r_1) \,(\partial_i(r_2))$ and 
        $\partial'_i (r_1\, r_2) = (\partial'_i r_1)\, s_i(r_2) + r_1 \,(\partial'_i(r_2))$. 
        \item $\partial_i$ and $\partial'_i$ are $S[[\Lambda]]_{\cF}^{s_i}$-linear.
        \item $\partial_i x_i = g(x_i, x_{i+1}) = -\partial_i x_{i+1}$, where $g$ is defined in \cref{lem:unique-g}. Analogously, $\partial_i x_i = g(x_{i+1}, x_{i}) = -\partial_i x_{i+1}$.
    \end{enumerate}
\end{proposition}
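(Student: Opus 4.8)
The plan is to verify all four items by direct computation, treating $\partial_i$ and $\partial_i'$ symmetrically, so that I only need to do the work once for $\partial_i$ and then record the parallel statement for $\partial_i'$. Throughout I work in $S[[\Lambda]]_{\cF} \simeq S[[x_1,\dots,x_n]]$, where $s_i$ swaps $x_i$ and $x_{i+1}$ and fixes the other variables, and I abbreviate $\alpha = \alpha_i$, so $x_\alpha = x_i -_\cF x_{i+1}$ and $x_{-\alpha} = x_{i+1} -_\cF x_i$; recall from \cref{lem:unique-g} (applied with the substitution $(x,y) = (x_i, x_{i+1})$ and then $(x_{i+1}, x_i)$) that $x_i - x_{i+1} = x_\alpha\, g(x_i, x_{i+1})$ with $g$ invertible. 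First, for item (1): if $r \in S[[\Lambda]]_{\cF}^{s_i}$ then $s_i(r) = r$, so the numerator $r - s_i(r)$ vanishes, and since $x_\alpha$ is regular (noted before \cref{def:Demazure}) the quotient is $0$; the same argument gives $\partial_i' r = 0$.

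For item (3), $S[[\Lambda]]_{\cF}^{s_i}$-linearity is immediate from item (1) together with item (2): if $a \in S[[\Lambda]]_{\cF}^{s_i}$ then $\partial_i(a r) = (\partial_i a) r + s_i(a)(\partial_i r) = 0 + a\, \partial_i r$. So (3) reduces to (2). For item (2), the twisted Leibniz rule, I would just expand: $\partial_i(r_1 r_2) = \frac{r_1 r_2 - s_i(r_1) s_i(r_2)}{x_\alpha}$, and insert the telescoping term $\pm s_i(r_1) r_2$ in the numerator, writing it as $\frac{(r_1 - s_i(r_1)) r_2 + s_i(r_1)(r_2 - s_i(r_2))}{x_\alpha} = (\partial_i r_1) r_2 + s_i(r_1)(\partial_i r_2)$. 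The computation for $\partial_i'$ is the same with the telescoping term chosen as $\pm r_1 s_i(r_2)$ instead, which is why the $s_i$ lands on the other factor. For item (4), I compute directly: $\partial_i x_i = \frac{x_i - x_{i+1}}{x_i -_\cF x_{i+1}} = \frac{x_i - x_{i+1}}{x_\alpha} = g(x_i, x_{i+1})$ by \cref{lem:unique-g}; since $s_i(x_i) = x_{i+1}$ and $x_i + x_{i+1}$ is $s_i$-fixed, item (1) gives $\partial_i(x_i + x_{i+1}) = 0$, hence $\partial_i x_{i+1} = -g(x_i, x_{i+1})$. For $\partial_i'$ the analogous identity uses $x_{-\alpha} = x_{i+1} -_\cF x_i$ and the version of \cref{lem:unique-g} with the arguments swapped, giving $\partial_i' x_i = \frac{x_{i+1} - x_i}{x_{i+1} -_\cF x_i} \cdot(-1)$... — here I would be careful with signs, recomputing $\partial_i' x_i = \frac{s_i(x_i) - x_i}{x_{-\alpha}} = \frac{x_{i+1} - x_i}{x_{i+1} -_\cF x_i} = g(x_{i+1}, x_i)$, and then $\partial_i' x_{i+1} = -g(x_{i+1}, x_i)$ by the same $s_i$-invariance argument. (I note in passing that the statement of item (4) as printed has a small typo — the second sentence repeats $\partial_i$ where $\partial_i'$ is meant — and I would state the corrected version.)

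The only place where anything nontrivial is happening is the appeal to \cref{lem:unique-g}: everything rests on the fact that $x_i - x_{i+1}$ is divisible by $x_\alpha = x_i -_\cF x_{i+1}$ with an \emph{invertible} quotient $g$, and that $x_\alpha$ is a regular element so that division is unambiguous. Once those two facts are in hand — and both are quoted from earlier in the text — items (1)--(4) are genuinely just algebra in a power series ring, with no convergence or grading subtleties beyond checking that $g(x,y)$ has degree $0$ (so $\partial_i$ lowers degree by $2$, consistent with $x_\alpha$ having degree $2$). So I do not anticipate a real obstacle; the main thing to be careful about is bookkeeping of the $s_i$'s and of the sign conventions distinguishing $x_\alpha$ from $x_{-\alpha}$ in the $\partial_i$ versus $\partial_i'$ computations.
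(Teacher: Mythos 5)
Your proposal is correct and matches the paper's approach: the paper offers no written argument beyond asserting that the proposition "follows from direct computation," and your computations (telescoping for the twisted Leibniz rules, deducing (3) from (1)+(2), and applying \cref{lem:unique-g} with regularity of $x_{\alpha_i}$ for (4)) are exactly the intended direct verification. Your observation that the second sentence of item (4) should read $\partial'_i$ rather than $\partial_i$ is also right.
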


\begin{remark}
    By \cref{obs:S-free-rank-2} and \cref{prop:demazure-prop}(3), $\partial_i$ and $\partial'_i$ are determined as  $S[[\Lambda]]_{\cF}^s$-linear maps by sending $1$ to $0$ and $x_i$ to the invertible elements $g(x_i, x_{i+1})$ and $g(x_{i+1}, x_i)$ respectively. 
\end{remark}

\subsection{Complex oriented ring spectra and cohomology of classifying spaces}\label{subsec:complex-orieted-ring-spectra}
Throughout the rest of the section we fix $E$ an $\EE_\infty$-ring spectrum 
with a complex $\EE_\infty$-orientation, that is, an $\EE_\infty$-ring map $f_E \colon \MU \to E$. We take this to be our notion of complex oriented ring spectra.\footnote{See \cite{hopkins2018strictly} for discussion of notions of complex orientation.}
In this subsection we review the relation between of complex oriented ring spectra and formal group laws, as well as the complex oriented cohomology of $B\U(n)$. 

Let $X$ be a space. We denote by $E(X) \coloneqq \Hom(\Sigma^{\infty}_+ X, E)$ the mapping spectrum whose homotopy groups are the $E$-cohomology $E^*(X)$ of $X$. If $X$ is a based space, we denote by $\tilde{E}(X) \coloneqq \Hom(\Sigma^{\infty}X, E)$ the mapping spectrum whose homotopy groups are the reduced $E$-cohomology $\tilde{E}^*(X)$ of $X$. Note that $E(X) \simeq \tilde{E}(X_+)$. We also denote the graded commutative ring $\pi_{-*}(E) = E^*(\pt)$ simply as $E^*$.
In particular, we use the cohomological grading. Lastly, let $X$ be a spectrum and $k \in \mbbZ$; we write $\Sigma^{-k}X$ as $X(k)$.\footnote{We use $(n)$ rather than $\Sigma$ because this suspension corresponds to the grading shift in Soergel bimodule. See \cref{rem:suspensions}.}

The orientation $f_E$ provides a nice theory of $E$-valued Chern classes for complex vector bundles. 
In particular, $E^*(B\U(1)) \simeq E^*[[x]]$, where the generator $x \in E^2(B\U(1))$ is the first Chern class $c_1(\cL)$ of the tautological line bundle $\cL$ on $B\U(1)$.
Furthermore, by the well-known result of Quillen \cite{MR0253350}, the tensor products of line bundles induce a formal group law $\cF_E$ on $E^*$:
consider the map $B\U(1) \times B\U(1) \to B\U(1)$ given by tensoring of line bundles. 
We can define $\cF_E$ as the image of $x$ under pullback on $E$-cohomology: 
\begin{equation}\label{eq:define-cF_E}
  E^*[[x]] \simeq  E^*(B\U(1)) \to E^*(B\U(1) \times B\U(1)) \simeq E^*[[x, y]], \quad x \mapsto \cF_E(x,y).
\end{equation}
We will often denote $\cF_E$ as $\cF$ when it is clear from context.
\begin{example}[Ordinary commutative ring]\label{ex:HS-additive}
    Let $S$ be a commutative ring. The associated Eilenberg-MacLane spectrum $HS$ is an $\EE_\infty$-ring spectrum with a canonical complex $\EE_\infty$-orientation $\MU \to \tau_{\leq 1} \MU \simeq H\mbbZ \to HS$. 
    The associated formal group law is the additive formal group law described in \cref{ex:additive-formal-group-law}. 
\end{example}
\begin{example}[K-theory]\label{ex:K-multiplicative}
    Let $ku$ be the connective complex $K$-theory spectrum. It has a canonical complex $\EE_\infty$-orientation $\MU \to ku$ and the associated formal group law is the multiplicative formal group law described in \cref{ex:multiplicative-formal-group-law}.
\end{example}
\begin{example}[$\MU$]\label{ex:MU-Lazard}
    The universal complex oriented ring spectrum is $\MU$ with the orientation $\id \colon \MU \to \MU$. 
    By a result of Milnor \cite{MR0119209}, $\MU^*$ is isomorphic to the Lazard ring $L$ described in \cref{ex:universal-formal-group-law}. Furthermore, in \cite{MR0253350} Quillen showed that the associated formal group law is indeed the universal formal group law on $L$.
\end{example}

Now we review the $E$-cohomology of classifying spaces of tori and $\U(n)$. Let $T$ be a torus and $\Lambda_T = \Hom(T, \U(1))$ be its character lattice. For $\alpha \in \Lambda_T$, let $\cL_\alpha$ be the associated complex line bundle on $BT$.
\begin{proposition}\label{prop:E-coho-on-torus}
    There exists a functorial equivalence
    \begin{equation}
        E^*(BT) \simeq E^*[[\Lambda]]_{\cF_E}
    \end{equation}
    where $c_1(\cL_\alpha)$ is taken to $x_\alpha$ for $\alpha \in \Lambda$.
\end{proposition}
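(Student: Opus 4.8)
The plan is to reduce everything to the rank-one case $T = \U(1)$, where the statement is exactly Quillen's definition of the formal group law attached to the complex orientation $f_E$, and then extend to general tori by the multiplicativity of $E$-cohomology on products together with the Künneth/collapse behavior of complex-oriented theories on $B\U(1)$. First I would recall that for $T = \U(1)$ the orientation $f_E \colon \MU \to E$ supplies a class $x = c_1(\cL) \in E^2(B\U(1))$ such that $E^*(B\U(1)) \simeq E^*[[x]]$; this is the standard consequence of complex orientability applied to the cell structure of $\mathbb{CP}^\infty$ and the Atiyah--Hirzebruch spectral sequence (or equivalently the Milnor sequence computing $E^*(\mathrm{colim}\, \mathbb{CP}^m)$), and the identification $x \leftrightarrow x_\alpha$ for $\alpha$ the generator of $\Lambda_{\U(1)} = \mbbZ$ is a matter of definition. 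The tensor-product map $B\U(1) \times B\U(1) \to B\U(1)$ then gives, by \eqref{eq:define-cF_E}, the formal group law $\cF_E$, and this is precisely the statement that $E^*(B\U(1)) \simeq E^*[[\Lambda_{\U(1)}]]_{\cF_E}$ with the claimed identification of Chern classes: the relations $x_0 = 0$ and $x_{\mu+\nu} = x_\mu +_{\cF_E} x_\nu$ are exactly the normalization $c_1(\cO) = 0$ and the Whitney/tensor formula $c_1(\cL_\mu \otimes \cL_\nu) = \cF_E(c_1(\cL_\mu), c_1(\cL_\nu))$.

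Next, for a general torus $T$, choose an isomorphism $T \simeq \U(1)^n$, so that $BT \simeq (B\U(1))^{\times n}$ and $\Lambda_T \simeq \mbbZ^n$ with standard basis $\mu_1, \dots, \mu_n$. Since $E^*(B\U(1)) = E^*[[x]]$ is a (pro-)free $E^*$-module, the external product induces $E^*(BT) \simeq E^*[[x_1, \dots, x_n]]$ with $x_i = c_1(\cL_{\mu_i})$ (this is the complex-oriented Künneth theorem; concretely one iterates the rank-one computation, using that each factor contributes a polynomial/power-series ring and that there are no $\mathrm{Tor}$ or $\lim^1$ obstructions because everything in sight is free). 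Comparing with \eqref{eq:SA-and-power-series}, which identifies $E^*[[\Lambda_T]]_{\cF_E}$ with $E^*[[x_1, \dots, x_n]]$ via $x_i \mapsto x_{\mu_i}$, gives the desired equivalence on the nose once we check that an arbitrary $x_\alpha = c_1(\cL_\alpha)$ for $\alpha = \sum a_i \mu_i$ is sent to the element obtained by iterated $+_{\cF_E}$ of the $x_i$; but this follows from the rank-one case applied to the classifying map of $\cL_\alpha$, i.e. from $\cL_\alpha \simeq \bigotimes_i \cL_{\mu_i}^{\otimes a_i}$ and the tensor formula, together with the formula for $\iota x$ handling negative $a_i$.

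The remaining point is \textbf{functoriality}: a homomorphism of tori $\phi \colon T \to T'$ induces $B\phi \colon BT \to BT'$ and a dual map $\phi^* \colon \Lambda_{T'} \to \Lambda_T$, and I must check that the square relating $E^*(B\phi)$ to $E^*[[\phi^*]]_{\cF_E}$ commutes. Since both sides are $E^*$-algebra maps and the source is (topologically) generated by the classes $x_\alpha$, it suffices to check $E^*(B\phi)(x_{\alpha'}) = x_{\phi^*(\alpha')}$ for $\alpha' \in \Lambda_{T'}$; this is just naturality of first Chern classes, because $B\phi^* \cL_{\alpha'} \simeq \cL_{\phi^*(\alpha')}$ by definition of the pairing between characters and the classifying space. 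I expect the main obstacle to be purely bookkeeping rather than conceptual: namely, being careful that the ``equivalence'' is claimed at the level of mapping spectra / $\EE_\infty$- or at least $\EE_1$-$E$-algebras (not merely graded rings), so one should phrase the rank-one identification as an equivalence $E(B\U(1)) \simeq E[[x]]$ of $E$-algebra spectra — which holds because $B\U(1)$ is a filtered colimit of finite complexes with free, evenly-graded $E$-cohomology, so the Atiyah--Hirzebruch / cellular filtration splits and no higher coherence obstructions arise — and then note that external product of such algebra equivalences is again one. Granting that, the identification of the formal group law and the functoriality are formal consequences of the definitions recalled in \cref{subsec:complex-orieted-ring-spectra}.
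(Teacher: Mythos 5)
Your proof is correct, and it is the standard argument; the paper itself gives no proof of this proposition, stating it as a recalled fact from the theory of complex-oriented cohomology (alongside \cref{prop:E-coho-on-parabolics} and \cref{prop:kunneth-theorem}), so there is no in-paper argument to compare against beyond the definitions in \cref{subsec:complex-orieted-ring-spectra}. Your reduction to the rank-one case (complex orientability plus the skeletal filtration of $\mathbb{CP}^\infty$ with vanishing $\lim^1$), the identification of the relations in $E^*[[\Lambda]]_{\cF_E}$ with $c_1(\cO)=0$ and the tensor formula $c_1(\cL_\mu \otimes \cL_\nu) = \cF_E(c_1(\cL_\mu), c_1(\cL_\nu))$ via \eqref{eq:define-cF_E} and \eqref{eq:SA-and-power-series}, and the verification of functoriality on the topological generators $x_\alpha$ by naturality of first Chern classes are all as they should be. One point worth being careful about, which you implicitly handle correctly: the passage to a general torus should be done by iterating the rank-one computation (i.e.\ $E^*(X \times B\U(1)) \simeq E^*(X)[[x]]$ for such $X$), since the uncompleted tensor product $E^*[[x]] \otimes_{E^*} E^*[[y]]$ is not literally $E^*[[x,y]]$; your parenthetical about iterating the computation, rather than the bare external-product phrasing, is the correct formulation. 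Your final remark about spectrum-level enhancements is not needed for the statement as given, which concerns only the graded rings $E^*(BT)$.
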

Now for the cohomology of $B\U(n)$ and more generally parabolic subgroups of $\U(n)$:
\begin{proposition}\label{prop:E-coho-on-parabolics}
    Fix $n \in \mathbb{N}$. Let $W' \subset W=S_n$ be a parabolic subgroup and $G_{W'} \subset G = \U(n)$ be the corresponding standard parabolic.
    The inclusion $T = \U(1)^n \to G_{W'}$ induces a map $E^*(BG_{W'}) \to E^*(BT)$. This map is injective and induces an isomorphism:
    \begin{equation}
        E^*(BG_{W'}) \simeq E^*(BT)^{W'}  \simeq E^*[[\Lambda]]_{\cF_E}^{W'}.
    \end{equation}
\end{proposition}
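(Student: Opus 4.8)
The plan is to reduce to the case of a single unitary group and then combine the splitting principle for complex-oriented cohomology with a symmetric-function computation. First I would recall that the standard parabolic subgroups of $\U(n)$ containing $T$ are exactly the block-diagonal subgroups $G_{W'} \cong \U(n_1) \times \cdots \times \U(n_k)$ indexed by compositions $n = n_1 + \cdots + n_k$, and that under the correspondence recalled in the footnotes $W' \cong S_{n_1} \times \cdots \times S_{n_k} \subset S_n$ is the associated Young subgroup, acting on $\Lambda = \mbbZ^n$ by permuting coordinates inside each block. Correspondingly $T$ and $\Lambda$ split as products $T = T_1 \times \cdots \times T_k$, $\Lambda = \Lambda_1 \oplus \cdots \oplus \Lambda_k$, and the restriction $E^*(BG_{W'}) \to E^*(BT)$ is the (completed) tensor product over $E^*$ of the restrictions $E^*(B\U(n_l)) \to E^*(BT_l)$. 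Since each $B\U(n_l)$ has a CW structure with cells only in even degrees, $E^*(BG_{W'}) \cong E^*(B\U(n_1)) \,\widehat{\otimes}_{E^*}\, \cdots \,\widehat{\otimes}_{E^*}\, E^*(B\U(n_k))$ — the Atiyah--Hirzebruch/K\"unneth spectral sequence collapsing for degree reasons, exactly as in the identification $E^*(B\U(1) \times B\U(1)) \cong E^*[[x,y]]$ used in \eqref{eq:define-cF_E} — compatibly with the splitting of $E^*(BT) \cong E^*[[\Lambda]]_{\cF_E}$ from \cref{prop:E-coho-on-torus}. As $S_{n_l}$ acts only on the $l$-th tensor factor, the statement for $G_{W'}$ follows by taking completed tensor products from the statement for each $\U(n_l)$, so it suffices to treat $G = \U(m)$, $W = S_m$.

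For $G = \U(m)$, the orientation $f_E$ supplies $E$-theoretic Chern classes and the splitting principle, yielding $E^*(B\U(m)) \cong E^*[[c_1, \dots, c_m]]$ with the $c_i$ the universal Chern classes, under which the restriction $E^*(B\U(m)) \to E^*(BT) \cong E^*[[x_1, \dots, x_m]]$ sends $c_i$ to the elementary symmetric polynomial $e_i(x_1, \dots, x_m)$; equivalently one runs the Atiyah--Hirzebruch spectral sequence directly, the Chern classes being permanent cycles that generate. In particular the image lies in the symmetric power series $E^*[[x_1, \dots, x_m]]^{S_m} = E^*(BT)^W$ — $W$-invariance of the image also being visible from the homotopy $W$-invariance of $BT \to B\U(m)$, conjugation by $N_G(T)$ acting trivially on $B\U(m)$. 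To finish I would check this map is injective with full image. Injectivity I would get from the flag bundle $\U(m)/T \to BT \to B\U(m)$: the fiber is an iterated projective bundle, so Leray--Hirsch exhibits $E^*(BT)$ as a free $E^*(B\U(m))$-module of rank $m!$, whence the restriction is split injective. For surjectivity onto the invariants I would prove that every symmetric power series in $x_1, \dots, x_m$ over the graded ring $E^*$ is a power series in $e_1, \dots, e_m$ — the completed-power-series analogue of the fundamental theorem of symmetric functions — by the usual induction on the lexicographically leading monomial, carried out one internal-degree-and-filtration level at a time so that the construction converges in $E^*[[x_1, \dots, x_m]]$. Assembling these facts and invoking \cref{prop:E-coho-on-torus} once more gives the claimed isomorphisms $E^*(BG_{W'}) \cong E^*(BT)^{W'} \cong E^*[[\Lambda]]_{\cF_E}^{W'}$.

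I expect the main obstacle to be bookkeeping rather than anything conceptual: checking that the K\"unneth comparison for a product parabolic really is an isomorphism onto the completed tensor product and is compatible with the block-wise $W'$-action and with taking invariants, and running the symmetric-functions argument convergently in the completed graded setting rather than the polynomial one. A secondary point requiring care is the standard $\lim^1$/Milnor-sequence handling needed to apply the splitting principle and Leray--Hirsch to the infinite-dimensional spaces $B\U(m)$ and $BT$.
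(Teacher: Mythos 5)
The paper itself gives no proof of this proposition: it is quoted as a standard consequence of complex orientability (the classical Adams/Conner--Floyd computation of $E^*(B\U(n))$ and its parabolic variants), on the same footing as \cref{prop:E-coho-on-torus} and \cref{prop:kunneth-theorem}. Your argument is exactly the standard proof and is essentially correct: block decomposition of standard parabolics with $W'$ the Young subgroup, K\"unneth reduction to a single $\U(m)$, Chern classes restricting to elementary symmetric power series, Leray--Hirsch on the flag bundle $\U(m)/T \to BT \to B\U(m)$ for split injectivity, and the completed fundamental theorem of symmetric functions (degree-by-degree, which does converge since expressing the polynomial-degree-$k$ piece in the $e_i$ only involves $e$-monomials of total weight $k$) for surjectivity onto invariants. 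Two points deserve tightening. First, the collapse of the Atiyah--Hirzebruch/K\"unneth spectral sequence for $B\U(n_1)\times\cdots\times B\U(n_k)$ is not forced by ``degree reasons'' alone when $\pi_*E$ has odd-degree elements: evenness of the cells kills only the odd differentials, not the even ones. The correct mechanism is the one you invoke later for $\U(m)$ -- the orientation makes the Chern classes permanent cycles, equivalently $E \wedge \Sigma^\infty_+ B\U(m)$ splits as a wedge of even suspensions of $E$ -- and it is this freeness that also yields the K\"unneth isomorphism onto the completed tensor product (this is presumably how the paper intends \cref{prop:kunneth-theorem} as well). Second, in the reduction to one block you should either verify that taking $\prod_l S_{n_l}$-invariants commutes with the completed tensor product (true here because each factor is pro-free with an invariant-stable topological basis of monomials), or simply bypass the reduction for that step by running your leading-monomial argument directly on $E^*[[x_1,\dots,x_n]]$ for the Young subgroup, producing $E^*[[e^{(1)}_\bullet,\dots,e^{(k)}_\bullet]]$ in one stroke. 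With these adjustments your proof is complete and is the argument the paper implicitly relies on.
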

\begin{example}\label{ex:E-coho-on-BUn}
    It follows that $E^*(B\U(n)) \simeq E^*[[c_1, \cdots, c_n]] \subset E^*[[x_1, \cdots, x_n]] \simeq E^*(BT)$, where $c_i \in E^{2i}(B\U(n))$ is the $i$-th symmetric polynomial in the variables $x_1 \cdots, x_n$. Furthermore, $c_i$ is indeed the $i$-th Chern class of the tautological vector bundle over $B\U(n)$.
\end{example}
\begin{example}\label{ex:E-of-BGi}
    Fix $1 \leq i \leq n-1$. \cref{prop:E-coho-on-parabolics} implies that 
    $$E^*(BG_i) \simeq E^*[[\Gamma]]^{s_i}_{\cF} = E^*[[x_1, \cdots, x_{i-1}, x_{i}+ x_{i+1}, x_{i} x_{i+1}, x_{i+2},\cdots, x_{n}].$$ 
    In particular, by \cref{obs:S-free-rank-2}, $E^*(BT)$ is a rank two free module over $E^*(BG_i)$ with basis $1$ and $x_i$.
\end{example}
We also get the Kunneth isomorphism:
\begin{proposition}\label{prop:kunneth-theorem}
    The Kunneth map 
    \begin{equation}
 E^*(B\U(n_1)) \otimes_{E^*} \cdots \otimes_{E^*} E^*(B\U(n_k))
\to
E^*(B\U(n_1) \times \cdots B\U(n_k))
    \end{equation}
    is an isomorphism.
\end{proposition}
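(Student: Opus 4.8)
The statement to prove is \cref{prop:kunneth-theorem}, the Künneth isomorphism for $E$-cohomology of products of $B\U(n_i)$'s.

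\medskip

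The plan is to reduce everything to the single fact that $E^*(B\U(n))$ is a \emph{free} $E^*$-module, together with the standard convergence properties of the Künneth (Atiyah--Hirzebruch / universal coefficient) spectral sequence for a generalized cohomology theory.

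First I would recall, via \cref{prop:E-coho-on-parabolics} and \cref{ex:E-coho-on-BUn}, that $E^*(B\U(n)) \simeq E^*[[c_1,\dots,c_n]]$ with $c_i$ in degree $2i$. The crucial structural point is that this is a \emph{completed free} $E^*$-module: in each cohomological degree it is the product of copies of (shifts of) $E^*$ indexed by the monomials of that degree, and there are finitely many such monomials in each degree since each $c_i$ has positive even degree. Concretely, writing $B\U(n) = \operatorname{colim}_N \mathrm{Gr}_n(\mbbC^N)$, each finite Grassmannian has $E^*(\mathrm{Gr}_n(\mbbC^N))$ a finitely generated free $E^*$-module (by the cell structure of the Grassmannian together with the complex orientation, which makes the Atiyah--Hirzebruch spectral sequence degenerate; this is where $f_E$ is used), and $E^*(B\U(n)) = \lim_N E^*(\mathrm{Gr}_n(\mbbC^N))$ with the inverse system of free modules having surjective (split) transition maps.

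Next I would invoke the Künneth spectral sequence for $E$: for spaces $X, Y$ there is a spectral sequence
\begin{equation}
    \mathrm{Tor}^{E^*}_{p,q}\bigl(E^*(X), E^*(Y)\bigr) \Rightarrow E^*(X \times Y).
\end{equation}
When $E^*(X)$ is a flat (in particular free) $E^*$-module, the higher $\mathrm{Tor}$ terms vanish and the edge map $E^*(X) \otimes_{E^*} E^*(Y) \to E^*(X\times Y)$ is an isomorphism. Since $\mathrm{Gr}_n(\mbbC^N)$ has free $E^*$-cohomology, this gives the Künneth isomorphism for finite products of Grassmannians immediately, and then by induction on $k$ for products of arbitrarily many of them. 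The remaining work is to pass to the colimit in each factor: $E^*(B\U(n_1)\times\cdots\times B\U(n_k))$ is a limit over the multi-index $(N_1,\dots,N_k)$ of $E^*(\mathrm{Gr}_{n_1}(\mbbC^{N_1})\times\cdots)$, and one checks that $\lim$ commutes with the relevant tensor products here because the inverse systems are Mittag--Leffler (the transition maps are split surjections of free modules) and because, degree by degree, everything is a finite tensor product of modules that are finitely generated and free — so the completed tensor product $E^*(B\U(n_1))\,\widehat\otimes_{E^*}\cdots\,\widehat\otimes_{E^*}E^*(B\U(n_k))$, which on the power-series models is just $E^*[[c^{(1)}_\bullet,\dots,c^{(k)}_\bullet]]$, agrees with $\lim_{N_\bullet}$ of the finite tensor products, hence with $E^*$ of the product. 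Finally, I would note that since each $E^*(B\U(n_i))$ is (degreewise) free, the ordinary algebraic tensor product $\otimes_{E^*}$ already equals the completed tensor product in each cohomological degree — there are only finitely many monomials in any fixed degree — so the statement as written, with the plain Künneth map, holds.

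\medskip

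The main obstacle is the interchange of limits with tensor products: generalized cohomology does not send infinite products/colimits to the naive algebraic operations, so one must be careful that no $\lim^1$ obstruction appears and that the completed and uncompleted tensor products coincide in each degree. The Mittag--Leffler property of the Grassmannian inverse systems (split surjectivity of restriction maps, which again uses the free $E^*$-module structure) is what kills the $\lim^1$ term, and the fact that each $c_i$ lies in strictly positive degree is what makes each graded piece a finite tensor product of finite free modules, so that completion is invisible degreewise. Modulo these standard facts about complex oriented theories and the Künneth spectral sequence, the proof is a formal bookkeeping argument; alternatively one may simply cite \cite[\S{}2]{nituSBI} or the analogous statements in the complex oriented cohomology literature, of which this is a routine case.
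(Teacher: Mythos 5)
The paper records this proposition without proof, as part of its review of standard facts about complex-oriented cohomology of classifying spaces, so there is no argument of the paper's to compare yours against; the question is only whether your argument is sound. Most of it is: the reduction to finite Grassmannians, degeneration of the Atiyah--Hirzebruch spectral sequence via the orientation, the K\"unneth argument for finite complexes with free $E$-cohomology, and the Mittag--Leffler passage to the limit correctly identify $E^*(B\U(n_1)\times\cdots\times B\U(n_k))$ with the \emph{completed} tensor product, i.e.\ with $E^*[[c^{(1)}_\bullet,\dots,c^{(k)}_\bullet]]$.

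The gap is your last step, where you assert that the completed and uncompleted tensor products agree in each cohomological degree because ``there are only finitely many monomials in any fixed degree.'' That finiteness holds only when $E^*$ is concentrated in degree $0$ (e.g.\ $E=HS$ as in \cref{ex:HS-additive}). In general an element of degree $d$ in $E^*[[c_1,\dots,c_n]]$ is a sum $\sum_I a_I c^I$ with $a_I\in E^{d-2|I|}$, and for the paper's main examples the coefficients are unbounded in negative cohomological degrees ($ku^*=\mathbb{Z}[v]$ with $v\in ku^{-2}$, $\MU^*\cong L$), so infinitely many monomials contribute in every single degree; degreewise $E^*(B\U(n))$ is an infinite product of shifted copies of $E^*$, not a finite one. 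Concretely, the uncompleted map fails to be surjective already for $E=ku$, $n_1=n_2=1$: the degree-zero class $\sum_{i\ge 0} v^{2i}x^iy^i\in ku^0(B\U(1)^2)\cong(\mathbb{Z}[v][[x,y]])^0$ cannot be written as a finite sum $\sum_{k=1}^N f_k(x)g_k(y)$, since specializing $v\mapsto 1$ over $\mathbb{Q}$ would express the infinite identity matrix as a sum of $N$ rank-one matrices (for $\MU$ the formal group law itself is the classical element of this kind, which is exactly why the subject works with completed tensor products). So your argument proves the completed K\"unneth isomorphism, but the bridge you build back to the plain tensor product is false; to get a correct statement one must either read $\otimes_{E^*}$ as the graded completed tensor product or restrict to $E$ with $\pi_*E$ concentrated in degree $0$, and your write-up should flag this rather than claim the two agree.
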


Lastly we review of orientations and Thom isomorphisms for complex vector bundles. We refer the reader to \cite{MR3252967, MR3286898} for more details.
As $\MU$ has the universal orientation for complex vector bundles, the map $f_E \colon \MU \to E$ induces an $E$-orientation for complex vector bundles. 
Orientations give rise to Thom isomorphisms:
\begin{proposition}\label{prop:E-thom-class}
   Let $X$ be a space and $V$ a rank $n$ complex vector bundle on $X$. We have  an equivalence of $E(X)$-module spectra 
   \begin{equation}
    \label{eq:thom-iso}
    \tilde{E}(X^V) \simeq  E(X)(2n).
   \end{equation}
   On cohomology, the unit $1 \in E^0(X)$ corresponds to the Thom class $\th(V) \in E^{2n}(X^V)$. Furthermore, the zero section inclusion induces a map of $E(X)$-module spectra
   \begin{equation}
    E(X)(2n) \simeq \tilde{E}(X^V) \to E(X).
   \end{equation}
  On cohomology this takes $1$ to the top Chern class $c_n(V) \in E^{2n}(X)$. 
\end{proposition}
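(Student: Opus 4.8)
The plan is to extract the Thom class directly from the $\EE_\infty$-orientation $f_E$, upgrade cup product with it to a map of $E(X)$-modules, show this map is an equivalence by a colimit/descent argument that collapses the statement to the trivial bundle over a point, and finally identify the zero-section composite with the top Chern class via the splitting principle. For the construction: recall $\MU \simeq \operatorname{colim}_n \Sigma^{-2n}\Sigma^\infty\big(B\U(n)^{\gamma_n}\big)$, so there is a canonical stabilization map $\sigma_n \colon \Sigma^\infty\big(B\U(n)^{\gamma_n}\big) \to \Sigma^{2n}\MU$; composing with $f_E$ produces the universal Thom class $\th(\gamma_n) \coloneqq (\Sigma^{2n}f_E)\circ\sigma_n \in \tilde E^{2n}\big(B\U(n)^{\gamma_n}\big)$. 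Given a rank $n$ complex bundle $V$ on $X$ classified by $c\colon X\to B\U(n)$, the Thomified map $\mathrm{Th}(c)\colon \Sigma^\infty X^V \to \Sigma^\infty\big(B\U(n)^{\gamma_n}\big)$ lets us set $\th(V)\coloneqq \mathrm{Th}(c)^*\th(\gamma_n)\in\tilde E^{2n}(X^V)$, manifestly natural in $(X,V)$. Over a point $V$ is trivial, $X^V\simeq S^{2n}$, and since $f_E$ is a unital ring map $\th(V)$ is carried to the unit, hence generates $\tilde E^{2n}(S^{2n})\simeq\pi_0 E$.

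Next, the Thom diagonal $X^V\to X_+\wedge X^V$ exhibits $\Sigma^\infty X^V$ as a comodule over the (cocommutative) coalgebra $\Sigma^\infty_+X$, so $\tilde E(X^V)=\Hom(\Sigma^\infty X^V,E)$ is a module over the $\EE_\infty$-ring $E(X)=\Hom(\Sigma^\infty_+X,E)$, and cup product with $\th(V)$ assembles into an $E(X)$-module map
\[
\Phi_V\colon\ E(X)(2n)=E(X)\otimes\Sigma^{-2n}\mathbb{S}\ \xrightarrow{\,\mathrm{id}\otimes\th(V)\,}\ E(X)\otimes\tilde E(X^V)\ \xrightarrow{\,\mathrm{act}\,}\ \tilde E(X^V),
\]
which by construction carries $1\in E^0(X)$ to $\th(V)$ (this is the first displayed claim of the proposition). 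To see $\Phi_V$ is an equivalence, regard $X\mapsto E(X)(2n)$ and $(c\colon X\to B\U(n))\mapsto\tilde E(X^{c^*\gamma_n})$ as functors $\cS_{/B\U(n)}^{\op}\to\Sp$ and $\Phi$ as a natural transformation. Both functors send colimits to limits: the first because $\Sigma^\infty_+(-)$ preserves colimits and $\Hom(-,E)$ converts them to limits; the second because the Thom-spectrum functor $(X\to B\U(n))\mapsto\Sigma^\infty X^{c^*\gamma_n}$ preserves colimits (it is a base change), again postcomposed with $\Hom(-,E)$. Since $\cS_{/B\U(n)}$ is generated under colimits by the points $\pt\to B\U(n)$, it suffices to check $\Phi$ is an equivalence there, which is precisely the computation over a point above. (Alternatively one runs a Mayer--Vietoris/skeletal induction, or cites \cite{MR3252967,MR3286898}.)

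For the last assertion, the zero section gives a based map $X_+\to X^V$, hence an $E(X)$-linear map $z^*\colon\tilde E(X^V)\to E(X)$, and $z^*\circ\Phi_V$ sends $1$ to $z^*\th(V)$; we must identify this with $c_n(V)$. By naturality it suffices to treat $V=\gamma_n$ over $B\U(n)$, where $z^*\th(\gamma_n)=e(\gamma_n)\in E^{2n}(B\U(n))\simeq E^*[[c_1,\dots,c_n]]$. Restricting along $BT\hookrightarrow B\U(n)$, which is injective on $E$-cohomology by \cref{prop:E-coho-on-parabolics}, and using $\gamma_n|_{BT}\simeq\bigoplus_{i=1}^n\cL_i$, multiplicativity of Euler classes under direct sums (from $X^{V\oplus W}\simeq X^V\wedge_X X^W$ with $\th(V\oplus W)=\th(V)\smile\th(W)$), and $e(\cL_i)=c_1(\cL_i)=x_i$ (the defining property of $c_1$ relative to the orientation; cf.\ \cref{prop:E-coho-on-torus}), we get $e(\gamma_n)|_{BT}=\prod_{i=1}^n x_i$, which equals $c_n|_{BT}$ by \cref{ex:E-coho-on-BUn}. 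Injectivity then yields $e(\gamma_n)=c_n$.

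I expect the descent step to be the main obstacle: one must recognize the Thom-spectrum functor on $\cS_{/B\U(n)}$ as colimit-preserving and invoke that $\cS_{/B\U(n)}$ is generated under colimits by its points, so that the whole proposition reduces to the trivial bundle over $\pt$. Once that is in hand, the construction of $\th(V)$, the module structure, and the $c_n$ identification are all formal; the only remaining mild subtlety is the bookkeeping of the Thom-diagonal and zero-section compatibilities (e.g.\ that $z^*$ is $E(X)$-linear and that the multiplicativity of $\th$ is compatible with the splitting $\gamma_n|_{BT}\simeq\bigoplus\cL_i$).
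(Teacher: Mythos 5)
Your proposal is correct, but it is worth noting that the paper does not actually prove this proposition: it is stated as a review of standard facts, with the proof delegated to the cited references \cite{MR3252967, MR3286898} (the Ando--Blumberg--Gepner--Hopkins--Rezk treatment of orientations and Thom spectra). Your argument is essentially a self-contained reconstruction of what those references provide: you build the universal Thom class from $f_E$ and the presentation $\MU \simeq \operatorname{colim}_n \Sigma^{-2n} M\U(n)$, upgrade cup product with $\th(V)$ to an $E(X)$-module map via the Thom diagonal, and prove it is an equivalence by observing that both $X \mapsto E(X)(2n)$ and $(X \to B\U(n)) \mapsto \tilde E(X^{c^*\gamma_n})$ carry colimits in $\cS_{/B\U(n)}$ to limits and that $\cS_{/B\U(n)}$ is generated under colimits by points, where the statement is the suspension of the unit; the identification $z^*\th(\gamma_n) = c_n$ via restriction to $BT$, multiplicativity of Thom classes, and injectivity from \cref{prop:E-coho-on-parabolics} is also the standard splitting-principle argument and is consistent with the paper's \cref{ex:E-coho-on-BUn}. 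Two points deserve slightly more care than your sketch gives them: (i) the descent step requires $\Phi$ to be a \emph{coherently} natural transformation of functors $\cS_{/B\U(n)}^{\op} \to \Spectra$, which is most cleanly arranged by defining everything at the universal level over $B\U(n)$ and pulling back (or by invoking the ABGHR $E$-module Thom equivalence $E \otimes \Sigma^\infty X^V \simeq \Sigma^{2n} E \otimes \Sigma^\infty_+ X$ directly), rather than asserting naturality objectwise; and (ii) you check the equivalence on underlying spectra, which suffices for an equivalence of $E(X)$-modules because the forgetful functor is conservative --- worth saying explicitly. The parenthetical that the Thom spectrum functor preserves colimits ``because it is a base change'' is loose; the correct justification is its description as a colimit/left Kan extension in the sense of the cited references. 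With those caveats your route is sound and, in substance, the same proof the paper implicitly relies on by citation.
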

Moving forward, we will always use the Thom isomorphism \eqref{eq:thom-iso} to identify $\tilde{E}(X^V)$ with $E(X)(2n)$.

\subsection{$E$-valued Bott-Samelson bimodules}\label{subsec:E-valued-BS}
In this subsection we define $E$-valued Bott-Samelson bimodules.
Given $H$ a group and $X$ a $H$-space, $E(X/H)$ is naturally an $\EE_\infty$-algebra over $E(BH)$ via pulling back along the map $X/H \to \pt/H = BH$. Note that here $X/H$ is the homotopy (stacky) quotient. 
In our case, if $X$ is a $H-H$-space, then $E(H \backslash X/H)$ is an $E(BH)-E(BH)$-bimodule spectrum.\footnote{Please note that by $E(BH)-E(BH)$-bimodule, we are referring to an $E(BH)-E(BH)$-bimodule $E$-module spectrum, meaning the left and right $E$ action is identified. We will use this simplified terminology in this section.}

Throughout the rest of the section we fix $n \in \mbbN$,  $G = \U(n)$, and $T = \U(1)^n$. 
\begin{definition}\label{def:E-Bott-Samelson}
    Fix  $\underbf{i} = (i_1, \cdots, i_m)$. The $E$-valued Bott-Samelson bimodule associated to $\underbf{i}$ is 
   \begin{equation}
    \EBS_{\underbf{i}} \coloneqq E(T\backslash \BS_{\underbf{i}}/T),
   \end{equation}
   which we view as an $E(BT)-E(BT)$-bimodule. We denote $\EBS_{\underline{i}}$ by $\EBS_{i}$.
\end{definition}
We will also denote the cohomology groups $E^*(T\backslash \BS_{\underbf{i}}/T)$ as $\EBS^*_{\underbf{i}}$, and $E^*(T\backslash \BS_i /T)$ as $\EBS^*_i$.
\begin{observation}\label{obs:tensor-over-EBT}
    Let $- \otimes_{E(BT)} -$ denote tensoring over $E(BT)$; then we have an equivalence 
    \begin{equation}
        \EBS_{\underbf{i}} \simeq \EBS_{i_1} \otimes_{E(BT)} \EBS_{i_2} \cdots  \otimes_{E(BT)} \EBS_{i_m}.
    \end{equation}
\end{observation}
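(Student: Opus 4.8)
\textbf{Proof proposal for \cref{obs:tensor-over-EBT}.}

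The plan is to reduce the statement to the base-change behaviour of $E$-cohomology along the iterated balanced products defining $\BS_{\underbf{i}}$, using that the double-coset stack $T \backslash \BS_{\underbf{i}} / T$ is built by gluing the pieces $T \backslash G_{i_k} / T$ along copies of $BT$. First I would record the $m = 2$ case as the essential input: given a $T$-$T$-space $X_1$ and a $T$-$T$-space $X_2$, there is a homotopy pushout (indeed a pullback-free colimit) of stacks
\begin{equation}
    T \backslash (X_1 \times_T X_2) / T \;\simeq\; (T \backslash X_1 / T) \times_{BT} (T \backslash X_2 / T),
\end{equation}
where the right-hand fibre product is formed over $BT$ via the right $T$-action on $X_1$ and the left $T$-action on $X_2$; this is immediate from the definition of $\times_T$ as the homotopy quotient by the simultaneous $T$-action. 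Applying the lax symmetric monoidal functor $E(-) = \Map(\Sigma^\infty_+(-), E)$ would then give a natural map
\begin{equation}
    \EBS_{i_1} \otimes_{E(BT)} \EBS_{i_2} \longrightarrow E\bigl(T \backslash (\BS_{i_1} \times_T \BS_{i_2}) / T\bigr) = \EBS_{(i_1, i_2)},
\end{equation}
and the main point is to show this is an equivalence.

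The key step is therefore a Künneth/base-change statement: $E$ applied to a fibre product of stacks over $BT$ computes the derived tensor product of the $E(BT)$-modules, provided one side is suitably flat or free. Here I would invoke \cref{ex:E-of-BGi}: $E^*(BT)$ is a free rank-two module over $E^*(BG_i)$ with basis $1, x_i$, so $\EBS_i = E(T \backslash G_i / T)$ is, as a left (or right) $E(BT)$-module, a retract of a finite free module — concretely $E(T \backslash G_i / T) \simeq E(BG_i) \otimes_{E(BT)} (\text{free rank two})$ after unwinding $T \backslash G_i / T \simeq BG_i$-relative constructions, cf. the isomorphisms \eqref{eq:obvious-iso}. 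Freeness (or at least perfectness together with the flatness needed for the spectral Künneth spectral sequence to collapse) is exactly what makes the natural map $E(Y_1) \otimes_{E(BT)} E(Y_2) \to E(Y_1 \times_{BT} Y_2)$ an equivalence for $Y_k = T \backslash \BS_{i_k}/T$. I would cite \cref{prop:kunneth-theorem} and the Thom-isomorphism bookkeeping of \cref{prop:E-coho-on-parabolics}–\cref{prop:E-thom-class} as the technical backbone, noting that the connectivity hypothesis on $E$ guarantees the relevant spectral sequences converge.

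Finally I would bootstrap from $m=2$ to general $m$ by induction, using associativity of $\times_T$ and of the relative tensor product $\otimes_{E(BT)}$, together with the observation that at each stage the new factor $\EBS_{i_{k+1}}$ is again free of finite rank over $E(BT)$ on the left, so no flatness is lost when iterating. The main obstacle I anticipate is not the algebra but the geometry of the gluing: one must check that $T \backslash (X_1 \times_T X_2)/T$ really is the fibre product $(T \backslash X_1/T) \times_{BT} (T\backslash X_2/T)$ of stacks rather than merely something mapping to it — this uses that the left and right $T$-actions in play are free (the footnote after \eqref{eq:T-action-on-CP1}'s surrounding discussion), so homotopy quotients agree with strict ones and the Borel construction is well-behaved. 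Once that identification is in hand, the equivalence follows formally from lax monoidality of $E(-)$ and the freeness established above.
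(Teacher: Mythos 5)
Your proposal is essentially the paper's own (implicit) justification: the paper states this as an unproved observation, but the surrounding text supplies exactly your two ingredients, namely the identification of $T\backslash \BS_{\underbf{i}}/T$ as an iterated fiber product over $BT$ (cf.\ \eqref{eq:iterative-def-of-TT-Soergel}) and the rank-two freeness from \cref{ex:E-of-BGi}, which is what makes the Eilenberg--Moore comparison collapse as in \cref{prop:E-coho-of-BS}. Two minor corrections: the comparison map being an equivalence rests on $\EBS_{i}$ being a finite free $E(BT)$-module (not on \cref{prop:kunneth-theorem}, which concerns $\otimes_{E^*}$, nor on connectivity of $E$, which is not assumed in this section), and the identification of the double quotient with the fiber product over $BT$ needs no freeness of the $T$-actions, since all quotients in sight are homotopy quotients.
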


Let us observe the following: suppose we have a group $G$ and a subgroup $H \subset G$; then the double qoutient $H\backslash G/H$ is equivalent to the pullback $BH \times_{BG} BH$. 
In our case, we see that 
$T\backslash G_i/T \simeq BT \times_{BG_i} BT$. More generally, we have 
\begin{equation}\label{eq:iterative-def-of-TT-Soergel}
    T\backslash \BS_{\underbf{i}}/T \simeq BT \times_{BG_{i_1}} BT \cdots \times_{BG_{i_m}} BT.
\end{equation}
Now we can compute $\EBS^*_{\underbf{i}}$:
\begin{proposition}\label{prop:E-coho-of-BS}
    The canonical map on cohomology induces by \eqref{eq:iterative-def-of-TT-Soergel}
    \begin{equation}\label{eq:EM-collapse}
        E^*(BT)\otimes_{E^*(BG_{i_1})} E^*(BT) \cdots \otimes_{E^*(BG_{i_m})} E^*(BT) \to E^*(T\backslash \BS_{\underbf{i}} /T) = \EBS^*_{\underbf{i}}
    \end{equation}
    is an isomorphism. The left and right $E^*(BT)$ action is on the leftmost and rightmost factors respectively.
\end{proposition}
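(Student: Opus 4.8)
The plan is to prove the case $m=1$ first and then bootstrap to general $\underbf{i}$ using \cref{obs:tensor-over-EBT} together with a flatness observation that drops out of the $m=1$ analysis. Indeed, once we know that for a single index $i$ the bimodule $\EBS_i$ is finitely generated free over $E(BT)$ on each side, the equivalence $\EBS_{\underbf i}\simeq\EBS_{i_1}\otimes_{E(BT)}\cdots\otimes_{E(BT)}\EBS_{i_m}$ of \cref{obs:tensor-over-EBT} lets us compute each relative tensor product as a finite direct sum of copies of the next factor; since $\pi_*$ commutes with finite direct sums, this becomes on homotopy groups the iterated ordinary tensor product $\EBS^*_{i_1}\otimes_{E^*(BT)}\cdots\otimes_{E^*(BT)}\EBS^*_{i_m}$, and substituting the $m=1$ identifications $\EBS^*_{i_j}\simeq E^*(BT)\otimes_{E^*(BG_{i_j})}E^*(BT)$ recovers the source of \eqref{eq:EM-collapse}, compatibly with the canonical map. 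So no spectral sequence is needed and everything reduces to $m=1$.

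For $m=1$, apply $E(T\backslash(-)/T)$ to the cofiber sequences of \cref{prop:two-fiber-sequences}. As $T\backslash(-)/T$ is a homotopy colimit it carries cofiber sequences of $T$-$T$-spaces to cofiber sequences of spaces, and $E(-)$ sends these to fiber sequences of spectra; using $T\backslash Ts_i/T\simeq BT$, the identification of the descended Thom space $T\backslash T^{\cL_{\alpha_i}}/T\simeq (BT)^{\cL_{\alpha_i}}$, and the Thom isomorphism (\cref{prop:E-thom-class}), the sequence $Ts_i\xrightarrow{ms_i}G_i\xrightarrow{\Delta_i}T^{\cL_{\alpha_i}}$ yields a fiber sequence $E(BT)(2)\xrightarrow{\overline{\Delta_i}^*}\EBS_i\xrightarrow{\overline{ms_i}^*}E(BT)$. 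Unpacking the $T$-$T$-action on $Ts_i$, the composite of $\overline{ms_i}$ with the left structure map $T\backslash G_i/T\to BT$ is the identity, so the left unit splits $\overline{ms_i}^*$ and $\EBS_i\simeq E(BT)\oplus E(BT)(2)$ as left $E(BT)$-modules; running the companion sequence $T\xrightarrow{m_i}G_i\xrightarrow{\Deltas_i}(Ts_i)^{\cL_{-\alpha_i}}$ through the same machine, with $\overline{m_i}$ and the right structure map in place of $\overline{ms_i}$ and the left one, gives the corresponding splitting as right $E(BT)$-modules. In particular $\EBS^*_i$ is free of rank two over $E^*(BT)$ with basis $\{1,\xi\}$, where $\xi\coloneqq\overline{\Delta_i}^*(\th(\cL_{\alpha_i}))$.

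It remains to check that the canonical ring map $\phi\colon E^*(BT)\otimes_{E^*(BG_i)}E^*(BT)\to\EBS^*_i$ is an isomorphism. By \cref{ex:E-of-BGi} the source is free of rank two over the left copy of $E^*(BT)$ on $\{1\otimes 1,\,1\otimes x_i\}$, and $\phi$ sends these to $1$ and to $\eta_R(x_i)$, where $\eta_R$ is the right unit. Write $\eta_R(x_i)=c_0+c_1\xi$ with $c_0,c_1\in E^*(BT)$ (identified with the left copy). Applying $\overline{ms_i}^*$, the composite $\overline{ms_i}^*\eta_R$ is the Weyl action $s_i$ (conjugation by $s_i$ twists the right $T$-action defining $Ts_i$) and $\overline{ms_i}^*\xi=0$ (the map $\Delta_i\circ ms_i$ is null, being a map into its own cofiber), so $c_0=s_i(x_i)=x_{i+1}$. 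Applying $\overline{m_i}^*$, we have $\overline{m_i}^*\eta_R=\id$, while $\overline{m_i}^*\xi$ is the top Chern class $c_1(\cL_{\alpha_i})=x_i -_\cF x_{i+1}$ because $\Delta_i\circ m_i$ is the zero section (\cref{prop:two-fiber-sequences}) and the zero section sends the Thom class to the top Chern class (\cref{prop:E-thom-class}); hence $x_i=x_{i+1}+c_1\,(x_i -_\cF x_{i+1})$, i.e. $c_1\,(x_i -_\cF x_{i+1})=x_i-x_{i+1}$. By \cref{lem:unique-g} the right-hand side is $g(x_i,x_{i+1})\,(x_i -_\cF x_{i+1})$, and since $x_i -_\cF x_{i+1}$ is regular in $E^*(BT)\simeq E^*[[x_1, \cdots, x_n]]$ this forces $c_1=g(x_i,x_{i+1})$, a unit. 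Thus in the chosen bases $\phi$ is upper triangular with diagonal entries $1$ and the unit $g(x_i,x_{i+1})$, so it is an isomorphism (this is the same computation that produces \cref{prop:demazure-prop}(4)).

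The step I expect to be the main obstacle is the bookkeeping in the second paragraph: one must verify that the $T$-$T$-equivariant cofiber sequences of \cref{prop:two-fiber-sequences} descend correctly along $T\backslash(-)/T$, identify the descended Thom spaces with the appropriate shifts of $E(BT)$, and — the delicate point — keep track of the two structure maps, including the Weyl twist that makes $\overline{ms_i}^*$ restrict to $s_i$ rather than to the identity on the right copy of $E^*(BT)$. Once those identifications are in place the remaining computations, and the reduction of the first paragraph, are routine; the connectivity hypothesis on $E$ is not used anywhere in this argument.
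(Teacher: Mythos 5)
Your argument is correct, but it takes a genuinely different route from the paper. The paper's proof is a two-line spectral-sequence argument: since $E^*(BT)$ is free over $E^*(BG_i)\simeq E^*(BT)^{s_i}$ (\cref{ex:E-of-BGi}), the Eilenberg--Moore spectral sequence for the pullback \eqref{eq:iterative-def-of-TT-Soergel} collapses, and the general case follows by induction. You instead avoid the Eilenberg--Moore spectral sequence for the double quotients altogether: you descend the space-level cofiber sequences of \cref{prop:two-fiber-sequences} through $T\backslash(-)/T$, apply the Thom isomorphism to split $\EBS_i\simeq E(BT)\oplus E(BT)(2)$ as one-sided modules, and then verify the canonical map is an isomorphism by an explicit change-of-basis computation in which the unit $g(x_i,x_{i+1})$ of \cref{lem:unique-g} appears on the diagonal; the reduction from general $\underbf{i}$ to $m=1$ then goes through \cref{obs:tensor-over-EBT} plus freeness rather than an inductive spectral-sequence argument. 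Your computations check out (the identifications $c_0=x_{i+1}$ and $c_1=g(x_i,x_{i+1})$ use exactly the regularity of $x_{\alpha_i}$ and the nullity of $\Delta_i\circ ms_i$, and the source is indeed free on $1\otimes 1,\,1\otimes x_i$ by \cref{ex:E-of-BGi}). What your route buys is more than the statement: you effectively re-derive \cref{prop:E-two-fiber-seq} and the splitting underlying \cref{prop:E-BS_s-splitting-1} from first principles, and your triangularity computation is the same one that later gives \cref{prop:demazure-prop}(4) and \cref{prop:E-coho-two-fiber-seq} without appealing to the GKM-type injectivity cited there; the paper's route buys brevity. One small caveat: your claim that ``no spectral sequence is needed'' is slightly overstated, since \cref{obs:tensor-over-EBT}, which your reduction leans on, is itself the spectrum-level Eilenberg--Moore/K\"unneth statement for the pullbacks in \eqref{eq:iterative-def-of-TT-Soergel} (justifiable by a finite-cell or dualizability argument, but stated in the paper without proof); since it precedes the proposition, citing it is legitimate, but it is doing real work. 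Your closing remark that connectivity of $E$ plays no role here is consistent with \cref{rem:non-connective-E}.
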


\begin{proof}
    First consider the case that $\underbf{i} = i$ for some $1 \leq i \leq n-1$, in which case $E^*(BT)$ is a free $E^*(BG_{i}) \simeq E^*(BT)^{s_i}$-module by \cref{ex:E-of-BGi}. It follows that the Eilenberg-Moore spectral sequence degenerates and the canonical map 
    $E^*(BT)\otimes_{E^*(BG_{i})} E^*(BT) \to \EBS^*_i$ is an isomorphism.
    The general case follows inductively from the same argument.
\end{proof}
By \cref{prop:E-coho-on-parabolics}, we have the following:
\begin{corollary}
    We have an isomorphism of graded rings:
   \begin{equation}
    \EBS^*_{\underbf{i}} \simeq  E^*[[\Lambda]]_{\cF} \otimes_{E^*[[\Lambda]]_{\cF}^{s_1}} E^*[[\Lambda]]_{\cF} \cdots \otimes_{E^*[[\Lambda]]_{\cF}^{s_m}} E^*[[\Lambda]]_{\cF}.
   \end{equation}
\end{corollary}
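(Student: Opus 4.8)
The plan is to combine the two preceding computational results and then check a compatibility. First I would invoke \cref{prop:E-coho-of-BS}: it already produces an isomorphism
\begin{equation*}
    E^*(BT)\otimes_{E^*(BG_{i_1})} E^*(BT) \cdots \otimes_{E^*(BG_{i_m})} E^*(BT) \;\xrightarrow{\ \sim\ }\; \EBS^*_{\underbf{i}},
\end{equation*}
and since every map entering the Eilenberg--Moore comparison \eqref{eq:EM-collapse} is a map of graded rings, this is an isomorphism of graded rings, not merely of graded $E^*$-modules or of bimodule spectra. So it remains only to rewrite each tensor factor.

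Second, I would apply \cref{prop:E-coho-on-torus} to each copy of $E^*(BT)$, giving the graded ring isomorphism $E^*(BT)\simeq E^*[[\Lambda]]_{\cF}$, and \cref{prop:E-coho-on-parabolics} (equivalently \cref{ex:E-of-BGi}, taking $W'=\{1,s_{i_k}\}$) to each $E^*(BG_{i_k})$, giving $E^*(BG_{i_k})\simeq E^*[[\Lambda]]_{\cF}^{s_{i_k}}$. The only point that needs a word of care is that these identifications are compatible with the structure maps along which the tensor products are formed: the ring map $E^*(BG_{i_k})\to E^*(BT)$ appearing in \eqref{eq:EM-collapse} is, by construction, induced by the inclusion $T\hookrightarrow G_{i_k}$, and the functoriality clause of \cref{prop:E-coho-on-parabolics} says precisely that under the above identifications this becomes the subring inclusion $E^*[[\Lambda]]_{\cF}^{s_{i_k}}\hookrightarrow E^*[[\Lambda]]_{\cF}$. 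Hence the two iterated tensor products are identified factor by factor, compatibly with all the bonding maps, which yields the claimed isomorphism.

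There is no genuine obstacle here; the mathematical content lives entirely in \cref{prop:E-coho-of-BS} and \cref{prop:E-coho-on-parabolics}, and the corollary is a formal consequence of gluing them together. If anything, the one item worth making explicit in the write-up is that all the comparison maps involved are maps of \emph{graded rings}, so that the final statement is an isomorphism of graded rings and not just of graded modules.
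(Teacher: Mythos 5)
Your proposal is correct and matches the paper's (implicit) argument: the corollary is stated there as an immediate consequence of \cref{prop:E-coho-of-BS} together with \cref{prop:E-coho-on-torus} and \cref{prop:E-coho-on-parabolics}, exactly the factor-by-factor identification you spell out. Your extra remark on functoriality of the identifications (so the bonding maps $E^*(BG_{i_k})\to E^*(BT)$ become the subring inclusions $E^*[[\Lambda]]_{\cF}^{s_{i_k}}\hookrightarrow E^*[[\Lambda]]_{\cF}$) is precisely the point the paper leaves tacit, so nothing is missing.
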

\begin{example}
    Fix $n = 2$ and $s_1 = (1,2)$. We have $E^*(BT) \simeq E^*[[x_1, x_2]]$ and 
    \begin{equation}
        \begin{aligned}
            \EBS^*_1 &= E^*(BT)\otimes_{E^*(BT)^{s_1}}E^*(BT) 
            \\ &= E^*[[x_1 \otimes 1, x_2 \otimes 1, 1 \otimes x_1, 1 \otimes x_2]]/((x_1 + x_2) \otimes 1 = 1 \otimes (x_1 + x_2), (x_1 x_2) \otimes 1= 1 \otimes (x_1 x_2)).     
        \end{aligned}
    \end{equation}
\end{example}

We end this subsection by translating \cref{prop:two-fiber-sequences} 
and \cref{cor:four-fiber-sequences} to this setting. 
Note that the double quotient $BTs_i \coloneqq T\backslash Ts_i/T$ is equivalent to $BT$ as a space. However, the associated map to $T\backslash \pt /T = BT \times BT$ is $(\id, \sigma_i)$.\footnote{Recall that $\sigma_i \colon BT \to BT$ is the automorphism given by conjugation by $s_i$.} It follows that the $E^*(BT)-E^*(BT)$ action on $E^*(BTs_i)$ is 
\begin{equation}
    (f_0, f_1) \cdot r = f_0\, r \,s_i(f_1),
\end{equation}
where $s_i(-)$ exchanges variables $x_i$ and $x_{i+1}$. 

Furthermore, we need to consider equivariant cohomology of Thom spaces:
suppose $H$ is a group and $X$ is a $H$-space.
In addition, suppose $V$ is a $H$-equivariant vector bundle on $X$, so it descends to a vector bundle $V/H$ over $X/H$. Furthermore, $\tilde{E}_H(X^V) \simeq \tilde{E}((X/H)^{V/H})$ is naturally an $E(BH)$-module.
In our case, we see that $T^{\cL_\alpha}$ and $(Ts_i)^{\cL_{-\alpha}}$ correspond to $BT^{\alpha}$ and $(BTs_i)^{-\alpha}$. Using the complex orientation (\cref{prop:E-thom-class}), we can identify
$\tilde{E}(BT^{\alpha})$ and $\tilde{E}((BTs_i)^{-\alpha})$ as $E(BT)(2)$ and $E(BTs_i)(2)$ respectively.

For the rest of the subsection we fix $1 \leq i \leq n-1$.
We get an $E$-valued version of \cref{prop:two-fiber-sequences}:
\begin{proposition}\label{prop:E-two-fiber-seq}
    We have maps of $E(BT)-E(BT)$-bimodule spectra 
    \begin{equation}\label{eq:E-fancy-diagram}
        \begin{tikzcd}
            E(BT)(2)\ar[rd, "\Delta_i"] & & E(BTs_i)(2)\ar[ld, "\Deltas_i"']\\
            & \EBS_i \ar[rd, "ms_i"] \ar[ld, "m_i"'] & \\
            E(BT) & & E(BTs_i).
        \end{tikzcd}
    \end{equation}
    Furthermore, the two diagonals are fiber sequences, while the two other composites $E(BT)(2) \to E(BT)$ and $E(BTs_i)(2) \to E(BTs_i)$ are multiplications by $c_1(\alpha_i) = x_i -_{\FE }x_{i+1}$ and $c_1(-\alpha_i) = x_{i+1} -_{\FE} x_{i}$ respectively. 
\end{proposition}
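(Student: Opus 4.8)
The plan is to deduce this from \cref{prop:two-fiber-sequences} (more precisely \cref{cor:four-fiber-sequences} is not needed here, only the base case \eqref{eq:fancy-diagram}) by applying the contravariant functor $\tilde{E}(-) = \Hom(\Sigma^\infty(-), E)$ to the diagram of $T-T$-based spaces, then passing to double homotopy quotients and using the identifications of the Thom spectra provided by the complex orientation. Concretely, the diagram \eqref{eq:fancy-diagram} lives in $T-T$-based spaces; taking $T\backslash(-)/T$ and then $\tilde E(-)$ turns the two diagonal cofiber sequences into fiber sequences of $E(BT)-E(BT)$-bimodule spectra, since $\tilde E(-)$ sends cofiber sequences of spectra to fiber sequences and smashing/(co)limits are computed objectwise. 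This already produces a diagram of the shape \eqref{eq:E-fancy-diagram} with $\EBS_i = E(T\backslash G_i/T)$ in the middle, $\tilde E(T\backslash T^{\cL_{\alpha_i}}/T)$ in the bottom left, and $\tilde E(T\backslash (Ts_i)^{\cL_{-\alpha_i}}/T)$ in the bottom right, and with the two non-diagonal composites being $\tilde E$ of the zero-section inclusions.

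Next I would identify the corners. Under $T\backslash(-)/T$, the $T-T$-equivariant line bundle $\cL_{\alpha_i}$ on $T$ descends to the line bundle $\alpha_i$ on $BT$ (as noted in the paragraph preceding the proposition), so $T\backslash T^{\cL_{\alpha_i}}/T \simeq BT^{\alpha_i}$; likewise $T\backslash(Ts_i)^{\cL_{-\alpha_i}}/T \simeq (BTs_i)^{-\alpha_i}$, where $BTs_i \simeq BT$ as a space but carries the twisted bimodule structure $(f_0,f_1)\cdot r = f_0\, r\, s_i(f_1)$ recalled just above. Applying the Thom isomorphism of \cref{prop:E-thom-class} to the rank-one complex bundles $\alpha_i$ and $-\alpha_i$ gives equivalences of $E(BT)$- (resp. $E(BTs_i)$-) module spectra $\tilde E(BT^{\alpha_i}) \simeq E(BT)(2)$ and $\tilde E((BTs_i)^{-\alpha_i}) \simeq E(BTs_i)(2)$, matching the source objects in \eqref{eq:E-fancy-diagram}. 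One must check these are equivalences of \emph{bimodule} spectra and that the maps $m_i$, $ms_i$, $\Delta_i$, $\Deltas_i$ are bimodule maps; this is automatic since everything in sight comes from $T-T$-equivariant maps of spaces and the functor $E(T\backslash(-)/T)$ lands in $E(BT)-E(BT)$-bimodules.

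Finally I would compute the two non-diagonal composites. By the last clause of \cref{prop:two-fiber-sequences}, the composites $T\to G_i \to T^{\cL_{\alpha_i}}$ and $Ts_i \to G_i \to (Ts_i)^{\cL_{-\alpha_i}}$ are the zero-section inclusions; hence after applying $\tilde E(T\backslash(-)/T)$ and the Thom isomorphism, the second clause of \cref{prop:E-thom-class} identifies these with multiplication by the top Chern class of the relevant line bundle, i.e. $c_1(\alpha_i)$ and $c_1(-\alpha_i)$ respectively. Under the identification $E^*(BT) \simeq E^*[[\Lambda]]_{\cF}$ of \cref{prop:E-coho-on-torus}, $c_1(\alpha_i) = x_{\alpha_i} = x_i -_{\FE} x_{i+1}$ and $c_1(-\alpha_i) = x_{i+1} -_{\FE} x_i$, which is exactly the claimed description. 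The only mildly delicate point — and the one I would write out carefully — is bookkeeping the twist by $s_i$ on the right-hand branch: one needs that the zero section $BTs_i \to (BTs_i)^{-\alpha_i}$ is a map of $E(BT)-E(BT)$-bimodules for the \emph{twisted} structure, so that the resulting multiplication-by-$c_1(-\alpha_i)$ map respects the bimodule structure on $E(BTs_i)$; this follows from $s_i$-equivariance of the bundle $\cL_{-\alpha_i}$, i.e. from the fact that $s_i$ sends $-\alpha_i$ to $-\alpha_i$ composed with the swap of $x_i, x_{i+1}$, consistent with the $s_i(-)$ appearing in the right action. No serious obstacle is expected beyond this; the argument is essentially a functorial transport of \cref{prop:two-fiber-sequences} through $\tilde E(T\backslash(-)/T)$ together with the Thom isomorphism.
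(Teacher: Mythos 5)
Your proposal is correct and follows exactly the route the paper intends: the paper gives no separate proof of this proposition, instead deriving it implicitly from the discussion immediately preceding it — applying $E(T\backslash(-)/T)$ to the diagram \eqref{eq:fancy-diagram} of \cref{prop:two-fiber-sequences}, identifying the Thom spectra via \cref{prop:E-thom-class}, and reading off the zero-section composites as multiplication by the top Chern classes $c_1(\pm\alpha_i)$. Your extra care about the twisted bimodule structure on $E(BTs_i)$ is a welcome elaboration of a point the paper handles only in the surrounding prose.
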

Next we determined what these maps in \eqref{eq:E-fancy-diagram} are on cohomology:
\begin{proposition}\label{prop:E-coho-two-fiber-seq}
    The induced maps on cohomology from  \eqref{eq:E-fancy-diagram} are given by 
    \begin{equation}\label{eq:E-coho-fancy-diagram}
        \begin{tikzcd}
            r \ar[rd, mapsto] &  &r \ar[ld, mapsto] \\ 
            & r\, (x_i \otimes 1 -_\FE 1 \otimes x_{i+1}) \quad \quad  r \,(x_{i+1} \otimes 1 -_\FE 1 \otimes x_{i+1}) &  \\ 
            & r_1 \otimes r_2 \ar[rd, mapsto] \ar[ld, mapsto] \\ 
            r_1 \,r_2 & & r_1 \,s_1(r_2).
        \end{tikzcd}
    \end{equation}
\end{proposition}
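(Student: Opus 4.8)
The plan is to identify each of the four maps in \eqref{eq:E-fancy-diagram} with an explicit geometric map and then run $E$-cohomology, using the computation of $\EBS^*_i$ from \cref{prop:E-coho-of-BS} together with the Thom isomorphism conventions fixed in \cref{prop:E-thom-class}.

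First I would treat the two ``inclusion'' maps $m_i$ and $ms_i$. By definition $m_i \colon E(BT) \to \EBS_i$ is the map on $E$-cohomology induced by the quotient of the inclusion $m_i \colon T \hookrightarrow G_i$ from \cref{def:m-and-ms}, i.e. $BT \to T\backslash G_i/T$. Under the identification $\EBS^*_i \simeq E^*(BT) \otimes_{E^*(BG_i)} E^*(BT)$ of \cref{prop:E-coho-of-BS}, pulling back along $T\backslash G_i/T \to T\backslash\pt/T \simeq BT$ followed by the two projections $BT\times BT \to BT$ exhibits the two $E^*(BT)$-algebra structures; the map $m_i$ on cohomology is the multiplication map $r_1\otimes r_2 \mapsto r_1 r_2$, since the composite $BT \to T\backslash G_i/T \to BT$ (either projection) is the identity on $BT$ by \eqref{eq:obvious-iso}. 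For $ms_i$, the relevant inclusion is $Ts_i \to G_i$, $t\mapsto ts_i$; by \cref{obs:explicit-maps-Gi} the corresponding map $BTs_i \to T\backslash G_i/T$, post-composed with the two projections, is $\id$ on one factor and $\sigma_i$ (conjugation by $s_i$, which swaps $x_i \leftrightarrow x_{i+1}$) on the other. Hence on cohomology $ms_i$ sends $r_1\otimes r_2 \mapsto r_1\, s_i(r_2)$, matching the bottom of \eqref{eq:E-coho-fancy-diagram}.

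Next I would handle the two ``cofiber/Thom'' maps $\Delta_i$ and $\Deltas_i$. By \cref{prop:two-fiber-sequences} (and its reduction to $BT$ via equivariant cohomology, as in the discussion preceding \cref{prop:E-two-fiber-seq}), $\Delta_i$ is the map $\tilde E(BT^{\alpha_i}) \to \tilde E(T\backslash G_i/T)$ induced by the cofiber map $G_i \to T^{\cL_{\alpha_i}}$. Under the Thom isomorphism $\tilde E(BT^{\alpha_i}) \simeq E(BT)(2)$ of \cref{prop:E-thom-class}, the Thom class corresponds to $1 \in E^0(BT)$, and the key point from \cref{prop:two-fiber-sequences} is that the composite $E(BT)(2) \xrightarrow{\Delta_i} \EBS_i \xrightarrow{m_i} E(BT)$ is the zero-section map, which by \cref{prop:E-thom-class} is multiplication by the top Chern class $c_1(\cL_{\alpha_i}) = x_{\alpha_i} = x_i -_\FE x_{i+1}$. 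Since $\EBS^*_i$ is a \emph{free} module over $E^*(BT)$ (say on the left), the composite with $m_i$ is injective, so $\Delta_i$ is determined: it sends $r \mapsto r\cdot(x_i \otimes 1 -_\FE 1 \otimes x_{i+1})$, where the factor lies in $\EBS^*_i$ — indeed $x_i\otimes 1 -_\FE 1 \otimes x_{i+1}$ maps under $m_i$ to $x_i -_\FE x_{i+1}$ and under $m_i$ one also checks it is the image of the relevant difference-of-Chern-classes class. Running the mirror argument for $\Deltas_i$ with $ms_i$ in place of $m_i$, the zero-section composite is multiplication by $c_1(\cL_{-\alpha_i}) = x_{i+1} -_\FE x_i$ on $E(BTs_i)$, and freeness of $\EBS^*_i$ over $E^*(BT)$ on the appropriate side forces $\Deltas_i(r) = r\cdot(x_{i+1}\otimes 1 -_\FE 1\otimes x_{i+1})$, giving the remaining entry of \eqref{eq:E-coho-fancy-diagram}.

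The main obstacle I anticipate is bookkeeping, not depth: one must be careful that the element ``$x_i\otimes 1 -_\FE 1\otimes x_{i+1}$'' is genuinely a well-defined element of $\EBS^*_i \simeq E^*(BT)\otimes_{E^*(BG_i)} E^*(BT)$ and that it is the correct preimage, which amounts to checking its images under $m_i$ and $ms_i$ against the Chern-class formulas of \cref{prop:E-two-fiber-seq}; the freeness statements of \cref{obs:S-free-rank-2}/\cref{ex:E-of-BGi} then upgrade ``correct image under an injective map'' to ``equal to the map.'' A secondary point is tracking the $s_i$-twist built into $BTs_i$ and into the $E^*(BT)$-bimodule structure on $E^*(BTs_i)$, so that the $-_\FE$ and the variable swaps land in the positions shown; this is exactly the twist recorded in \eqref{eq:non-obvious-iso} and in the formula $(f_0,f_1)\cdot r = f_0\, r\, s_i(f_1)$.
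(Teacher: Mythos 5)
Your overall strategy coincides with the paper's: identify $m_i$ and $ms_i$ directly, then determine the two Thom-side maps by computing their composites with the bottom maps (zero from the fiber sequence, multiplication by the Euler class from the zero section, via \cref{prop:E-two-fiber-seq} and \cref{prop:E-thom-class}) and appealing to an injectivity statement. The gap is in the injectivity you invoke. You claim that because $\EBS^*_i$ is free over $E^*(BT)$, ``the composite with $m_i$ is injective,'' and you use this to conclude that $\Delta_i$ is determined by $m_i\circ\Delta_i$ alone. But the multiplication map $m_i\colon \EBS^*_i \to E^*(BT)$, $r_1\otimes r_2\mapsto r_1 r_2$, is not injective: its kernel contains the nonzero element $1\otimes x_i - x_i\otimes 1$. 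Concretely, $(x_i -_\FE x_{i+1})\otimes 1$ and $x_i\otimes 1 -_\FE 1\otimes x_{i+1}$ are distinct elements of $\EBS^2_i$ (they have different images under $ms_i$, namely $x_i -_\FE x_{i+1}$ and $0$), yet both multiply to $x_i -_\FE x_{i+1}$ under $m_i$; so knowing $m_i\circ\Delta_i$ does not pin down $\Delta_i(1)$, and likewise knowing $ms_i\circ\Deltas_i$ does not pin down $\Deltas_i(1)$. As written, the step ``so $\Delta_i$ is determined'' does not follow.

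What actually determines the top maps, and what the paper uses, is the \emph{joint} injectivity of $(m_i, ms_i)\colon \EBS^*_i \to E^*(BT)\oplus E^*(BTs_i)$, cited from \cite[Theorem 1.1]{li2020equivariant} (a GKM-type statement); alternatively you can deduce it from the left free basis $1\otimes 1,\ 1\otimes x_i$ of \cref{ex:E-of-BGi} together with regularity of $x_i - x_{i+1}$, since the combined map sends $a\,(1\otimes 1)+b\,(1\otimes x_i)$ to $(a+b\,x_i,\ a+b\,x_{i+1})$. With that in hand, one checks that $r\mapsto r\,(x_i\otimes 1 -_\FE 1\otimes x_{i+1})$ composes with $ms_i$ to zero and with $m_i$ to multiplication by $x_i -_\FE x_{i+1}$, exactly the two constraints supplied by \cref{prop:E-two-fiber-seq}, and similarly $r\mapsto r\,(x_{i+1}\otimes 1 -_\FE 1\otimes x_{i+1})$ composes with $m_i$ to zero and with $ms_i$ to multiplication by $x_{i+1}-_\FE x_i$. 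You do gesture at checking both images in your final paragraph, so the repair is short; but the determination must rest on the joint injectivity of the pair, not on any injectivity of $m_i$ or $ms_i$ separately, and that joint injectivity needs either the citation or the explicit basis-plus-regularity argument rather than freeness alone.
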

\begin{proof}
    The forms of the bottom two maps are determined by being ring homomorphisms and $E^*(BT)-E^*(BT)$ bimodule maps.
    Furthermore, the combined map $EB^*_i \to E^*(BT) \oplus E^*(BTs_i)$ is injective by \cite[Theorem 1.1]{li2020equivariant}.\footnote{In fact the image can be characterized by the Goresky-Kottwitz-MacPherson description, see \cite{li2020equivariant}.}
    Therefore to determine the top two maps it suffices to check the composites to $E^*(BT)$ and $E^*(BTs_i)$. 
    Let us first consider $\Delta_i \colon E^*(BT)(2) \to \EBS_{i}^*$. By \cref{prop:E-two-fiber-seq}, the composite $ms_i \circ \Delta_i$ is the $0$, while $\Delta_i \circ m_i$ is given by multiplication by $x_i -_\FE x_{i+1}$. It is straightforward to see that $r \mapsto r \,(x_{i} \otimes 1 -_\FE 1 \otimes x_{i+1})$ satisfies both conditions. The same argument holds for $\Deltas_i \colon E^*(BTs_i)(2) \to \EBS^*$.
\end{proof}
\begin{remark}
    Note that $$x_i \otimes 1 -_\FE 1 \otimes x_{i+1} = 1 \otimes x_{i} -_\FE x_{i+1} \otimes 1$$ and $$x_{i+1} \otimes 1 -_\FE 1 \otimes x_{i+1} = 1 \otimes x_i -_\FE x_i \otimes 1.$$ This can be verified by checking their images in $E^*(BT)$ and $E^*(BTs_i)$.   
\end{remark}

\begin{remark}
    Since $\Delta_i \colon E^*(BT)(2) \to \EBS_i^*$ is an $E^*(BT)-E^*(BT)$-equivariant map, we see that for any $r \in E^*(BT) = E^*[[x_1, x_2]]$, we have 
    \begin{equation}\label{eq:hochschild-1}
        \begin{aligned}
            r \otimes 1 \times (x_i \otimes 1 -_\FE 1 \otimes x_{i+1}) 
            &= r\, (x_i \otimes 1 -_\FE 1 \otimes x_{i+1})
            \\ & = (x_i \otimes 1 -_\FE 1 \otimes x_{i+1})\, r   
            \\ &= (x_i \otimes 1 -_\FE 1 \otimes x_{i+1}) \times 1 \otimes r.        
        \end{aligned}
    \end{equation}
    and 
    \begin{equation}\label{eq:hochschild-2}
        \begin{aligned}
            r \otimes 1 \times (x_{i+1} \otimes 1 -_\FE 1 \otimes x_{i+1}) &= 
            r\, (x_{i+1} \otimes 1 -_\FE 1 \otimes x_{i+1}) 
            \\ &= (x_{i+1} \otimes 1 -_\FE 1 \otimes x_{i+1})\, s_i(r)
            \\ & = (x_{i+1} \otimes 1 -_\FE 1 \otimes x_{i+1}) \times 1 \otimes s_i(r).
        \end{aligned}
    \end{equation}
    Here $\times$ denotes the multiplication of $\EBS^*_i$.
\end{remark}

The isomorphisms in \eqref{eq:non-obvious-iso} induces isomorphisms:
\begin{equation}\label{eq:E-non-obvious-iso}
    \begin{tikzcd}
     \EBS_i \otimes_{E(BT)} E(BTs_i) 
    \ar[r, "\sim"] &  \EBS_i & \ar[l, "\sim"'] 
    E(BTs_i) \otimes_{E(BT)} \EBS_i
    \\ 
        r_1 \otimes r_2 \otimes t  = r_1 \otimes r_2 \,t \otimes 1 \ar[r, mapsto] & r_1 \otimes s_i\,(r_2\, t) & \\
        & s_i(t)\, r_1 \otimes r_2 & \ar[l, mapsto] t \otimes r_1 \otimes r_2 = 1 \otimes s_i(t)\, r_1 \otimes r_2.
    \end{tikzcd}
\end{equation}

Finally we have the $E$-valued version of \cref{cor:four-fiber-sequences}:
\begin{corollary}\label{cor:E-four-fiber-sequences}
    We have a diagram
\begin{equation}\label{eq:E-fancy-diagram-times-Gs-1}
    \begin{tikzcd}
        \EBS_i(2) \ar[rrd, "\Delta_i \otimes_{E(BT)} \EBS_i"']& && & \EBS_i(2)\ar[lld, "\Deltas_i \otimes_{E(BT)} \EBS_i"] \\ 
        && \EBS_{\underline{ii}} \ar[rrd, "ms_i \otimes_{E(BT)} \EBS_i"]
        \ar[lld, "m_i\\\ \otimes_{E(BT)} \EBS_i"'] && \\
        \EBS_i &&& & \EBS_i 
    \end{tikzcd}
\end{equation}
where the diagonal maps are fiber sequences. Note that we twisted $ms_i \otimes_{E(BT)} \EBS_i$ and $\Deltas_i \otimes_{E(BT)} \EBS_i$ by the isomorphisms in \eqref{eq:E-non-obvious-iso}.
Furthermore, the induced maps on cohomology are given by 
\begin{equation}\label{eq:E-coho-fancy-diagram-Gs-1}
    \begin{tikzcd}[scale cd=.90]
        r_1 \otimes r_2 \ar[rd, mapsto] &  &r_1 \otimes r_2 \ar[ld, mapsto] \\ 
        & r_1 \,(x_i \otimes 1 -_\FE 1 \otimes x_{i+1}) \otimes r_2 \quad \quad  r_1\, (x_{i+1} \otimes 1 -_\FE 1 \otimes x_{i+1}) \otimes r_2 &  \\ 
        & r_1 \otimes r_2 \otimes r_3 \ar[rd, mapsto] \ar[ld, mapsto] \\ 
        r_1 \,r_2 \otimes r_3 & & r_1 \,s_i(r_2) \otimes r_3.
    \end{tikzcd}
\end{equation}

Similarly we have a diagram
\begin{equation}\label{eq:E-fancy-diagram-times-Gs-2}
    \begin{tikzcd}
        \EBS_i(2) \ar[rrd, " \EBS_i  \otimes_{E(BT)} \Delta_i "']& && & \EBS_i(2)\ar[lld, "\EBS_i  \otimes_{E(BT)}   \Deltas_i "] \\ 
        && \EBS_{\underline{ii}} \ar[rrd, "\EBS_i  \otimes_{E(BT)} ms_i "]
        \ar[lld, "\EBS_i  \otimes_{E(BT)} m_i "'] && \\
        \EBS_i &&& & \EBS_i 
    \end{tikzcd}
\end{equation}
where the diagonal maps are fiber sequences. 
Once again we twisted $\EBS_i \otimes_{E(BT)} ms_i $ and $\EBS_i \otimes_{E(BT)} \Deltas_i $ by the isomorphisms in \eqref{eq:E-non-obvious-iso}.
Furthermore, the induced maps on cohomology are given by
\begin{equation}\label{eq:E-coho-fancy-diagram-Gs-2}
    \begin{tikzcd}[scale cd=.90]
        r_1 \otimes r_2 \ar[rd, mapsto] &  &r_1 \otimes r_2 \ar[ld, mapsto] \\ 
        & r_1  \otimes (x_i \otimes 1 -_\FE 1 \otimes x_{i+1})\, r_2 \quad \quad  r_1  \otimes (x_{i+1} \otimes 1 -_\FE 1 \otimes x_{i+1})\, r_2  &  \\ 
        & r_1 \otimes r_2 \otimes r_3 \ar[rd, mapsto] \ar[ld, mapsto] \\ 
        r_1 \otimes r_2 \,r_3 & & r_1 \otimes s_i(r_2)\,  r_3.
    \end{tikzcd}
\end{equation}
\end{corollary}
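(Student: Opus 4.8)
The plan is to obtain \cref{cor:E-four-fiber-sequences} purely formally from \cref{prop:E-two-fiber-seq} and \cref{prop:E-coho-two-fiber-seq} by base-changing along $\EBS_i$. Concretely, I would apply the functor $-\otimes_{E(BT)}\EBS_i$ (tensoring on the right, over the right $E(BT)$-module structure) to the diagram \eqref{eq:E-fancy-diagram}, symmetrically apply $\EBS_i\otimes_{E(BT)}-$, and then rename the resulting objects and maps. An equivalent route, which I would mention as a sanity check, is to apply $E(T\backslash-/T)$ directly to the cofiber-sequence diagrams \eqref{eq:fancy-diagram-times-Gs-1} and \eqref{eq:fancy-diagram-times-Gs-2} of \cref{cor:four-fiber-sequences}, using the Thom isomorphism \cref{prop:E-thom-class} to identify the Thom-space terms $G_i^{\cL_\alpha\times_T G_i}$ etc.\ (each of which is the Thom space of a line bundle over $T\times_T G_i\simeq G_i$, hence has reduced $E$-cohomology $\EBS_i(2)$); the two routes produce the same diagrams.

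For the spectrum-level statement: $-\otimes_{E(BT)}\EBS_i$ is exact (it is a left adjoint, and tensor products of module spectra are exact in each variable), and since all objects here are $E$-module spectra the cofiber/fiber distinction is only a shift, so the two fiber sequences of \eqref{eq:E-fancy-diagram} are carried to fiber sequences. It then remains to name the six terms: $E(BT)(2)\otimes_{E(BT)}\EBS_i\simeq\EBS_i(2)$ and $E(BT)\otimes_{E(BT)}\EBS_i\simeq\EBS_i$ are immediate, $\EBS_i\otimes_{E(BT)}\EBS_i\simeq\EBS_{\underline{ii}}$ is \cref{obs:tensor-over-EBT}, and $E(BTs_i)(2)\otimes_{E(BT)}\EBS_i\simeq\EBS_i(2)$, $E(BTs_i)\otimes_{E(BT)}\EBS_i\simeq\EBS_i$ come from the isomorphisms \eqref{eq:E-non-obvious-iso} (induced by $s_i\in G_i$ as in \eqref{eq:non-obvious-iso}); precomposing $ms_i\otimes_{E(BT)}\EBS_i$ and $\Deltas_i\otimes_{E(BT)}\EBS_i$ with those isomorphisms is exactly the ``twisting'' in the statement. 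This yields \eqref{eq:E-fancy-diagram-times-Gs-1}, and $\EBS_i\otimes_{E(BT)}-$ yields \eqref{eq:E-fancy-diagram-times-Gs-2}. Since $-\otimes_{E(BT)}\EBS_i$ is $E(BT)$-linear, the two outer composites $\EBS_i(2)\to\EBS_{\underline{ii}}\to\EBS_i$ remain multiplication by $x_i-_\FE x_{i+1}$ and $x_{i+1}-_\FE x_i$, as in \cref{prop:E-two-fiber-seq}.

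For the cohomology formulas: by \cref{ex:E-of-BGi} the module $\EBS_i^*\simeq E^*(BT)\otimes_{E^*(BG_i)}E^*(BT)$ is free of rank two over $E^*(BT)$ on either side, so all relevant $\mathrm{Tor}$-groups vanish, the Eilenberg--Moore/Künneth spectral sequences computing the tensor products degenerate, $\EBS_{\underline{ii}}^*\simeq E^*(BT)\otimes_{E^*(BG_i)}E^*(BT)\otimes_{E^*(BG_i)}E^*(BT)$ (\cref{prop:E-coho-of-BS}), and the effect of $-\otimes_{E(BT)}\EBS_i$ on cohomology is just $(-)^*\otimes_{E^*(BT)}\EBS_i^*$. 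Tensoring the formulas of \cref{prop:E-coho-two-fiber-seq} with $\EBS_i^*$ then gives \eqref{eq:E-coho-fancy-diagram-Gs-1} and \eqref{eq:E-coho-fancy-diagram-Gs-2}, once one records the effect of \eqref{eq:E-non-obvious-iso} on cohomology -- this is what produces the Weyl-group conjugations $s_i(-)$ appearing in the $ms_i$- and $\Deltas_i$-rows.

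I expect the only genuinely delicate point -- the ``main obstacle'' -- to be the bookkeeping around the twist: checking that the spectrum-level identification $E(BTs_i)\otimes_{E(BT)}\EBS_i\simeq\EBS_i$ coming from the element $s_i$ agrees with the algebraic twist used on cohomology, and that composing $ms_i$ and $\Deltas_i$ with it deposits the conjugation $s_i(-)$ in the correct tensor slot. This is routine but error-prone; a reliable check is that each side of \eqref{eq:E-fancy-diagram-times-Gs-1} must, after the identifications, reduce to the $-\otimes_{E(BT)}\EBS_i$ base-change of the corresponding composite in \cref{prop:E-two-fiber-seq}, which determines the twist uniquely.
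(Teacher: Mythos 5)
Your proposal is correct and matches the paper's (largely implicit) argument: the paper obtains \cref{cor:E-four-fiber-sequences} exactly by base-changing the fiber sequences of \cref{prop:E-two-fiber-seq} along $-\otimes_{E(BT)}\EBS_i$ and $\EBS_i\otimes_{E(BT)}-$, identifying the terms via \cref{obs:tensor-over-EBT} and the twist isomorphisms \eqref{eq:E-non-obvious-iso}, and reading off the cohomology formulas from \cref{prop:E-coho-two-fiber-seq} together with \eqref{eq:E-non-obvious-iso}, which is precisely your route (your alternative via $E(T\backslash-/T)$ applied to \cref{cor:four-fiber-sequences} with the Thom isomorphism is also how the topological input was set up). Your flagged "delicate point" about which tensor slot receives $s_i(-)$ is indeed the only place requiring care, and your proposed check against the topological description resolves it.
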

Here we used \cref{prop:E-coho-two-fiber-seq} and \eqref{eq:E-non-obvious-iso} to compute the maps on cohomology.



\subsection{Splittings of $E$-valued Bott-Samelson bimodules}\label{subsec:splittings-of-EBS}
Fix $i, j$ neighboring simple transpositions.
In this subsection we construct splittings of $\EBS_{\underline{ii}}$ and $\EBS_{\underline{iji}}$. We start with $\EBS_{\underline{ii}}$. Recall that we have a $T-T$-equivariant multiplication map $\mu_i \colon G_i \times_T G_i \to G_i$. This induces a map of $E(BT)-E(BT)$-bimodules:
\begin{equation}\label{eq:E-mu-mult}
    \mu_i \colon \EBS_i \to \EBS_{\underline{ii}}, \quad r_1 \otimes r_2 \mapsto r_1 \otimes 1 \otimes r_2.
\end{equation}

\begin{proposition}\label{prop:E-BS_s-splitting-1}
    We have a splitting
    \begin{equation}\label{eq:E-BS_s-splitting-1}
        \begin{tikzcd}[column sep=1.5cm]
    \EBS_i
    \arrow[yshift=0.9ex]{rr}{\mu_i}
\arrow[leftarrow, yshift=-0.9ex, "m_i \otimes_{E(BT)}\EBS_i"']{rr}[yshift=-0.2ex]{}
&&
\EBS_{\underline{ii}}
\arrow[yshift=0.9ex]{rr}{\nabla_i^L}
\arrow[leftarrow, yshift=-0.9ex, "\Deltas_i \otimes_{E(BT)}\EBS_i"']{rr}[yshift=-0.2ex]{}
&&
\EBS_i(2)
\\
r_1 \otimes r_2 \ar[rr] && r_1 \otimes 1 \otimes r_2 && 
\\
r_1  \, r_2 \otimes r_3 && \ar[ll] r_1 \otimes r_2 \otimes r_3 \ar[rr] && 
r_1 \, \partial'_i r_2  \otimes r_3 \\ 
&& 
r_1 \, (x_{i+1} \otimes 1 -_\FE 1 \otimes x_{i+1}) \otimes  r_2 
&& \ar[ll] r_1 \otimes r_2
        \end{tikzcd}
    \end{equation}
\end{proposition}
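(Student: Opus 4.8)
The plan is to obtain the splitting formally from \cref{cor:E-four-fiber-sequences} together with \cref{obs:mult-splitting}, and then to pin down the four maps by a computation on cohomology. The structural input is that the southwest diagonal of \eqref{eq:E-fancy-diagram-times-Gs-1} is a fiber sequence
$\EBS_i(2)\xrightarrow{\Deltas_i\otimes_{E(BT)}\EBS_i}\EBS_{\underline{ii}}\xrightarrow{m_i\otimes_{E(BT)}\EBS_i}\EBS_i$
of $E(BT)-E(BT)$-bimodule spectra, while \cref{obs:mult-splitting} (unwound in \cref{obs:explicit-maps-Gi}) shows that the multiplication map $\mu_i$ of \eqref{eq:E-mu-mult} is a section of $m_i\otimes_{E(BT)}\EBS_i$. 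Since we are working with module spectra, a fiber sequence equipped with a section of its second map is canonically a biproduct; this yields $\EBS_{\underline{ii}}\simeq\EBS_i\oplus\EBS_i(2)$ together with a canonical retraction, which I call $\nabla_i^L$, characterized by $\nabla_i^L\circ(\Deltas_i\otimes_{E(BT)}\EBS_i)=\id$ and $\nabla_i^L\circ\mu_i=0$. The cohomology formulas in the statement for $\mu_i$, $m_i\otimes_{E(BT)}\EBS_i$ and $\Deltas_i\otimes_{E(BT)}\EBS_i$ are exactly those already recorded in \eqref{eq:E-coho-fancy-diagram-Gs-1} (the last one after incorporating the twist \eqref{eq:E-non-obvious-iso}), so nothing new is needed there.

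\textbf{Identifying $\nabla_i^L$ on cohomology.} It remains to check that $\nabla_i^L$ acts on cohomology by $r_1\otimes r_2\otimes r_3\mapsto r_1\,\partial'_i(r_2)\otimes r_3$. Applying $\pi_*$ to the biproduct gives $\EBS^*_{\underline{ii}}=\mathrm{im}(\mu_i)\oplus\mathrm{im}(\Deltas_i\otimes_{E(BT)}\EBS_i)$, so on cohomology $\nabla_i^L$ is the unique $E^*(BT)$-bilinear map that vanishes on $\mathrm{im}(\mu_i)$ and restricts to a retraction of $\Deltas_i\otimes_{E(BT)}\EBS_i$; it therefore suffices to verify the displayed rule has these two properties. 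It is well defined because $\partial'_i$ is $E^*(BG_i)$-linear, and it annihilates $\mathrm{im}(\mu_i)$ because $\partial'_i(1)=0$ (\cref{prop:demazure-prop}(1),(3)). For the retraction property, substituting the formula for $\Deltas_i\otimes_{E(BT)}\EBS_i$ and cancelling the outer factors $r_1,r_3$ reduces everything to the single assertion that applying $\partial'_i$ to the right tensor factor of $x_{i+1}\otimes 1-_{\FE}1\otimes x_{i+1}\in\EBS^*_i$ and then multiplying out (via $r_1\otimes r_2\mapsto r_1r_2$) yields $1$.

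\textbf{The key identity.} To establish that last assertion I would first use \cref{lem:unique-g} to rewrite
\[ x_{i+1}\otimes 1-_{\FE}1\otimes x_{i+1}=-(x_i\otimes 1-1\otimes x_i)\,g(x_{i+1}\otimes 1,\,1\otimes x_{i+1})^{-1}, \]
and then exploit the relation $(x_i\otimes 1-1\otimes x_i)(x_{i+1}\otimes 1-1\otimes x_i)=0$ in $\EBS^*_i$ --- which is immediate from $x_i+x_{i+1},\,x_ix_{i+1}\in E^*(BG_i)$ --- to move the invertible correction factor into the left tensor slot, where it becomes $g(x_{i+1},x_i)^{-1}$. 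Left $E^*(BT)$-linearity of ``apply $\partial'_i$ in the middle and multiply out'', together with $\partial'_i(x_i)=g(x_{i+1},x_i)$ (\cref{prop:demazure-prop}(4)), then produces the value $1$, as needed.

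\textbf{Main obstacle.} The one substantive step is this last identity: reconciling the formal-group subtraction $-_{\FE}$ entering $\Deltas_i\otimes_{E(BT)}\EBS_i$ with the ordinary subtraction built into the Demazure operator, while carefully tracking the three tensor factors and the $s_i$-twists of \eqref{eq:E-non-obvious-iso}. Everything else is abstract nonsense or routine bookkeeping.
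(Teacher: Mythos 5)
Your proof is correct and follows essentially the same route as the paper: the splitting is obtained from the fiber sequence of \cref{cor:E-four-fiber-sequences} split by the section $\mu_i$ coming from \cref{obs:mult-splitting}, and $\nabla_i^L$ is then pinned down on cohomology using \cref{lem:unique-g}, $\partial_i'(x_i)=g(x_{i+1},x_i)$, and the relation letting you trade $1\otimes x_{i+1}$ for $x_i\otimes 1$ against the difference element (the paper's \eqref{eq:hochschild-2} in slightly different clothing). The paper merely organizes the final check differently, verifying $1\otimes r_2 - r_2\otimes 1=\partial_i'(r_2)\,(x_{i+1}\otimes 1-_{\FE}1\otimes x_{i+1})$ after reducing to $r_2\in\{1,x_i\}$ by $E^*(BT)^{s_i}$-linearity, which is the same computation as your evaluation of the ``apply $\partial_i'$ in the right slot and multiply'' map on that element.
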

Recall that $\partial'_i r \coloneqq \frac{s_i(r)- r}{x_{i+1}-_{\FE} x_{i}}$
is defined in \cref{def:Demazure}.

\begin{proof}
    The splitting follows from  \cref{obs:mult-splitting}.
    We only need to verify the maps on cohomology. As three  of the four maps are described in \eqref{eq:E-coho-fancy-diagram-Gs-1} and \eqref{eq:E-mu-mult}, it is enough to check that 
    the projection $r_1 \otimes r_2 \otimes r_3 \mapsto r_1 \,\partial_i' r_s \otimes r_3$ is the correct map. That is, we need to check that 
    \begin{equation}
        r_1 \otimes r_2 \otimes r_3 = r_1 \,r_2 \otimes 1 \otimes r_3 + r_1 \, \partial_i' r_2 \,(x_{i+1} \otimes 1 -_\FE 1 \otimes x_{i+1}) \otimes r_3.
    \end{equation}

    It is clear that we can take $r_1 = r_3 = 1$, in which case it reduces to showing that 
    \begin{equation}\label{eq:need-to-check-this}
       1 \otimes r_2 - r_2 \otimes 1 = \partial_i' r_2 \,(x_{i+1} \otimes 1 -_\FE 1 \otimes x_{i+1})
    \end{equation}
    in $\EBS^*_i = E^*(BT) \otimes_{E^*(BT)^{s_i}} E^*(BT)$.
    Furthermore, as both sides are $E^*(BT)^{s_i}$-linear (\cref{prop:demazure-prop}), and $E^*(BT)$ is generated by $1$ and $x_i$ as a left $E^*(BT)^{s_i}$-module (\cref{ex:E-of-BGi}), it suffices to consider the cases $r_2 = 1$ and $r_2 = x_i$. For $r_2 = 1$, \eqref{eq:need-to-check-this} is clear as $\partial_i'\, 1 = 0$.
    For $r_2 = x_i$, note that 
    $1 \otimes x_i - x_i \otimes 1 = x_{i+1} \otimes 1 - 1 \otimes x_{i+1}$. 
    By definition of $g(-, -)$ in \cref{lem:unique-g}, we have 
    \begin{equation}
        x_{i+1} \otimes 1 - 1 \otimes x_{i+1} = g(x_{i+1} \otimes 1, 1 \otimes x_{i+1}) \times (x_{i+1} \otimes 1 -_\FE 1 \otimes x_{i+1}).
    \end{equation}
    By \eqref{eq:hochschild-2}, we can exchange a right multiplication by $1 \otimes x_{i+1}$ on $(x_{i+1} \otimes 1 -_\FE 1 \otimes x_{i+1})$ with a left multiplication by $x_{i} \otimes 1$.
    Therefore 
    \begin{equation}
        \begin{aligned}
            g(x_{i+1} \otimes 1, 1 \otimes x_{i+1}) \times (x_{i+1} \otimes 1 -_\FE 1 \otimes x_{i+1}) &= g(x_{i+1} \otimes 1, x_{i} \otimes 1)\times (x_{i+1} \otimes 1 -_\FE 1 \otimes x_{i+1}) \\
            &= g(x_{i+1}, x_i)\, (x_{i+1} \otimes 1 -_\FE 1 \otimes x_{i+1}) \\ 
            &= \partial'_i x_i\, (x_{i+1} \otimes 1 -_\FE 1 \otimes x_{i+1}),
        \end{aligned}
    \end{equation}
    where the last equality is given by \cref{prop:demazure-prop}(4).
\end{proof}

Analogously, we can use $\mu_i$ to split $\EBS_i \otimes_{E(BT)} m_i$:
\begin{proposition}\label{prop:E-BS_s-splitting-2}
    We have a splitting
    \begin{equation}\label{eq:E-BS_s-splitting-2}
        \begin{tikzcd}[column sep=1.5cm]
    \EBS_i
    \arrow[yshift=0.9ex]{rr}{\mu_i}
\arrow[leftarrow, yshift=-0.9ex, "\EBS_i \otimes_{E(BT)}m_i "']{rr}[yshift=-0.2ex]{}
&&
\EBS_{\underline{ii}}
\arrow[yshift=0.9ex]{rr}{\nabla^R_i}
\arrow[leftarrow, yshift=-0.9ex, "\EBS_i \otimes_{E(BT)} \Deltas_i "']{rr}[yshift=-0.2ex]{}
&&
\EBS_i(2)
\\
r_1 \otimes r_2 \ar[rr] && r_1 \otimes 1 \otimes r_2 && 
\\
r_1  \otimes r_2  \, r_3 && \ar[ll] r_1 \otimes r_2 \otimes r_3 \ar[rr] && 
r_1   \otimes (-\partial'_i r_2)\, r_3 \\ 
&& 
r_1  \otimes (x_{i+1} \otimes 1 -_\FE 1 \otimes x_{i+1})\, r_2 
&& \ar[ll] r_1 \otimes r_2
        \end{tikzcd}
    \end{equation} 
\end{proposition}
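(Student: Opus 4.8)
The plan is to deduce \cref{prop:E-BS_s-splitting-2} from \cref{prop:E-BS_s-splitting-1} by a symmetry argument, rather than redoing the cohomology computation from scratch. The key observation is that there is an anti-automorphism of the situation which swaps the left and right $E(BT)$-actions: at the level of spaces, the map $G_i \to G_i$ given by $g \mapsto g^{-1}$ (or more precisely the induced map on $T\backslash \BS_{\underline{ii}}/T$ reversing the order of factors) intertwines $m_i \otimes_{E(BT)} \EBS_i$ with $\EBS_i \otimes_{E(BT)} m_i$, intertwines $\Deltas_i \otimes_{E(BT)} \EBS_i$ with $\EBS_i \otimes_{E(BT)} \Deltas_i$, and fixes $\mu_i$ by \cref{obs:mult-splitting}. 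Applying this flip to the splitting \eqref{eq:E-BS_s-splitting-1} yields a splitting of exactly the form \eqref{eq:E-BS_s-splitting-2}, and one reads off the maps on cohomology by transporting the formulas in \eqref{eq:E-BS_s-splitting-1} through the flip, which simply reverses the tensor factors $r_1 \otimes r_2 \mapsto r_2 \otimes r_1$, etc.

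Alternatively — and this may be cleaner to write — one can argue directly, parallel to the proof of \cref{prop:E-BS_s-splitting-1}. The existence of the splitting as a diagram of $E(BT)-E(BT)$-bimodule spectra is immediate from \cref{obs:mult-splitting} and \cref{cor:E-four-fiber-sequences} (specifically \eqref{eq:E-fancy-diagram-times-Gs-2}): $\mu_i$ retracts $\EBS_i \otimes_{E(BT)} m_i$, and since \eqref{eq:E-fancy-diagram-times-Gs-2} exhibits $\EBS_i(2) \xrightarrow{\EBS_i \otimes_{E(BT)} \Deltas_i} \EBS_{\underline{ii}} \xrightarrow{\EBS_i \otimes_{E(BT)} ms_i} \EBS_i$ as a fiber sequence, the cofiber of $\EBS_i \otimes_{E(BT)} m_i$ is $\EBS_i(2)$. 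It remains only to identify the projection $\nabla_i^R$ on cohomology. As in the previous proof, three of the four maps are already recorded (in \eqref{eq:E-coho-fancy-diagram-Gs-2} and \eqref{eq:E-mu-mult}), so it suffices to check that the projection $r_1 \otimes r_2 \otimes r_3 \mapsto r_1 \otimes (-\partial'_i r_2)\, r_3$ is correct, i.e. that
\begin{equation}
    r_1 \otimes r_2 \otimes r_3 = r_1 \otimes 1 \otimes r_2\, r_3 + r_1 \otimes (x_{i+1} \otimes 1 -_\FE 1 \otimes x_{i+1})\, (-\partial'_i r_2)\, r_3
\end{equation}
in $\EBS^*_{\underline{ii}}$. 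Reducing to $r_1 = r_3 = 1$, this becomes the identity $1 \otimes r_2 - 1 \otimes 1 \cdot r_2 = (x_{i+1} \otimes 1 -_\FE 1 \otimes x_{i+1})\,(-\partial'_i r_2)$ — note the sign, because here the term $x_{i+1}\otimes 1 -_\FE 1 \otimes x_{i+1}$ sits on the \emph{left} of the second tensor slot, so the roles of the two copies of $E^*(BT)$ inside $\EBS_i^* = E^*(BT) \otimes_{E^*(BT)^{s_i}} E^*(BT)$ are exchanged relative to \eqref{eq:need-to-check-this}. By $E^*(BT)^{s_i}$-linearity (\cref{prop:demazure-prop}) and \cref{ex:E-of-BGi} one checks the cases $r_2 = 1$ (trivial) and $r_2 = x_i$, where one uses $1 \otimes x_i - 1 \otimes 1 \cdot x_i$... — more precisely one works with $x_i \otimes 1 - 1 \otimes x_i = -(x_{i+1}\otimes 1 - 1 \otimes x_{i+1})$ inside the relevant slot, \cref{lem:unique-g}, the exchange relation \eqref{eq:hochschild-2}, and \cref{prop:demazure-prop}(4), exactly mirroring the end of the proof of \cref{prop:E-BS_s-splitting-1} but tracking the extra minus sign that produces $-\partial'_i$.

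The main obstacle I anticipate is purely bookkeeping: getting the placement of the class $x_{i+1}\otimes 1 -_\FE 1\otimes x_{i+1}$ and the resulting sign on $\partial'_i$ exactly right, since in \eqref{eq:E-BS_s-splitting-2} the $\Deltas_i$-map and the projection act on the \emph{right-hand} factor and are twisted by the isomorphism \eqref{eq:E-non-obvious-iso}, which involves the $s_i$-twist. Concretely, one must be careful that when the $\Delta$-class multiplies on the left of $r_2$ (rather than the right), the Hochschild-type exchange relation one needs is still \eqref{eq:hochschild-2} but read in the opposite direction, and this is precisely what flips $\partial'_i$ to $-\partial'_i$ relative to \eqref{eq:E-BS_s-splitting-1}. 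I would therefore present the symmetry argument as the conceptual reason the statement holds and, if a self-contained verification is wanted, include the short direct check above with the sign tracked explicitly.
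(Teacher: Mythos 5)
Your second, direct argument is exactly what the paper does: its entire proof of this proposition is the single sentence that the argument is analogous to that of \cref{prop:E-BS_s-splitting-1}, and your mirrored verification (existence of the splitting from \cref{obs:mult-splitting} together with the fiber sequence \eqref{eq:E-fancy-diagram-times-Gs-2}, reduction to $r_1=r_3=1$ and then to $r_2\in\{1,x_i\}$ by $E^*(BT)^{s_i}$-linearity, and the computation via \cref{lem:unique-g}, \eqref{eq:hochschild-2}, and \cref{prop:demazure-prop}(4)) is precisely that analogy carried out. One caution: the flip $g\mapsto g^{-1}$ you offer as the conceptual reason is only equivariant after twisting by inversion on $T$, so on cohomology it exchanges the left and right $E(BT)$-actions and is semilinear over the formal-group inverse rather than ``simply reversing the tensor factors''; so present the direct check as the actual proof, tracking the placement of the class $x_{i+1}\otimes 1 -_\FE 1\otimes x_{i+1}$ and the $s_i$-twist coming from \eqref{eq:E-non-obvious-iso}, since that bookkeeping (not the naive sign flip in your garbled reduced identity, whose left-hand side should read $r_2\otimes 1 - 1\otimes r_2$ in the second copy of $\EBS_i^*$) is what produces $-\partial'_i$.
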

The proof is analogous to the proof of \cref{prop:E-BS_s-splitting-1}.
\begin{remark}
    By \cref{obs:mult-splitting}, $\mu_i$ also splits $\EBS_i \otimes_{E(BT)} ms_i$ and $\EBS_i \otimes_{E(BT)} ms_i$. The induced maps $\EBS_{\underline{ii}} \to \EBS_i(2)$ use the Demazure operator $\partial_i$ instead of $\partial'_i$.
\end{remark}
We also need the following lemma:
\begin{lemma}\label{lem:need-this-for-condition-2}
   The composite 
   \begin{equation}
   \EBS_i(2) \xrightarrow{\EBS_i \otimes_{E(BT)} \Deltas_i} \EBS_{\underline{ii}} \xrightarrow{\nabla_i^L} \EBS_i(2)
   \end{equation}
   is an equivalence.
\end{lemma}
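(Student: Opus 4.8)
The plan is to argue formally rather than to unwind the explicit cohomology formulas, working throughout in the stable $\infty$-category of $E(BT)$--$E(BT)$-bimodule spectra. The key observation is that the two splittings \cref{prop:E-BS_s-splitting-1} and \cref{prop:E-BS_s-splitting-2} both present $\EBS_{\underline{ii}}$ as a biproduct $\EBS_i\oplus\EBS_i(2)$ in which the summand $\EBS_i$ is included by one and the same map $\mu_i$ (namely $E(-)$ applied to the multiplication $G_i\times_T G_i\to G_i$). Since in a stable $\infty$-category the projection from a biproduct onto one factor is a cofiber of the inclusion of the complementary factor, \cref{prop:E-BS_s-splitting-1} exhibits $\nabla_i^L$ as a cofiber of $\mu_i$, while \cref{prop:E-BS_s-splitting-2} exhibits $\nabla_i^R$ as a cofiber of the \emph{same} map $\mu_i$; and \cref{prop:E-BS_s-splitting-2} moreover tells us that $\iota^R\coloneqq\EBS_i\otimes_{E(BT)}\Deltas_i$ is a section of $\nabla_i^R$.

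Concretely, I would feed the biproduct identity $\id_{\EBS_{\underline{ii}}}\simeq\mu_i\circ(\EBS_i\otimes_{E(BT)}m_i)+\iota^R\circ\nabla_i^R$ coming from \cref{prop:E-BS_s-splitting-2}, together with $\nabla_i^L\circ\mu_i\simeq0$ coming from \cref{prop:E-BS_s-splitting-1} (there $\mu_i$ and $\nabla_i^L$ are the inclusion and projection of complementary summands), into
\[
\nabla_i^L \;\simeq\; \nabla_i^L\circ\mu_i\circ(\EBS_i\otimes_{E(BT)}m_i) \;+\; (\nabla_i^L\circ\iota^R)\circ\nabla_i^R \;\simeq\; (\nabla_i^L\circ\iota^R)\circ\nabla_i^R .
\]
So $\nabla_i^L$ factors through $\nabla_i^R$ by the endomorphism $\nabla_i^L\circ\iota^R$ of $\EBS_i(2)$. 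Because $\nabla_i^R$ is a cofiber of $\mu_i$ and $\nabla_i^L\circ\mu_i\simeq0$, this factorization is unique up to contractible choice, so $\nabla_i^L\circ\iota^R$ is \emph{the} comparison map between the two cofibers of $\mu_i$; as $\nabla_i^L$ is also a cofiber of $\mu_i$, this comparison map is an equivalence. Hence $\nabla_i^L\circ\iota^R\colon\EBS_i(2)\to\EBS_i(2)$ is an equivalence, which is what the lemma asserts.

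I do not anticipate any genuine obstacle: this is a diagram chase in a stable $\infty$-category, with no analytic content. The only thing to be careful about is bookkeeping — keeping straight which of the four inclusion/slide maps of \cref{cor:E-four-fiber-sequences} enters each of the two splittings, and checking that the map $\mu_i$ is literally the same in \cref{prop:E-BS_s-splitting-1} and \cref{prop:E-BS_s-splitting-2}, so that the two cofiber presentations really do share a base map. Should one prefer a concrete verification, one can instead compute on cohomology via \eqref{eq:E-BS_s-splitting-1} and \eqref{eq:E-BS_s-splitting-2}: the composite sends $r_1\otimes r_2$ to $r_1\,\partial'_i(a)\otimes b\,r_2$ with $\sum a\otimes b=x_{i+1}\otimes1-_{\FE}1\otimes x_{i+1}$ in $\EBS_i^*$, and \cref{lem:unique-g} together with \cref{prop:demazure-prop}(4) show this is multiplication by a unit; but the formal route is cleaner and transparently lifts to the spectrum level.
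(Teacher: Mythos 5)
Your proof is correct and is essentially the paper's argument: both rest on the facts that $\nabla_i^L$ and $\nabla_i^R$ are cofibers of the same map $\mu_i$ and that $\EBS_i \otimes_{E(BT)} \Deltas_i$ is a section of $\nabla_i^R$, so that the composite in question is identified with the comparison equivalence $\phi$ between the two cofiber presentations. The only cosmetic difference is that the paper skips your biproduct-identity step and obtains the identification directly by composing $\nabla_i^L \simeq \phi \circ \nabla_i^R$ with the section $\EBS_i \otimes_{E(BT)} \Deltas_i$; note that your appeal to ``unique up to contractible choice'' really relies on this section (factorizations through a cofiber are unique only after fixing null-homotopies), so the clean justification is exactly that composition.
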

\begin{proof}
    One can do this by explicit computation. However, we will give an abstract argument: since $\nabla_i^L, \nabla_i^R \colon \EBS_{\underline{ii}} \to \EBS_i(2)$ are both cofibers of $\EBS_i \xrightarrow{\mu} \EBS_{\underline{ii}}$, there is an automorphism $\phi \colon \EBS_i(2) \xrightarrow{\simeq} \EBS_i(2)$ such that $\nabla_i^L = \nabla_i^R \circ \phi$. Finally, we have
    \begin{equation}
       (\EBS_i \otimes_{E(BT)} \Deltas_i) \circ \nabla_i^L = 
        (\EBS_i \otimes_{E(BT)} \Deltas_i) \circ \nabla_i^R \circ \phi = \phi, 
    \end{equation}
    which is invertible.
\end{proof}

Now we move on to the the splitting of $\EBS_{\underline{iji}}$, where $i$ and $j$  are neighboring simple transpositions. 
Recall that $G_{i,j}$ is the standard parabolic subgroup corresponding to the $S_3 \subset W$ generated by $i$ and $j$.
\begin{notation}\label{nota:EBS_iji}
    Let $\EBS_{i,j}$ denote the $E(BT)-E(BT)$-bimodule $E(T\backslash G_{i,j}/T)$. We denote its cohomology by $\EBS^*_{i,j}$.
\end{notation}
By the same argument as \cref{prop:E-coho-of-BS}, we get the following:
\begin{lemma}\label{lem:E-coho-of-sts}
    $\EBS^*_{i,j} \simeq E^*(BT) \otimes_{E^*(BT)^{S_3}} E^*(BT)$.
\end{lemma}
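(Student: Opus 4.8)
The plan is to run the argument of \cref{prop:E-coho-of-BS} essentially verbatim, with the simple reflection $s_i$ replaced by the parabolic subgroup $S_3 = \langle s_i, s_j\rangle \subseteq W$ and $G_i$ replaced by the standard parabolic $G_{i,j}$. As recorded just above \eqref{eq:iterative-def-of-TT-Soergel}, the double quotient is a pullback of spaces, $T\backslash G_{i,j}/T \simeq BT \times_{BG_{i,j}} BT$, so $E$-cohomology furnishes the Eilenberg-Moore spectral sequence
\[
E_2 = \mathrm{Tor}^{E^*(BG_{i,j})}\!\bigl(E^*(BT),\, E^*(BT)\bigr) \;\Longrightarrow\; E^*\bigl(T\backslash G_{i,j}/T\bigr) = \EBS^*_{i,j}
\]
(with the same convergence input as in \cref{prop:E-coho-of-BS}, since $BG_{i,j}$ is simply connected), whose edge homomorphism is the canonical ring map $E^*(BT) \otimes_{E^*(BG_{i,j})} E^*(BT) \to \EBS^*_{i,j}$.

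Thus the lemma reduces to showing this spectral sequence degenerates, for which it is enough that $E^*(BT)$ be flat --- in fact free of finite rank --- as a module over $E^*(BG_{i,j}) \simeq E^*[[\Lambda]]_{\cF}^{S_3}$ (the identification being \cref{prop:E-coho-on-parabolics}). This is the parabolic analogue of \cref{obs:S-free-rank-2}, and I would get it by iterating the rank-two fact: $E^*(BT) = E^*[[\Lambda]]_{\cF}$ is free of rank $2$ over $E^*(BG_i) = E^*[[\Lambda]]_{\cF}^{s_i}$ on $\{1, x_i\}$ by \cref{ex:E-of-BGi}, while $E^*(BG_i)$ is free of rank $3$ over $E^*(BG_{i,j})$: writing $j = i+1$ for definiteness, the third moved variable $x_{i+2}$ satisfies a monic cubic over $E^*[[\Lambda]]_{\cF}^{S_3}$ whose coefficients are the generating $S_3$-invariants, and a degree/rank count (equivalently a graded Nakayama argument with respect to the $(x_1,\dots,x_n)$-adic filtration) identifies this as the defining relation, so $\{1, x_{i+2}, x_{i+2}^2\}$ is a basis. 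Composing, $E^*(BT)$ is free of rank $6$ over $E^*(BG_{i,j})$ on the staircase basis $\{x_i^a x_{i+2}^b : 0 \le a \le 1,\ 0 \le b \le 2\}$; in particular it is flat, the higher $\mathrm{Tor}$-groups vanish, and the canonical map is an isomorphism. (Alternatively one may invoke directly the freeness of $E^*[[\Lambda]]_{\cF}$ of rank $|W'|$ over $E^*[[\Lambda]]_{\cF}^{W'}$ for an arbitrary parabolic $W'$, which is standard in the formal-group-law setting; see \cite{CZZ}.) A final application of \cref{prop:E-coho-on-parabolics} rewrites $E^*(BG_{i,j})$ as $E^*(BT)^{S_3}$, giving $\EBS^*_{i,j} \simeq E^*(BT) \otimes_{E^*(BT)^{S_3}} E^*(BT)$.

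The only step that is not soft homotopy theory --- and hence the main obstacle --- is the freeness of $E^*(BT)$ over $E^*(BG_{i,j})$ used above. Unlike the rank-two statement \cref{obs:S-free-rank-2}, which the paper treats as immediate, the intermediate rank-three freeness in the deformed formal-group-law setting requires either the explicit cubic-relation computation together with a completeness/Nakayama argument to lift it from the associated graded (where it is the classical type $A$ invariant theory statement) to $E^*[[\Lambda]]_{\cF}$ itself, or a citation to the structure theory of formal Demazure algebras as in \cite{CZZ}. Granting that, the remainder is the same Eilenberg-Moore degeneration as in \cref{prop:E-coho-of-BS}.
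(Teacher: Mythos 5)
Your proposal is correct and follows essentially the same route as the paper, which proves the lemma by the identical Eilenberg--Moore degeneration argument used for \cref{prop:E-coho-of-BS} applied to $T\backslash G_{i,j}/T \simeq BT \times_{BG_{i,j}} BT$. The only difference is that you spell out the freeness of $E^*(BT)$ over $E^*(BG_{i,j}) \simeq E^*(BT)^{S_3}$ (rank $6$, via the rank-$2$ and rank-$3$ intermediate steps), a point the paper leaves implicit when it says ``by the same argument,'' and your justification of that freeness is sound.
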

The multiplication map $\mu_{iji} \colon \BS_{\underline{iji}} \to G_{i,j}$ induces a map $$\mu_{iji} \colon \EBS_{i,j} \to \EBS_{\underline{iji}}, \quad r_1 \otimes r_2 \mapsto r_1 \otimes 1 \otimes 1 \otimes r_2.$$

By \cref{prop:sts-pushout}, we get the following:
\begin{proposition}\label{prop:E-sts-pushout}
    We have a pullback (equivalently pushout) square of $E(BT)-E(BT)$-bimodule spectra:
    \begin{equation}\label{eq:E-sts-pushout}
        \begin{tikzcd}
            \EBS_{i,j} \ar[r] \ar[d, "\mu_{iji}"]& \EBS_i \ar[d, "\mu_i"]\\ 
            \EBS_{\underline{iji}} \ar[r] & \EBS_{\underline{ii}} \, .
        \end{tikzcd}
    \end{equation}
\end{proposition}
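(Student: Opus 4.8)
The plan is to deduce \cref{prop:E-sts-pushout} formally from the space-level pushout \eqref{eq:sts-pushout}, using that the homotopy double quotient and the functor $E(-)$ each interact well with pushouts and pullbacks.

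First I would observe that $X \mapsto T\backslash X/T$, regarded as a functor from $T-T$-spaces to spaces, is the homotopy-orbit functor $(-)_{h(T\times T)} = \operatorname*{colim}_{B(T\times T)}(-)$ for the combined $T\times T$-action. As a colimit functor it is a left adjoint, hence preserves all colimits, in particular pushouts. By \cref{prop:sts-pushout}, \eqref{eq:sts-pushout} is a pushout in $T-T$-spaces, so applying $T\backslash -/T$ produces a pushout square of spaces
\begin{equation*}
\begin{tikzcd}
T\backslash\BS_{\underline{ii}}/T \ar[r] \ar[d, "\mu_i"'] & T\backslash\BS_{\underline{iji}}/T \ar[d, "\mu_{iji}"]\\
T\backslash G_i/T \ar[r] & T\backslash G_{i,j}/T \,.
\end{tikzcd}
\end{equation*}

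Next I would apply $E(-) = \Hom(\Sigma^\infty_+(-), E)$. As a functor from $\Spaces^{\op}$ to $E$-module spectra it is the composite of $\Sigma^\infty_+$, which preserves colimits, with $\Hom(-,E)$, which carries colimits to limits; hence it sends the pushout square above to a pullback square of $E$-module spectra. Identifying the four corners via \cref{def:E-Bott-Samelson} and \cref{nota:EBS_iji} gives exactly the square of \eqref{eq:E-sts-pushout}, with vertical maps the $\mu_{iji}$ and $\mu_i$ induced by the multiplication maps (cf. \eqref{eq:E-mu-mult} and the discussion preceding \cref{prop:E-sts-pushout}). Because all four double quotients map compatibly to $T\backslash\pt/T = BT\times BT$, this is moreover a pullback square of $E(BT)-E(BT)$-bimodule spectra, as limits of bimodule spectra are computed on underlying spectra. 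Finally, since the $\infty$-category of $E(BT)-E(BT)$-bimodule spectra is stable, a pullback square there is the same as a pushout square, which yields the parenthetical claim.

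The proof is essentially formal, so I do not expect a genuine obstacle; the one point to be careful about is that \cref{prop:sts-pushout} really asserts a pushout in the $\infty$-category of $T-T$-spaces — equivalently, a homotopy pushout of spaces carrying compatible $T\times T$-actions — so that it is preserved by the homotopy quotient. That is precisely the content of \cite[Lemma 5.7]{nituSBI}, which we take as given.
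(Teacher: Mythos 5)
Your proof is correct and is essentially the argument the paper intends: the paper simply states the proposition as an immediate consequence of \cref{prop:sts-pushout}, the implicit reasoning being exactly what you spell out — the double quotient, as a homotopy colimit, preserves the pushout, $E(-)=\Hom(\Sigma^\infty_+(-),E)$ converts it into a pullback, limits of bimodule spectra are detected on underlying spectra, and stability makes the pullback a pushout. Your added care about \eqref{eq:sts-pushout} being a homotopy pushout (via the cited \cite[Lemma 5.7]{nituSBI}) is exactly the right point to flag.
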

By \cref{prop:E-BS_s-splitting-1}, the cofiber of $\mu_i \colon \EBS_i \to \EBS_{\underline{ii}}$ is $\EBS_i(2)$. Since parallel maps in a pushout diagram have the same cofibers, we get the following:
\begin{corollary}\label{cor:E-fiber-sequence-sts}
    We have a fiber sequence 
    \begin{equation}\label{eq:E-fiber-sequence-sts}
       \EBS_{i,j} \xrightarrow{\mu_{iji}} \EBS_{\underline{iji}}  \to \EBS_i(2), 
    \end{equation}
    where the second map $\EBS_{\underline{iji}} \to \EBS_i(2)$ is the composite 
    \begin{equation}
       \EBS_{\underline{iji}} \xrightarrow{\EBS_i \otimes_{E(BT)} m_j \otimes_{E(BT)}\EBS_i}  \EBS_{\underline{ii}} \xrightarrow{\nabla_i^L}  \EBS_i(2).
    \end{equation}
    Similarly, there is another fiber sequence of the form \eqref{eq:E-fiber-sequence-sts} where the second map is the composite 
         \begin{equation}\label{eq:E-fiber-sequence-sts-2}
        \EBS_{\underline{iji}} \xrightarrow{\EBS_i \otimes_{E(BT)} m_j \otimes_{E(BT)}\EBS_i}  \EBS_{\underline{ii}} \xrightarrow{\nabla_i^R}  \EBS_i(2). 
    \end{equation}
\end{corollary}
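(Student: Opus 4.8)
The plan is to bootstrap off the bicartesian square of \cref{prop:E-sts-pushout} and the splitting of \cref{prop:E-BS_s-splitting-1}. All four corners of \eqref{eq:E-sts-pushout} lie in the stable $\infty$-category of $E(BT)$-bimodule spectra, so that square is at once a pullback and a pushout. I would invoke the following standard fact: for a pushout square in a stable $\infty$-category with horizontal maps $a\colon A\to B$, $c\colon C\to D$ and vertical maps $p\colon A\to C$, $q\colon B\to D$, the square extends to a map of cofiber sequences from $\bigl(A\xrightarrow{p}C\to\mathrm{cofib}(p)\bigr)$ to $\bigl(B\xrightarrow{q}D\to\mathrm{cofib}(q)\bigr)$ whose three vertical legs are $a$, $c$, and an \emph{equivalence} $\mathrm{cofib}(p)\isoto\mathrm{cofib}(q)$. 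Applying this to the vertical maps $\mu_{iji}$ and $\mu_i$ of \eqref{eq:E-sts-pushout} gives a fiber sequence $\EBS_{i,j}\xrightarrow{\mu_{iji}}\EBS_{\underline{iji}}\to\mathrm{cofib}(\mu_{iji})$ together with an equivalence $\mathrm{cofib}(\mu_{iji})\isoto\mathrm{cofib}(\mu_i)$ under which the cofiber projection of $\mu_{iji}$ becomes the cofiber projection of $\mu_i$ pre-composed with the bottom horizontal map of \eqref{eq:E-sts-pushout}.

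By \cref{prop:E-BS_s-splitting-1} the map $\mu_i$ is a split monomorphism: the splitting \eqref{eq:E-BS_s-splitting-1} exhibits $\EBS_{\underline{ii}}\simeq\EBS_i\oplus\EBS_i(2)$ with $\mu_i$ the inclusion of the first summand and $\nabla_i^L$ the projection onto the second, so $\nabla_i^L$ presents $\EBS_i(2)$ as $\mathrm{cofib}(\mu_i)$. Combining with the previous paragraph yields the fiber sequence $\EBS_{i,j}\xrightarrow{\mu_{iji}}\EBS_{\underline{iji}}\to\EBS_i(2)$ whose second map is $\nabla_i^L$ pre-composed with the bottom horizontal map of \eqref{eq:E-sts-pushout}. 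It then remains to recognize that horizontal map as $\EBS_i\otimes_{E(BT)}m_j\otimes_{E(BT)}\EBS_i$. This comes from applying the contravariant functor $E(T\backslash-/T)$ to the space-level map $(g_1,g_2)\mapsto(g_1,1,g_2)$ of \eqref{eq:sts-pushout}: that map factors as $G_i\times_T G_i\cong G_i\times_T T\times_T G_i\xrightarrow{\id\times_T m_j\times_T\id}G_i\times_T G_j\times_T G_i$ via the canonical isomorphism of \eqref{eq:obvious-iso}, and the middle inclusion $m_j\colon T\to G_j$ induces on $E$-cohomology precisely the bimodule map $m_j\colon\EBS_j\to E(BT)$ of \cref{prop:E-two-fiber-seq}.

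The second fiber sequence \eqref{eq:E-fiber-sequence-sts-2} follows by exactly the same argument with \cref{prop:E-BS_s-splitting-2} in place of \cref{prop:E-BS_s-splitting-1}: it provides a second splitting of the \emph{same} map $\mu_i$, now with projection $\nabla_i^R$, so $\nabla_i^R$ is equally a cofiber projection of $\mu_i$ (the two projections differ by the automorphism of $\EBS_i(2)$ occurring in the proof of \cref{lem:need-this-for-condition-2}), and the bootstrapping produces the fiber sequence with second map $\nabla_i^R\circ\bigl(\EBS_i\otimes_{E(BT)}m_j\otimes_{E(BT)}\EBS_i\bigr)$. The formal ingredients here — bicartesian squares and split cofiber sequences in a stable $\infty$-category — are routine; the one point that needs genuine care is this last identification, i.e.\ correctly threading the isomorphisms of \eqref{eq:obvious-iso} through $E(T\backslash-/T)$ so that the transverse map of \eqref{eq:E-sts-pushout} is exactly $\EBS_i\otimes_{E(BT)}m_j\otimes_{E(BT)}\EBS_i$ rather than a twisted variant of it.
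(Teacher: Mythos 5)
Your proposal is correct and follows essentially the same route as the paper: the paper likewise deduces the corollary from \cref{prop:E-sts-pushout} together with \cref{prop:E-BS_s-splitting-1} (resp.\ \cref{prop:E-BS_s-splitting-2}), using that parallel maps in a pushout square of bimodule spectra have equivalent cofibers, so that $\mathrm{cofib}(\mu_{iji})\simeq\mathrm{cofib}(\mu_i)\simeq\EBS_i(2)$ with the stated composite as cofiber projection. Your write-up merely makes explicit two points the paper leaves implicit, namely the identification of the bottom horizontal map of \eqref{eq:E-sts-pushout} with $\EBS_i\otimes_{E(BT)}m_j\otimes_{E(BT)}\EBS_i$ and the fact (also recorded in the proof of \cref{lem:need-this-for-condition-2}) that both $\nabla_i^L$ and $\nabla_i^R$ present $\EBS_i(2)$ as the cofiber of $\mu_i$.
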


Following standard Soergel bimodule theory, we expect a splitting 
$\EBS_{\underline{iji}} = \EBS_{i,j} \oplus \EBS_i(2)$:
\begin{proposition}\label{prop:E-sts-splitting-1}
    There exists a map $\EBS_i(2) \to \EBS_{\underline{iji}}$ that splits \eqref{eq:E-fiber-sequence-sts}.
\end{proposition}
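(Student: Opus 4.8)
The plan is to produce a single map $s\colon\EBS_i(2)\to\EBS_{\underline{iji}}$ such that the composite $\EBS_i(2)\xrightarrow{s}\EBS_{\underline{iji}}\xrightarrow{q}\EBS_i(2)$ is an equivalence, where $q$ denotes the second map in the fiber sequence \eqref{eq:E-fiber-sequence-sts}. Granting this, $s\circ(q\circ s)^{-1}$ is a genuine section of $q$, and since a fiber sequence whose last map admits a section splits, this proves the proposition. Recall from \cref{cor:E-fiber-sequence-sts} that $q=\nabla_i^L\circ(\EBS_i\otimes_{E(BT)}m_j\otimes_{E(BT)}\EBS_i)$.

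\textbf{The candidate $s$.} I would take the composite
\[
\EBS_i(2)\xrightarrow{\ \mu_i(2)\ }\EBS_{\underline{ii}}(2)\ \simeq\ \EBS_i\otimes_{E(BT)}E(BT)(2)\otimes_{E(BT)}\EBS_i\xrightarrow{\ \EBS_i\otimes_{E(BT)}\Delta_j\otimes_{E(BT)}\EBS_i\ }\EBS_{\underline{iji}},
\]
where $\mu_i\colon\EBS_i\to\EBS_{\underline{ii}}$ is the multiplication map \eqref{eq:E-mu-mult} and $\Delta_j\colon E(BT)(2)\to\EBS_j$ is the bimodule map of \cref{prop:E-two-fiber-seq}, and the middle equivalence uses that $E(BT)$ is the unit for $-\otimes_{E(BT)}-$. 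The point of this choice is that $s$ is patently a map of $E(BT)$--$E(BT)$-bimodule spectra, being a composite of such; there is therefore nothing to lift, and the whole argument takes place at the level of honest maps of spectra.

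\textbf{Computing $q\circ s$.} Functoriality of $\EBS_i\otimes_{E(BT)}(-)\otimes_{E(BT)}\EBS_i$ identifies $(\EBS_i\otimes_{E(BT)}m_j\otimes_{E(BT)}\EBS_i)\circ(\EBS_i\otimes_{E(BT)}\Delta_j\otimes_{E(BT)}\EBS_i)$ with $\EBS_i\otimes_{E(BT)}(m_j\circ\Delta_j)\otimes_{E(BT)}\EBS_i$, and by \cref{prop:E-two-fiber-seq} the composite $m_j\circ\Delta_j\colon E(BT)(2)\to E(BT)$ is multiplication by $x_{\alpha_j}=x_j-_{\FE}x_{j+1}$. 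Tracking the explicit cohomology formulas of \eqref{eq:E-mu-mult} and \cref{prop:E-BS_s-splitting-1}, and using the Leibniz rule \cref{prop:demazure-prop} to evaluate $\nabla_i^L$ on the element with $1$ in the middle slot, one finds that $q\circ s\colon\EBS_i(2)\to\EBS_i(2)$ is left multiplication by the scalar $\partial'_i(x_{\alpha_j})\in E^0(BT)$. Everything therefore reduces to checking that $\partial'_i(x_{\alpha_j})$ is a unit of $E^*(BT)$: then $q\circ s$ is an isomorphism on homotopy groups, hence an equivalence of spectra. Since $j=i\pm1$, the root $\alpha_j$ is not $s_i$-invariant, the numerator $s_i(x_{\alpha_j})-x_{\alpha_j}$ vanishes along $x_i=x_{i+1}$ with linear term $\pm(x_i-x_{i+1})$, and the denominator $x_{-\alpha_i}=x_{i+1}-_{\FE}x_i$ equals $(x_{i+1}-x_i)$ times the unit $g(x_{i+1},x_i)^{-1}$ by \cref{lem:unique-g}; hence $\partial'_i(x_{\alpha_j})$ has invertible constant term and is a unit. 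This last point is the topological incarnation of the Soergel-calculus fact that the coefficient attached to a pair of adjacent simple reflections is invertible. The only genuine obstacle is this unit computation together with keeping the bookkeeping of tensor slots and $(2)$-shifts straight; once $s$ is set up as above, the rest is formal.
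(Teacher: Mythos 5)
Your proof is correct and essentially identical to the paper's: the paper uses the same candidate section $f=(\EBS_i\otimes_{E(BT)}\Delta_j\otimes_{E(BT)}\EBS_i)\circ\mu_i$ (the label $m_j$ in its diagram is a typo for $\Delta_j$, as the displayed cohomology formula shows), computes that its composite with the projection $\EBS_{\underline{iji}}\to\EBS_i(2)$ is multiplication by $\partial'_i(x_{\alpha_j})$, and inverts this to obtain a genuine section, exactly as you do. The only (cosmetic) difference is that the paper verifies invertibility of $\partial'_i(x_{\alpha_j})$ by an explicit manipulation with the series $g$ of \cref{lem:unique-g}, whereas you read off an invertible constant term from the lowest-order terms of numerator and denominator; both arguments are valid.
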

\begin{proof}
    Let us assume that $j = i+1$. The other case $j = i-1$ is analogous.
    First we construct a map $f \colon \EBS_i(2) \to \EBS_{\underline{iji}}$ such that the composite $\EBS_i(2) \xrightarrow{f} \EBS_{\underline{iji}} \to \EBS_i(2)$ is an isomorphism, which we will check on cohomology.
    On cohomology, the map $\EBS_{\underline{iji}} \to \EBS_i(2)$ takes 
    $r_1 \otimes r_2 \otimes r_3 \otimes r_4 \mapsto r_1 \, \partial_i'(r_2\, r_3) \otimes r_4$. Let $f$ be the composite 
    \begin{equation}
        \begin{tikzcd}
            \EBS_i(2) \ar[r, "\mu_i"] &\EBS_{\underline{ii}}(2) \ar[r, "\EBS_i \otimes_{E(BT)} m_j \otimes_{E(BT)}\EBS_i"] &\EBS_{\underline{iji}} \\ 
            r_1 \otimes r_2 \ar[r, mapsto] & r_1 \otimes 1 \otimes r_2 \ar[r, mapsto] & r_1 \otimes (x_{i+1}\otimes 1 -_\FE 1 \otimes x_{i+2}) \otimes r_2.
        \end{tikzcd}
    \end{equation}
    The total composite $\EBS_i(2) \xrightarrow{f} \EBS_{\underline{iji}} \to \EBS_i(2)$ takes 
    \begin{equation}
        r_1 \otimes r_2 \mapsto r_1 \,\partial_i' (x_{i+1} -_\FE x_{i+2}) \otimes r_2.
    \end{equation}
    Therefore it suffices to check that $\partial_i' (x_{i+1} -_\FE x_{i+2})$ is an invertible element in $E^*(BT)$, which follows from the following calculation:
    \begin{equation}
        \begin{aligned}
            \partial_i' (x_{i+1} -_\FE x_{i+2}) &= 
            \frac{(x_{i} -_\FE x_{i+2}) - (x_{i+1} -_\FE x_{i+2})}{x_{i+1}-_\FE x_{i}}\\ 
            &=  \frac{(x_{i} -_\FE x_{i+2}) -_\FE (x_{i+1} -_\FE x_{i+2})}{x_{i+1}-_\FE x_{i}} \,g(x_{i} -_\FE x_{i+2},x_{i+1} -_\FE x_{i+2}) \\ 
            &= \frac{x_{i} -_\FE x_{i+1}}{x_{i+1} -_\FE x_i} \,g(x_{i} -_\FE x_{i+2},x_{i+1} -_\FE x_{i+2}) \\ 
            &= \frac{x_i - x_{i+1}}{x_{i+1}-x_i}\, g^{-1}(x_i, x_{i+1})
           \,  g(x_{i+1}, x_i)\, g(x_{i} -_\FE x_{i+2},x_{i+1} -_\FE x_{i+2}) \\ 
           &= -g^{-1}(x_i, x_{i+1})
           \,  g(x_{i+1}, x_i)\, g(x_{i} -_\FE x_{i+2},x_{i+1} -_\FE x_{i+2}).
        \end{aligned}
    \end{equation}
    Note that $g$ is invertible by \cref{lem:unique-g}.

    Since the composite $\EBS_i(2) \xrightarrow{f} \EBS_{\underline{iji}} \to \EBS_i(2)$ is an isomorphism, it has an inverse $\phi$. It follows that $f \circ \phi$ is a section of $\EBS_{\underline{iji}} \to \EBS_i(2)$.
\end{proof}
By an analogous argument, we have the following:
\begin{corollary}\label{cor:E-sts-splitting-2}
    There exists a map $\EBS_i(2) \to \EBS_{\underline{iji}}$ that splits  \eqref{eq:E-fiber-sequence-sts-2}.
\end{corollary}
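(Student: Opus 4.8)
The plan is to reduce immediately to \cref{prop:E-sts-splitting-1} via the same abstract argument used for \cref{lem:need-this-for-condition-2}. By \cref{cor:E-fiber-sequence-sts}, the map \eqref{eq:E-fiber-sequence-sts-2} and the second map in the fiber sequence \eqref{eq:E-fiber-sequence-sts} (the one factoring through $\nabla_i^L$) are both cofibers of one and the same map $\mu_{iji} \colon \EBS_{i,j} \to \EBS_{\underline{iji}}$; hence, writing $g_L$ and $g_R$ for these two maps, there is an equivalence $\psi \colon \EBS_i(2) \isoto \EBS_i(2)$ with $g_R \simeq \psi \circ g_L$. I would then take the section $\sigma$ of $g_L$ supplied by \cref{prop:E-sts-splitting-1} and note that $\sigma \circ \psi^{-1}$ splits $g_R$, since $g_R \circ (\sigma \circ \psi^{-1}) \simeq \psi \circ g_L \circ \sigma \circ \psi^{-1} \simeq \psi \circ \psi^{-1} \simeq \id$. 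This already proves the statement.

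If one instead wants the concrete argument that the phrase "by an analogous argument" suggests, I would repeat the proof of \cref{prop:E-sts-splitting-1} essentially verbatim, with $\nabla_i^R$ in place of $\nabla_i^L$. Assuming $j = i+1$ (the case $j = i-1$ is analogous), I would reuse the very same map $f \colon \EBS_i(2) \to \EBS_{\underline{iji}}$ from that proof — it is assembled from the multiplication $\mu_i$ and the map $\Delta_j$ of \cref{prop:E-two-fiber-seq}, and in particular does not see the left/right choice — which on cohomology sends $r_1 \otimes r_2 \mapsto r_1 \otimes (x_{i+1} \otimes 1 -_\FE 1 \otimes x_{i+2}) \otimes r_2$. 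Using that $m_j$ is the multiplication map $\EBS^*_j \to E^*(BT)$ together with the formula for $\nabla_i^R$ from \cref{prop:E-BS_s-splitting-2}, the composite of $f$ with the map \eqref{eq:E-fiber-sequence-sts-2} becomes, on cohomology,
\begin{equation}
    r_1 \otimes r_2 \longmapsto r_1 \otimes \bigl(-\partial'_i(x_{i+1} -_\FE x_{i+2})\bigr)\, r_2 ,
\end{equation}
i.e.\ multiplication by the element $-\partial'_i(x_{i+1} -_\FE x_{i+2}) \in E^*(BT)$. This element is invertible by exactly the computation already carried out in the proof of \cref{prop:E-sts-splitting-1} (expand it via \cref{lem:unique-g} and \cref{prop:demazure-prop}(4) into a product of values of the invertible series $g$), so the composite is an equivalence; precomposing $f$ with its inverse then produces the desired section of \eqref{eq:E-fiber-sequence-sts-2}.

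I do not anticipate any genuine obstacle. For the abstract route, the only thing to verify is that the two maps really are cofibers of the \emph{same} map $\mu_{iji}$, which is precisely what \cref{cor:E-fiber-sequence-sts} records. For the concrete route, the only points requiring a little care are the harmless overall sign (the $-\partial'_i$ of \cref{prop:E-BS_s-splitting-2} replacing the $\partial'_i$ of \cref{prop:E-BS_s-splitting-1}), which does not affect invertibility, and the bookkeeping that $\nabla_i^R$ deposits the scalar on the right-hand tensor factor of $\EBS_i$ rather than the left; since $E^*(BT)$ is commutative, both prescriptions define automorphisms of $\EBS_i(2)$, so it is immaterial which arises.
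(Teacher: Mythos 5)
Your proposal is correct, and it actually contains the paper's intended argument as its second half: the paper proves this corollary simply "by an analogous argument" to \cref{prop:E-sts-splitting-1}, i.e.\ by rerunning that proof with $\nabla_i^R$ in place of $\nabla_i^L$, which is exactly your concrete route — the same map $f$ (built from $\mu_i$ and $\Delta_j$; you are right that the label $m_j$ in the paper's display is a typo for $\Delta_j$), the composite now acting on cohomology by multiplication by $-\partial'_i(x_{i+1} -_\FE x_{i+2})$ on the right-hand factor, and invertibility coming from the same $g$-series computation, so the sign and sidedness are indeed harmless. Your primary, abstract route is a genuinely different and arguably cleaner derivation: since \cref{cor:E-fiber-sequence-sts} exhibits both composites as cofibers of the single map $\mu_{iji}$, uniqueness of cofibers gives an automorphism $\psi$ of $\EBS_i(2)$ intertwining them, and transporting the section of \cref{prop:E-sts-splitting-1} along $\psi^{-1}$ splits \eqref{eq:E-fiber-sequence-sts-2} with no further computation. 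This is precisely the trick the paper itself uses in \cref{lem:need-this-for-condition-2}, so it is fully consistent with the paper's toolkit; what it buys is the avoidance of any cohomology calculation, while the concrete route buys an explicit formula for the splitting map, which is closer in spirit to how the splittings are later fed into \cref{prop:condition-3}. Either argument suffices for the statement as written.
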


We conclude this section with the following simple observation:
\begin{observation}\label{obs:E-sts-commuting-square}
    The commuting square of $T-T$ spaces
    \begin{equation}
        \begin{tikzcd}
            G_i \times_T G_j \ar[rrr, "G_i \times_T G_j \times_T m_i"] \ar[d, "m_j \times G_i \times_T G_j"] &&& G_i \times_T G_j \times G_i \ar[d, "\mu_{iji}"] \\ 
            G_j \times_T G_i \times_T G_j \ar[rrr, "\mu_{jij}"] &&& G_{i,j}
        \end{tikzcd}
    \end{equation}
    induces a commuting square of $E(BT)-E(BT)$-bimodule spectra:
    \begin{equation}\label{eq:sts-to-st-commuting}
        \begin{tikzcd}
            \EBS_{i,j} \ar[rrr, "\mu_{iji}"] \ar[d, "\mu_{jij}"] &&& \EBS_{\underline{iji}} \ar[d, "\EBS_{\underline{ij}} \otimes_{E(BT)} m_i"] \\
            \EBS_{\underline{jij}} \ar[rrr, "m_j \otimes_{E(BT)} \EBS_{\underline{ij}}"] &&& \EBS_{\underline{ij}}
        \end{tikzcd}
    \end{equation}
    commutes.

    By the same argument, we have a commuting square $E(BT)-E(BT)$-bimodule spectra:
    \begin{equation}\label{eq:sts-to-ts-commuting}
        \begin{tikzcd}
            \EBS_{i,j} \ar[rrr, "\mu_{iji}"] \ar[d, "\mu_{jij}"] &&& \EBS_{\underline{iji}} \ar[d, "m_i \otimes_{E(BT)} \EBS_{\underline{ji}}"] \\
            \EBS_{\underline{jij}} \ar[rrr, "\EBS_{\underline{ji}} \otimes_{E(BT)} m_j"] &&& \EBS_{\underline{ji}}.
        \end{tikzcd}
    \end{equation}
\end{observation}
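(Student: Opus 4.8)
The plan is to derive both commuting squares of bimodule spectra by applying the contravariant functor $X \mapsto E(T\backslash X/T)$, from $T-T$-spaces to $E(BT)-E(BT)$-bimodule spectra, to commuting squares of $T-T$-spaces; functoriality then does all the work, and the only things to check are that the underlying space-level squares commute and that the four induced maps are the ones named in the statement.

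First I would verify the space-level square. Using the obvious isomorphisms of \eqref{eq:obvious-iso} to identify $G_i \times_T G_j$ with $T \times_T G_i \times_T G_j$ on the one hand and with $G_i \times_T G_j \times_T T$ on the other, the map labelled $G_i \times_T G_j \times_T m_i$ is $(g_1,g_2) \mapsto (g_1, g_2, 1)$ and the map labelled $m_j \times_T (G_i \times_T G_j)$ is $(g_1, g_2) \mapsto (1, g_1, g_2)$. Composing with $\mu_{iji}$ and $\mu_{jij}$ respectively, both composites $G_i \times_T G_j \to G_{i,j}$ equal $(g_1, g_2) \mapsto g_1 g_2$, namely the multiplication map into $G_{i,j}$; hence the square of $T-T$-spaces commutes. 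The square underlying the second diagram is the identical computation with $G_j \times_T G_i$ in place of $G_i \times_T G_j$ in the top-left corner, using $G_{j,i} = G_{i,j}$: the two factor-insertions send $(g_1,g_2)$ to $(1,g_1,g_2)$ and to $(g_1,g_2,1)$, and both composites into $G_{i,j}$ are again $(g_1,g_2)\mapsto g_1 g_2$.

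Next I would apply $E(T\backslash - /T)$. The corners become $\EBS_{i,j}$, $\EBS_{\underline{iji}}$, $\EBS_{\underline{jij}}$, and $\EBS_{\underline{ij}}$ (resp. $\EBS_{\underline{ji}}$ for the second square), and the two multiplication maps induce $\mu_{iji}$ and $\mu_{jij}$ by definition. By \cref{obs:tensor-over-EBT} there are equivalences $\EBS_{\underline{iji}} \simeq \EBS_{\underline{ij}} \otimes_{E(BT)} \EBS_i$ and $\EBS_{\underline{jij}} \simeq \EBS_j \otimes_{E(BT)} \EBS_{\underline{ij}}$; under these, together with the unit equivalences $\EBS_{\underline{ij}} \simeq \EBS_{\underline{ij}} \otimes_{E(BT)} E(BT)$ and $\EBS_{\underline{ij}} \simeq E(BT) \otimes_{E(BT)} \EBS_{\underline{ij}}$, the maps induced by the two factor-insertions are exactly $\EBS_{\underline{ij}} \otimes_{E(BT)} m_i$ and $m_j \otimes_{E(BT)} \EBS_{\underline{ij}}$, where $m_i \colon \EBS_i \to E(BT)$ and $m_j \colon \EBS_j \to E(BT)$ are as in \cref{prop:E-two-fiber-seq}. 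Since a functor sends a commuting square to a commuting square, the first square of bimodule spectra commutes. For the second square one uses instead the equivalences $\EBS_{\underline{iji}} \simeq \EBS_i \otimes_{E(BT)} \EBS_{\underline{ji}}$ and $\EBS_{\underline{jij}} \simeq \EBS_{\underline{ji}} \otimes_{E(BT)} \EBS_j$, which identify the induced maps with $m_i \otimes_{E(BT)} \EBS_{\underline{ji}}$ and $\EBS_{\underline{ji}} \otimes_{E(BT)} m_j$, and the same reasoning applies.

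There is essentially no obstacle here; the one point requiring attention is the bookkeeping of the unit and associativity equivalences for $- \otimes_{E(BT)} -$, equivalently of the $\times_T$-isomorphisms of \eqref{eq:obvious-iso}, needed to match the abstract ``insert a trivial tensor factor'' maps with the named $m \otimes \id$ and $\id \otimes m$ maps. Because every insertion here involves the untwisted maps $m_i$, $m_j$ rather than the twisted $ms_i$, only the \emph{obvious} isomorphisms \eqref{eq:obvious-iso} occur and no Weyl-group conjugation $\sigma_i$ enters, so this identification is immediate.
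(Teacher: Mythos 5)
Your proposal is correct and matches the paper's intent exactly: the paper offers no separate argument beyond the observation itself, namely that both space-level composites are the multiplication map $G_i\times_T G_j \to G_{i,j}$ and that applying the (contravariant) functor $E(T\backslash-\!/T)$, together with the identifications of \cref{obs:tensor-over-EBT} and \eqref{eq:obvious-iso}, yields the stated bimodule squares. Your explicit check of the insertions $(g_1,g_2)\mapsto(g_1,g_2,1)$ and $(g_1,g_2)\mapsto(1,g_1,g_2)$, and the remark that only the untwisted maps $m_i$, $m_j$ occur so no $\sigma_i$-twist enters, is precisely the bookkeeping the paper leaves implicit.
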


    




\section{$(\infty, 2)$-category of additive and stable $E$-valued Soergel bimodules}\label{sec:inf-2-SBim}

In this section we construct  the monoidal $(\infty, 2)$-categories $\SBim_E$ and $\Kbloc(\SBim_E)$ of $E$-valued additive and stable Soergel $(\infty, 2)$-categories, where $E$ a \emph{connective} $\EE_\infty$-ring spectrum.\footnote{See \cref{rem:non-connective-E} for the generalization to when $E$ is non-connective.}
Much of the argument here relies on the machinery developed in \cite{2024braided}. 

\subsection{The Morita category}\label{subsec:morita-cat}
Let us recall some results from \cite[\S 3,4]{2024braided}. 
Let $\st$ be the $\infty$-category of small idempotent-complete stable $\infty$-categories and $\Spectra$ be the stable $\infty$-category of spectra.

Throughout the rest of the subsection we fix $E \in \CAlg(\Spectra)$, that is, an $\EE_{\infty}$-ring spectrum.
If $\cC$ be a symmetric monoidal $\infty$-category, we denote by $\Cat[\cC]$ the $\infty$-category of $\cC$-enriched categories, and $\largecat[\cC]$ the $\infty$-category of large $\cC$-enriched categories.\footnote{We refer the reader to \cite[Appendix A.10]{2024braided} for a review of enriched $\infty$-categories.} 
We have $\Perf_E \coloneqq (\Mod_E(\Spectra))^{\mrc} \in \CAlg(\st)$ the stable $\infty$-category of compact $E$-module spectra.\footnote{By \cite[Lemma 3.5.7(2)]{2024braided} compact $E$-module spectra are retracts of iterated finite colimits of the regular $E$-module.} Let $\st_E \coloneqq \Mod_{\Perf_E}(\st)$. 
Now we define the relevant Morita category, which is a large $\infty$-category enriched in $\st_E$.
By \cite[Definition 4.4.4, Corollary 4.4.5]{2024braided}, with $\Monoid = \pt$ and $\mbbK = E$, we have a large symmetric monoidal $\st_E$-enriched $\infty$-category 
\begin{equation}
    \Morc_E \coloneqq \Moritac(\Mod_E) \in \CAlg(\largecat[\st_E])
\end{equation}
such that 
\begin{enumerate}
    \item There is a symmetric monoidal surjective-on-objects functor 
    $\Alg(\Mod_E) \to \Morc_E$. That is, we can label the objects of $\Morc_E$ by $E$-algebra spectra. 
    \item Given $A, B \in \Alg(\Mod_E)$, the $\st_E$-enriched hom 
    \begin{equation}
        \eHom_{\Morc_E}(A, B) \simeq {_A\BMod_B^{\mrc}(\Mod_E)}
    \end{equation}
    is the $\st_E$-enriched $\infty$-category of $A-B$-bimodule $E$-module spectra that are compact as right $B$-modules.
    \item Given $A, B, C \in \Alg(\Mod_E)$, the composition 
    \begin{equation}
        {_A\BMod_B^{\mrc}(\Mod_E)} \otimes {_B\BMod_C^{\mrc}(\Mod_E)}
        \to {_A\BMod_C^{\mrc}(\Mod_E)}
    \end{equation}
    is given by $- \otimes_B -$, i.e. tensoring over the middle algebra $B$.
    \item The symmetric monoidal structure $\boxtimes$ on $\Morc_E$ is given by  $- \otimes_E -$, i.e.  tensoring over $E$. It follows that $E$ is the unit of $\boxtimes$. 
\end{enumerate}
Just as in \cite{2024braided}, it will be convenient to work with a small full subcategory of $\Morc_E$:
\begin{definition}\label{def:PolyMorc}
   Let $\PolyMorc_E \in \CAlg(\Cat[\st_E])$ be the full symmetric monoidal subcategory of $\Morc_E$ containing the algebran $E(B\U(1))$.
\end{definition}
It follows that $\PolyMorc_E$ is the full subcategory of $\Morc_E$ containing the algebras $E(B\U(1))^{\otimes_E n} \simeq E(B\U(1)^n)$. 
\begin{observation}\label{obs:polymorc-description}
$\PolyMorc_E$ can be described as follows: it is a $\st_E$-enriched $\infty$-category with objects index by natural numbers. 
Given two natural numbers, the $\st_E$-enriched hom
\begin{equation}\label{eq:PolyMorc-homs}
    \eHom_{\PolyMorc_E}(n, m) = \, {_{E(B\U(1)^n)}\BMod_{E(B\U(1)^m)}^\mrc(\Mod_E)}
\end{equation}
is the $\st_E$-enriched $\infty$-category of $E(B\U(1)^n)-E(B\U(1)^m)$-bimodule $E$-module spectra that are compact as right $E(B\U(1)^m)$-modules.
Composition is given by tensoring over the middle algebra. 
Furthermore, the symmetric monoidal structure $\boxtimes$ is given 
by addition on objects $n \boxtimes m \coloneqq n+m$ and tensoring over $E$ on bimodules $M \boxtimes N \coloneqq M \otimes_E N$.
\end{observation}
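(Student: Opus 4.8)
The plan is to unwind \cref{def:PolyMorc} against the explicit description of $\Morc_E$ recorded in the four itemized properties above. Since $\PolyMorc_E$ is by definition the full \emph{symmetric monoidal} subcategory of $\Morc_E$ on the object $E(B\U(1))$, it is closed under $\boxtimes$ and contains the monoidal unit; because $\boxtimes = -\otimes_E-$ has unit $E$ (property (4)), its objects are precisely the iterated tensor powers $E(B\U(1))^{\otimes_E k}$ for $k \in \mbbN$, with $k=0$ giving $E$. So the objects are canonically indexed by $\mbbN$ and $\boxtimes$ is addition of indices, $n \boxtimes m = n+m$. Moreover every $\eHom_{\Morc_E}(A,B)$ is essentially small (its objects are the compact right $B$-modules), so this full subcategory genuinely lands in $\Cat[\st_E]$, as required by \cref{def:PolyMorc}.

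The one substantive step is to identify $E(B\U(1))^{\otimes_E k}$ with $E(B\U(1)^k) = E(BT)$ for $T = \U(1)^k$. On homotopy groups this is the Künneth isomorphism of \cref{prop:kunneth-theorem} with all $n_i = 1$, together with \cref{prop:E-coho-on-torus}: it gives $\pi_{-*}\bigl(E(B\U(1))^{\otimes_E k}\bigr) \cong E^*[[x]]^{\otimes_{E^*} k} \cong E^*[[x_1,\dots,x_k]] \cong E^*(B\U(1)^k)$. The point is that $E^*(B\U(1)) \cong E^*[[x]]$ is pro-free over $E^*$, so the derived tensor product carries no higher $\mathrm{Tor}$ and the Künneth map refines to an equivalence of $E$-module spectra. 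This is the only mildly delicate point: $B\U(1)$ is a colimit of the $\mbbC P^m$ with each $E(\mbbC P^m)$ a finite free $E$-module, so the tensor power must be formed in the appropriate completed sense — equivalently, one tensors the finite stages and passes to the limit afterwards — exactly as in the analogous step of \cite{2024braided}.

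Once the objects are matched, there is nothing left to check: because $\PolyMorc_E$ is a \emph{full} (symmetric monoidal) subcategory, its $\st_E$-enriched homs, composition, and monoidal structure are inherited verbatim from $\Morc_E$. Concretely, $\eHom_{\PolyMorc_E}(n,m) \simeq \eHom_{\Morc_E}\bigl(E(B\U(1)^n), E(B\U(1)^m)\bigr)$ is the bimodule category of property (2), which is \eqref{eq:PolyMorc-homs}; composition is property (3), i.e.\ tensoring over the middle algebra; and the symmetric monoidal product on hom-categories is the restriction of property (4), $M \boxtimes N = M \otimes_E N$. Thus the main — and essentially only — obstacle is the completed Künneth equivalence of the second paragraph; everything else is formal unwinding of the definition of a full symmetric monoidal enriched subcategory.
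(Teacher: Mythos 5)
Your overall route coincides with the paper's: \cref{obs:polymorc-description} is given no proof there, being treated as the direct unwinding of \cref{def:PolyMorc} against the four enumerated properties of $\Morc_E$, with the identification of the $\boxtimes$-powers of $E(B\U(1))$ with the algebras $E(B\U(1)^n)$ asserted in the sentence just before the observation (implicitly via the K\"unneth map, \cref{prop:kunneth-theorem}). Your first and third paragraphs carry out this formal unwinding correctly: objects are the powers of $E(B\U(1))$, indexed by $\mbbN$ with $n\boxtimes m=n+m$, and homs, composition and the monoidal structure are inherited verbatim by fullness from properties (2)--(4).

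The one substantive step is the K\"unneth identification, and there your write-up is internally inconsistent. Since $\boxtimes$ on $\Morc_E$ is by definition the plain relative tensor product $-\otimes_E-$ (property (4)), the objects of the full symmetric monoidal subcategory generated by $E(B\U(1))$ are the honest, uncompleted powers $E(B\U(1))^{\otimes_E n}$; you are not at liberty to ``form the tensor power in the appropriate completed sense''. Moreover, flatness (``no higher $\mathrm{Tor}$'') only gives $\pi_{-*}\bigl(E(B\U(1))^{\otimes_E 2}\bigr)\cong E^*[[x]]\otimes_{E^*}E^*[[y]]$, and for $E$ with homotopy in infinitely many degrees (e.g.\ $E=ku$ or $E=\MU$) this is a proper subring of $E^*[[x,y]]\cong E^*(B\U(1)^2)$: an element such as $\textstyle\sum_{j\ge 0}v^{2j}x^jy^j$ is not a finite sum of products of one-variable series. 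So your two sentences --- ``the K\"unneth map refines to an equivalence of $E$-module spectra'' and ``the tensor power must be formed in the completed sense'' --- cannot both stand, and as written the proposal asserts the needed identification at exactly the point where an argument (or a citation) is required. The way to stay inside the paper's framework is simply to invoke \cref{prop:kunneth-theorem} together with \cref{prop:E-coho-on-torus} as the spectrum-level input, which is what the paper implicitly does; note, though, that this proposition as stated with an uncompleted tensor product is itself the nontrivial input (it holds on the nose for $E=HS$, where each graded piece of $E^*(B\U(1)^m)$ is finitely generated so the relevant product of cells agrees with the coproduct, but is delicate for $ku$ or $\MU$). In short: your reduction is the same as the paper's, and you have correctly located the delicate point, but your resolution of it conflicts with the definition of $\boxtimes$ and so leaves the key object identification assumed rather than proved.
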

Let us end this subsection with a simple yet important observation:
\begin{observation}\label{obs:permutation-bimodule}
    The $S_2$-action on the object $2 \in \PolyMorc$ is implemented by the permutation bimodule $$E(B\U(1)^2s_1) \in \eHom_{\PolyMorc_E}(2, 2) = \, {_{E(B\U(1)^2)}\BMod_{E(B\U(1)^2)}^\mrc(\Mod_E)}.$$
\end{observation}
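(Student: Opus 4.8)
The plan is to reduce the claim to two facts already in hand: the formula for the $E(BT)$-$E(BT)$-bimodule structure on $E(BTs_1)$ (with $T=\U(1)^2$), namely $(f_0,f_1)\cdot r=f_0\,r\,s_1(f_1)$, and the description of $\PolyMorc_E$ from \cref{obs:polymorc-description}, in which the object $2$ is the image under the surjective-on-objects symmetric monoidal functor $\Alg(\Mod_E)\to\Morc_E$ of the $E$-algebra $E(B\U(1))^{\otimes_E 2}\simeq E(B\U(1)^2)=E(BT)$.

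First I would make the $S_2$-action explicit. Since $2=1\boxtimes 1$ and $\PolyMorc_E$ is symmetric monoidal, the object $2$ carries a canonical $S_2$-action; because $\boxtimes=-\otimes_E-$ on morphisms and the comparison functor $\Alg(\Mod_E)\to\Morc_E$ is symmetric monoidal, this $S_2$-action is the image of the $S_2$-action on the $E$-algebra $E(B\U(1))\otimes_E E(B\U(1))$ that permutes the two tensor factors. Geometrically the nontrivial element is induced by $B$ of the swap automorphism of $\U(1)^2$, that is, conjugation by the permutation matrix $s_1$; write $\sigma_1\colon E(BT)\to E(BT)$ for the resulting $E$-algebra automorphism, which on homotopy groups exchanges $x_1$ and $x_2$.

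Next I would invoke the general description of $\Alg(\Mod_E)\to\Morc_E$ from the construction of $\Moritac(\Mod_E)$ in \cite[\S 4]{2024braided}: an $E$-algebra map $f\colon A\to B$ is carried to the $A$-$B$-bimodule obtained from $B$ by keeping the standard right $B$-action and restricting the left action along $f$. For an automorphism $\phi$ of $A$ this produces the invertible bimodule, which I will write ${}_{\phi}\!A$, given by $A$ with standard right action and left action precomposed with $\phi$. A direct check gives a bimodule equivalence ${}_{\phi}\!A\simeq A_{\phi^{-1}}$ (the bimodule with standard left action and right action precomposed with $\phi^{-1}$), via $a\mapsto\phi^{-1}(a)$. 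Taking $A=E(BT)$ and $\phi=\sigma_1$, which is an involution, we get $A_{\sigma_1}$ — and by the formula $(f_0,f_1)\cdot r=f_0\,r\,s_1(f_1)$ recalled above this is exactly $E(BTs_1)=E(B\U(1)^2 s_1)$. Hence the nontrivial element of the $S_2$-action on $2$ is the permutation bimodule, as claimed.

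The only real point to keep track of is conventions — whether the comparison functor restricts scalars on the left or on the right, and whether the twist is by $\sigma_1$ or $\sigma_1^{-1}$ — and these ambiguities are all harmless here precisely because $\sigma_1^2=\id$, so I would fix one convention and note that the involution property absorbs the rest. If one prefers to sidestep conventions entirely, an alternative is to observe directly that $E(BTs_1)$ is invertible in $\Morc_E$ with itself as inverse, since $Ts_1\times_T Ts_1\simeq T$ yields $E(BTs_1)\otimes_{E(BT)}E(BTs_1)\simeq E(BT)$, and then to match the two resulting $S_2$-actions on $2$ after applying $E$-cohomology via \cref{prop:E-coho-on-torus}; but the conceptual argument above is cleaner.
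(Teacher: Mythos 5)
Your proposal is correct, and it supplies exactly the justification the paper leaves implicit (the statement appears there as an unproved ``simple observation''): the $S_2$-action on $2=1\boxtimes 1$ is the image, under the symmetric monoidal functor $\Alg(\Mod_E)\to\Morc_E$, of the swap automorphism of $E(B\U(1))^{\otimes_E 2}\simeq E(B\U(1)^2)$, an algebra automorphism goes to the twisted regular bimodule, and the computed $E(BT)$--$E(BT)$-action $(f_0,f_1)\cdot r=f_0\,r\,s_1(f_1)$ identifies that twisted bimodule with $E(B\U(1)^2 s_1)$; your use of $\sigma_1^2=\id$ to absorb the left/right restriction convention is the right way to handle the only ambiguity. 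The only caveat is the parenthetical alternative at the end: knowing that $E(BTs_1)$ is invertible and its own inverse does not by itself identify it with the symmetry $1$-morphism (the identity bimodule has the same properties), so that sketch would need the comparison with the actual braiding data spelled out --- but since you offer it only as an aside and rely on the main argument, the proof stands.
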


\subsection{Additive and stable $E$-valued Soergel $(\infty,2)$-categories}\label{E-valed-SBim}
In this subsection we define the monoidal locally additive $(\infty, 2)$-category $\SBim_E$ of $E$-valued Soergel bimodules, as well as the locally stable analogue $\Kbloc(\SBim_E)$.
We will do this by defining a generating set of data, using the machinery of factorization systems of higher and enriched categories developed in \cite[\S 5, \S 6, Appendix B]{2024braided}. 

We start by defining the generating category $\Gen \in \Alg(\cat) \subset \Alg(\Cat[\cat])$. 
Let $\pt$ be the terminal category, $c_0 = S^0$ be the category with two objects and no nontrivial morphisms, and $\Delta^1$ be the arrow category.
\begin{definition}
    Let $\End^\mbbZ$ be the pushout in $\cat$:
    \begin{equation}
        \begin{tikzcd}
            (c_0)^{\sqcup \mbbZ} \ar[r] \ar[d] & (\Delta^1)^{\sqcup \mbbZ} \ar[d] \\ 
        \pt \ar[r]& \End^{\mbbZ}.
        \end{tikzcd}
    \end{equation}
\end{definition}
Intuitively, $\End^{\mbbZ}$ is the walking category with one object and $\mbbZ$ many endomorphisms. 
Let $\Free_{\EE_1} \colon \cat \to \Alg(\cat)$ be the free functor. 
\begin{definition}\label{def:gen}
   Let $\Gen$ be the pushout 
   \begin{equation}
    \begin{tikzcd}
        \Free(\pt) \ar[r, "* \mapsto * \boxtimes *"] \ar[d] & \Free(\pt) \ar[d] \\ 
        \Free(\End^\mbbZ) \ar[r] &  \Gen
    \end{tikzcd}
\end{equation}
in $\Alg(\cat)$.
\end{definition}
\begin{notation}
    We typically will use $\boxtimes$ to denote the monoidal tensor products.
\end{notation}
\begin{observation}\label{obs:mapping-out-of-Gen}
    By construction, $\Gen$ satisfies the following universal property:
    $\Gen$ is the universal monoidal $\infty$-category with a distinguished element $*$ together with $\mbbN$ many distinguished endomorphisms $f_k \in \Hom_{\Gen}(* \boxtimes *, * \boxtimes *)$.
    Let $\cD$ be a monoidal $(\infty, 1)$-category, then an $\EE_1$-map $G \colon \Gen \to \cD$ is equivalent to specifying an object $c = G(*) \in \cD$ together with $\mbbZ$ number of endomorphisms $G(f_k) \in \Hom_{\cD}(c \boxtimes c, c \boxtimes c)$. Note that this also holds when $\cD$ is an $(\infty, 2)$-category  via the fully faithful embedding $\Alg(\cat) \subset \Alg(\Cat[\cat])$.
\end{observation}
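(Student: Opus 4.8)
The plan is to corepresent the claimed data: for an arbitrary monoidal $\infty$-category $\cC$ I will compute $\Map_{\Alg(\cat)}(\Gen, \cC)$ by feeding the two defining pushouts (of \cref{def:gen} and the one defining $\End^{\mbbZ}$) through the free--forgetful adjunction together with a couple of elementary mapping-space identifications, and then deduce the $(\infty,2)$-statement by a short adjunction argument. Throughout I write $\cC$ also for its underlying $\infty$-category.

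First I would assemble the tools. The free functor $\Free_{\EE_1}\colon \cat \to \Alg(\cat)$ is left adjoint to the forgetful functor, so $\Map_{\Alg(\cat)}(\Free_{\EE_1}(X), \cC) \simeq \Map_{\cat}(X, \cC)$ naturally in $X \in \cat$; and for any $\infty$-category $\cE$ one has $\Map_{\cat}(\pt, \cE) \simeq \cE^{\simeq}$ (the space of objects), $\Map_{\cat}(c_0, \cE) \simeq \cE^{\simeq}\times\cE^{\simeq}$, and $\Map_{\cat}(\Delta^1, \cE) \simeq \Ar(\cE)^{\simeq}$, the space of arrows, lying over $\cE^{\simeq}\times\cE^{\simeq}$ via (source, target). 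Applying $\Map_{\cat}(-, \cE)$ to the pushout defining $\End^{\mbbZ}$ turns that pushout into a pullback, identifying $\Map_{\cat}(\End^{\mbbZ}, \cE)$ with the space of pairs $(d, (f_k)_{k\in\mbbZ})$ of an object $d\in\cE$ and a $\mbbZ$-indexed family of endomorphisms $f_k \in \Map_{\cE}(d,d)$, with structure map $(d,(f_k))\mapsto d$ to $\cE^{\simeq}$.

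Next I would apply $\Map_{\Alg(\cat)}(-, \cC)$ to the pushout defining $\Gen$ and run the adjunction on all three corners, obtaining
\[
\Map_{\Alg(\cat)}(\Gen, \cC) \;\simeq\; \Map_{\cat}(\End^{\mbbZ}, \cC) \times_{\cC^{\simeq}} \cC^{\simeq},
\]
where the left leg is restriction along the object inclusion $\pt \hookrightarrow \End^{\mbbZ}$ ("forget to the object") and the right leg is the squaring map $c \mapsto c\boxtimes c$. Pinning down this right leg is the one bookkeeping point deserving care: the morphism $\Free_{\EE_1}(\pt)\xrightarrow{\,*\mapsto*\boxtimes*\,}\Free_{\EE_1}(\pt)$ corresponds under the adjunction to the object $*\boxtimes* \in \Free_{\EE_1}(\pt)$, so precomposition with it sends a monoidal functor $\psi$ to $\psi(*\boxtimes*)=\psi(*)\boxtimes\psi(*)$, i.e.\ to squaring on the space of objects. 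Plugging in the previous paragraph then presents $\Map_{\Alg(\cat)}(\Gen, \cC)$ as the space of pairs $(c, (f_k)_{k\in\mbbZ})$ with $c$ an object of $\cC$ and $f_k \in \Map_{\cC}(c\boxtimes c, c\boxtimes c)$, and naturality in $\cC$ yields the stated universal property.

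For a monoidal $(\infty,2)$-category $\cD$ I would use that the embedding $\iota\colon\cat\hookrightarrow\Cat[\cat]$ — an $(\infty,1)$-category viewed as a locally discrete $(\infty,2)$-category — is fully faithful and admits a right adjoint $(-)^{\flat}$ replacing each hom-category by its core; since this adjunction is (lax symmetric) monoidal it lifts to $\Alg(-)$, whence $\Map_{\Alg(\Cat[\cat])}(\Gen, \cD) \simeq \Map_{\Alg(\cat)}(\Gen, \cD^{\flat})$. Taking $\cC = \cD^{\flat}$ in the computation above identifies this with pairs $(c, (f_k))$ where $f_k \in \Hom_{\cD}(c\boxtimes c, c\boxtimes c)^{\simeq}$, i.e.\ $f_k$ is a $1$-morphism $c\boxtimes c\to c\boxtimes c$ of $\cD$, as claimed. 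I do not expect a genuine obstacle here: the argument is entirely formal, and the only points that need attention are the identification of the squaring leg above and the routine verification that passing to cores of hom-categories is compatible with the symmetric monoidal structures, so that $\iota\dashv(-)^{\flat}$ descends to $\EE_1$-algebras.
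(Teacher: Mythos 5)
Your proposal is correct and is essentially the paper's own (implicit) argument made explicit: the Observation is asserted ``by construction,'' i.e.\ by mapping out of the two defining pushouts, converting them to pullbacks of mapping spaces, and using the free--forgetful adjunction for $\Free_{\EE_1}$ together with the base-change adjunction between $\Alg(\cat)$ and $\Alg(\Cat[\cat])$, exactly as you do. Your careful identification of the squaring leg $c \mapsto c \boxtimes c$ and the hom-wise-core right adjoint for the $(\infty,2)$-case are the right (and only) points needing attention, and they are handled correctly.
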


\begin{observation}\label{obs:describe-Gen}
   $\Gen$ has $\mbbN$ many objects, with $n$ labeling the $n$-th tensor product  $*^{\boxtimes n}$. Furthermore, $\eHom_{\Gen}(n,m) = \emptyset$ if $n \neq m$, and every morphism in $\eHom_{\Gen}(n,n)$ is a composition of morphisms of the form  $$f_{n,i,k} \coloneqq \id_{i-1} \boxtimes f_k \boxtimes \id_{n-i-1}$$ for $1 \leq i \leq n-1$ and $k \in \mbbZ$.
\end{observation}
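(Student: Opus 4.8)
The plan is to read the statement off the universal property recorded in \cref{obs:mapping-out-of-Gen}, avoiding any direct computation of the pushout in \cref{def:gen}. That universal property identifies $\Gen$ with the free monoidal $\infty$-category generated by one object $*$ and $\mbbZ$-many endomorphisms $f_k$ of $*^{\boxtimes 2}$, so the first task is to check that $*$ and the $f_k$ generate everything. Let $\cH \subseteq \Gen$ be the monoidal subcategory generated by the object $*$ and the morphisms $f_k$ (the smallest sub-$\infty$-category closed under $\boxtimes$ and $\circ$ containing them): its objects are the tensor powers $*^{\boxtimes n}$, and its morphisms are the finite $\circ$- and $\boxtimes$-composites of identities and the $f_k$. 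The distinguished datum $(*, (f_k)_{k\in\mbbZ})$ already lives in $\cH$, so by \cref{obs:mapping-out-of-Gen} it is classified by a monoidal functor $r \colon \Gen \to \cH$ with $r(*) = *$ and $r(f_k) = f_k$. Post-composing with the inclusion $\iota \colon \cH \hookrightarrow \Gen$ produces a monoidal endofunctor $\iota \circ r$ of $\Gen$ classifying the same datum $(*, (f_k))$ as $\id_{\Gen}$; since \cref{obs:mapping-out-of-Gen} says the space of monoidal functors realizing a fixed datum is contractible, $\iota \circ r \simeq \id_{\Gen}$. Hence $\iota$ is essentially surjective and surjective on mapping spaces, so (being a subcategory inclusion) an equivalence. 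Therefore every object of $\Gen$ is some $*^{\boxtimes n}$ and every morphism is a $\circ$-$\boxtimes$-composite of identities and the $f_k$.

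With this in place the rest is bookkeeping. Applying \cref{obs:mapping-out-of-Gen} to the discrete monoidal category $\mbbN$ (natural numbers under addition), with the only possible datum — the object $1$ and the forced endomorphisms $\id_2$ of $1 \boxtimes 1 = 2$ — gives a monoidal functor $\pi \colon \Gen \to \mbbN$ sending $* \mapsto 1$ and $f_k \mapsto \id_2$. Then $\pi(*^{\boxtimes n}) = n$, so the $*^{\boxtimes n}$ are pairwise non-equivalent (the objects are genuinely indexed by $\mbbN$) and $\eHom_{\Gen}(n,m) = \emptyset$ for $n \neq m$. Finally, every $\circ$-$\boxtimes$-composite of identities and the $f_k$ can be rewritten as a composite under $\circ$ alone of the whiskered generators $f_{n,i,k} = \id_{*^{\boxtimes(i-1)}}\boxtimes f_k \boxtimes \id_{*^{\boxtimes(n-i-1)}}$, $1 \le i \le n-1$, by repeated use of the interchange law $f\boxtimes g \simeq (f\boxtimes\id)\circ(\id\boxtimes g)$ together with the associativity coherences — the usual ``one generator at a time'' normalization of string diagrams. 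This yields exactly the stated description.

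The argument is formal once \cref{obs:mapping-out-of-Gen} is granted, and I do not expect a real obstacle. The only point demanding any care is the last one: justifying that an arbitrary $\boxtimes$-$\circ$-expression in the $f_k$ can be brought into the claimed layered form. This is the classical normal-form (coherence) theorem for free monoidal categories, and since the generating data involves no $2$-cells or higher, no genuinely $\infty$-categorical subtleties intervene beyond those already packaged into the universal property — so the classical statement applies verbatim.
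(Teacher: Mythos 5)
Your argument is correct, but it is a genuinely different (and more self-contained) route than the paper's. The paper offers no argument at all: \cref{obs:describe-Gen} is treated as immediate from the construction in \cref{def:gen}, i.e.\ from the standard explicit description of free monoidal $\infty$-categories ($\Free_{\EE_1}(\cC)\simeq\coprod_{n\geq 0}\cC^{\times n}$ with concatenation) together with the effect of the pushout, which exhibits $\Gen$ as the free monoidal $\infty$-category on one object $*$ and $\mbbZ$-many endomorphisms of $*\boxtimes *$. You instead derive everything from the universal property of \cref{obs:mapping-out-of-Gen} alone: the generate-and-retract argument ($\iota\circ r\simeq \id_{\Gen}$ because both classify the datum $(*,(f_k))$) shows $\Gen$ is exhausted by the monoidal subcategory $\cH$ generated by $*$ and the $f_k$; the length functor $\pi\colon\Gen\to\mbbN$ separates the tensor powers and forces $\eHom_{\Gen}(n,m)=\emptyset$ for $n\neq m$; and the classical interchange/normal-form argument rewrites any $\boxtimes$-$\circ$ word as a $\circ$-composite of the whiskered generators $f_{n,i,k}$. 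What your approach buys is independence from any explicit model of $\Free_{\EE_1}$ and of the pushout; what it costs is some fussiness you partially elide: for \cref{obs:mapping-out-of-Gen} to produce $r\colon\Gen\to\cH$, the generated subcategory $\cH$ must actually be a monoidal $\infty$-category with monoidal inclusion, so it has to contain the unit $*^{\boxtimes 0}$ and the associator/unitor constraints at its objects (these extra morphisms are harmless for your word-length bookkeeping, but they should be named), and the resulting identity $g\simeq \eta_y\circ\iota r(g)\circ\eta_x^{-1}$ only exhibits a general morphism as a conjugate of a generated one by structure equivalences, which is exactly why the functor to $\mbbN$ is needed to rule out morphisms between distinct powers — a point you correctly supply rather than trying to extract it from generation alone.
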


Throughout the rest of the subsection, we fix $E \in \CAlg(\ConnSpectra)$ a \emph{connective} $\mbbE_\infty$-ring spectrum. Let us view $\PolyMorc_E$ defined in \cref{def:PolyMorc} as an object in $\Alg(\Cat[\cat])$. 
\begin{proposition}
    There exists a monoidal functor $F \colon \Gen \to \PolyMorc_E$ taking $*$ to $E(B\U(1))$ and the $k$-th endomorphism $f_k$ to the $k$-shifted Bott-Samelson bimodule $\EBS_1(k)$ associated to the simple transposition $s_1 = (1,2)$,
    which we view as an $E(B\U(1)^{2})-E(B\U(1)^{2})$ bimodule. 
\end{proposition}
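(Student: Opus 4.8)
The plan is to read $F$ off directly from the universal property of $\Gen$ recorded in \cref{obs:mapping-out-of-Gen}. Since $\Gen$ lies in $\Alg(\cat)$, which sits fully faithfully inside $\Alg(\Cat[\cat])$ as noted there, constructing a monoidal functor $F\colon \Gen \to \PolyMorc_E$ is the same as choosing an object $c = F(*) \in \PolyMorc_E$ together with a $\mbbZ$-indexed family of endomorphisms of $c \boxtimes c$ — equivalently, a $\mbbZ$-indexed family of objects of the enriched hom category $\eHom_{\PolyMorc_E}(c \boxtimes c, c \boxtimes c)$. So the entire content of the statement is to exhibit this data and check that it is well defined; no coherences need to be verified by hand, since monoidality is built into the pushout presentation of $\Gen$ (\cref{def:gen}).

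First I would take $c$ to be the object $1$ of $\PolyMorc_E$, i.e. the algebra $E(B\U(1))$ (see \cref{obs:polymorc-description}). Because $\boxtimes$ is $-\otimes_E-$ and $E(B\U(1)) \otimes_E E(B\U(1)) \simeq E(B\U(1)^2)$ by Künneth (\cref{prop:kunneth-theorem}), we have $c \boxtimes c \simeq E(B\U(1)^2)$, the object $2$, so that $\eHom_{\PolyMorc_E}(c\boxtimes c, c\boxtimes c) \simeq {_{E(B\U(1)^2)}\BMod^{\mrc}_{E(B\U(1)^2)}(\Mod_E)}$. For each $k \in \mbbZ$ I would then set $F(f_k) \coloneqq \EBS_1(k)$. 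Here, taking $n = 2$ and $T = \U(1)^2$, the Bott-Samelson space of the length-one word $(1)$ is $\BS_1 = G_1 = \U(2)$, so that $\EBS_1 = E(T\backslash G_1/T)$ carries its tautological $E(B\U(1)^2)-E(B\U(1)^2)$-bimodule structure as in \cref{def:E-Bott-Samelson}, and $\EBS_1(k) = \Sigma^{-k}\EBS_1$ does too.

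The one point requiring verification is that $\EBS_1$, and hence each $\EBS_1(k)$, is compact as a \emph{right} $E(B\U(1)^2)$-module, so that it genuinely defines an object of the hom category above rather than of the larger category of all bimodules. This is where \cref{prop:E-coho-of-BS} and \cref{ex:E-of-BGi} enter: since $E^*(BT)$ is free of rank two over $E^*(BG_1)$ on the classes $1$ and $x_1$, lifting this graded basis gives $\EBS_1 \simeq E(BT)\otimes_{E(BG_1)} E(BT) \simeq E(BT) \oplus E(BT)(2)$ as right $E(BT)$-modules, a finite direct sum of shifts of the regular module, hence compact; shifting by $(k)$ preserves this. Feeding the pair $\bigl(E(B\U(1)),\,\{\EBS_1(k)\}_{k\in\mbbZ}\bigr)$ into \cref{obs:mapping-out-of-Gen} then yields the desired monoidal functor $F$, with $F(*) = E(B\U(1))$ and $F(f_k) = \EBS_1(k)$. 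I expect no genuine obstacle here: everything except the (easy) compactness check is formal bookkeeping with the universal property.
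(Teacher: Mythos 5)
Your proposal is correct and follows the same overall reduction as the paper: by \cref{obs:mapping-out-of-Gen} and \cref{obs:polymorc-description}, the only substantive point is that $\EBS_1(k)$ is compact as a right $E(B\U(1)^2)$-module, all coherence being handled by the universal property of $\Gen$. Where you diverge is in how that compactness is verified. The paper uses the geometric input of \cref{prop:E-two-fiber-seq}: the fiber sequence $E(BT)(2) \to \EBS_1 \to E(BTs_1)$, the identification $E(BTs_1) \simeq E(BT)$ as right modules, and closure of compactness under extensions. You instead use \cref{prop:E-coho-of-BS} and \cref{ex:E-of-BGi} to see that $\EBS_1^*$ is free of rank two over $E^*(BT)$ on $1\otimes 1$ and $x_1\otimes 1$, and lift this basis to a right-module splitting $\EBS_1 \simeq E(BT) \oplus E(BT)(2)$. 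That argument is fine --- the two cohomology classes corepresent right-module maps $E(BT) \to \EBS_1$ and $E(BT)(2) \to \EBS_1$, and their sum is a $\pi_*$-isomorphism by freeness, hence an equivalence --- but you should state this lifting step explicitly rather than fold it into ``$\EBS_1 \simeq E(BT)\otimes_{E(BG_1)}E(BT)$'', since the paper only establishes that base-change statement on cohomology, not at the spectrum level. Your route buys slightly more (an explicit right-module splitting, close in spirit to \cref{prop:E-BS_s-splitting-1}), at the cost of the basis-lifting argument; the paper's route is lighter, needing only the cofiber sequence and the general fact that compact objects are closed under extensions. Both, note, draw on the complex-oriented computations of Section 3 even though the proposition itself assumes only connectivity, so you are no worse off than the paper on that point.
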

\begin{proof}
    Let $T = \U(1)^{2}$.
    By \cref{obs:mapping-out-of-Gen} and \cref{obs:polymorc-description}, it suffices to check that $\EBS_1(k)$
    are indeed compact as right $E(BT)$-bimodules.
    Since suspensions of compact modules are still compact, it suffices to show that $\EBS_1$ is a compact $E(BT)$-bimodule. By \cref{prop:E-two-fiber-seq}, we have a fiber sequence 
    \begin{equation}
        E(BT)(2) \to \EBS_1 \to E(BTs_1).
    \end{equation}
    Note that as right $E(BT)$-module we have an isomorphism  $E(BTs_1) \simeq E(BT)$. Now the result follows from the fact that 
    $E(BT)$ and $E(BT)(2)$ are compact and 
    compactness is closed under extensions.
\end{proof}

\begin{observation}\label{obs:describe-F}
   Fix $n \in \mbbN$. Then $F$ takes the object $n = *^{\boxtimes n} \in \Gen$ to $E(B\U(1)^{n})$.
   Furthermore, suppose we have
   $1 \leq i \leq n-1$, and $k \in \mbbZ$. By \cref{obs:describe-Gen}, there is an endomorphism $f_{n,i,k} \coloneqq \id_{i-1} \boxtimes f_k \boxtimes \id_{n-i-1} \in \eHom_{\Gen}(n,n)$. $F$ takes  $f_{n,i,k}$ to the elementary Bott-Samelson
   \begin{equation}
   \EBS_{i}(k) \simeq E(B\U(1)^{i-1}) \otimes_E \EBS_{(1,2)}(k) \otimes_E E(B\U(1)^{n-i-1}) 
   \end{equation}
   in $\eHom_{\PolyMorc_E}(n,n) = \, {_{E(B\U(1)^n)}\BMod_{E(B\U(1)^n)}^\mrc(\Mod_E)}$. 
   Furthermore, given $\underbf{i} = (i_1, \cdots, i_m)$, by \cref{obs:tensor-over-EBT}, $F$ maps the composition 
   $ f_{n, i_1, k_1} \circ f_{n, i_2, k_2} \cdots \circ f_{n, i_m, k_m}$ to the Bott-Samelson $\EBS_{\underbf{i}}(k)$ where $k = \sum_{j} k_j$.
\end{observation}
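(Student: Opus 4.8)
The statement is obtained by unwinding the monoidal functor $F$ constructed in the previous proposition against the explicit models of $\Gen$ (\cref{obs:describe-Gen}) and of $\PolyMorc_E$ (\cref{obs:polymorc-description}), together with the Künneth isomorphism \cref{prop:kunneth-theorem} and \cref{obs:tensor-over-EBT}. The plan is to carry out this bookkeeping in three steps: objects, the elementary generators $f_{n,i,k}$, and general compositions.

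First I would treat the objects. Since $F$ is monoidal and $n = *^{\boxtimes n}$ in $\Gen$, we have $F(*^{\boxtimes n}) \simeq F(*)^{\boxtimes n} = E(B\U(1))^{\boxtimes n}$, and since $\boxtimes$ on $\PolyMorc_E$ is $\otimes_E$ on objects (item (4) of the description of $\Morc_E$), this is $E(B\U(1))^{\otimes_E n} \simeq E(B\U(1)^n)$ by \cref{prop:kunneth-theorem}; this is exactly the object $n$ of $\PolyMorc_E$.

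Next, the generators. By \cref{obs:describe-Gen} we have $f_{n,i,k} = \id_{i-1} \boxtimes f_k \boxtimes \id_{n-i-1}$, so monoidality of $F$ gives $F(f_{n,i,k}) \simeq F(\id_{i-1}) \boxtimes F(f_k) \boxtimes F(\id_{n-i-1})$. Here $F(f_k) = \EBS_{(1,2)}(k)$ by definition of $F$, each $F(\id_{m})$ is the identity endomorphism of the object $m$, i.e. the regular bimodule $E(B\U(1)^m)$, and $\boxtimes$ on $1$-morphisms is again $\otimes_E$, so $F(f_{n,i,k}) \simeq E(B\U(1)^{i-1}) \otimes_E \EBS_{(1,2)}(k) \otimes_E E(B\U(1)^{n-i-1})$. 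It then remains to identify this external tensor product with the intrinsic Bott-Samelson bimodule $\EBS_i(k) = E(T\backslash G_i/T)(k)$. For this I would use \eqref{eq:iterative-def-of-TT-Soergel} to write $T\backslash G_i/T \simeq BT \times_{BG_i} BT$; since $G_i = \U(1)^{i-1}\times\U(2)\times\U(1)^{n-i-1}$ and both $T$-actions respect this product decomposition, this pullback splits as $B\U(1)^{i-1}\times(B\U(1)^2\times_{B\U(2)}B\U(1)^2)\times B\U(1)^{n-i-1}$, whose middle factor is the $n=2$ Bott-Samelson double quotient computing $\EBS_{(1,2)}$. Applying \cref{prop:kunneth-theorem} to this product, and pulling the suspension $(k)$ through $\otimes_E$, yields the claimed equivalence of $E(B\U(1)^n)$-bimodules.

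Finally, for a composition $f_{n,i_1,k_1}\circ\cdots\circ f_{n,i_m,k_m}$, functoriality of $F$ together with the fact that composition in $\PolyMorc_E$ is tensoring over the middle algebra $E(B\U(1)^n)$ (item (3)) gives $F$ of the composite $\simeq \EBS_{i_1}(k_1)\otimes_{E(B\U(1)^n)}\cdots\otimes_{E(B\U(1)^n)}\EBS_{i_m}(k_m)$; pulling out all the suspensions and invoking \cref{obs:tensor-over-EBT} identifies this with $\EBS_{\underbf{i}}(\sum_j k_j)$. The only genuinely non-formal point is the bimodule bookkeeping in the generator step — verifying that the ``padded'' external tensor product really is the intrinsic Bott-Samelson bimodule, with the correct left and right $E(B\U(1)^n)$-actions — and I expect this to be the main (though still routine) obstacle; everything else is a direct consequence of monoidality and functoriality of $F$.
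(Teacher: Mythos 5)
Your proposal is correct and follows essentially the same route as the paper, which states this as an Observation with no written proof precisely because it is the routine unwinding you carry out: monoidality of $F$ plus Künneth on objects, monoidality plus the identification of the padded external tensor product with $\EBS_i(k)$ on generators, and functoriality plus composition-as-tensoring over the middle algebra together with \cref{obs:tensor-over-EBT} on composites. Your explicit flagging of the generator-step identification (splitting $T\backslash G_i/T$ along the product decomposition of $G_i$) is exactly the one implicit point the paper leaves to the reader, and your sketch of it is sound.
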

Let $\add$ be the $\infty$-category of small idempotent-complete additive $\infty$-categories,\footnote{See \cite[\S 3.3]{2024braided} for a detailed discussion of additive $\infty$-categories.}
and $\ConnSpectra$ be the additive full subcategory of $\Sp$ consisting of connective spectra.
We have $\CProj_E \coloneqq (\Mod_E(\ConnSpectra))^{\cp} \in \CAlg(\add)$ the additive $\infty$-category of compact projective $E$-module spectra.\footnote{By \cite[Lemma 3.5.7(1)]{2024braided}, compact projective $E$-module spectra are  retracts of finite direct sums of the regular $E$-module.} Lastly we define $\add_E$ to be $\Mod_{\CProj_E}(\add)$.

\begin{proposition}\label{prop:a-bunch-of-left-adjoints}
    There are presentably symmetric monoidal functors 
\begin{equation}
    \begin{tikzcd}
         & \add_E \ar[r, "\Kb_E"] & \st_E \\ 
      \cat \ar[r] & \add \ar[r, "\Kb"] \ar[u, "-\otimes \CProj_E"] & \st \ar[u, "-\otimes \Perf_E"] 
    \end{tikzcd}
\end{equation}
whose right adjoints are the evident forgetful functors.
\end{proposition}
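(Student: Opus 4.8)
The plan is to deduce the proposition from three inputs: (i) the results of \cite{2024braided} that the free-additive-category functor $\cat\to\add$ and the universal stable $\infty$-category functor $\Kb\colon\add\to\st$ are presentably symmetric monoidal with right adjoints the forgetful functors; (ii) the standard functoriality, for a presentably symmetric monoidal $\infty$-category $\cC$, of the construction $A\mapsto\Mod_A(\cC)$ and of extension/restriction of scalars; and (iii) the identification $\Kb(\CProj_E)\simeq\Perf_E$ as $\EE_\infty$-algebras in $\st$.

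First I would handle the two vertical functors. Working in a large enough universe so that $\add$ and $\st$ are presentably symmetric monoidal, and using $\CProj_E\in\CAlg(\add)$ and $\Perf_E\in\CAlg(\st)$, the categories $\add_E=\Mod_{\CProj_E}(\add)$ and $\st_E=\Mod_{\Perf_E}(\st)$ are again presentably symmetric monoidal; extension of scalars along the unit maps $\uno_{\add}\to\CProj_E$ and $\uno_{\st}\to\Perf_E$ then gives presentably symmetric monoidal functors $-\otimes\CProj_E$ and $-\otimes\Perf_E$, and their right adjoints are restriction of scalars along those unit maps, which — under $\add\simeq\Mod_{\uno_{\add}}(\add)$ and $\st\simeq\Mod_{\uno_{\st}}(\st)$ — are exactly the evident forgetful functors $\add_E\to\add$ and $\st_E\to\st$.

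Next, for $\Kb_E$, I would first establish (iii): by \cite[Lemma 3.5.7]{2024braided}, $\CProj_E$ is the idempotent completion of the additive subcategory of $\Mod_E(\Spectra)$ on finite direct sums of the regular module, while $\Perf_E$ is the idempotent completion of the stable subcategory on finite colimits of it; since $\Kb$ is the universal stable category and is symmetric monoidal, it carries the former to the latter compatibly with the algebra structures inherited from $E$, so $\Kb(\CProj_E)\simeq\Perf_E$ in $\CAlg(\st)$. Granting this, the functoriality of $\Mod$ under symmetric monoidal left adjoints produces the presentably symmetric monoidal functor $\Kb_E\coloneqq\Kb_{\CProj_E}\colon\Mod_{\CProj_E}(\add)\to\Mod_{\Kb(\CProj_E)}(\st)=\st_E$, whose right adjoint is obtained from the right adjoint $U\colon\st\to\add$ of $\Kb$ by restriction of scalars along the unit $\CProj_E\to U\Kb(\CProj_E)\simeq U(\Perf_E)$; as this unit is precisely the inclusion of compact projective $E$-modules into perfect $E$-modules, the right adjoint of $\Kb_E$ is exactly the evident forgetful functor $\st_E\to\add_E$.

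Finally I would check that the square commutes. Since $\Kb$ is symmetric monoidal with $\Kb(\CProj_E)\simeq\Perf_E$, it sends $\uno_{\add}\to\CProj_E$ to $\uno_{\st}\to\Perf_E$, so naturality of extension of scalars along a symmetric monoidal functor yields $\Kb_E\circ(-\otimes\CProj_E)\simeq(-\otimes\Perf_E)\circ\Kb$; equivalently — and perhaps most transparently — both composites $\add\to\st_E$ are seen to have right adjoint the functor carrying an object of $\st_E$ to the underlying additive $\infty$-category of its underlying stable $\infty$-category, and uniqueness of adjoints forces the two left adjoints to agree. I expect the only real obstacle to be careful bookkeeping: making precise the functoriality of $\Mod_{(-)}(-)$ under symmetric monoidal left adjoints together with base change — in particular identifying all the right adjoints as the stated forgetful functors and verifying the commuting square — and pinning down $\Kb(\CProj_E)\simeq\Perf_E$ at the level of $\EE_\infty$-algebras; everything else assembles from the cited results.
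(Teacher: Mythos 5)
Your proposal is correct in outline, but it takes a genuinely different route from the paper, whose entire proof is a citation: it points to \cite[Eq.~(6.1) and Proposition 3.4.5]{2024braided}, where the adjunctions $\cat \rightleftarrows \add \rightleftarrows \st$ and their $E$-linear counterparts are already assembled. You instead rebuild the statement from general principles: presentably symmetric monoidal structures on $\add$ and $\st$, extension/restriction of scalars along the units of $\CProj_E$ and $\Perf_E$ for the two vertical functors, functoriality of $\Mod_{(-)}$ under symmetric monoidal left adjoints to produce $\Kb_E$, and the identification $\Kb(\CProj_E) \simeq \Perf_E$ in $\CAlg(\st)$, with the commuting square obtained by uniqueness of adjoints. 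What your route buys is a self-contained explanation of where each forgetful right adjoint comes from; what the paper's route buys is brevity, deferring exactly this bookkeeping to the earlier paper. The one place your sketch is thinner than the input it replaces is the equivalence $\Kb(\CProj_E) \simeq \Perf_E$: the universal property of $\Kb$ only hands you a functor $\Kb(\CProj_E) \to \Perf_E$ which is essentially surjective onto the thick closure of $E$; full faithfulness requires knowing that mapping spectra between compact projective $E$-modules, as computed in $\Kb(\CProj_E)$, agree with those in $\Mod_E(\Spectra)$, which uses connectivity of $E$ (the same hypothesis the paper imposes; cf.\ \cref{rem:non-connective-E}) and is precisely the content of the results in \cite[\S 3.4--3.5]{2024braided} that you would be citing anyway. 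Granting that point, your identification of the right adjoint of $\Kb_E$ as restriction along $\CProj_E \to \Perf_E$, i.e.\ the evident forgetful functor, and the verification of the square, are both fine.
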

\begin{proof}
    See \cite[Eq. (6.1) and Proposition 3.4.5]{2024braided}.
\end{proof}
We will abuse notation and drop the $E$ in $\Kb_E$ when it is clear from context.
\begin{remark}\label{rem:Kb-generalizes-chain-complex}
    If $\cA$ is an $1$-category, then  $\Kb(\cA)$ is the $\infty$-categorical version of the chain homotopy category of $\cA$ by \cite[Corollary 3.4.10]{2024braided}. Formally, it is the dg nerve of the dg category of bounded chain complexes in $\cA$ (\cite[Definition 3.4.1]{2024braided}). Therefore $\Kb(\cA)$, where $\cA$ is an additive $(\infty, 1)$-category, is the generalization of the chain homotopy category for $(\infty, 1)$-categories.
\end{remark}
\begin{remark}
    The functors $\Kb_E$ and $\Kb$ commutes with forgetting the $E$ action. That is, we have a commutative diagram:
    \begin{equation}
        \begin{tikzcd}
            \add_E \ar[r, "\Kb_E"] \ar[d] & \st_E \ar[d]\\
            \add \ar[r, "\Kb"] & \st,
        \end{tikzcd}
    \end{equation}
    where the vertical maps are the evident forgetful functors.
\end{remark}
The composite $\cat \to \add \to \add_E$ induces a symmetric monoidal left adjoint 
\begin{equation}
    (-)^{\add_E} \colon \Cat[\cat] \to \Cat[\add_E].
\end{equation}
It follows that the functor $F \colon \Gen \to \PolyMorc_E$ in $\Alg(\Cat[\cat])$ induces a map 
\begin{equation}
    (F)^{\add_E} \colon (\Gen)^{\add_E} \to \PolyMorc_E 
\end{equation}
in $\Alg(\Cat[\add_E])$. 

We now define the $(\infty,2)$-category of $E$-valued Soergel bimodules via factorization systems.\footnote{We refer the reader to \cite[Appendix B]{2024braided} for an introduction to factorization systems.}
\begin{definition}\label{def:fully-faithful-and-dominant}
   Let $F \colon \cC \to \cD$ be a functor between $(\infty, 1)$-categories. $F$ is \emph{fully faithful} if for every $c_1, c_2 \in \cC$, the induced map of spaces
   \begin{equation}
    \Hom_{\cC}(c_1, c_2) \to \Hom_{\cD}(Fc_1, Fc_2)
   \end{equation}
   is an equivalence. $F$ is \emph{dominant} if every object in $\cD$ is a retract of $Fc$ for some $c \in \cC$.
\end{definition}
By \cite[Proposition 6.2.2]{2024braided}, the (dominant, fully faithful) functors define factorization system on $\add_E$. 
\begin{definition}
    Let $F \colon \cC \to \cD$ is a functor between $\add_E$-enriched categories. 
    $F$ is \emph{surjective on objects} if every object in $\cD$ is of the form $Fc$ for some $c \in \cC$. $F$ is \emph{dominant on $1$-morphism} if it is hom-wise dominant, that is, for every $c_1, c_2 \in \cC$, the induced map  
    $\eHom_{\cC}(c_1, c_2) \to \eHom_{\cD}(c_1, c_2)$ in $\add_E$ is dominant. Similarly, $F$ is \emph{faithful} if it is hom-wise fully faithful.
\end{definition}
By \cite[Corollary 6.2.4]{2024braided}, the (surjective on objects and dominant on $1$-morphisms, faithful) functors define a
factorization system on $\Cat[\add_E]$ and $\Alg(\Cat[\add_E])$.
\begin{definition}\label{def:definition-of-SBim_E}
    Let the monoidal $\add_E$-enriched $(\infty, 2)$-category of \emph{additive $E$-valued Soergel $(\infty, 2)$-category} $\SBim_E \in \Alg(\Cat[\add_E])$ be the unique factorization 
    \begin{equation}\label{eq:factorization-of-SBim}
        \begin{tikzcd}
            (\Gen)^{\add_E} \arrow[dr, "\substack{\mathrm{surjective-on-objects}\\\mathrm{-and-dominant-on-1-morphisms}}"'] \arrow[rr, "(F)^{\add_E}"] && \PolyMorc_E \\
            &\SBim_E \arrow[ur, "\mathrm{faithful}"']
            \end{tikzcd}
    \end{equation}
    of $(F)^{\add_E}$ with respect to the (surjective on objects and dominant on $1$-morphisms, faithful) factorization system.
\end{definition}
We would like to describe the objects and hom-categories of $\SBim_E$. First we need the following lemma:
\begin{lemma}\label{lem:dominant}
   Let $F \colon \cC \to \cD$  be a functor of $(\infty, 1)$-categories, where $\cC \in \cat$ and $\cD\in \add_E$. Then the following are equivalent:
   \begin{enumerate}
    \item Its adjunct $(\cC)^{\add_E} \to \cD$ is dominant. 
    \item Every object of $\cD$ is a retract of (finite) direct sums of objects in the image of $F$.
   \end{enumerate}
\end{lemma}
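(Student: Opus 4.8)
The plan is to reduce both implications to one structural fact about the free construction: \emph{every object of $(\cC)^{\add_E}$ is a retract of a finite direct sum of objects in the image of the unit map $\eta\colon\cC\to(\cC)^{\add_E}$}. I would establish this by a universal-property argument. Let $\cD_0\subseteq(\cC)^{\add_E}$ be the full subcategory spanned by the retracts of finite direct sums of objects of $\eta(\cC)$. One checks directly that $\cD_0$ is closed under finite direct sums and under retracts (a retract of a retract of $\bigoplus_i X_i$ is again a retract of $\bigoplus_i X_i$), hence is idempotent complete, and that it is closed under the $\CProj_E$-action: if $p\in\CProj_E$ is a retract of $E^{\oplus k}$ (\cite[Lemma 3.5.7(1)]{2024braided}) and $d_0\in\cD_0$, then $p\otimes d_0$ is a retract of $d_0^{\oplus k}\in\cD_0$. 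Thus $\cD_0$ is itself an object of $\add_E$, and the corestriction of $\eta$ along the inclusion $\cD_0\hookrightarrow(\cC)^{\add_E}$ gives a functor $\cC\to\cD_0$ in $\cat$. Since $(-)^{\add_E}$ is left adjoint to the forgetful functor $\add_E\to\cat$ (\cref{prop:a-bunch-of-left-adjoints}), this extends to a morphism $G\colon(\cC)^{\add_E}\to\cD_0$ in $\add_E$; composing with $\cD_0\hookrightarrow(\cC)^{\add_E}$ produces an $\add_E$-endomorphism of $(\cC)^{\add_E}$ whose restriction along $\eta$ is $\eta$, so by uniqueness it is the identity, and the inclusion $\cD_0\hookrightarrow(\cC)^{\add_E}$ is essentially surjective.

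Granting this, write $\widetilde F\colon(\cC)^{\add_E}\to\cD$ for the adjunct of $F$, so $F\simeq\widetilde F\circ\eta$; being a morphism in $\add_E$, $\widetilde F$ preserves finite direct sums, and like any functor it preserves retracts. For $(2)\Rightarrow(1)$: if $d\in\cD$ is a retract of $\bigoplus_{i=1}^n F(c_i)\simeq\widetilde F\big(\bigoplus_i\eta(c_i)\big)$, then $d$ is a retract of $\widetilde F(x)$ for $x=\bigoplus_i\eta(c_i)\in(\cC)^{\add_E}$, i.e.\ $\widetilde F$ is dominant in the sense of \cref{def:fully-faithful-and-dominant}. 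For $(1)\Rightarrow(2)$: if $d$ is a retract of $\widetilde F(x)$ for some $x\in(\cC)^{\add_E}$, use the structural fact to write $x$ as a retract of $\bigoplus_i\eta(c_i)$; applying $\widetilde F$ shows $\widetilde F(x)$ is a retract of $\bigoplus_i F(c_i)$, and composing retractions exhibits $d$ as a retract of $\bigoplus_i F(c_i)$.

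The only genuine work is the structural fact, and there the subtlety is purely organizational: confirming that $(-)^{\add_E}$ is the left adjoint of the \emph{plain} forgetful functor $\add_E\to\cat$ (so that the universal property applies to the unadorned input $\cC$) and that $\cD_0$ as defined really satisfies the three closure conditions needed to be an object of $\add_E$. No homotopy-coherence difficulties arise, since retracts and finite direct sums are absolute and everything else is formal manipulation of the free--forgetful adjunction.
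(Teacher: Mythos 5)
Your proof is correct. Note that the paper itself does not spell out an argument here: it simply defers to \cite[Lemma 6.3.1]{2024braided} (``the same argument, without the $\mbbZ$ action''), so what you have written is a self-contained version of exactly the kind of argument being outsourced. The heart of your proof --- that every object of $(\cC)^{\add_E}$ is a retract of a finite direct sum of objects in the image of the unit $\eta$, established by corestricting $\eta$ to the full subcategory $\cD_0$ generated under finite sums and retracts, checking $\cD_0$ is closed under the $\CProj_E$-action via \cite[Lemma 3.5.7(1)]{2024braided}, and invoking the free--forgetful adjunction to see the inclusion $\cD_0 \hookrightarrow (\cC)^{\add_E}$ is essentially surjective --- is precisely the structural fact that drives the cited lemma, whose only extra ingredient in \cite{2024braided} is closure under the grading shifts coming from the $B\mbbZ$-action, which is absent here. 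The two formal implications you derive from it are as stated, using only that the adjunct $\widetilde F$ is additive and preserves retracts; the one point that genuinely needs (and gets) your attention is that $(-)^{\add_E}$ is left adjoint to the plain forgetful functor $\add_E \to \cat$, which is \cref{prop:a-bunch-of-left-adjoints}.
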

\begin{proof}
    The argument is the same as \cite[Lemma 6.3.1]{2024braided}, just without the $\mbbZ$ action.
\end{proof}
Applying \cref{lem:dominant} to $(\Gen)^{\add_E} \to \SBim_E$ in \eqref{eq:factorization-of-SBim}, we get 
\begin{corollary}\label{cor:Gen-to-SBim}
    Consider the map $\Gen \to \SBim_E$ adjunct to $(\Gen)^{\add_E} \to \SBim_E$ in \eqref{eq:factorization-of-SBim}; it is surjective on objects and every $1$-morphism in $\SBim_E$ is a retract of direct sums of $1$-morphisms in the image of $1$-morphisms in $\Gen$.
\end{corollary}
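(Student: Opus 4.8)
The plan is to deduce both assertions directly from \cref{lem:dominant}, applied to the left leg $(\Gen)^{\add_E}\to\SBim_E$ of the factorization \eqref{eq:factorization-of-SBim}, one hom-category at a time. First I would recall that $(-)^{\add_E}\colon\Cat[\cat]\to\Cat[\add_E]$ is change of enrichment along the symmetric monoidal functor $\cat\to\add_E$: hence $(\Gen)^{\add_E}$ has the same objects as $\Gen$, its hom-categories are the pointwise free additive completions
\[
\eHom_{(\Gen)^{\add_E}}(c_1,c_2)\;\simeq\;\bigl(\eHom_{\Gen}(c_1,c_2)\bigr)^{\add_E},
\]
and — by compatibility of the adjunction $(-)^{\add_E}\dashv(\text{forget})$ with passing to hom-categories — the map $\Gen\to\SBim_E$ adjunct to $(F)^{\add_E}$ is, on objects, the same as $(\Gen)^{\add_E}\to\SBim_E$, and on each hom-category is the adjunct of the corresponding hom-functor of $(\Gen)^{\add_E}\to\SBim_E$. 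Since the latter map is surjective on objects, so is $\Gen\to\SBim_E$; that settles the first assertion.

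For the second assertion I would fix objects $c_1,c_2$ of $\Gen$ and apply \cref{lem:dominant} with $\cC=\eHom_{\Gen}(c_1,c_2)\in\cat$, $\cD=\eHom_{\SBim_E}(c_1,c_2)\in\add_E$, and (in the notation of that lemma) $F$ the hom-functor $\eHom_{\Gen}(c_1,c_2)\to\eHom_{\SBim_E}(c_1,c_2)$ induced by $\Gen\to\SBim_E$. By the previous paragraph its adjunct is the hom-functor $\bigl(\eHom_{\Gen}(c_1,c_2)\bigr)^{\add_E}\to\eHom_{\SBim_E}(c_1,c_2)$ of $(\Gen)^{\add_E}\to\SBim_E$, which is dominant in $\add_E$ because $(\Gen)^{\add_E}\to\SBim_E$ is dominant on $1$-morphisms by \eqref{eq:factorization-of-SBim}. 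Thus condition (1) of \cref{lem:dominant} holds, and condition (2) then says exactly that every object of $\eHom_{\SBim_E}(c_1,c_2)$ — i.e. every $1$-morphism of $\SBim_E$ between $c_1$ and $c_2$ — is a retract of a finite direct sum of objects in the image of $F$, i.e. of $1$-morphisms in the image of $1$-morphisms of $\Gen$. Letting $c_1,c_2$ vary gives the corollary; combined with \cref{obs:describe-F} this says concretely that every $1$-morphism of $\SBim_E$ is a retract of a finite direct sum of shifted Bott--Samelson bimodules $\EBS_{\underbf{i}}(k)$.

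The only point that is not entirely formal is the hom-wise description of $(\Gen)^{\add_E}$ together with the compatibility of the adjunction $(-)^{\add_E}\dashv(\text{forget})$ with restriction to hom-categories — that change of enrichment behaves as its name suggests — since this is what legitimizes the hom-by-hom invocation of \cref{lem:dominant}. I expect this to be a direct appeal to the enriched-category formalism set up in \cite{2024braided} rather than a genuine obstacle; with it in hand the argument runs parallel to the corresponding statement in \cite[\S 6.3]{2024braided}.
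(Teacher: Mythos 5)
Your proposal is correct and matches the paper's argument: the corollary is obtained there exactly by applying \cref{lem:dominant} hom-wise to the left (surjective-on-objects and dominant-on-$1$-morphisms) leg $(\Gen)^{\add_E}\to\SBim_E$ of the factorization \eqref{eq:factorization-of-SBim}, with the change-of-enrichment adjunction identifying the adjunct map on hom-categories. Your extra care about the hom-wise behaviour of $(-)^{\add_E}$ is exactly the (implicit) formal input the paper relies on, so there is nothing to add.
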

\begin{proposition}\label{prop:spell-out-SBim_E}
The objects of $\SBim_E$ are indexed by natural numbers $\mbbN$. 
Given $n , m \in \mbbN$; then  $\eHom_{\SBim_E}(n, m) = 0$ if $n \neq m$. Moreover,  $$\eHom_{\SBim_E}(n,n) \subset \, {_{E(B\U(1)^{n})}\BMod^\mrc_{E(B\U(1)^{n})}(\Mod_E)}$$  is the full subcategory consisting of retracts of direct sums of shifted $\U(n)$ Bott-Samelson bimodules
$\EBS_{\underbf{i}}(k)$.
\end{proposition}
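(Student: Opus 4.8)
The plan is to read the statement off the factorization \eqref{eq:factorization-of-SBim} together with the explicit description of $F$ provided by \cref{obs:describe-F}. Write $p \colon (\Gen)^{\add_E} \to \SBim_E$ and $q \colon \SBim_E \to \PolyMorc_E$ for the two legs of \eqref{eq:factorization-of-SBim}, so that $p$ is surjective on objects and dominant on $1$-morphisms while $q$ is faithful. Throughout I will use that, by \cref{obs:describe-Gen} and \cref{obs:describe-F}, the composite $\Gen \to \PolyMorc_E$ carries $n = *^{\boxtimes n}$ to $E(B\U(1)^n)$ and carries every composite of the generating endomorphisms $f_{n,i,k}$ to a shifted $\U(n)$ Bott--Samelson bimodule $\EBS_{\underbf{i}}(k)$, with the empty word recovering $\EBS_{\varnothing} = E(B\U(1)^n)$.

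For the object set, note that $(-)^{\add_E}$ does not alter objects, so $(\Gen)^{\add_E}$ has object set $\mbbN$; since $p$ is surjective on objects, the objects of $\SBim_E$ form a quotient of $\mbbN$. On the other hand the composite $q \circ p = (F)^{\add_E}$ agrees on objects with $F$, which by \cref{obs:polymorc-description} is the bijection $n \mapsto E(B\U(1)^n)$ onto the object set of $\PolyMorc_E$; hence $p$ is injective on objects as well, and the objects of $\SBim_E$ are indexed by $\mbbN$.

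For the hom-categories I would invoke \cref{cor:Gen-to-SBim}: every object of $\eHom_{\SBim_E}(n,m)$ is a retract of a finite direct sum of images of $1$-morphisms of $\Gen$. If $n\neq m$ there are no $1$-morphisms $n\to m$ in $\Gen$ (\cref{obs:describe-Gen}), so every object of $\eHom_{\SBim_E}(n,m)$ is a retract of a zero object and hence zero, giving $\eHom_{\SBim_E}(n,m) = 0$. If $n = m$, the images in question are exactly the bimodules $\EBS_{\underbf{i}}(k)$ by \cref{obs:describe-F}, so every object of $\eHom_{\SBim_E}(n,n)$ is a retract of a finite direct sum of such bimodules. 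For the reverse containment, observe that $\eHom_{\SBim_E}(n,n)$ is an object of $\add_E$, hence an idempotent-complete additive $\infty$-category containing each $\EBS_{\underbf{i}}(k)$, and therefore contains every retract of a finite direct sum of them. Since $q$ is faithful, it identifies $\eHom_{\SBim_E}(n,n)$ with the full subcategory of $\eHom_{\PolyMorc_E}(n,n) = {_{E(B\U(1)^n)}\BMod^\mrc_{E(B\U(1)^n)}(\Mod_E)}$ on these objects, which is the asserted description.

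I do not expect a genuine obstacle here: all the real content is already packaged in \cref{cor:Gen-to-SBim} (which rests on \cref{lem:dominant} and the factorization-system formalism imported from \cite{2024braided}) and in the image computation of \cref{obs:describe-F}. The single point worth spelling out in the write-up is that ``retracts of finite direct sums'' already absorbs the $\CProj_E$-linear enrichment: since $\CProj_E$ consists of retracts of finite free $E$-modules, tensoring $\EBS_{\underbf{i}}(k)$ over $E$ with a compact projective $E$-module yields nothing more general than a retract of a finite direct sum of copies of $\EBS_{\underbf{i}}(k)$.
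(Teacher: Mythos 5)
Your proposal is correct and follows essentially the same route as the paper: read the objects off the factorization \eqref{eq:factorization-of-SBim}, use \cref{cor:Gen-to-SBim} (equivalently, \cref{lem:dominant} applied hom-wise) together with \cref{obs:describe-Gen} and \cref{obs:describe-F} to identify $\eHom_{\SBim_E}(n,m)$ inside $\eHom_{\PolyMorc_E}(n,m)$ via the faithful leg. Your extra care on the object set (surjectivity plus injectivity through the composite, where the paper says ``by construction'') and on absorbing the $\CProj_E$-enrichment into retracts of finite direct sums just makes explicit details the paper leaves implicit.
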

\begin{proof}
    By construction, $\SBim_E$ has the same objects as $(\Gen)^{\add_E}$, which has the same objects as $\Gen$ as $(-)^{\add_E}$ is a hom-wise construction. Since $\Gen$ has $\mbbN$ many objects \cref{obs:describe-Gen}, it follows that $\SBim_E$ also has $\mbbN$ many objects.

    Fix $n, m \in \mbbN$; by \eqref{eq:factorization-of-SBim} we have a factorization
    \begin{equation}
        \begin{tikzcd}
            \eHom_{(Gen)^{\add_E}}(n,m) \ar[rr, "{F(n,m)}"] \ar[rd, "\mathrm{dominant}"']
            && \eHom_{\PolyMorc_E}(n,m). \\ 
            & \eHom_{\SBim_E}(n,m) \ar[ru, "\mathrm{fully-faithful}"']
            &
        \end{tikzcd}   
    \end{equation}
    By \cref{lem:dominant}, $\eHom_{\SBim_E}(n,m)$ is the full subcategory of $\eHom_{\PolyMorc_E}(n,m)$ consisting of retracts of 
    direct sums of $1$-morphisms in the image of $\Gen$. 

    Now we use  \cref{obs:polymorc-description} to identify $\eHom_{\PolyMorc_E}(n,m)$ with $_{E(B\U(1)^{n})}\BMod^\mrc_{E(B\U(1)^{n})}(\Mod_E)$. 
    Suppose $n \neq m$, since  $\eHom_{\Gen}(n,m) = \emptyset$ by \cref{obs:describe-Gen}, we see that  $\eHom_{\SBim_E}(n,m)$ is the $0$ category. On the other hand, 
    by \cref{obs:describe-F} we see that the image of $\eHom_{\Gen}(n,n)$ are precisely shifts of $\U(n)$ Bott-Samelson bimodules $\EBS_{\underbf{i}}(k)$. Therefore 
    $\eHom_{\SBim_E}(n,n)$ is the full subcategory consisting of retracts of direct sums of shifted Bott-Samelson bimodules.
\end{proof}
\begin{notation}\label{nota:SBim_E-n}
    We will often denote $\eHom_{\SBim_E}(n,n)$ as $\SBim_E(n)$.
\end{notation}
Now we define the stable Soergel $(\infty, 2)$-category.
Recall from \cref{prop:a-bunch-of-left-adjoints} that there is a presentably symmetric monoidal left adjoint $\Kb \colon \add_E \to \st_E$ to the evident forgetful functor. This induces a symmetric monoidal left adjoint 
\begin{equation}
    \Kbloc \colon \Cat[\add_E] \to \Cat[\st_E].
\end{equation}
\begin{definition}
    We call the monoidal $\st_E$-enriched $(\infty, 2)$-category $\Kbloc(\SBim_E)$  the \emph{$E$-valued stable Soergel $(\infty, 2)$-category}.
\end{definition}
\begin{observation}\label{obs:describe-KblocSBim_E}
   By construction, $\Kbloc(\SBim_E)$ has $\mbbN$ many objects, with 
   \begin{equation}
    \eHom_{\Kbloc(\SBim_E)}(n,m) = \begin{cases}
        0 & n \neq m \\
        \Kb(\SBim_E(n)) & n = m.
    \end{cases}
   \end{equation}
\end{observation}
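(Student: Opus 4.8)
The strategy is to observe that \cref{obs:describe-KblocSBim_E} is purely a matter of unwinding the construction of $\Kbloc$, all of whose content has already been isolated in \cref{prop:spell-out-SBim_E}. The single structural input I would invoke is that the functor $\Kbloc \colon \Cat[\add_E] \to \Cat[\st_E]$ of \cref{prop:a-bunch-of-left-adjoints} is change of enrichment along the symmetric monoidal left adjoint $\Kb \colon \add_E \to \st_E$; consequently, for any $\add_E$-enriched category $\cC$ the $\st_E$-enriched category $\Kbloc(\cC)$ has the same objects as $\cC$, and $\eHom_{\Kbloc(\cC)}(x,y) \simeq \Kb\bigl(\eHom_{\cC}(x,y)\bigr)$ naturally in $x$ and $y$. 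This is the standard behaviour of base change along a strong monoidal functor, and in this exact form it is recorded in \cite{2024braided}.

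Granting that, the proof is three short steps. First, the objects: $\Kbloc(\SBim_E)$ has the same objects as $\SBim_E$, which by \cref{prop:spell-out-SBim_E} are indexed by $\mbbN$. Second, the off-diagonal hom-categories: for $n \neq m$ one has $\eHom_{\SBim_E}(n,m) = 0$ by \cref{prop:spell-out-SBim_E}, and since $\Kb$ is a left adjoint it preserves the zero object, so $\eHom_{\Kbloc(\SBim_E)}(n,m) \simeq \Kb(0) = 0$. Third, the diagonal: $\eHom_{\Kbloc(\SBim_E)}(n,n) \simeq \Kb\bigl(\eHom_{\SBim_E}(n,n)\bigr) = \Kb(\SBim_E(n))$, the last identification being \cref{nota:SBim_E-n}. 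Assembling the three cases yields the displayed formula, and by the defining property of $\Kb$ (\cref{prop:a-bunch-of-left-adjoints}) the category $\Kb(\SBim_E(n))$ is indeed the universal stable $(\infty,1)$-category attached to $\SBim_E(n)$.

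I do not anticipate a genuine obstacle: this is a \emph{by construction} statement. The only point deserving care is the first one, namely that $\Kbloc$ is computed by applying $\Kb$ hom-wise while leaving objects untouched; this must be traced through the definition of $\Kbloc$ as an enrichment base-change in \cite{2024braided} rather than reproved here. Once that is in hand, the monoidal refinement needed downstream --- that the symmetric monoidal structure on $\Kbloc(\SBim_E)$ is again addition on objects and $\Kb$ applied to the $\otimes_E$-product of hom-bimodules --- follows formally from $\Kbloc$ being symmetric monoidal together with the corresponding description of the monoidal structure on $\SBim_E$.
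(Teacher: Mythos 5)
Your proposal is correct and matches the paper's reasoning: the paper treats this as an immediate "by construction" observation, the construction being exactly the hom-wise change of enrichment along the symmetric monoidal left adjoint $\Kb \colon \add_E \to \st_E$ from \cref{prop:a-bunch-of-left-adjoints}, combined with the description of objects and hom-categories of $\SBim_E$ in \cref{prop:spell-out-SBim_E}. Your extra care about $\Kb(0)=0$ and the monoidal refinement is consistent with, and no more than an unwinding of, what the paper leaves implicit.
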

\begin{remark}
   By \cref{rem:Kb-generalizes-chain-complex}, $\Kb(\SBim_E(n))$ is the $\infty$-categorical analogue of the chain homotopy category of Soergel bimodules. 
\end{remark}
By adjunction, the map $\SBim_E \to \PolyMorc_E$ in $\Alg(\Cat[\add_E])$ induces a map 
\begin{equation}\label{eq:fiber-functor}
    H_{\loc} \colon \Kbloc(\SBim_E) \to \PolyMorc_E
\end{equation}
in $ \Alg(\Cat[\st_E])$. This is our \emph{fiber functor}.
\begin{remark}\label{rem:suspensions}
    Fix $n \in \mbbN$;
    there are two type of ``suspensions'' in $\Kb(\SBim_E(n)) = 
    \eHom_{\Kbloc(\SBim_E)}(n,n)$. There is the standard suspension for the stable $\infty$-category $\Kb(\SBim_E(n))$, which we denote by $\Sigma$. It corresponds to the shifts in chain complex degrees.
    On the other hand, there are the ``grading shifts'' $(k) \colon \SBim_E \xrightarrow{\simeq} \SBim_E$ of Soergel bimodules, which extend to automorphisms on $\Kbloc(\SBim_E)(n)$. While $\Sigma$ and $(-1)$ disagree on $\Kbloc$, 
    the functor $H_{\loc}$ takes both to the standard suspension 
    in $\left(_{E(B\U(1)^n)}\BMod_{E(B\U(1)^n)}^\mrc(\Mod_E)\right)  = \eHom_{\PolyMorc_E}(n,n)$. 
\end{remark}


\section{Braiding on $\Kbloc(\SBim_E)$}\label{sec:braiding-on-Kbloc}
In this section we construct a braiding on $\Kbloc(\SBim_E)$, where $E$ is a connective $\EE_\infty$-ring spectrum equipped with a complex $\EE_\infty$-orientation $f_E \colon \MU \to E$.

\subsection{From braiding to prebraiding to Rouquier complex}\label{subsec:braiding-to-Rouquier}
We start by recalling the definition of a braiding as well as the parallel notion of a prebraiding. 
\begin{definition}\label{def:braiding}
    Let $\cC$ be a symmetric monoidal $\infty$-category and $c \in \Alg(\cC) = \Alg_{\EE_1}(\cC)$. Then the \emph{space of braidings} on $c$ is 
    \begin{equation}
    \Braid_{\cC}(c) \coloneqq \Alg_{\EE_2}(\cC) \times_{\Alg_{\EE_1}(\cC)} \{c\}.
    \end{equation}
\end{definition}
Note that this is a space as the forgetful map $\Alg_{\EE_2}(\cC) \to \Alg_{\EE_1}(\cC)$ is conservative.
Intuitively a braiding on $c$ is a lift of the $\EE_1$-algebra structure on $c$ to an $\EE_2$-algebra structure.

We also have the the notion of prebraidings on monoidal functors between $1$-categories over a symmetric monoidal $1$-category: 
\begin{definition}\label{def:prebraiding}{\cite[Definition 2.4.1]{2024braided}}
Let $\cA$ and $\cB$ be monoidal $1$-categories, with monoidal product denoted by
$\boxtimes$ in both cases and with associators $b_{x,y,z}$ in $\cB$. A \emph{prebraiding} $\beta$ on a monoidal functor $F\colon \cA \to \cB$ consists of the data
of isomorphisms
\[ F(x)\boxtimes F(y) \xrightarrow{\beta_{x,y}} F(y) \boxtimes F(x)\qquad
\forall x,y\in \cA \] that form a natural transformation $\boxtimes\circ (F\times
F) \Rightarrow \boxtimes^{\mathrm{op}}\circ (F\times F)$ and satisfy the
following two \emph{hexagon axioms} 
 for all $x,y,z\in \cA$:
    \begin{equation}
    \label{eq:fakehexagon-top}
        \begin{tikzcd}
            [scale cd=.75]
       (F(x)\boxtimes F(y))\boxtimes F(z) \arrow[d,"b"] \arrow[r,"\beta_{x,y} \boxtimes \id"]
       & (F(y)\boxtimes F(x))\boxtimes F(z) \arrow[r,"b"] 
       &  F(y)\boxtimes (F(x)\boxtimes F(z)) \arrow[r,"\id\boxtimes \beta_{x,z}"] 
       &   F(y)\boxtimes (F(z)\boxtimes F(x)) \arrow[d,"b^{-1}"]
       \\ 
       F(x)\boxtimes (F(y)\boxtimes F(z))\arrow[r,"\simeq"]
        & F(x)\boxtimes F(y\boxtimes z) \arrow[r,"\beta_{x,y\boxtimes z}"]
        & F(y \boxtimes z)\boxtimes F(x) \arrow[r,"\simeq"] 
        & (F(y)\boxtimes F(z)) \boxtimes F(x)
        \end{tikzcd}
    \end{equation}
    \begin{equation}
        \label{eq:fakehexagon-bottom}
            \begin{tikzcd}
                [scale cd=.75]
       F(x)\boxtimes (F(y)\boxtimes F(z)) \arrow[d,"b^{-1}"] \arrow[r,"\id\boxtimes \beta_{y,z}"]
       & F(x)\boxtimes (F(z)\boxtimes F(y)) \arrow[r,"b^{-1}"] 
       &  (F(x)\boxtimes F(z))\boxtimes F(y) \arrow[r," \beta_{x,z}\boxtimes\id"] 
       &   (F(z)\boxtimes F(x))\boxtimes F(y) \arrow[d,"b"]
       \\ 
       (F(x)\boxtimes F(y))\boxtimes F(z)\arrow[r,"\simeq"]
        & (F(x\boxtimes y))\boxtimes F(z) \arrow[r,"\beta_{x\boxtimes y,z}"]
        & F(z) \boxtimes F(x\boxtimes y) \arrow[r,"\simeq"] 
        & F(z)\boxtimes (F(x) \boxtimes F(y))
        \end{tikzcd} 
     \end{equation}
     where the isomorphisms $\simeq$ are part of the data of $F$.
     We denote the \emph{set of prebraidings} on $F$ by $\PreBraid(F)$.
\end{definition}
\begin{observation}\label{obs:prebraiding-on-identity}\cite[Corollary 2.4.2]{2024braided}
    Let $\cA$ be a monoidal $1$-category; then a pre-braiding on the identity functor $\id \colon \cA \to \cA$ is equivalent to a braided monoidal structure on $\cA$ (in the sense of \cite[Definition 8.1.1]{EGNO}) that extends its monoidal structure. Furthermore, since a braided monoidal structure on $\cA$ is equivalent to a $\EE_2$-algebra structure on $\cA$ (see \cite[Remark 7.7.7]{2024braided}),
    we have an isomorphism 
    \begin{equation}
        \PreBraid(\id \colon \cA \to \cA) \simeq \Braid_{\Cat_{(1,1)}}(\cA).
    \end{equation}
    See \cite[Example 8.1.2]{2024braided} for a detailed discussion.
\end{observation}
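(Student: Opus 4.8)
The statement is formal: it is the composite of two standard identifications, so the plan is simply to exhibit each in turn and to observe that both are natural in $\cA$.

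First I would specialize \cref{def:prebraiding} to the identity monoidal functor $F = \id_\cA$, which is strict, so all the coherence isomorphisms $\simeq$ occurring in \eqref{eq:fakehexagon-top} and \eqref{eq:fakehexagon-bottom} --- those encoding the lax monoidal maps $F(y)\boxtimes F(z)\to F(y\boxtimes z)$ --- become identities. What remains of the datum of a prebraiding is then precisely a natural family of isomorphisms $\beta_{x,y}\colon x\boxtimes y\xrightarrow{\simeq} y\boxtimes x$ together with the commutativity of the two resulting hexagons: \eqref{eq:fakehexagon-top} becomes the axiom expressing $\beta_{x,y\boxtimes z}$ in terms of $\beta_{x,y}$, $\beta_{x,z}$ and the associator $b$ of $\cA$, while \eqref{eq:fakehexagon-bottom} becomes the axiom expressing $\beta_{x\boxtimes y,z}$ in terms of $\beta_{x,z}$, $\beta_{y,z}$ and $b$. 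Comparing these diagram by diagram with \cite[Definition 8.1.1]{EGNO} identifies the set $\PreBraid(\id \colon \cA \to \cA)$ with the set of braided monoidal structures on $\cA$ refining its given monoidal structure.

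Next I would invoke the classical recognition principle, in the form recorded in \cite[Remark 7.7.7]{2024braided}, that an $\EE_2$-algebra structure on an object of $\Cat_{(1,1)}$ lifting a prescribed $\EE_1$-algebra (i.e.\ monoidal) structure is the same datum as a braided monoidal refinement of that structure; this is the $n=2$ case of the assertion that $\EE_n$-algebras in categories are $n$-fold monoidal categories, which by Eckmann--Hilton collapses to the braided case when $n=2$. Unwinding \cref{def:braiding}, the right-hand side of the claimed equivalence is by definition $\Braid_{\Cat_{(1,1)}}(\cA) = \Alg_{\EE_2}(\Cat_{(1,1)}) \times_{\Alg_{\EE_1}(\Cat_{(1,1)})} \{\cA\}$, which is exactly this set of braided refinements. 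Composing with the identification of the previous paragraph, and noting that both steps are manifestly functorial in the monoidal category $\cA$, yields $\PreBraid(\id \colon \cA \to \cA) \simeq \Braid_{\Cat_{(1,1)}}(\cA)$.

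I do not expect a genuine mathematical obstacle here; the only work is coherence bookkeeping --- checking that the two hexagons of \cref{def:prebraiding} (once the isomorphisms $\simeq$ are suppressed) coincide on the nose with the two hexagons of \cite[Definition 8.1.1]{EGNO}, including the placement of $b$ versus $b^{-1}$ and the left/right handedness, and that the $\EE_2$ formalism delivers this same handedness rather than the opposite braiding. Since the content is purely definitional, the result is recorded as an observation and the detailed comparison is deferred to \cite[Example 8.1.2]{2024braided}.
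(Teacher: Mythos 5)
Your proposal is correct and takes essentially the same route the paper relies on: the paper records this observation purely by citation to \cite[Corollary 2.4.2]{2024braided}, and your argument—specializing \cref{def:prebraiding} to $F=\id_{\cA}$ so that the structure maps of $F$ become identities and the two hexagons reduce to the braiding hexagons of \cite[Definition 8.1.1]{EGNO}, then identifying $\EE_2$-lifts of the given $\EE_1$-structure with braided refinements via \cite[Remark 7.7.7]{2024braided} and \cref{def:braiding}—is exactly the intended unwinding.
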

\begin{remark}
    See \cite[Notation 8.1.1]{2024braided} for the 
    $\infty$-categorical generalization of prebraiding.
\end{remark}
We also need the notion of prebraiding over a base braided monoidal $1$-category.
\begin{definition}\label{def:prebraiding-over-base}\cite[Definition 2.4.5]{2024braided}
   Suppose we have monoidal categories $\cC_1, \cC_2$ and a braided monoidal category $\cD$. Furthermore, suppose we have monoidal functors $F \colon \cC_1 \to \cC_2$ and $g \colon \cC_2 \to \cD$. We can consider $\cC_1$ as a monoidal category over $\cD$ by the composite $f \coloneqq g \circ F$.

   A \emph{prebraiding on $F$ over $\cD$ } is a prebraiding on $F$ together with the condition that g maps the prebraiding isomorphisms $\beta$ to the braiding isomorphisms in $\cD$. We denote by $\PreBraid_{/\cD}(F)$ the set of prebraidings on $F$ over $\cD$.
\end{definition}
\begin{observation}\cite[Corollary 2.4.7]{2024braided}
    Let $\cA$ be a monoidal category, $\cD$ a braided monoidal category, and $g \colon \cA \to \cD$ be a monoidal functor. Then a prebraiding on the identity functor $\id \colon \cA \to \cA$ over $\cD$ is a braided monoidal structure on $\cA$ such that $g$ is braided monoidal in the sense of \cite[Definition 8.1.7]{EGNO}. As braided monoidal functors are the same as $\EE_2$ algebra maps, 
    we have an equivalence 
    \begin{equation}\label{eq:prebraid-on-iden-to-braid-rel}
        \PreBraid_{/\cD}(\id \colon \cA \to \cA) \simeq \Braid_{(\Cat_{(1,1)})_{/\cD}}(\cA).
    \end{equation}
\end{observation}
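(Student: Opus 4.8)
I would split the assertion into a concrete $1$-categorical identification followed by its translation into $\EE_2$-language. For the first step, recall from \cref{obs:prebraiding-on-identity} that a prebraiding on $\id\colon\cA\to\cA$, \emph{without} the ``over $\cD$'' clause, is precisely a braided monoidal structure on $\cA$ extending its given monoidal structure. By \cref{def:prebraiding-over-base}, upgrading this to a prebraiding on $\id$ over $\cD$ imposes only the additional requirement that the monoidal functor $g\colon\cA\to\cD$ send each prebraiding isomorphism $\beta_{x,y}$ to the braiding isomorphism $c_{g(x),g(y)}$ of $\cD$. Since $\beta$ is, by the previous sentence, already a braiding on $\cA$ and $g$ is already monoidal, I claim this lone condition is exactly the defining compatibility of a braided monoidal functor in the sense of \cite[Definition 8.1.7]{EGNO}: the hexagon diagrams demanded there follow formally from the prebraiding hexagons \eqref{eq:fakehexagon-top}--\eqref{eq:fakehexagon-bottom} for $(\cA,\beta)$, the corresponding hexagons for the braiding of $\cD$, and the monoidal coherence isomorphisms of $g$. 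Thus a prebraiding on $\id_{\cA}$ over $\cD$ is the same datum as a braided monoidal structure on $\cA$ together with an enhancement of $g$ to a braided monoidal functor, which is the stated description.

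\textbf{Passing to $\EE_2$-algebras.} For the second step I would invoke the standard dictionary recorded in \cite[Remark 7.7.7]{2024braided} (cf.\ \cref{obs:prebraiding-on-identity}): $\EE_2$-algebras in $\Cat_{(1,1)}$ are braided monoidal categories, and $\EE_2$-algebra maps are braided monoidal functors. It then remains to unwind $\Braid_{(\Cat_{(1,1)})_{/\cD}}(\cA)$. By \cref{def:braiding} this is the fiber of the forgetful map $\Alg_{\EE_2}\bigl((\Cat_{(1,1)})_{/\cD}\bigr)\to\Alg_{\EE_1}\bigl((\Cat_{(1,1)})_{/\cD}\bigr)$ over the fixed $\EE_1$-algebra object $\cA\xrightarrow{g}\cD$ of the slice, where $(\Cat_{(1,1)})_{/\cD}$ carries the monoidal structure induced by the product of $\cD$ as set up in \cite[\S 2.4]{2024braided}. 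With respect to that structure I would check that an $\EE_n$-algebra in $(\Cat_{(1,1)})_{/\cD}$ unwinds to an $\EE_n$-algebra $\cB$ in $\Cat_{(1,1)}$ equipped with an $\EE_n$-algebra map $\cB\to\cD$; pulling back to the fiber over $(\cA,g)$ then rigidifies $\cB$ to $\cA$ with its given monoidal structure and rigidifies the structure map $\cB\to\cD$ to $g$ with its given monoidal structure. Hence that fiber is exactly the set of $\EE_2$-lifts of the monoidal structure of $\cA$ equipped with an $\EE_2$-lift of the $\EE_1$-map $g$ --- a braided structure on $\cA$ plus a braided monoidal enhancement of $g$ --- which is the same data as in the first step, yielding the equivalence \eqref{eq:prebraid-on-iden-to-braid-rel}.

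\textbf{Main obstacle.} Neither half is deep; I expect the only real work to lie in two bookkeeping points. The first is verifying that ``$g$ sends $\beta$ to the braiding of $\cD$'' subsumes every axiom of \cite[Definition 8.1.7]{EGNO}: I would organize this diagram chase by juxtaposing the prebraiding hexagons \eqref{eq:fakehexagon-top}--\eqref{eq:fakehexagon-bottom} for $\cA$ with their counterparts for $\cD$, applying $g$, and cancelling using the coherence of $g$. The second is the slice-category identification of $\EE_n$-algebras in $(\Cat_{(1,1)})_{/\cD}$ with ($\EE_n$-algebras in $\Cat_{(1,1)}$) together with an $\EE_n$-algebra map to $\cD$, where one must be attentive to precisely which monoidal structure the slice carries. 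Since the resulting statement is \cite[Corollary 2.4.7]{2024braided}, one could alternatively simply cite that result.
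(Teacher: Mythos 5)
Your argument is correct, and in fact the paper offers no proof of this observation at all — it simply cites \cite[Corollary 2.4.7]{2024braided}, as you note one could do at the end. Your two-step reconstruction (unwinding \cref{def:prebraiding-over-base} on top of \cref{obs:prebraiding-on-identity}, then identifying $\EE_2$-algebras in the slice with $\EE_2$-algebras equipped with an $\EE_2$-map to $\cD$) is exactly the intended content; the only nitpick is that \cite[Definition 8.1.7]{EGNO} asks for a single square compatibility between the braidings and the monoidal structure of $g$, not hexagon diagrams, so your "lone condition" is essentially that definition verbatim rather than something from which further axioms must be derived.
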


 


Now let us recall our setup. Let $E$ be a connective $\EE_\infty$-algebra. We have $\Kbloc(\SBim_E)$ together with a fiber functor $$H_{\loc} \colon \Kbloc(\SBim_E) \to \PolyMorc_E$$ in $ \Alg_{\EE_1}(\Cat[\st_E])$.
We are interested in lifting the $\EE_1$-algebra structures on both $\Kbloc(\SBim_E)$ and $H_{\loc}$ to $\EE_2$-algebra structures. Equivalently, we want a braiding on $\Kbloc(\SBim_E)$, viewed as an object in the over-category $\Alg_{\EE_1}(\Cat[\st_E])_{/\PolyMorc_E} \simeq \Alg_{\EE_1}(\Cat[\st_E]_{/\PolyMorc_E})$ (\cite[A.8.6]{2024braided}) via $H_\loc$.

We now reduce a problem about braiding on an $(\infty, 2)$-category to a problem about prebraiding on a $1$-category, namely its homotopy $1$-category.
\begin{definition}\label{def:homotopy-1-functor}
    Let $h_0 \colon \cat \to \Set$ denote the symmetric monoidal functor that takes an $\infty$-category to the set of isomorphism classes of objects. 
    This induces $h_1 \colon \Cat[\cat] \to \Cat_{(1,1)} = \Cat[\Set]$, 
    which takes an $(\infty, 2)$-category $\cC$ to a $1$-category $h_1 \cC$ who has the same objects and whose $1$-morphisms are isomorphism classes of $1$-morphisms in $\cC$. We call $h_1\cC$ the \emph{homotopy $1$-category} of $\cC$.\footnote{We refer the reader to \cite[\S 5]{2024braided} for detailed discussions about the functors $h_1$, and more generally the homotopy $n$-category functors $h_n$.} 
\end{definition}
We have a sequence of maps 
\begin{equation}\label{eq:reducing-to-prebraid}
    \begin{aligned}
        \Braid_{\Cat[\st_E]_{/\PolyMorc_E}}(\Kbloc(\SBim_E)) &\to 
        \Braid_{{\Cat_{(1,1)}}_{/h_1\PolyMorc_E}}(h_1 \Kbloc(\SBim_E)) 
         \\&
         \simeq \PreBraid_{/h_1\PolyMorc_E}(\id \colon h_1 \Kbloc(\SBim_E) \to h_1 \Kbloc(\SBim_E))
         \\& \to 
        \PreBraid_{/h_1\PolyMorc_E}(h_1 \Gen \to h_1 \Kbloc(\SBim_E)),
    \end{aligned}
\end{equation}
where $h_1 \Gen \to \Kbloc(\SBim_E)$ comes from the composite $\Gen \to \SBim_E \to \Kbloc(\SBim_E)$.
Now we use the key technical result of \cite{2024braided}:
\begin{proposition}\label{prop:reducing-to-prebraid}
   The composite map
   $$ \Braid_{\Cat[\st_E]_{/\PolyMorc_E}}(\Kbloc(\SBim_E)) \to 
   \PreBraid_{/h_1\PolyMorc_E}(h_1 \Gen \to h_1 \Kbloc(\SBim_E))$$ is an isomorphism.  
\end{proposition}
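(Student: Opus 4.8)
The plan is to run the argument of \cite{2024braided} over the base $E$ in place of $\mbbQ$: the machinery of that paper --- factorization systems for enriched $\infty$-categories, the homotopy $n$-truncation functors, and the interaction of monoidal localizations with $\EE_n$-algebras --- was set up over an essentially arbitrary connective base, and the structural inputs it requires here ($\add_E$, $\st_E$, the functor $\Kb_E$, and the presentation of $\SBim_E$ by a factorization system) were all assembled in \cref{sec:inf-2-SBim}. Concretely, I would show that the composite of \eqref{eq:reducing-to-prebraid} is an isomorphism by treating its three factors separately. The middle factor is an isomorphism by \eqref{eq:prebraid-on-iden-to-braid-rel} applied to $\cA = h_1\Kbloc(\SBim_E)$ over the symmetric (hence braided) monoidal $1$-category $\cD = h_1\PolyMorc_E$, so the content is in the outer two.

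First, I would show that the $h_1$-map --- from braidings on the $(\infty,2)$-category $\Kbloc(\SBim_E)$ over $\PolyMorc_E$ to braidings on its homotopy $1$-category over $h_1\PolyMorc_E$ --- is an isomorphism of spaces. This is the rigidity statement at the heart of \cite{2024braided}: because $\Kbloc(\SBim_E)$ is a monoidal locally stable $(\infty,2)$-category produced from $\Gen$ by the monoidal left adjoints $(-)^{\add_E}$ and $\Kbloc$ together with the (surjective-on-objects-and-dominant-on-$1$-morphisms, faithful) factorization of \cref{def:definition-of-SBim_E}, the fibre of $\Alg_{\EE_2}(\Cat[\st_E]_{/\PolyMorc_E}) \to \Alg_{\EE_1}(\Cat[\st_E]_{/\PolyMorc_E})$ over the given $\EE_1$-structure is already discrete, and $h_1$ identifies it with the analogous $1$-categorical fibre. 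The verification that this transfers amounts to checking that $\add_E$, $\st_E$ and $\Kb_E$ satisfy the hypotheses of \cite[\S 3, \S 6]{2024braided} --- which they do, the connectivity of $E$ being used only to identify $\CProj_E$ with the additive (idempotent-complete) envelope of the free module.

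Second, I would show that the restriction map $\PreBraid_{/h_1\PolyMorc_E}(\id) \to \PreBraid_{/h_1\PolyMorc_E}(h_1\Gen \to h_1\Kbloc(\SBim_E))$ is an isomorphism of sets. The functor $h_1\Gen \to h_1\Kbloc(\SBim_E)$ is surjective on objects, and by \cref{cor:Gen-to-SBim} together with the construction of $\Kb$ every $1$-morphism of $h_1\Kbloc(\SBim_E)$ is built from the images of the $1$-morphisms of $\Gen$ --- the shifted Bott--Samelson bimodules $\EBS_{\underbf{i}}(k)$ --- by finitely many direct sums, retracts and cones. A prebraiding is a monoidal-natural family of isomorphisms constrained by the hexagon axioms, and such data is determined by its restriction to any generating set of $1$-morphisms, since naturality propagates through $\oplus$, retracts and cones and the hexagons need only be verified on generators; conversely a prebraiding on the generating functor \emph{over} $h_1\PolyMorc_E$ extends, because the compatibility with $h_1\PolyMorc_E$ forces the generating braiding isomorphisms onto the symmetry and, since $h_1$ sends the additive/idempotent/stable completions to the corresponding $1$-categorical operations, this determines a consistent extension. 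Both halves are proved exactly as in \cite{2024braided}.

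The main obstacle I anticipate is organisational rather than conceptual: one must go through the proof of the key technical theorem of \cite{2024braided} and confirm that no step uses a property of $\mbbQ$ not shared by a connective $\EE_\infty$-ring --- in particular the connectivity estimates on enriched mapping spaces and the behaviour of the $h_n$ functors --- and that the hom-categories $\SBim_E(n)$ behave as their rational analogues, which follows from \cref{prop:E-coho-of-BS}. I would stress that the splittings \cref{prop:E-BS_s-splitting-1} and \cref{prop:E-sts-splitting-1} of \cref{sec:complex-oriented-BS} do \emph{not} enter the proof of \cref{prop:reducing-to-prebraid}: this proposition only pins down the \emph{shape} of the space of braidings, and the splittings --- together with the complex orientation on $E$ --- are needed afterwards, in \cref{sec:braiding-on-Kbloc}, to actually construct a prebraiding on $h_1\Gen \to h_1\Kbloc(\SBim_E)$, namely the one given by Rouquier complexes, and to check its hexagon axioms.
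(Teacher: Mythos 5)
Your proposal is essentially the paper's proof: the paper simply invokes the key technical result \cite[Theorem 8.2.1]{2024braided} with $\cC = \SBim_E$, $\cD = \PolyMorc_E$, $\cB = \Gen$ (noting via a footnote of that paper that it applies to general $E$ and without the $\mbbZ$-grading), and verifies exactly the hypotheses you identify — faithfulness of $\SBim_E \to \PolyMorc_E$ from the factorization \eqref{eq:factorization-of-SBim} and the generation condition from \cref{cor:Gen-to-SBim}. One caveat: the paper asserts only that the \emph{composite} in \eqref{eq:reducing-to-prebraid} is an isomorphism, as the cited theorem states, and does not claim the outer factors are separately isomorphisms; your gloss that a prebraiding extends from generators "through cones by naturality" is delicate (cones are not functorial in $h_1$ of a stable category), but since you ultimately defer to the proof in \cite{2024braided} this is a presentational rather than substantive difference.
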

\begin{proof}
    The result follows from  \cite[Theorem 8.2.1]{2024braided}
    with $\cC = \SBim_E$, $\cD = \PolyMorc_E$, and $\cB = \Gen$, together with the described maps between them.\footnote{\cite[Theorem 8.2.1]{2024braided} is stated for $E = Hk$ with a $\mbbZ$-grading. However, the statement works more generally for any $E$ and with or without grading, see \cite[Footnote 37]{2024braided}. In our case we take $\mbbK$ to be $E$ and $\Monoid = \pt$.} 
    It remains to check that they satisfy the conditions of \cite[Theorem 8.2.1]{2024braided}. The map $\SBim_E \to \PolyMorc_E$ is faithful by definition \eqref{eq:factorization-of-SBim}, and the condition on $\Gen \to \SBim_E$ is proven in \cref{cor:Gen-to-SBim}. Note that there are no $\mbbZ$-shifts as we don't have a $\mbbZ$-grading.
\end{proof}

It remains to define a prebraiding on $h_1 \Gen \to h_1 \SBim_E$ over $h_1\PolyMorc_E$. We would like to simplify the data needed to define a prebraiding from $h_1 \Gen$. 
\begin{lemma}\label{lem:prebraid-to-slide-maps}
    Let $\cC$ be a monoidal $1$-category and $F \colon h_1\Gen \to \cC$ be a monoidal map given by taking $* \mapsto c$ and $f_k \mapsto g_k \colon c \boxtimes c \to c \boxtimes c$.
    The set of prebraiding $\PreBraid(F \colon h_1 \Gen \to \cC)$ isomorphic to the set of morphism $\beta \colon c \boxtimes c \to c \boxtimes c$ satisfying the following:
    \begin{enumerate}
        \item $\beta$ is an equivalence.
        \item For every $k \in \mbbZ$, the following diagrams commute:\footnote{For the rest of the section, to simplify notation, we omit the parentheseses and associators.}
    \begin{equation}\label{eq:slide-map-top}
        \begin{tikzcd}
            c \boxtimes c \boxtimes c \ar[rr, "\id_c \boxtimes g_k"] \ar[d, "\beta \boxtimes  \id_c"] && c \boxtimes c \boxtimes c \ar[d, "\beta \boxtimes \id_c"] \\ 
            c \boxtimes c \boxtimes c \ar[d, "\id_c \boxtimes \beta"] && c \boxtimes c \boxtimes c \ar[d, "\id_c \boxtimes \beta"]  \\ 
            c \boxtimes c \boxtimes c \ar[rr, "g_k \boxtimes  \id_c"] &&  c \boxtimes c \boxtimes c
        \end{tikzcd}
    \end{equation}
        \begin{equation}\label{eq:slide-map-bottom}
\begin{tikzcd}
    c \boxtimes c \boxtimes c \ar[rr, "g_k \boxtimes \id_c"] \ar[d, "\id_c \boxtimes \beta"] && c \boxtimes c \boxtimes c \ar[d, "\id_c \boxtimes \beta"] \\ 
    c \boxtimes c \boxtimes c \ar[d, "\beta \boxtimes \id_c"] && c \boxtimes c \boxtimes c \ar[d, "\beta \boxtimes \id_c"]  \\ 
    c \boxtimes c \boxtimes c \ar[rr, "\id_c \boxtimes g_k"] &&  c \boxtimes c \boxtimes c.
\end{tikzcd}
    \end{equation}
    \end{enumerate}
\end{lemma}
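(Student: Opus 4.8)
The plan is to exhibit a prebraiding on $F$ as data freely generated by its value on the single object $*$, and then to read off conditions (1) and (2) from the requirements in \cref{def:prebraiding}. Unwinding that definition, a prebraiding on $F$ is a family of isomorphisms $\beta_{x,y}\colon F(x)\boxtimes F(y)\to F(y)\boxtimes F(x)$, natural in $x$ and $y$, satisfying the hexagons \eqref{eq:fakehexagon-top} and \eqref{eq:fakehexagon-bottom}. By \cref{obs:describe-Gen} every object of $h_1\Gen$ is a tensor power $*^{\boxtimes n}$, and $F(*^{\boxtimes n})=c^{\boxtimes n}$. Suppressing associators as elsewhere in this section, the two hexagons read
\[
\beta_{x,\,y\boxtimes z}=(\id_{F(y)}\boxtimes\beta_{x,z})\circ(\beta_{x,y}\boxtimes\id_{F(z)}),\qquad
\beta_{x\boxtimes y,\,z}=(\beta_{x,z}\boxtimes\id_{F(y)})\circ(\id_{F(x)}\boxtimes\beta_{y,z}).
\]
Taking $z=*$ in the first identity and $y=*$ in the second and inducting on tensor lengths, every $\beta_{*^{\boxtimes m},*^{\boxtimes n}}$ is determined by $\beta\coloneqq\beta_{*,*}$; this gives the forward assignment $\PreBraid(F)\to\{\beta\}$, $\{\beta_{x,y}\}\mapsto\beta_{*,*}$.

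For the converse, given any morphism $\beta\colon c\boxtimes c\to c\boxtimes c$ I would define $\beta_{*^{\boxtimes m},*^{\boxtimes n}}$ by the two displayed recursions in a fixed order (say, expanding the first variable, then the second), so that $\beta_{*^{\boxtimes m},*^{\boxtimes n}}$ becomes the evident block-transposition composite built from $mn$ copies of $\beta$. Three points must then be checked. First, each $\beta_{x,y}$ is a composite of associators and morphisms of the form $\id^{\boxtimes a}\boxtimes\beta\boxtimes\id^{\boxtimes b}$, so it is an isomorphism exactly when $\beta$ is --- this is condition (1). Second, the hexagon axioms for all triples $(x,y,z)$ follow from the cases $z=*$ and $y=*$, which hold by the definition of $\beta_{x,y}$, by an induction on tensor lengths using naturality of the associators and the pentagon (this also shows the definition is independent of the order of expansion). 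Third, for naturality of the family: the morphisms of $h_1\Gen$ are generated under composition and $\boxtimes$ by the $f_{n,i,k}=\id_{i-1}\boxtimes f_k\boxtimes\id_{n-i-1}$ (\cref{obs:describe-Gen}), and naturality is stable under composition and --- via the displayed hexagon formulas --- under tensoring with identity strands, so it suffices to check naturality of $\beta_{*,\,*\boxtimes *}$ and of $\beta_{*\boxtimes *,\,*}$ against $f_k$. Substituting $\beta_{*,\,*\boxtimes *}=(\id_c\boxtimes\beta)\circ(\beta\boxtimes\id_c)$ and $\beta_{*\boxtimes *,\,*}=(\beta\boxtimes\id_c)\circ(\id_c\boxtimes\beta)$, these two naturality squares are precisely the commuting diagrams \eqref{eq:slide-map-top} and \eqref{eq:slide-map-bottom}. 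The two assignments are manifestly mutually inverse, which yields the stated bijection.

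The step I expect to be the main obstacle is the second point above: verifying that the recursively defined $\beta_{x,y}$ satisfies every hexagon, equivalently that the ``expand first variable'' and ``expand second variable'' prescriptions agree. This is a routine monoidal-coherence computation --- the same mechanism by which a braiding on a monoidal category is freely determined by its value on $\boxtimes$-generators, and parallel to the corresponding statement in \cite{2024braided} --- so I would not grind through the bookkeeping; the substantive content of the lemma is the reduction of naturality to the two slide relations carried out in the third point.
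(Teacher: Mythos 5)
Your proposal is correct and follows essentially the same route as the paper: extract $\beta=\beta_{*,*}$, rebuild the full prebraiding from $\beta$ via the hexagon-forced (cabling) formulas, verify the hexagons by monoidal coherence/functoriality of $\boxtimes$, and reduce naturality to the generators $f_{n,i,k}$ and hence to the two slide diagrams \eqref{eq:slide-map-top} and \eqref{eq:slide-map-bottom}, finishing by noting the two assignments are mutually inverse. The only cosmetic difference is that you define $\beta_{*^{\boxtimes m},*^{\boxtimes n}}$ by recursion on the hexagons rather than by the explicit product of simple transpositions in \eqref{eq:cable-crossing}, which is the same construction.
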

Intuitively, the diagrams \eqref{eq:slide-map-top} \eqref{eq:slide-map-bottom} are saying that the endomorphisms $f_k$ slide across the braiding given by $\beta$.
\begin{proof}
    First we construct $\beta$ from a prebraiding and show that it satisfies the required conditions.
    Suppose we have a prebraiding natural transform $\gamma \colon \boxtimes \circ (F \times F) \simeq \boxtimes^{\op} \circ (F \times F)$.
    We define $\beta$ to be $\gamma_{1,1} \colon c \boxtimes c \mapsto c \boxtimes c$. It is clearly invertible. Now we would like to show that $\beta$ satisfies condition (2). By the top hexagon axiom \cref{eq:fakehexagon-top}, the braiding $\gamma_{1, 2} \colon c \boxtimes c \boxtimes c \to c \boxtimes c \boxtimes c$ is given by the composite 
    \begin{equation}
        c \boxtimes c \boxtimes c \xrightarrow{\beta \boxtimes \id_c} 
        c \boxtimes c \boxtimes c \xrightarrow{\id_c \boxtimes \beta}
        c \boxtimes c \boxtimes c.
    \end{equation}
    Now the commutativity of \eqref{eq:slide-map-top} follows from the naturality of $\gamma$. The analogous argument shows that the commutativity \eqref{eq:slide-map-bottom} follows from the bottom Hexagon axiom \eqref{eq:fakehexagon-bottom} and naturality of $\gamma$.

    Now suppose we have $\beta \colon c \boxtimes c \xrightarrow{\simeq} c \boxtimes c$ satisfying \eqref{eq:slide-map-top} and \eqref{eq:slide-map-bottom}. We will now construct a prebraiding on $F \colon h_1\Gen \to \cC$. First we construct the natural isomorphism $\gamma \colon \boxtimes \circ (F \times F) \simeq \boxtimes^{\op} \circ (F \times F)$. 
    Given $n \in \mbbN$ and a simple transposition $s_i$ in $S_n$, we define $F(s_i)$ to be $$\id_{c^{\boxtimes i-1}} \boxtimes \beta \boxtimes \id_{c^{\boxtimes n-i-1}} \in \Hom_{\cC}(c^{\boxtimes n}, c^{\boxtimes n}).$$
    More generally, given $\underbf{i} = (i_1, \cdots, i_k)$, we define
    \begin{equation}
        F(\underbf{i}) \coloneqq F(s_{i_1}) \circ \cdots \circ F(s_{i_k}) \in 
        \Hom_{\cC}(c^{\boxtimes n}, c^{\boxtimes n}).
    \end{equation}
   Now we define the prebraiding:
   \begin{equation}\label{eq:cable-crossing}
    \gamma_{m,n} \coloneqq F(\underline{(s_n \cdots s_1) \cdots (s_{i+n-1} \cdots s_i) \cdots (s_{m+n-1} \cdots s_n)}) \in \Hom_{\cC}(c^{\boxtimes m + n}, c^{\boxtimes m + n}).
   \end{equation}
   See \cite[Figure 1]{2024braided} for pictures of such $\gamma$.
   
   Now we need to show that the assignments $\gamma_{m,n}$ are natural and that they satisfy the two Hexagon axioms. Both of the Hexagon axioms \eqref{eq:fakehexagon-top} \eqref{eq:fakehexagon-bottom} follow directly from the construction of $\gamma_{m,n}$ and the naturality of $\boxtimes$, which allow us to exchange the order of far away crossings.

   As for naturality, we need show that for any $m, n \in \mbbN$ and any $f \in \Hom_{\Gen}(n,n)$,\footnote{Recall that $\Gen$ only has endomorphisms by \cref{obs:describe-Gen}.} the morphism $f$ can slide across $\gamma_{m,n}$:
   \begin{equation}\label{eq:f-right-slide}
    \begin{tikzcd}
       c^{\boxtimes m} \boxtimes c^{\boxtimes n} \ar[d, "\id_{c^{\boxtimes m}} \boxtimes  F(f)"] \ar[rrr, "\gamma_{m,n}"]  
       &&& c^{\boxtimes n} \boxtimes c^{\boxtimes m} \ar[d, "F(f) \boxtimes  \id_{c^{\boxtimes n}}"] \\ 
       c^{\boxtimes m} \boxtimes c^{\boxtimes n} \ar[rrr, "\gamma_{m,n}"] 
       &&& c^{\boxtimes n} \boxtimes c^{\boxtimes m}.
    \end{tikzcd}
   \end{equation}
   Similarly, for any $f \in \Hom_{\Gen}(m,m)$, it can slide across $\gamma_{m,n}$:
   \begin{equation}\label{eq:f-left-slide}
    \begin{tikzcd}
       c^{\boxtimes m} \boxtimes c^{\boxtimes n} \ar[d, "F(f) \boxtimes \id_{c^{\boxtimes n}}"] \ar[rrr, "\gamma_{m,n}"]  
       &&& c^{\boxtimes n} \boxtimes c^{\boxtimes m} \ar[d, "\id_{c^{\boxtimes n}} \boxtimes F(f)"] \\ 
       c^{\boxtimes m} \boxtimes c^{\boxtimes n} \ar[rrr, "\gamma_{m,n}"] 
       &&& c^{\boxtimes n} \boxtimes c^{\boxtimes m}.
    \end{tikzcd}
   \end{equation}
   Let us prove the commutativity of \eqref{eq:f-right-slide}. By \cref{obs:describe-Gen}, it suffices to consider the case of $f$ being  $f_{n,i,k} \coloneqq \id_{i-1} \boxtimes f_k \boxtimes \id_{n-i-1} \in \Hom_{\Gen}(n,n)$ for some $1 \leq i \leq n-1$ and $k \in \mbbZ$. 
   Furthermore, using the functoriality of $\boxtimes$, we can reduce to the case when $m = 1$, $n = 2$, and $f = f_k$. In this case \eqref{eq:f-right-slide} unpacks to \eqref{eq:slide-map-top}.
   Similarly, we can reduce the commutativity of \eqref{eq:f-left-slide} to that of \eqref{eq:slide-map-bottom}.

   To finish the proof, we need to check that the two constructions above are inverse constructions. The case of  $\beta$ to prebraiding back to $\beta$ is clear. On the other hand, for any prebraiding $\gamma$, the hexagon axioms \eqref{eq:fakehexagon-top} \eqref{eq:fakehexagon-bottom} imply that $\gamma_{m,n}$ is of the form defined in \eqref{eq:cable-crossing}.
\end{proof}

\begin{remark}
    \cref{lem:prebraid-to-slide-maps} is a rigorous formalization of much of the reduction arguments given in \cite[\S 2.5]{2024braided}.
\end{remark}

We apply \cref{lem:prebraid-to-slide-maps} to $h_1 \Gen \to \h_1 \Kbloc(\SBim_E)$:
\begin{corollary}\label{cor:reduce-to-Rouquier-and-slide}
    The set of prebraidings $\PreBraid_{/h_1\PolyMorc_E}(h_1 \Gen \to h_1 \Kbloc(\SBim_E))$ is equivalent to the set of isomorphism classes of morphisms $R \in \eHom_{\Kbloc(\SBim_E)}(2,2)$ satisfying the following conditions:
    \begin{enumerate}
        \item The image $$H_\loc(R) \in \eHom_{\PolyMorc_E}(2,2) = \,{_{E(B\U(1)^2)}\BMod_{E(B\U(1)^2)}^\mrc(\Mod_E)}$$ is isomorphic to the permutation bimodule $E(B\U(1)^2s_1)$.
        \item $R$ is invertible. 
        \item 
        Let $R_1, R_2 \in \Hom_{\Kbloc(\SBim_E)}(3,3)$ denote $R \boxtimes \id_{E(BT)}$ and $\id_{E(BT)}  \boxtimes R$ respectively, then there exists isomorphisms
        \begin{equation}\label{eq:BS-slide-1}
            \EBS_1 \circ R_2 \circ R_1 \simeq 
            R_2 \circ R_1 \circ \EBS_2,
        \end{equation}
    and 
        \begin{equation}\label{eq:BS-slide-2}
            \EBS_{2} \circ R_1 \circ R_2 \simeq 
            R_1 \circ R_2 \circ \EBS_1.
        \end{equation}
        Here we use $\circ$ to represent the composition of $1$-morphisms in $\Hom_{\Kbloc(\SBim_E)}(3,3)$.
    \end{enumerate}
\end{corollary}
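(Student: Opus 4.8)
The plan is to deduce this directly from \cref{lem:prebraid-to-slide-maps}, applied to the monoidal functor $F \colon h_1\Gen \to h_1\Kbloc(\SBim_E)$ obtained by applying $h_1$ to the composite $\Gen \to \SBim_E \to \Kbloc(\SBim_E)$, with structure functor $h_1 H_\loc$ to the base $h_1\PolyMorc_E$. First I would record, using \cref{obs:describe-F}, that $F$ has exactly the shape required by \cref{lem:prebraid-to-slide-maps}: it sends the distinguished object $\ast$ to the object $1$ (that is, $E(B\U(1))$) and the endomorphism $f_k$ to the isomorphism class of the shifted Bott--Samelson bimodule $g_k \coloneqq \EBS_1(k) \in \eHom_{\Kbloc(\SBim_E)}(2,2)$. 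Thus \cref{lem:prebraid-to-slide-maps} identifies $\PreBraid(F)$ with the set of isomorphism classes of morphisms $R \in \eHom_{\Kbloc(\SBim_E)}(2,2)$ that are invertible---this is condition~(2)---and that make the slide squares \eqref{eq:slide-map-top} and \eqref{eq:slide-map-bottom} commute for every $k \in \mbbZ$, with $g_k = \EBS_1(k)$.

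Next I would unwind these slide squares. With $R_1, R_2 \in \Hom_{\Kbloc(\SBim_E)}(3,3)$ as in the statement, one has $g_k \boxtimes \id = \EBS_1(k)$ and $\id \boxtimes g_k = \EBS_2(k)$ in $\eHom_{\Kbloc(\SBim_E)}(3,3)$, since $\boxtimes = - \otimes_E -$ and $E(B\U(1)) \otimes_E \EBS_1 \simeq \EBS_2$. Hence \eqref{eq:slide-map-top} for $g_k$ becomes $\EBS_1(k) \circ R_2 \circ R_1 \simeq R_2 \circ R_1 \circ \EBS_2(k)$ and \eqref{eq:slide-map-bottom} for $g_k$ becomes $\EBS_2(k) \circ R_1 \circ R_2 \simeq R_1 \circ R_2 \circ \EBS_1(k)$. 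I would then observe that the grading-shift autoequivalence $(k)$ is central with respect to composition of $1$-morphisms and commutes with $\boxtimes$---because shifting the underlying spectrum commutes with tensoring bimodules over $E$ and over the rings $E(B\U(1)^j)$---so that $\EBS_j(k) \circ X \simeq (k)\bigl(\EBS_j \circ X\bigr)$ and $X \circ \EBS_j(k) \simeq (k)\bigl(X \circ \EBS_j\bigr)$. Consequently the $k$-th pair of isomorphisms above is the image under the autoequivalence $(k)$ of the $k = 0$ pair, and since an autoequivalence reflects isomorphisms, the whole family of slide conditions holds as soon as \eqref{eq:BS-slide-1} and \eqref{eq:BS-slide-2} hold. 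Therefore $\PreBraid(F)$ is exactly the set of isomorphism classes of $R$ satisfying conditions~(2) and~(3).

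Finally I would bring in the base $h_1\PolyMorc_E$. By \cref{def:prebraiding-over-base}, an element of $\PreBraid_{/h_1\PolyMorc_E}(F)$ is a prebraiding $\gamma$ on $F$ such that $h_1 H_\loc$ sends each component $\gamma_{m,n}$ to the corresponding braiding isomorphism of the symmetric monoidal $1$-category $h_1\PolyMorc_E$ (whose braiding is inherited from the symmetric monoidal $\infty$-category $\PolyMorc_E$). By the explicit formula \eqref{eq:cable-crossing}, every $\gamma_{m,n}$ is a fixed composite of whiskerings of $\gamma_{1,1} = R$, and the braiding of $h_1\PolyMorc_E$ on the pair $(m,n)$ is the same composite of whiskerings of the permutation bimodule $E(B\U(1)^2 s_1)$, which by \cref{obs:permutation-bimodule} is the $S_2$-symmetry on the object $2$. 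Since $h_1 H_\loc$ is monoidal, the whole family of conditions collapses to the single requirement $H_\loc(R) \simeq E(B\U(1)^2 s_1)$, which is condition~(1). Combining the three paragraphs yields the asserted bijection. I expect the only genuinely delicate point to be the reduction, in the middle step, from the slide conditions for all $k$ to those for $k = 0$, which rests on the centrality of the grading shift with respect to composition and $\boxtimes$; everything else is bookkeeping with \cref{lem:prebraid-to-slide-maps}, \cref{obs:describe-F}, and \cref{def:prebraiding-over-base}.
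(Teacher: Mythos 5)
Your proposal is correct and follows essentially the same route as the paper's proof: apply \cref{lem:prebraid-to-slide-maps} to $h_1\Gen \to h_1\Kbloc(\SBim_E)$ to get conditions (2) and (3), reduce the slide conditions for all $k$ to $k=0$ using that grading shifts commute with composition and $\boxtimes$, and identify the over-$h_1\PolyMorc_E$ condition with condition (1) via \cref{obs:permutation-bimodule}. You merely spell out more explicitly two points the paper leaves implicit (the shift-centrality reduction and the collapse of the base condition via the cable formula \eqref{eq:cable-crossing} and monoidality of $H_\loc$).
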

\begin{proof}
    By \cref{lem:prebraid-to-slide-maps}, a prebraiding $\gamma$ on $h_1 \Gen \to h_1 \Kbloc(\SBim_E)$ is 
    equivalent to the set of isomorphism classes of such $R$ satisfying conditions (2) and (3). Note that it suffices to check on $\EBS_i$ because the other generating endomorphisms are mapped to shifts of $\EBS_i$. 
    Furthermore, by \cref{obs:permutation-bimodule}, condition (1) is equivalent to the condition of $\gamma$ being a prebraiding  \emph{over} $h_1\PolyMorc$ in the sense of \cref{def:prebraiding-over-base}.
\end{proof}

\begin{remark}
    Note that we only ask for the isomorphisms in conditions (1) and (3) of \cref{cor:reduce-to-Rouquier-and-slide} to exist, as we are working on the level of homotopy $1$-categories.
\end{remark}

\subsection{Main theorem}\label{subsec:main-theorem}
In this subsection we construct a braiding on $\Kbloc(\SBim_E)$ by defining the Rouquier complex and show that it satisfies the conditions of \cref{cor:reduce-to-Rouquier-and-slide}.
\begin{definition}\label{def:rouquier-complex}
    Let $T = \U(1)^2$. We define the \emph{Rouquier complex} $$R \in \Kb(\SBim_E(2)) = \Hom_{\Kbloc(\SBim_E)}(2,2)$$ to be the fiber of  $$m_1(-2) \colon \EBS_1(-2) \to E(BT)(-2)$$ defined in \cref{prop:E-two-fiber-seq}.
\end{definition}

Now we show the Rouquier complex satisfies the conditions of \cref{cor:reduce-to-Rouquier-and-slide}. We start with condition (1):
\begin{proposition}\label{prop:condition-1}
   The functor $H_\loc \colon \Kbloc(\SBim_E) \to \PolyMorc_E$ takes the Rouquier complex $R$ to the permutation bimodule $E(BTs_1) \in \Hom_{\PolyMorc_E}(2,2)$.
\end{proposition}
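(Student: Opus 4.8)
The statement says that the exact functor $H_\loc$ collapses the two-term complex defining $R$ to a single bimodule, and that bimodule is exactly the one produced by the fiber sequence of \cref{prop:E-two-fiber-seq}. First I would record two facts about $H_\loc \colon \Kbloc(\SBim_E) \to \PolyMorc_E$. It is exact on the relevant hom-category: on $\eHom_{\Kbloc(\SBim_E)}(2,2) = \Kb(\SBim_E(2))$ it is a morphism in $\st_E = \Mod_{\Perf_E}(\st)$, hence a $\Perf_E$-linear (in particular exact) functor to $\eHom_{\PolyMorc_E}(2,2) = {}_{E(B\U(1)^2)}\BMod_{E(B\U(1)^2)}^\mrc(\Mod_E)$. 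Moreover, since $H_\loc$ is by construction the adjunct of the faithful functor $\SBim_E \to \PolyMorc_E$ of \eqref{eq:factorization-of-SBim} under the adjunction of \cref{prop:a-bunch-of-left-adjoints} (applied hom-wise), precomposing it with the unit $\SBim_E \hookrightarrow \Kbloc(\SBim_E)$ — which hom-wise is the inclusion of complexes concentrated in degree $0$ — recovers that faithful functor. Hence on degree-$0$ objects $H_\loc$ agrees with the bimodule realization: it sends $\EBS_1(-2)$ and $E(BT)(-2)$ (both Bott--Samelson bimodules in $\SBim_E(2)$, the latter for the empty word) to the corresponding bimodules, and the morphism $m_1(-2)$ of \cref{prop:E-two-fiber-seq} to the map of bimodules of the same name.

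Next I would compute the fiber. By \cref{def:rouquier-complex}, $R$ is the fiber of $m_1(-2)\colon \EBS_1(-2) \to E(BT)(-2)$ in $\Kb(\SBim_E(2))$. Applying the exact functor $H_\loc$ and using the compatibility just noted,
\[ H_\loc(R) \;\simeq\; \fib\bigl(m_1(-2)\colon \EBS_1(-2) \to E(BT)(-2)\bigr), \]
the fiber now taken in ${}_{E(B\U(1)^2)}\BMod_{E(B\U(1)^2)}^\mrc(\Mod_E)$. By \cref{prop:E-two-fiber-seq}, the diagonal $E(BTs_1)(2) \xrightarrow{\Deltas_1} \EBS_1 \xrightarrow{m_1} E(BT)$ is a fiber sequence of $E(BT)-E(BT)$-bimodule spectra; shifting it by $(-2)$ gives $E(BTs_1) \simeq \fib\bigl(m_1(-2)\bigr)$. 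Therefore $H_\loc(R) \simeq E(BTs_1)$. Since $T = \U(1)^2$ this is $E(B\U(1)^2 s_1)$, which is precisely the permutation bimodule implementing the $S_2$-action on $2 \in \PolyMorc_E$ by \cref{obs:permutation-bimodule}. This completes the argument.

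The only step that needs any care is the first one: verifying that $H_\loc$ is exact and that it restricts on degree-$0$ complexes to the faithful inclusion $\SBim_E(2) \hookrightarrow \eHom_{\PolyMorc_E}(2,2)$. Both are formal consequences of the construction of $H_\loc$ via the $\Kbloc \dashv (\text{forget})$ adjunction of \cref{prop:a-bunch-of-left-adjoints} (exactness because $H_\loc$ is a morphism of $\st_E$; compatibility because of the triangle identity), so there is no real obstacle. Conceptually, the content of the proposition is that the bimodule fiber sequence $E(BTs_1)(2) \to \EBS_1 \to E(BT)$ does \emph{not} lift to $\SBim_E$ — as $E(BTs_1)$ is not a Soergel bimodule, which is exactly why $R$ is genuinely a two-term complex — yet after $H_\loc$ it splits off and identifies $R$ with the permutation bimodule.
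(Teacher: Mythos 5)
Your argument is correct and matches the paper's proof: both use exactness of $H_\loc$ on $\eHom_{\Kbloc(\SBim_E)}(2,2)$ to identify $H_\loc(R)$ with the fiber of $m_1(-2)$ in the bimodule category, and then invoke the anti-diagonal fiber sequence $E(BTs_1)(2) \xrightarrow{\Deltas_1} \EBS_1 \xrightarrow{m_1} E(BT)$ of \cref{prop:E-two-fiber-seq}. The extra care you take in justifying exactness and the compatibility of $H_\loc$ with the degree-$0$ inclusion is left implicit in the paper but is the same formal content.
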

\begin{proof}
    The functor $H_{\loc}(2,2) \colon \Hom_{\Kbloc(\SBim_E)}(2,2) \to \Hom_{\PolyMorc_E}(2,2)$ is exact.\footnote{Recall that a functor between stable $\infty$-categories is exact if it takes finite (co)limits to finite (co)limits.} Therefore it takes the Rouquier complex $R$ to the fiber of $$m_i(-2) \colon \EBS_i(-2) \to E(BT)(-2)$$ in $$\Hom_{\PolyMorc_E}(2,2) = \,{_{E(B\U(1)^2)}\BMod_{E(B\U(1)^2)}^\mrc(\Mod_E)}.$$ Now the result follows from the anti-diagonal fiber sequence in \cref{prop:E-two-fiber-seq}.
\end{proof}

Next we move on to condition (2) of \cref{cor:reduce-to-Rouquier-and-slide} that $R$ is invertible. First we define the inverse:
\begin{definition}
    We define the \emph{inverse Rouquier complex} $R'\in \Kb(\SBim_E(2))$ to be the cofiber of $\Deltas_1 \colon E(BT) \to \EBS_1$ defined in \cref{prop:E-two-fiber-seq}.
\end{definition}
We need the basic aspects of the theory of total fibers. We refer the reader to \cite[\S 3.4]{cubical} for the general theory.
\begin{definition}\label{def:total-fiber}
   Let $\cC$ be a stable $\infty$-category. Consider a commutative square
   \begin{equation}
    \begin{tikzcd}
        A \ar[r] \ar[d] & B \ar[d] \\ 
        C \ar[r] & D
    \end{tikzcd}
   \end{equation} in $\cC$. The \emph{total fiber} of the diagram is the fiber of the induced map $A \to B \times_D C$.
\end{definition}
\begin{lemma}\label{lem:total-fiber-inductive}\cite[Proposition 3.4.3]{cubical}
    Let $\cC$ be a stable $\infty$-category. Consider a commutative square
    \begin{equation}
     \begin{tikzcd}
         A \ar[r] \ar[d] & B \ar[d] \\ 
         C \ar[r] & D
     \end{tikzcd}
    \end{equation} in $\cC$.
    Then the total fiber of the square is equivalent to the fiber of 
    \begin{equation}
        \fib(A \to B) \to \fib(C \to D).
    \end{equation}
   Analogous, the total fiber of the square is also equivalent to the fiber of 
   \begin{equation}
    \fib(A \to C) \to \fib(B \to D).
   \end{equation} 
\end{lemma}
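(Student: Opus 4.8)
The plan is to deduce the statement from two elementary facts that hold in any pointed $\infty$-category with finite limits (in particular in a stable $\cC$): base change preserves fibers, and a composable pair $X \to Y \to Z$ induces a fiber sequence $\fib(X \to Y) \to \fib(X \to Z) \to \fib(Y \to Z)$ (itself immediate from the identification $\fib(X\to Z) \simeq X \times_Y \fib(Y \to Z)$ together with base change).

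First I would set $P \coloneqq B \times_D C$, so that by definition the total fiber of the square is $\fib(A \to P)$, where $A \to P$ is the canonical comparison map determined by the horizontal edge $A \to B$ and the vertical edge $A \to C$. Since the square exhibiting $P = B \times_D C$ is a pullback, base change along $B \to D$ identifies $\fib(P \to B) \simeq \fib(C \to D)$, and base change along $C \to D$ identifies $\fib(P \to C) \simeq \fib(B \to D)$; I would record these equivalences together with the evident compatibilities with the structure maps.

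Next I would observe that the composite $A \to P \to B$ is precisely the horizontal edge $A \to B$, and that $A \to P \to C$ is the vertical edge $A \to C$. Applying the fiber-sequence-of-a-composite fact to $A \to P \to B$ then yields a fiber sequence $\fib(A \to P) \to \fib(A \to B) \to \fib(P \to B) \simeq \fib(C \to D)$, so the total fiber $\fib(A \to P)$ is equivalent to $\fib(\fib(A \to B) \to \fib(C \to D))$. Running the same argument with $A \to P \to C$ in place of $A \to P \to B$ produces the equivalence $\fib(A \to P) \simeq \fib(\fib(A \to C) \to \fib(B \to D))$, which is the second assertion.

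I do not anticipate a serious obstacle here: the content is entirely formal. The only point requiring a little care is verifying that the base-change identifications are compatible with the comparison map $A \to P$, so that the two composites through $P$ genuinely recover the original edges of the square; and it is worth noting that the composite-fiber-sequence statement needs only finite limits and a zero object, not full stability. Alternatively, the whole lemma is \cite[Proposition 3.4.3]{cubical} and may simply be cited.
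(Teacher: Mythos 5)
Your proposal is correct. The paper does not actually prove this lemma internally -- it is stated with a citation to \cite[Proposition 3.4.3]{cubical} -- so there is no in-paper argument to compare against; what you have written is a valid self-contained proof, which is more than the paper supplies. The two ingredients you use (base change preserves fibers, and the fiber sequence $\fib(X\to Y)\to\fib(X\to Z)\to\fib(Y\to Z)$ associated to a composite $X\to Y\to Z$) are exactly the right ones, and the assembly works: with $P= B\times_D C$ one has $\fib(P\to B)\simeq 0\times_B(B\times_D C)\simeq 0\times_D C=\fib(C\to D)$, and the composite fiber sequence for $A\to P\to B$ identifies the total fiber $\fib(A\to P)$ with $\fib\bigl(\fib(A\to B)\to\fib(P\to B)\bigr)$; the symmetric argument through $A\to P\to C$ gives the second equivalence. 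The compatibility point you flag is the one genuine thing to check, and it does hold: the identification $\fib(P\to B)\simeq\fib(C\to D)$ is induced by the projection $P\to C$, so the composite $\fib(A\to B)\to\fib(P\to B)\simeq\fib(C\to D)$ is the map induced by the square's edges $A\to C$ and $B\to D$, i.e., the natural comparison map appearing in the statement. Your remark that only a zero object and finite limits are needed (not stability) is also accurate; stability is used elsewhere in the paper only to pass freely between fibers and cofibers.
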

\begin{lemma}\label{lem:total-fiber-of-tensors}
    Let $\cC \in \Alg(\st)$ be a monoidal stable $\infty$-category with tensor product $\boxtimes$. Let 
    $A \to B \to C$ and $A' \to B' \to C'$ be fiber sequences in $\cC$. Then $A \boxtimes A'$ is the total fiber of 
    \begin{equation}
        \begin{tikzcd}
            B \boxtimes B' \ar[r] \ar[d] & B \boxtimes C' \ar[d]\\ 
            C \boxtimes B' \ar[r] & C \boxtimes C'.
        \end{tikzcd}
    \end{equation}
\end{lemma}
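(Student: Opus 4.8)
The plan is to reduce the statement to the inductive description of total fibers in \cref{lem:total-fiber-inductive}, using only one extra input: for a monoidal stable $\infty$-category $\cC \in \Alg(\st)$ the tensor product preserves finite colimits in each variable by definition, so (being a functor between stable $\infty$-categories) the functors $X \boxtimes -$ and $- \boxtimes X$ are exact, and in particular preserve fibers, for every object $X$.

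First I would apply \cref{lem:total-fiber-inductive} to the square in the statement, taking fibers in the horizontal direction: the total fiber is equivalent to
\begin{equation*}
    \fib\Bigl( \fib(B \boxtimes B' \to B \boxtimes C') \longrightarrow \fib(C \boxtimes B' \to C \boxtimes C') \Bigr).
\end{equation*}
Since $B \boxtimes -$ is exact and the map $B \boxtimes B' \to B \boxtimes C'$ is the image of $B' \to C'$ under $B \boxtimes -$, its fiber is canonically $B \boxtimes \fib(B' \to C') \simeq B \boxtimes A'$; likewise $\fib(C \boxtimes B' \to C \boxtimes C') \simeq C \boxtimes A'$. These identifications are natural in the first tensor factor, so under them the outer map is identified with $(B \to C) \boxtimes A'$. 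Hence the total fiber is $\fib\bigl((B \to C) \boxtimes A'\bigr)$, and since $- \boxtimes A'$ is exact this equals $\fib(B \to C) \boxtimes A' \simeq A \boxtimes A'$, as desired.

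The argument is entirely formal; the only point requiring a little care — and the closest thing to an obstacle — is the naturality of the identifications $\fib(X \boxtimes B' \to X \boxtimes C') \simeq X \boxtimes A'$ as $X$ runs over the morphism $B \to C$, which is what licenses computing the outer fiber by applying $- \boxtimes A'$ to $B \to C$. This is immediate from functoriality of $\boxtimes$. Alternatively one can run the same computation with the two variables exchanged, using the second clause of \cref{lem:total-fiber-inductive} together with exactness of $- \boxtimes B'$ and $- \boxtimes C'$, and observe that the two computations yield the same object; this simultaneously proves the claim and exhibits the expected symmetry of the total fiber in the two input fiber sequences. I do not anticipate any genuine difficulty beyond this bookkeeping.
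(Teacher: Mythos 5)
Your proof is correct and is essentially the paper's argument: the paper organizes the same computation as a $3\times 3$ diagram whose rows and columns are fiber sequences (using exactness of $\boxtimes$ in each variable) and then applies \cref{lem:total-fiber-inductive}, which is exactly your identification of the horizontal (or vertical) fibers with $B \boxtimes A'$, $C \boxtimes A'$ followed by one more fiber. The only difference is presentational — you take fibers in one direction and argue naturality explicitly, while the paper encodes that naturality in the diagram.
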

\begin{proof}
    Consider the following diagram:
    \begin{equation}
        \begin{tikzcd}
        A \boxtimes A' \ar[r] \ar[d]& A \boxtimes B' \ar[r] \ar[d] & A \boxtimes C' \ar[d] \\ 
        B \boxtimes A' \ar[r] \ar[d] &   B \boxtimes B' \ar[r] \ar[d] & B \boxtimes C' \ar[d]\\ 
        B \boxtimes A' \ar[r]&   C \boxtimes B' \ar[r] & C \boxtimes C'. 
        \end{tikzcd}
    \end{equation}
    Since $\boxtimes$ is exact in both variables, each row and column is a fiber sequence. In particular, the top row is a fiber sequence. Now the result follows from \cref{lem:total-fiber-inductive} as the last two columns are also fiber sequences.
\end{proof}
We also have a straightforward yet useful observation:
\begin{observation}\label{obs:eliminating-direct-sums}
    Let $\cC$ be a stable $\infty$-category. Suppose we have a commutative square of the form 
    \begin{equation}
        \begin{tikzcd}
            A \ar[r] \ar[d] & D \oplus B \ar[d, "{(\id, 0)}"] \\ 
            C \ar[r] & D \, .
        \end{tikzcd}
    \end{equation}
    Then the total fiber of the above square is equivalent to the total fiber of 
    \begin{equation}
        \begin{tikzcd}
            A \ar[r] \ar[d] & B\ar[d] \\ 
            C \ar[r]& 0\, ,
        \end{tikzcd}
    \end{equation}
    which is simply the fiber of $A \to B \oplus C$.
\end{observation}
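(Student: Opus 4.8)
The plan is to identify both total fibers with the fiber of one and the same map out of $A$, after fixing the (implicit) convention that the map $A\to B$ in the second square is the composite of the top map $f\colon A\to D\oplus B$ of the first square with the projection $\mathrm{pr}_B\colon D\oplus B\to B$; there is no a priori map $A\to B$ once the corner $D$ is deleted, so this convention is exactly what makes the statement typecheck.

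First I would analyze the pullback $C\times_D(D\oplus B)$ entering the definition of the total fiber of the first square. The right-hand vertical map $(\id,0)\colon D\oplus B\to D$ fits into a split fiber sequence $B\to D\oplus B\to D$ (with section $d\mapsto(d,0)$); pulling this back along $C\to D$ produces a fiber sequence $B\to C\times_D(D\oplus B)\to C$ which is again split, so $C\times_D(D\oplus B)\simeq C\oplus B$ since $\cC$ is stable, hence additive. Under this identification the comparison map becomes $(g,\ \mathrm{pr}_B\circ f)\colon A\to C\oplus B$, where $g\colon A\to C$ is the left-hand map out of $A$; thus the total fiber of the first square is $\fib\!\big(A\xrightarrow{(g,\ \mathrm{pr}_B\circ f)}C\oplus B\big)$.

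Applying the same reasoning to the second square — whose bottom-right corner is $0$, so that the comparison target is $C\times_0 B\simeq C\oplus B$ — identifies its total fiber with $\fib\!\big(A\xrightarrow{(g,\ A\to B)}C\oplus B\big)$. With the convention above the map $A\to B$ is precisely $\mathrm{pr}_B\circ f$, so the two comparison maps agree on the nose and the two total fibers are canonically equivalent; this common object is the fiber of $A\to B\oplus C$, which is the final assertion. Alternatively one can sidestep the pullback computations via \cref{lem:total-fiber-inductive}: the total fiber of the first square is $\fib\!\big(\fib(A\to C)\to\fib(D\oplus B\to D)\big)$ and $\fib\big((\id,0)\colon D\oplus B\to D\big)\simeq B$ via $b\mapsto(0,b)$, while the total fiber of the second square is $\fib\!\big(\fib(A\to C)\to\fib(B\to 0)\big)=\fib\!\big(\fib(A\to C)\to B\big)$, and one checks the two maps $\fib(A\to C)\to B$ coincide.

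The only genuine subtlety is the bookkeeping around the map $A\to B$ in the second diagram just discussed; everything else is a formal consequence of additivity of $\cC$ together with the stability of fiber sequences under pullback, so I do not expect any substantive obstacle here.
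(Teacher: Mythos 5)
Your proof is correct; the paper states this as an Observation with no proof at all, and your argument --- whether via the direct identification $C\times_D(D\oplus B)\simeq C\oplus B$ using the split fiber sequence, or via \cref{lem:total-fiber-inductive} and the identification $\fib\bigl((\id,0)\colon D\oplus B\to D\bigr)\simeq B$ --- is precisely the formal justification the paper leaves implicit. Your point about the convention that the map $A\to B$ in the second square is $\mathrm{pr}_B$ composed with the top map of the first square is also the intended reading, and it matches how the observation is actually invoked in \cref{prop:condition-2} and \cref{prop:condition-3}.
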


Now we prove condition (2) of \cref{cor:reduce-to-Rouquier-and-slide}, that is, that $R$ and $R'$ are inverses of each other:
\begin{proposition}\label{prop:condition-2}
    We have equivalences 
    \begin{equation}
    R \circ R' \simeq E(BT) \simeq R' \circ R.        
    \end{equation}
    in $\Kb(\SBim_E(2))$. Here $\circ$ is denoting the monoidal composition product on $\Kb(\SBim_E(2)) = \Hom_{\Kbloc(\SBim_E)}(2, 2)$.
\end{proposition}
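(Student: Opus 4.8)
The plan is to compute $R \circ R' = R \otimes_{E(BT)} R'$ explicitly inside $\Kb(\SBim_E(2))$ and collapse it to the monoidal unit $E(BT)$ by Gaussian elimination. Recall $R$ is the fiber of $m_1(-2)\colon \EBS_1(-2) \to E(BT)(-2)$ and $R'$ the cofiber of the structure map $\Deltas_1\colon E(BT) \to \EBS_1$ of \cref{prop:E-two-fiber-seq}, so both are two-term complexes with outer terms $\EBS_1$ and $E(BT)$ (up to shifts). First I would tensor these two fiber sequences over $E(BT)$ and apply \cref{lem:total-fiber-of-tensors} to the monoidal stable $\infty$-category $(\Kb(\SBim_E(2)), \otimes_{E(BT)})$; this presents $R \circ R'$, up to a grading shift, as the total fiber of a commuting square whose four corners are (shifts of) $\EBS_1$, $\EBS_1 \otimes_{E(BT)} \EBS_1 = \EBS_{\underline{11}}$, $E(BT)$ and $\EBS_1$, with the vertical legs induced by $m_1$ (so the $\EBS_{\underline{11}}$-leg is $m_1 \otimes_{E(BT)} \EBS_1$) and the horizontal legs induced by the structure map of $R'$ (so the $\EBS_{\underline{11}}$-leg is $\EBS_1 \otimes_{E(BT)} \Deltas_1$, after the $s_1$-twist isomorphism \eqref{eq:E-non-obvious-iso}).

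Next I would substitute into the $\EBS_{\underline{11}}$-corner the splitting $\EBS_{\underline{11}} \simeq \EBS_1 \oplus \EBS_1(2)$ of \cref{prop:E-BS_s-splitting-1}, whose projections are $m_1 \otimes_{E(BT)} \EBS_1$ and $\nabla_1^L$; this choice is exactly what makes the right-hand vertical leg equal to the projection $(\id,0)$ onto the $\mu_1$-summand. In this basis the horizontal leg $\EBS_1 \otimes_{E(BT)} \Deltas_1$ reads as $(\nu,e)$, and its component $e = \nabla_1^L \circ (\EBS_1 \otimes_{E(BT)} \Deltas_1)$ into the $\EBS_1(2)$-summand is an equivalence --- this is precisely \cref{lem:need-this-for-condition-2} (the point being that $\EBS_1 \otimes_{E(BT)} \Deltas_1$ and $\Deltas_1 \otimes_{E(BT)} \EBS_1$ are two sections of $\EBS_{\underline{11}}$ complementary to $\mu_1$, so the second's projection $\nabla_1^L$ restricts to an equivalence on the first). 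Now \cref{obs:eliminating-direct-sums} cancels the shared summand and collapses the total fiber of the square to $\fib(\EBS_1(2)(k) \xrightarrow{(e,\, m_1(k))} \EBS_1(2)(k) \oplus E(BT)(k))$; since $e$ is an equivalence this fiber is $\Omega E(BT)(k)$, and once the accumulated shifts are reconciled one gets $R \circ R' \simeq E(BT)$. (Alternatively one may run the middle step through \cref{lem:total-fiber-inductive}, taking fibers down the two vertical legs, which are $E(BTs_1)(2)$ and $\EBS_1(2)$ by \cref{prop:E-two-fiber-seq} and \cref{prop:E-BS_s-splitting-1}; the induced map between them is identified with a ``unit'' map whose cofiber is $E(BT)$, again by \cref{prop:E-two-fiber-seq}.)

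Finally, $R' \circ R \simeq E(BT)$ follows from the mirror-image computation: $R' \otimes_{E(BT)} R$ has the same style of total-fiber presentation, but now the $\EBS_{\underline{11}}$-corner must be split by \cref{prop:E-BS_s-splitting-2} (projections $\EBS_1 \otimes_{E(BT)} m_1$ and $\nabla_1^R$), and the relevant ``change of complement'' equivalence is the left--right mirror of \cref{lem:need-this-for-condition-2}. Since the two arguments are interchanged simply by swapping the left and right $E(BT)$-module structures (and the side on which \eqref{eq:E-non-obvious-iso} acts), no new ingredient is required. The hard part is not conceptual but bookkeeping: carrying the grading shifts $(k)$ correctly through the iterated tensor products so that $R \circ R'$ ends up with trivial global shift, and --- more delicate --- keeping the $s_1$-twist isomorphisms of \eqref{eq:E-non-obvious-iso} straight on both sides so that the structure map of $R'$ is correctly matched with the sections appearing in \cref{prop:E-BS_s-splitting-1}/\cref{prop:E-BS_s-splitting-2}; once those identifications are pinned down the cancellation is forced.
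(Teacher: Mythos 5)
Your proposal matches the paper's proof essentially step for step: the same application of \cref{lem:total-fiber-of-tensors} to present $R\circ R'$ (up to shift) as a total fiber, the same substitution of the splitting from \cref{prop:E-BS_s-splitting-1} to turn the right-hand vertical into $(\id,0)$, the same use of \cref{obs:eliminating-direct-sums} and \cref{lem:need-this-for-condition-2} to collapse to a fiber of a map whose first component is an equivalence, and the same mirrored argument via \cref{prop:E-BS_s-splitting-2} for $R'\circ R$ (which the paper leaves as ``analogous''). The only caveat is your parenthetical alternative: inside $\Kb(\SBim_E(2))$ the fiber of $m_1$ cannot be literally identified with the bimodule $E(BTs_1)(2)$ (that identification is only available after applying $H_{\loc}$), but since this aside is not used in your main line of argument, the proof stands as essentially the paper's.
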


\begin{proof}
    Let us prove that $R \circ R' \simeq E(BT)$; the proof of $E(BT) \simeq R' \circ R$ is analogous. By \cref{lem:total-fiber-of-tensors}, $\Sigma R \circ R'$ is the
    total fiber of 
    \begin{equation}\label{eq:R-otimes-Rprime-diag-1}
        \begin{tikzcd}
            \EBS_1 \ar[rrr, "\EBS_1 \otimes_{E(BT)} \Deltas_1"] \ar[d, "m_1"] &&& \EBS_{\underline{11}}(-2) \ar[d, "m_1 \otimes_{E(BT)} \EBS_i(-2)"] \\ 
            E(BT) \ar[rrr] &&& \EBS_1(-2).
        \end{tikzcd}
    \end{equation}

    By \cref{prop:E-BS_s-splitting-1}, we have a splitting
    \begin{equation}
        \EBS_{\underline{11}} \simeq \EBS_1 \oplus \EBS_1(2)
    \end{equation}
    in $\SBim_E(2)$. Since the map $\SBim_E(2) \to \Kb(\SBim_E(2))$ is additive, this splitting also holds in $\Kb(\SBim_E(2))$. Replacing $\EBS_{\underline{11}}(-2)$ with $\EBS_1(-2) \oplus \EBS_1$, we can replace \eqref{eq:R-otimes-Rprime-diag-1} with 
    \begin{equation}\label{eq:R-otimes-Rprime-diag-2}
        \begin{tikzcd}
            \EBS_1 \ar[rrrr, "{(?,\, (\EBS_1 \otimes_{E(BT)} \Deltas_1) \circ \nabla_1^L)}"] \ar[d, "m_1"] &&&& \EBS_1(-2) \oplus \EBS_1 \ar[d, "{(\id, 0)}"] \\ 
            E(BT) \ar[rrrr] &&&& \EBS_1(-2).
        \end{tikzcd} 
    \end{equation}
    Note that we denote maps that does not need specification by $?$.
    By \cref{obs:eliminating-direct-sums}, the total fiber of \eqref{eq:R-otimes-Rprime-diag-2} is equivalent to the fiber of 
    \begin{equation}\label{eq:condition-2-simplify}
        \EBS_1 \xrightarrow{((\EBS_1 \otimes_{E(BT)}\Deltas_1) \circ \nabla_1^L,\, m_1)} \EBS_1 \oplus E(BT).
    \end{equation}
    Since $(\EBS_1 \otimes_{E(BT)}) \Deltas_1 \circ \nabla_1^L$ is an equivalence by \cref{lem:need-this-for-condition-2}, we see that the fiber of \eqref{eq:condition-2-simplify} is $\Sigma E(BT)$. Therefore we have an equivalence $ R \circ R' \simeq E(BT)$.
\end{proof}
Lastly, we prove condition (3) of \cref{cor:reduce-to-Rouquier-and-slide}:
\begin{proposition}\label{prop:condition-3}
    We have equivalences 
         \begin{equation}\label{eq:BS-slide-1-copy}
            \EBS_1 \circ R_2 \circ R_1 \simeq 
            R_2 \circ R_1 \circ \EBS_2,
        \end{equation}
    and 
        \begin{equation}\label{eq:BS-slide-2-copy}
            \EBS_2 \circ R_1 \circ R_2 \simeq 
            R_1 \circ R_2 \circ \EBS_1.
        \end{equation}
        in $\Kb(\SBim_E(3))$.
\end{proposition}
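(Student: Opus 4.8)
The idea is to express each of the two sides of \eqref{eq:BS-slide-1-copy} as the total fibre of a square of $E$-valued Bott--Samelson bimodules, then cancel the redundant summands, reducing both to the fibre of a single map. Throughout write $T=\U(1)^3$; recall $\circ$ is $-\otimes_{E(BT)}-$, which is exact in each variable.

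Since $R_1,R_2$ are by definition the fibres of $m_1(-2),m_2(-2)$, iterated use of \cref{lem:total-fiber-inductive} identifies $R_2\circ R_1$ with the total fibre of the commuting square with corners $\EBS_{21}(-4)$, $\EBS_2(-4)$, $\EBS_1(-4)$, $E(BT)(-4)$ whose maps are the appropriate $m_i$'s. Applying the exact functors $\EBS_1\otimes_{E(BT)}-$ and $-\otimes_{E(BT)}\EBS_2$ to this square presents $\EBS_1\circ R_2\circ R_1$ as the total fibre of the square with corners $\EBS_{121}(-4)$, $\EBS_{12}(-4)$, $\EBS_{11}(-4)$, $\EBS_1(-4)$, and $R_2\circ R_1\circ\EBS_2$ as the total fibre of the square with corners $\EBS_{212}(-4)$, $\EBS_{22}(-4)$, $\EBS_{12}(-4)$, $\EBS_2(-4)$; in each case every map is the evident collapse of a single Bott--Samelson slot (e.g. $\EBS_{121}\to\EBS_{11}$ is $\EBS_1\otimes m_2\otimes\EBS_1$, and $\EBS_{11}\to\EBS_1$ is $\EBS_1\otimes m_1$).

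For the first square, \cref{prop:E-BS_s-splitting-2} identifies $\EBS_1\otimes m_1\colon\EBS_{11}\to\EBS_1$ with the projection of the splitting $\EBS_{11}\simeq\EBS_1\oplus\EBS_1(2)$, so by \cref{obs:eliminating-direct-sums} the total fibre equals the fibre of a map $\EBS_{121}(-4)\to\EBS_{12}(-4)\oplus\EBS_1(-2)$ whose $\EBS_1(-2)$-component is $\nabla_1^R\circ(\EBS_1\otimes m_2\otimes\EBS_1)$. By \eqref{eq:E-fiber-sequence-sts-2} and \cref{cor:E-sts-splitting-2} this component fits in a split fibre sequence $\EBS_{1,2}(-4)\xrightarrow{\mu_{121}}\EBS_{121}(-4)\to\EBS_1(-2)$; comparing it with the split fibre sequence $\EBS_{12}(-4)\to\EBS_{12}(-4)\oplus\EBS_1(-2)\to\EBS_1(-2)$ and passing to vertical fibres (\cref{lem:total-fiber-inductive} again) yields $\EBS_1\circ R_2\circ R_1\simeq\fib(\theta)$, where $\theta\coloneqq(\EBS_{12}\otimes m_1)\circ\mu_{121}\colon\EBS_{1,2}(-4)\to\EBS_{12}(-4)$. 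The identical reduction of the second square --- splitting $\EBS_{22}$ via \cref{prop:E-BS_s-splitting-1}, then using \eqref{eq:E-fiber-sequence-sts} and \cref{prop:E-sts-splitting-1} --- gives $R_2\circ R_1\circ\EBS_2\simeq\fib(\theta')$ with $\theta'\coloneqq(m_2\otimes\EBS_{12})\circ\mu_{212}\colon\EBS_{2,1}(-4)\to\EBS_{12}(-4)$.

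Finally $G_{1,2}=G_{2,1}$ gives $\EBS_{1,2}=\EBS_{2,1}$, and the commuting square \eqref{eq:sts-to-st-commuting} of \cref{obs:E-sts-commuting-square} is precisely the identity $(\EBS_{12}\otimes m_1)\circ\mu_{121}=(m_2\otimes\EBS_{12})\circ\mu_{212}$, i.e. $\theta=\theta'$; hence $\EBS_1\circ R_2\circ R_1\simeq R_2\circ R_1\circ\EBS_2$, which is \eqref{eq:BS-slide-1-copy}. Equation \eqref{eq:BS-slide-2-copy} follows verbatim with $1$ and $2$ interchanged and \eqref{eq:sts-to-ts-commuting} in place of \eqref{eq:sts-to-st-commuting}. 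The step I expect to be the main obstacle is the bookkeeping: tracking which simple slot each $m_i$ and $\mu_{iji}$ occupies once the Bott--Samelson bimodules are tensored up to $\SBim_E(3)$, and checking that each reduction invokes the matching $\nabla_i^{L}$ versus $\nabla_i^{R}$ so that \cref{cor:E-fiber-sequence-sts} and \eqref{eq:sts-to-st-commuting} apply on the nose; everything else is formal manipulation of (co)fibre sequences.
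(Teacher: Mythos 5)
Your proof is correct and follows essentially the same route as the paper's: both sides are presented as total fibres of squares of Bott--Samelson bimodules, the split summands are removed using \cref{prop:E-BS_s-splitting-1}, \cref{prop:E-BS_s-splitting-2}, \cref{prop:E-sts-splitting-1}, \cref{cor:E-sts-splitting-2} together with \cref{obs:eliminating-direct-sums}, and the resulting fibres of $(\EBS_{\underline{12}}\otimes_{E(BT)} m_1)\circ\mu_{121}$ and $(m_2\otimes_{E(BT)}\EBS_{\underline{12}})\circ\mu_{212}$ are identified via \cref{obs:E-sts-commuting-square}. Your explicit tracking of which $\nabla^L$ versus $\nabla^R$ splitting applies to each square is precisely the bookkeeping the paper leaves to ``the same argument,'' and you carried it out correctly.
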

\begin{proof}
    We will construct the isomorphism \eqref{eq:BS-slide-1-copy}. The isomorphism \eqref{eq:BS-slide-2-copy} can be constructed analogously.
    Note that by \cref{prop:E-sts-splitting-1}, $\EBS_{1,2}  = E(T\backslash G_{1,2}/T)$ is an object in $\SBim_E(3)$. Note that $G_{1,2}$ is simply $\U(3)$ in our case.
    First we are going to show that 
    $\EBS_1 \circ R_2 \circ R_1(4)$  is equivalent to the fiber of the composite map
    \begin{equation}
        \EBS_{1,2} \xrightarrow{\mu_{121}} \EBS_{\underline{121}} \xrightarrow{\EBS_{\underline{12}} \otimes_{E(BT)} m_1} \EBS_{\underline{12}}.
    \end{equation}
    By \cref{lem:total-fiber-of-tensors}, we see that 
    $\EBS_1 \circ R_2 \circ R_1(4)$ is equivalent to the total fiber of 
    \begin{equation}\label{eq:condition-3-square-1}
        \begin{tikzcd}
            \EBS_{\underline{121}} \ar[rrrr, "\EBS_1 \otimes_{E(BT)} m_2 \otimes_{E(BT)} \EBS_1"] \ar[d, "\EBS_{\underline{12}} \otimes_{E(BT)} m_1"] &&&& \EBS_{\underline{11}} \ar[d, "\EBS_1 \otimes_{E(BT)} m_1"] \\
            \EBS_{\underline{12}} \ar[rrrr, "\EBS_1 \otimes_{E(BT)} m_2"'] &&&& \EBS_1.
        \end{tikzcd}
    \end{equation}
    By \cref{prop:E-BS_s-splitting-2}
    and \cref{cor:E-sts-splitting-2}, 
    we have compatible splittings  
    \begin{equation}
        \EBS_{\underline{11}} \simeq \EBS_1 \oplus \EBS_1(2), \quad
        \EBS_{\underline{121}} \simeq \EBS_{1,2} \oplus \EBS_1(2), 
    \end{equation}
    such that the top horizontal map of \eqref{eq:condition-3-square-1} is of the form 
    \begin{equation}
        \begin{pmatrix}
            ? & ? \\ 
            0 & \id
        \end{pmatrix}.
    \end{equation}
    Now we can write \eqref{eq:condition-3-square-1} as 
    \begin{equation}\label{eq:condition-3-square-2}
        \begin{tikzcd}
            \EBS_{1,2} \oplus \EBS_1(2) \ar[rrrr]
            \ar[d] &&&& \EBS_1 \oplus \EBS_1(2) \ar[d, "{(\id, 0)}"] \\
            \EBS_{\underline{12}} \ar[rrrr, "\EBS_1 \otimes_{E(BT)} m_2"'] &&&& \EBS_1.
        \end{tikzcd} 
    \end{equation}
By \cref{obs:eliminating-direct-sums}, the total fiber of \eqref{eq:condition-3-square-2} is equivalent to the total fiber of 
    \begin{equation}\label{eq:condition-3-square-3}
        \begin{tikzcd}
            \EBS_{1,2} \oplus \EBS_1(2) \ar[rrrr, "{(0, \id)}"]
            \ar[d] &&&& \EBS_1(2)  \ar[d] \\
            \EBS_{\underline{12}} \ar[rrrr] &&&& 0.
        \end{tikzcd} 
    \end{equation}
    By \cref{lem:total-fiber-inductive} and taking fibers horizontally, the total fiber of \eqref{eq:condition-3-square-3} is equivalent to the fiber of the composite map
    \begin{equation}\label{eq:sts}
        \EBS_{1,2} \xrightarrow{\mu_{121}} \EBS_{\underline{121}} \xrightarrow{\EBS_{\underline{12}} \otimes_{E(BT)} m_1} \EBS_{\underline{12}}.
    \end{equation}
    Applying the same argument, we can show that $R_2 \circ R_1 \circ \EBS_2(4)$ is equivalent to the fiber of the composite 
    \begin{equation}\label{eq:tst}
        \EBS_{1,2} \xrightarrow{\mu_{212}} \EBS_{\underline{jij}} 
        \xrightarrow{m_2 \otimes_{E(BT)} \EBS_{\underline{12}}} \EBS_{\underline{12}}.
    \end{equation}
    By \cref{obs:E-sts-commuting-square} (in particular \eqref{eq:sts-to-st-commuting}), the composites in \eqref{eq:sts} and \eqref{eq:tst} are equivalent. It follows that
    $\EBS_1 \circ R_2 \circ R_1$ is equivalent to  $R_2 \circ R_1 \circ \EBS_2$.
\end{proof}

Putting it altogether, we have our main theorem:
\begin{theorem}\label{thm:main}
   Let $E$ be a connective $\EE_\infty$-ring spectrum together with a complex $\EE_\infty$-orientation. Then there exists a contractible space of braidings on $\Kbloc(\SBim_E)$
   in $\Cat[\st_E]_{/\PolyMorc_E}$ whose braiding on $2$ strands is isomorphic to the Rouquier complex $R$ defined in \cref{def:rouquier-complex}.
\end{theorem}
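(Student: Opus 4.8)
The plan is to obtain \cref{thm:main} as a direct assembly of the reductions in \cref{subsec:braiding-to-Rouquier} with the three verifications in \cref{subsec:main-theorem}. First I would invoke \cref{prop:reducing-to-prebraid}, which identifies the space of braidings $\Braid_{\Cat[\st_E]_{/\PolyMorc_E}}(\Kbloc(\SBim_E))$ with the \emph{set} $\PreBraid_{/h_1\PolyMorc_E}(h_1\Gen \to h_1\Kbloc(\SBim_E))$; in particular this space is discrete, so ``contractible'' will just mean ``a single point''. Composing with the bijection of \cref{cor:reduce-to-Rouquier-and-slide}, I would then identify $\Braid_{\Cat[\st_E]_{/\PolyMorc_E}}(\Kbloc(\SBim_E))$ with the set of isomorphism classes of morphisms $R'\in\eHom_{\Kbloc(\SBim_E)}(2,2)$ satisfying conditions (1)--(3) stated there, recording that under this identification the braiding on two strands attached to the class $[R']$ is $R'$ itself.

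Next I would note that the Rouquier complex $R$ of \cref{def:rouquier-complex} satisfies all three conditions: condition~(1) is \cref{prop:condition-1}, condition~(2) is \cref{prop:condition-2}, and condition~(3) is \cref{prop:condition-3}. Hence $[R]$ is a point of the set above, so the space of braidings is nonempty, and since it is discrete, the subspace consisting of braidings whose two-strand braiding is isomorphic to $R$ is the singleton $\{[R]\}$ --- in particular contractible. (Equivalently, the homotopy fiber over $[R]$ of the restriction-to-two-strands map is $\{[R]\}$, by the same bookkeeping.) The only compatibilities to record are that the hypotheses of \cref{prop:reducing-to-prebraid} hold in our setting --- $\SBim_E\to\PolyMorc_E$ is faithful by construction and $\Gen\to\SBim_E$ satisfies the dominance condition of \cref{cor:Gen-to-SBim} --- and that there is no $\mbbZ$-grading to keep track of here.

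From this vantage point, all the substantive work lives upstream: \cref{prop:reducing-to-prebraid} rests on the factorization-system machinery of \cite{2024braided}, while the real content is the verification of conditions (1)--(3) for the Rouquier complex, whose crux is the spectral splittings \cref{prop:E-BS_s-splitting-1} and \cref{prop:E-sts-splitting-1} together with \cref{lem:need-this-for-condition-2} and the compatibility square \cref{obs:E-sts-commuting-square}. I therefore expect the only genuine obstacle to be (already handled in \cref{prop:condition-2} and \cref{prop:condition-3}) making these $E$-valued splittings interact correctly with the total-fiber manipulations of \cref{lem:total-fiber-of-tensors} and \cref{obs:eliminating-direct-sums}; granting those, the proof of \cref{thm:main} itself is purely formal.
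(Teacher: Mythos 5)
Your proposal is correct and follows essentially the same route as the paper: reduce via \cref{prop:reducing-to-prebraid} and \cref{cor:reduce-to-Rouquier-and-slide} to checking conditions (1)--(3) for the Rouquier complex, which are exactly \cref{prop:condition-1}, \cref{prop:condition-2}, and \cref{prop:condition-3}. The extra bookkeeping you record --- that the identified space is discrete so the braidings with two-strand braiding isomorphic to $R$ form the singleton $\{[R]\}$, and that the hypotheses of \cref{prop:reducing-to-prebraid} are supplied by \eqref{eq:factorization-of-SBim} and \cref{cor:Gen-to-SBim} --- is just an unpacking of what the paper leaves implicit.
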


\begin{proof}
    By \cref{prop:reducing-to-prebraid} and \cref{cor:reduce-to-Rouquier-and-slide}, it suffices to check conditions 
    (1), (2), (3) of \cref{cor:reduce-to-Rouquier-and-slide} for the Rouquier complex $R$. These conditions are respectively proven in \cref{prop:condition-1}, \cref{prop:condition-2}, and \cref{prop:condition-3}.
\end{proof}

We end with a remark about generalizing to the non-connective case:
\begin{remark}\label{rem:non-connective-E}
    We restrict ourselves to $E$ being a \emph{connective} $\EE_\infty$ spectrum because $\add_E$ is only well-defined if $E$ is connective. Without assuming connectivity, we can still define $\SBim_E$ and $\Kbloc(\SBim_E)$, as well as construct a braiding on $\Kbloc(\SBim_E)$.
However, $\SBim_E$ will be only enriched in $\add$, and $\Kbloc(\SBim_E)$ will be an $\EE_2$-algebra in $\Cat[\st]$ rather than in $\Cat[\st_E]$.
\end{remark}
\bibliographystyle{amsalpha}
\bibliography{bib.bib}

\newcommand{\etalchar}[1]{$^{#1}$}
\providecommand{\bysame}{\leavevmode\hbox to3em{\hrulefill}\thinspace}
\providecommand{\MR}{\relax\ifhmode\unskip\space\fi MR }
\providecommand{\MRhref}[2]{%
  \href{http://www.ams.org/mathscinet-getitem?mr=#1}{#2}
}
\providecommand{\href}[2]{#2}
\begin{thebibliography}{LMGR{\etalchar{+}}24}

\bibitem[ABG{\etalchar{+}}14a]{MR3252967}
Matthew Ando, Andrew~J. Blumberg, David Gepner, Michael~J. Hopkins, and Charles
  Rezk, \emph{An {$\infty$}-categorical approach to {$R$}-line bundles,
  {$R$}-module {T}hom spectra, and twisted {$R$}-homology}, J. Topol.
  \textbf{7} (2014), no.~3, 869--893. \MR{3252967}

\bibitem[ABG{\etalchar{+}}14b]{MR3286898}
\bysame, \emph{Units of ring spectra, orientations and {T}hom spectra via rigid
  infinite loop space theory}, J. Topol. \textbf{7} (2014), no.~4, 1077--1117.
  \MR{3286898}

\bibitem[BS58]{bottsamelson}
Raoul Bott and Hans Samelson, \emph{Applications of the theory of {M}orse to
  symmetric spaces}, American Journal of Mathematics \textbf{80} (1958), no.~4,
  964--1029.

\bibitem[CZZ15]{CZZ}
Baptiste Calmès, Kirill Zainoulline, and Changlong Zhong, \emph{A coproduct
  structure on the formal affine {D}emazure algebra}, Mathematische Zeitschrift
  \textbf{282} (2015), no.~3–4, 1191–1218.

\bibitem[EGNO15]{EGNO}
Pavel Etingof, Shlomo Gelaki, Dmitri Nikshych, and Victor Ostrik, \emph{Tensor
  categories}, Mathematical Surveys and Monographs, vol. 205, American
  Mathematical Society, Providence, RI, 2015.

\bibitem[HL18]{hopkins2018strictly}
Michael~J Hopkins and Tyler Lawson, \emph{Strictly commutative complex
  orientation theory}, Mathematische Zeitschrift \textbf{290} (2018), no.~1,
  83--101.

\bibitem[Kit23a]{nituSBI}
Nitu Kitchloo, \emph{Symmetry breaking and link homologies i}, 2023.

\bibitem[Kit23b]{nituSBII}
\bysame, \emph{Symmetry breaking and link homologies ii}, 2023.

\bibitem[KK24]{khovdef}
Mikhail Khovanov and Nitu Kitchloo, \emph{A deformation of {R}obert-{W}agner
  foam evaluation and link homology}, Algebraic and topological aspects of
  representation theory, Contemp. Math., vol. 791, Amer. Math. Soc.,
  Providence, RI, 2024, pp.~147--204. \MR{4694171}

\bibitem[Laz55]{lazard}
Michel Lazard, \emph{Sur les groupes de lie formels {\`a} un param{\`e}tre},
  Bulletin de la Soci{\'e}t{\'e} Math{\'e}matique de France \textbf{83} (1955),
  251--274.

\bibitem[LMGR{\etalchar{+}}24]{2024braided}
Yu~Leon Liu, Aaron Mazel-Gee, David Reutter, Catharina Stroppel, and Paul
  Wedrich, \emph{A braided monoidal $(\infty,2)$-category of {S}oergel
  bimodules}, 2024.

\bibitem[LZ20]{li2020equivariant}
Hao Li and Changlong Zhong, \emph{On equivariant oriented cohomology of
  {B}ott-{S}amelson varieties}, 2020.

\bibitem[Mil60]{MR0119209}
J.~Milnor, \emph{On the cobordism ring {$\Omega \sp{\ast} $} and a complex
  analogue. {I}}, Amer. J. Math. \textbf{82} (1960), 505--521. \MR{119209}

\bibitem[MV15]{cubical}
Brian~A Munson and Ismar Voli{\'c}, \emph{Cubical homotopy theory}, vol.~25,
  Cambridge University Press, 2015.

\bibitem[Qui69]{MR0253350}
Daniel Quillen, \emph{On the formal group laws of unoriented and complex
  cobordism theory}, Bull. Amer. Math. Soc. \textbf{75} (1969), 1293--1298.
  \MR{253350}

\bibitem[Soe92]{Soergel}
Wolfgang Soergel, \emph{The combinatorics of {H}arish-{C}handra bimodules}, J.
  Reine Angew. Math. \textbf{429} (1992), 49--74. \MR{1173115}

\bibitem[Str22]{stroppel2022categorification}
Catharina Stroppel, \emph{Categorification: tangle invariants and tqfts}, 2022.

\end{thebibliography}

\end{document}